\documentclass[11pt]{amsart}

\setlength{\oddsidemargin}{0cm}
\setlength{\evensidemargin}{0cm}
\setlength{\topmargin}{-.5in}
\setlength{\textheight}{9in}
\setlength{\textwidth}{6.5in}

\usepackage{amsmath, amssymb, comment}

\usepackage{pgf,tikz,hyperref}
\usetikzlibrary{arrows}

\pagestyle{myheadings}
\thispagestyle{empty}


\renewenvironment{abstract}{\begin{quote}\textbf{\abstractname.}}{\end{quote}}


\newtheorem{thm}{Theorem}[subsection]
\newtheorem{theorem}[thm]{Theorem}
\newtheorem{lemma}[thm]{Lemma}
\newtheorem{fact}[thm]{Fact}
\newtheorem{proposition}[thm]{Proposition}
\newtheorem{corollary}[thm]{Corollary}
\newtheorem{conjecture}[thm]{Conjecture}

\newtheorem{openproblem}[thm]{Open Problem}

\newcommand\mkthm[2]{\newenvironment{#1}{\begin{#2}\rm}{\end{#2}}}
 \mkthm{definition}{tdefinition}
         \mkthm{remark}{tremark}
       \mkthm{example}{texample}
     \mkthm{question}{tquestion}
     \mkthm{exercise}{texercise}



\renewcommand\O{\mathcal O}



\newcommand\Exercise{Example}
\newcommand\KK{K}
\renewcommand\P{\mathbb P}
\newcommand\C{\mathbb C}
\newcommand\Q{\mathbb Q}
\newcommand\R{\mathbb R}

\newcommand\be[1][@{\;}r@{\;}c@{\;}l@{\;}l@{\;}]{$$\everymath{\displaystyle}\renewcommand\arraystretch{1.2}\begin{array}{#1}}
\newcommand\ee{\end{array}$$}

\newcommand\compact{\itemsep=0cm \parskip=0cm}

\newcommand\smallmatr[2][*{20}{c}]{{\fontsize{8}{9}\arraycolsep=4pt\selectfont\left(\!\!\begin{array}{#1}#2\end{array}\!\!\right)}}
\def\nmat#1 #2 #3 #4 #5 #6 #7 #8 #9 {\smallmatr{
   #1 & #2 & #3 \\
   #4 & #5 & #6 \\
   #7 & #8 & #9
   }}

\newcommand\SolnEater[1]{#1}



\newcommand\newop[2]{\newcommand#1{\mathop{\rm #2}\nolimits}}
\newcommand\renewop[2]{\renewcommand#1{\mathop{\rm #2}\nolimits}}
\newop\Cl{Cl}
\newop\chr{char}
\newop\mult{mult}
\newop\reg{reg}
\newop\Bl{Bl}
\newop\Pic{Pic}
\newop\Fix{Fix}
\newop\PGL{PGL}
\renewop\Re{Re}
\renewop\Im{Im}
\newop\br{branch}



\begin{document}

\title{Asymptotics of linear systems, with connections to line arrangements}
\author{Brian Harbourne}
\address{Department of Mathematics, University of Nebraska-Lincoln, Lincoln, NE 68588, USA}

\date{May 28, 2017}

\maketitle

\tableofcontents

\begin{abstract}
The main focus of these notes is recent work on linear systems 
in which line arrangements play a role, including problems such as 
semi-effectivity, containment problems of symbolic powers of 
homogeneous ideals in their powers, bounded negativity,
and a new perspective on the SHGH Conjecture.
Along the way we will be concerned with asymptotic invariants 
such as Waldschmidt constants, resurgences and $H$-constants.
\end{abstract}

\begin{quote}
\thanks{\noindent {\bf Acknowledgements}: 
My participation at the Spring, 2016 miniPAGES was partially supported by the grant 346300 for IMPAN 
from the Simons Foundation and the matching 2015-2019 Polish MNiSW fund.
Some of the revisions to these notes were done at the Banach Center in Warsaw in the weeks 
after this material was presented at the workshop at the Pedagogical University in Krakow.
I also thank J.\ Szpond for her helpful comments, and the referee for generously working though these notes
and providing copious, helpful feedback.}
\end{quote}


\section{Line arrangements, semi-effectivity and Waldschmidt constants}
\subsection{Line arrangements}
In this section we recall some examples and facts about line arrangements (in the projective plane).
We will always take $\KK$ to be an algebraically closed field.
A line arrangement over $\KK$ is a finite list $L_1,\ldots,L_d\subseteq\P^2_\KK$, $d>1$,
of distinct lines in the projective plane and their crossing points 
(i.e., the points of intersections of the lines).
Line arrangements have been coming up in a range of topics of recent research interest discussed in these notes.
A useful notation is $t_k$, for $k\geq2$, for the number of points lying on exactly $k$ lines.

\begin{exercise}\label{t_k}
Consider a line arrangement $L_1,\ldots,L_d\subseteq\P^2_\KK$.
Let $s$ be the number of crossing points.
\begin{enumerate}
\item[(a)] Then the number of crossing points is $s=t_2+\ldots+t_d$.
\item[(b)] And $\binom{d}{2}=\sum_k t_k\binom{k}{2}$.
\item[(c)] We have $0\leq t_d\leq 1$, and that $t_k=0$ for all $k<d$ if and only if $t_d=1$.
\item[(d)] Then $d^2-\sum_k t_kk^2=d-\sum_k t_kk$.
\item[(e)] If the lines do not all go through a single point, then $s\geq d$. 
(Note: This is a weak form of the de Bruijn-Erd\H{o}s theorem in incidence geometry.
See \cite{1deBE48}
for a combinatorial proof. Here is a sketch for an algebraic geometric proof.
Blow up the crossing points. Look at the classes of the proper transforms of the lines. One can show that
they are linearly independent in the divisor class group of the blow up and span a negative definite
subspace.)
\end{enumerate}
\end{exercise}

\SolnEater{\vskip\baselineskip 
\noindent{\it Details}: (a) Let $T_k$ be the set of points where exactly $k$ lines meet, so $|T_k|=t_k$.
In order for lines to meet, there must be at least 2, so $k\geq2$.
And clearly $T_k=\varnothing$ for $k>d$. Every crossing point is in some $T_k$,
so the set of crossing points is $\cup_kT_k$, hence there are $|\cup_kT_k|$ crossing points.
Since the sets $T_k$ are disjoint, we have  $|\cup_kT_k|=t_2+\cdots+t_d$.

(b) There are $\binom{d}{2}$ pairs of lines. Every pair of lines meet at exactly one point, so we can count the pairs
by counting how many pairs occur at each crossing point. Thus $\binom{d}{2}=\sum_k t_k\binom{k}{2}$.

(c) This follows from $\binom{d}{2}=\sum_k t_k\binom{k}{2}$.

(d) This formula is equivalent to $\binom{d}{2}=\sum_k t_k\binom{k}{2}$.

(e) Let $X\to\P^2$ be the surface obtained by blowing up the points.
Let $C_i\subseteq X$ be the proper transform of the line $L_i$.
Since the lines don't all go through the same point, each line has at least
two crossing points, hence $C_i^2\leq -1$. Also, since every crossing point has been blown up,
we have $C_i\cdot C_j=0$ for $i\neq j$. It now follows that the span of the classes
of the $C_i$ in the divisor class group of $X$ (which is free abelian) is negative definite
and thus has rank at most the number of points blown up, namely $s$.
If for some integers $a_i$ we had
$\sum_ia_iC_i\sim0$, then $\sum_ia_i^2C_i^2=(\sum_ia_iC_i)^2=0$, and since $C_i^2<0$ for all $i$,
we see that $a_i=0$ for all $i$. Thus the classes of the $C_i$ are linearly
independent, hence $d\leq s$.
\newline\qedsymbol\vskip\baselineskip}

An interesting property that a line arrangement can have is the $t_2=0$ property; i.e., that whenever two of the lines $L_i$ cross,
there is at least one other line that also goes through that crossing point. An easy such example is the 
case of $d\geq 3$ concurrent lines (i.e., $d\geq3$ lines through a point $p$). 
Over the reals, these are the only  line arrangements with $t_2=0$, due to the following result \cite{1M41}:

\begin{theorem}\label{MelchiorThm}
Given a real line arrangement of $d$ lines with $t_d=0$ (i.e., the lines are not concurrent), we have
$$t_2\geq 3+\sum_{k>2}t_k(k-3).$$
\end{theorem}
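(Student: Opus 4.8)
The plan is to reproduce Melchior's original topological argument: view the $d$ lines inside the real projective plane $\R\P^2$, observe that they cut it into a finite cell complex, and extract the inequality by combining Euler's formula with the fact that every $2$-cell has at least three sides. First I would record two preliminary observations. The hypothesis $t_d=0$ forces $d\ge 3$, since for $d=2$ one has $t_2=1=t_d$. And no line can contain fewer than two crossing points: if $L_i$ contained at most one crossing point, then all of $L_1,\dots,L_d$ would pass through a single common point, contradicting non-concurrency. Hence each line, a topological circle in $\R\P^2$, is genuinely subdivided by the crossing points on it, and the lines together define a regular cell decomposition of $\R\P^2$ in which every edge is an arc with two distinct endpoints.

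Next I would set up the three counts. Let $V$, $E$, $F$ be the numbers of vertices, edges and faces of this decomposition. The vertices are precisely the crossing points, so $V=\sum_k t_k$. A line carrying $m$ crossing points is cut by them into $m$ arcs, so an incidence count line by line gives $E=\sum_k k\,t_k$. The faces are the connected components of the complement of the union of the lines. Since $\R\P^2$ is a closed surface with Euler characteristic $1$, Euler's formula gives $V-E+F=1$, that is, $F=1-V+E$.

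The one genuinely geometric input is the claim that every face has at least three bounding edges (an edge that bounds the same face on both sides being counted twice). This is exactly where being in the \emph{projective} plane with \emph{non-concurrent} lines is used: a face with only one or two sides would be a monogon or a bigon, but a bigon would force two of the lines to meet at two distinct points, impossible in $\P^2$, and a monogon would require a loop edge, which we have excluded. Granting this, and since each edge contributes exactly two sides in total, summing sides over all faces gives $2E\ge 3F$. Substituting $F=1-V+E$ yields $3V-E\ge 3$, and then
$$3V-E=\sum_k(3-k)\,t_k=t_2-\sum_{k>2}(k-3)\,t_k\ \ge\ 3,$$
which rearranges to the asserted inequality $t_2\ge 3+\sum_{k>2}t_k(k-3)$.

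The argument is short, and the main point needing care is not any single hard step but the topological bookkeeping of the first two paragraphs: one must check that the lines genuinely define a (regular) cell complex of $\R\P^2$, that each edge meets the faces along exactly two sides, and --- most easily gotten wrong --- that Euler's formula is applied with the correct constant $\chi(\R\P^2)=1$, rather than $\chi(S^2)=2$ or an affine-plane count. It is worth underlining that the \emph{real} hypothesis enters precisely through this topology: over $\C$ the statement is false, as the dual Hesse arrangement ($d=9$, $t_2=0$, $t_4=12$) already violates it.
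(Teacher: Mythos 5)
Your proof is correct, and it is exactly Melchior's original argument: treat the arrangement as a cell decomposition of $\R\P^2$, apply Euler's formula with $\chi(\R\P^2)=1$, and feed in the observation that non-concurrency rules out monogons and bigons so that $2E\geq 3F$. The paper itself states the theorem with only the citation to Melchior \cite{1M41} and gives no in-text proof, so there is no internal argument to compare against; on its own terms, your bookkeeping is right. The two preliminary points you isolate are the ones that genuinely need to be said: $t_d=0$ forces $d\geq 3$, and every line must carry at least two crossing points (else all lines would be concurrent), which guarantees that each line, a circle in $\R\P^2$, is honestly subdivided into arcs with two distinct endpoints. With $V=\sum_k t_k$, $E=\sum_k kt_k$, and $F=1-V+E$, the inequality $2E\geq 3F$ gives $3V-E\geq 3$, which is $t_2-\sum_{k>2}(k-3)t_k\geq 3$, as claimed. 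You are also right that the whole weight of the ``real'' hypothesis lives in the topology of $\R\P^2$; there is no analogous combinatorial constraint over $\C$.

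One small correction to your closing aside: the dual Hesse arrangement (which is the $n=3$ Fermat arrangement in Remark~\ref{t_2=0Rem}) has $d=9$, $t_2=0$, and $t_3=12$, not $t_4=12$. With $t_3=12$ and all other $t_k=0$, the right-hand side of Melchior's inequality is $3$ while $t_2=0$, so the inequality does fail over $\C$ exactly as you intend; the numbers just needed the index fixed. You might also note explicitly, when asserting that every face has at least three sides, that the degenerate two-sided case where a single edge borders the same face on both sides is also excluded (at each endpoint there are at least four edge-ends, so a face cannot wrap around a vertex to meet both sides of one edge without meeting another edge); this is folded into your ``regular cell complex'' caveat but is the kind of detail a careful reader will want spelled out.
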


If $\chr(\KK)=p>0$, there are many examples of line arrangements with $t_2=0$.

\begin{exercise}\label{finfieldarrs}
Assume $\chr(\KK)=p>0$. Consider the arrangement of all lines defined over the finite field ${\mathbb F}_q\subseteq \KK$ of order $q$.
Then one can see that there are $q^2+q+1$ lines and $q^2+q+1$ crossing points, that $t_k=0$ except for $t_{q+1}=q^2+q+1$, 
that each line contains $q+1$ of the points and each point is on $q+1$ of the lines.
\end{exercise}

\SolnEater{\vskip\baselineskip 
\noindent{\it Details}: There are $|{\mathbb F}_q^2|=q^2$ points in the affine plane.
These are of the form $[a:b:1]$ with $a,b\in {\mathbb F}_q$. The remaining points are of the form
$[a:b:0]$, hence either $[a:1:0]$, of which there are $q$, or $[1:0:0]$. So there are 
$q^2+q+1$ points of $\P^2$ defined over ${\mathbb F}_q$. 
Lines are dual to points, so there are the same number of lines.
Given any point $[a_0:a_1:a_2]$, at least one of the three sets of forms 
$\{a_1x-a_0y, a_2x-a_0z\}$, $\{a_0y-a_1x, a_2y-a_1z\}$, $\{a_0z-a_2x, a_1z-a_2y\}$
defines a pair of lines crossing at the point, so every point is a crossing point.
For every point $P$ there is a coordinate axis $x=0$, $y=0$ or $z=0$ not containing $P$.
Let $L$ be this coordinate axis. Every ${\mathbb F}_q$-line through $P$ meets $L$ at one of the $q+1$
${\mathbb F}_q$-points of $L$ and this point uniquely determines the line, so there are $q+1$ ${\mathbb F}_q$-lines through $P$.  
Thus every point is on $q+1$ lines, so $t_{q+1}=q^2+q+1$ and $t_k=0$ otherwise. 
Dually, for every line $L$ there is a coordinate vertex $P$, either $[0:0:1]$, $[0:1:0]$ or $[1:0:0]$, not on $L$.
This point is on $q+1$ lines, and every ${\mathbb F}_q$-point of $L$ is on exactly one of these lines,
so $L$ has $q+1$ ${\mathbb F}_q$-points.
\newline\qedsymbol\vskip\baselineskip}

\begin{remark}\label{t_2=0Rem}
Over $\KK=\C$ only four kinds of line arrangements seem to be known with $t_2=0$. Here is the list.
(See \cite{3refIMRN} and especially \cite{2refBetal} for more information about the Klein and Wiman configurations below.)
\begin{enumerate}
\item[(1)] Any set of $s\geq3$ concurrent lines.

\item[(2)] The Fermat arrangement of $3n$ lines for $n\geq3$: The lines of this arrangement are defined by the factors of $(x^n-y^n)(x^n-z^n)(y^n-z^n)$, shown for $n=3$ in
Figure \ref{FermatFigure}. Each line contains $n+1$ of the points, and we have $t_k=0$ except for $t_3=n^2$ and $t_n=3$ when $n>3$ or $t_3=12$ when $n=3$.

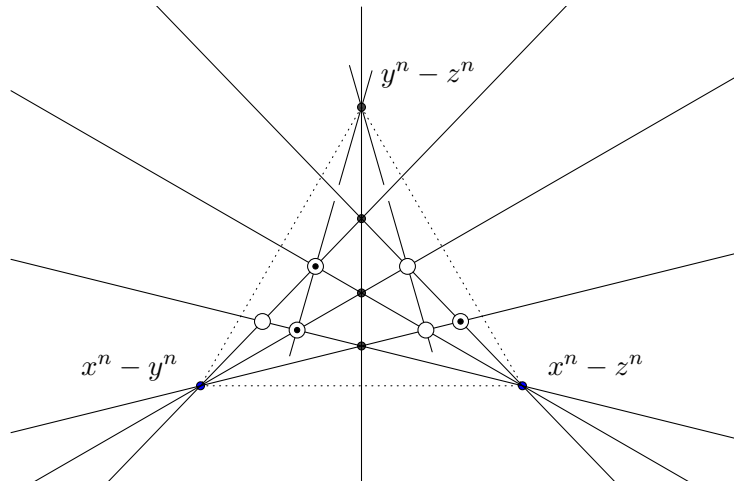
\begin{figure}[htbp]
\begin{center}
\definecolor{uuuuuu}{rgb}{0.26666666666666666,0.26666666666666666,0.26666666666666666}
\definecolor{zzttqq}{rgb}{0.6,0.2,0.0}
\definecolor{qqqqff}{rgb}{0.0,0.0,1.0}
\begin{tikzpicture}[line cap=round,line join=round,>=triangle 45,x=1.0cm,y=1.0cm]
\clip(-4.3,0) rectangle (5.3,6.3);
\draw [color=black,domain=-.6:.5] plot(\x,{(-2.274215998297232-2.118050701827084*\x)/-0.6114285714285721});
\draw [color=black,domain=.2:1.3] plot(\x,{(--3.79921250361273-2.1180507018270847*\x)/0.6114285714285709});


\begin{scriptsize}
\draw [fill=qqqqff] (-1.78,1.26) circle (1.5pt);
\draw [fill=qqqqff] (2.5,1.26) circle (1.5pt);
\draw [fill=uuuuuu] (0.3600000000000009,4.966588728197398) circle (1.5pt);
\draw [fill=uuuuuu] (0.3600000000000006,3.483953236918439) circle (1.5pt);
\draw [fill=uuuuuu] (0.3600000000000003,2.4955295760657994) circle (1.5pt);
\draw [fill=uuuuuu] (0.35999999999999976,1.7895126754567712) circle (1.5pt);
\draw [color=white, fill=white] (0.030769230769231208,3.826099888752045) circle (3pt);
\draw [color=white, fill=white] (0.6892307692307698,3.826099888752045) circle (3pt);
\draw [color=white, fill=white] (-0.6276923076923083,1.545122209861338) circle (3pt);
\draw [color=white, fill=white] (1.3476923076923073,1.5451222098613382) circle (3pt);
\end{scriptsize}
\draw [dotted,domain=-4.3:7.3] (0.3600000000000009,4.966588728197398)-- (-1.78,1.26);
\draw [dotted,domain=-4.3:7.3] (-1.78,1.26)-- (2.5,1.26);
\draw [dotted,domain=-4.3:7.3] (0.3600000000000009,4.966588728197398)-- (2.5,1.26);
\draw [domain=-4.3:7.3] plot(\x,{(-7.035317955123373--0.9266471820493496*\x)/-3.7449999999999997});
\draw [domain=-4.3:7.3] plot(\x,{(--8.677835910246747-1.853294364098699*\x)/3.2099999999999995});
\draw [domain=-4.3:7.3] plot(\x,{(-10.320353865370121--2.7799415461480486*\x)/-2.6749999999999994});
\draw [domain=-4.3:7.3] plot(\x,{(--8.318795952143528--2.7799415461480486*\x)/2.6750000000000007});
\draw [domain=-4.3:7.3] plot(\x,{(--7.343463968095685--1.853294364098699*\x)/3.2100000000000004});
\draw [domain=-4.3:7.3] plot(\x,{(--6.368131984047842--0.9266471820493496*\x)/3.745});
\draw (0.36,-2.58) -- (0.36,6.3);
\draw (-3.5,1.8) node[anchor=north west] {$x^n-y^n$};
\draw (2.7,1.8) node[anchor=north west] {$x^n-z^n$};
\draw (0.48,5.7) node[anchor=north west] {$y^n-z^n$};
\begin{scriptsize}
\draw [fill=white] (-0.956923076923077,2.115366629584015) circle (3pt);
\draw [fill=white] (0.9714285714285718,2.8485380263703135) circle (3pt);
\draw [fill=white] (1.2159999999999997,2.0013177456394797) circle (3pt);
\draw [fill=white] (1.676923076923077,2.115366629584015) circle (3pt);
\draw [fill=black] (1.676923076923077,2.115366629584015) circle (1pt);
\draw [fill=white] (-0.2514285714285713,2.848538026370314) circle (3pt);
\draw [fill=black] (-0.2514285714285713,2.848538026370314) circle (1pt);
\draw [fill=white] (-0.4959999999999998,2.0013177456394797) circle (3pt);
\draw [fill=black] (-0.4959999999999998,2.0013177456394797) circle (1pt);
\end{scriptsize}
\end{tikzpicture}
\caption{The Fermat arrangement of $3n$ complex lines and their $n^2+3$ points of intersection for $n=3$.
(The 12 points for $n=3$ are indicated by the three open circles, the three dotted circles and the six black circular dots. 
The coordinate axes are represented by dotted lines.
At each coordinate vertex there occur $n$ of the $3n$ lines, defined by the forms shown; the $n^2+3$ points consist of
a complete intersection of $n^2$ points plus the 3 coordinate vertices. This arrangement
does not exist over the reals: one must regard the open circles as representing collinear points, and
likewise the dotted circles as representing collinear points.)}
\label{FermatFigure}
\end{center}
\end{figure}

\item[(3)] The Klein arrangement of 21 lines \cite{1Kle79}: here $t_k=0$ except for $t_4=21$ and $t_3=28$. For this arrangement,
each line contains 4 points where 3 lines cross and 4 points where 3 lines cross.

\item[(4)] The Wiman arrangement of 45 lines \cite{1Wim96}: here $t_k=0$ except for $t_5=36$, $t_4=45$ and $t_3=120$. For this arrangement,
each line contains 4 points where 5 lines cross, 4 points where 4 lines cross and 8 points where 3 lines cross.
\end{enumerate}
\end{remark}

\begin{exercise}\label{FC}
We have $t_2\neq0$ for the Fermat arrangement if and only if $n=2$. Note that 
the Fermat arrangement is defined over the reals for $n=1, 2$, so we can
draw it in those cases (see Figure \ref{Fermat1and2}).
\end{exercise}

\SolnEater{\vskip\baselineskip 
\noindent{\it Details}: There are exactly $n$ lines through each coordinate vertex, defined by the 
linear factors of either $x^n-y^n$, $x^n-z^n$ or $y^n-z^n$. Other than the coordinate vertices,
the crossing points are contained in the zero locus of the ideal $(x^n-y^n, x^n-z^n)$. No coordinate
vertex is in this zero locus, and since each curve, $x^n-y^n$ and $x^n-z^n$, is a union of a different set of lines, 
these curves have degree $n$ and meet transversely, hence the zero locus consists of $n^2$ points,
and at each point there is exactly one line from $x^n-y^n$ and one line from $x^n-z^n$.
So there are exactly $n^2+2$ crossing points for these $2n$ lines.
But $y^n-z^n$ is in the ideal $(x^n-y^n, x^n-z^n)$, so the only additional crossing point
coming from the $n$ lines of $y^n-z^n$ is the third coordinate vertex. This gives $n^2+3$ crossing points.
At each coordinate vertex there are $n$ lines, and at each of the $n^2$ other points, there is one line
each, defined by a factor of  $x^n-y^n$, $x^n-z^n$ and $y^n-z^n$. Thus for $n\geq3$, each of the $n^2+3$ 
points is on at least 3 of the lines, so $t_2=0$. If $n=2$, then $t_3=2^2$ but the 3 coordinate vertices give $t_2=3$.
For $n=1$, the coordinate vertices are not crossing points, and we have $t_2=0$ and $t_3=1$.
\newline\qedsymbol\vskip\baselineskip}

\begin{figure}[htbp]
\begin{center}
\definecolor{uuuuuu}{rgb}{0.26666666666666666,0.26666666666666666,0.26666666666666666}
\definecolor{ffffff}{rgb}{1.0,1.0,1.0}
\begin{tikzpicture}[line cap=round,line join=round,>=triangle 45,x=1.0cm,y=1.0cm]
\clip(-4.788647561425146,0.5) rectangle (6.496656545575581,6.52991951524885);
\fill[color=ffffff,fill=ffffff,fill opacity=0.1] (-1.96,3.82) -- (-3.0,2.04) -- (-0.9384747812636993,2.0293335800641827) -- cycle;


\draw [dotted] (-1.77,4)-- (-3.0,2.04);
\draw [dotted] (-3.0,2.04)-- (-0.455,2.02);
\draw [dotted] (-0.45,2.02)-- (-1.77,4);


\draw (-3.4266541975250924,1.7966056873578928)-- (-0.8754380922432023,3.27);
\draw (-1.7524179059551636,1.419960769920086)-- (-1.756999135449185,4.399984473398056);
\draw (-2.571826823529412,3.263988705882353)-- (-0.12,1.8110889411764694);
\draw (-2.278640268661191,1.4126178253502502) node[anchor=north west] {$n=1$};
\draw (1.324845578235138,3.8408434111599417)-- (3.7105287404469367,3.8687577064140037);
\draw (2.3986766603997998,5.913369322346303)-- (2.2997803777715737,2.404829008031016);
\draw [dotted] (0.98,1.88)-- (2.3405927546239114,3.8527284204664203);
\draw (2.329918464729798,3.4740369729731717)-- (3.6325467952353945,1.8457515598411773);
\draw [dotted] (3.6325467952353945,1.8457515598411773)-- (0.98,1.88);
\draw [dotted] (2.3405927546239114,3.8527284204664203)-- (3.6325467952353945,1.8457515598411773);
\draw (2.2743962158873785,1.393160404476111) node[anchor=north west] {$n=2$};
\draw (0.8373114010209874,5.339795802609183)-- (4.206026879797202,1.1289014541389188);
\draw (4.053905097583056,0.9802272419885505)-- (1.6856748507684447,5.844877724657637);
\draw (0.5029349110698615,0.9852396981066656)-- (3.1333976176971077,5.918808848553596);
\draw (0.3415011462185662,1.126035366786157)-- (3.887578748345626,5.31338368041487);
\begin{scriptsize}
\draw [fill=uuuuuu] (-1.756158260421233,2.7697778600213944) circle (1.5pt);
\draw [fill=black] (0.98,1.88) circle (1.5pt);
\draw [fill=black] (3.6325467952353945,1.8457515598411773) circle (1.5pt);
\draw [fill=uuuuuu] (2.357828153297249,4.46418811815827) circle (1.5pt);
\draw [fill=uuuuuu] (2.329918464729798,3.4740369729731717) circle (1.5pt);
\draw [fill=uuuuuu] (2.0298738237137606,3.8490927742432177) circle (1.5pt);
\draw [fill=uuuuuu] (2.653717987465469,3.856392222240052) circle (1.5pt);
\draw [fill=uuuuuu] (2.3405927546239114,3.8527284204664203) circle (1.5pt);
\end{scriptsize}
\end{tikzpicture}
\caption{The Fermat arrangement of $3n$ complex lines and their $n^2+3$ points of intersection for $n=31$ and $n=2$.}
\label{Fermat1and2}
\end{center}
\end{figure}
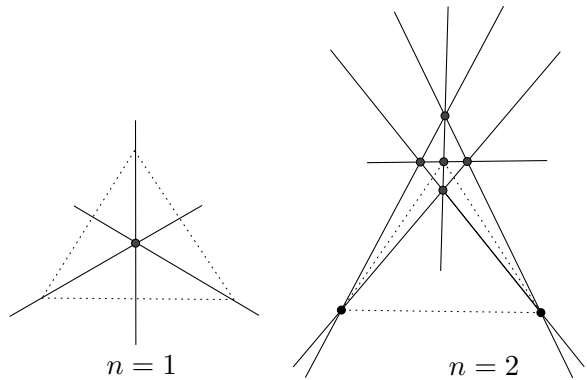

\begin{openproblem}\label{t_2=0OP}
Show either that there are other complex line arrangements with $t_2=0$, or that the four types listed above are the only ones.
\end{openproblem}

If one allows curves of higher degree, there are additional examples of finite sets of curves where more than two curves pass through each point of intersection
of any two of the curves; see Figure \ref{ChudConics} for an example taken from
\cite{refCh} using conics, and see \cite{3refBHRT,3Roulleau14} for examples of cubics.

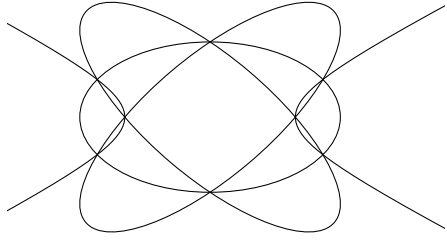
\begin{figure}[htbp]
\begin{center}
\begin{tikzpicture}[line cap=round,line join=round,>=triangle 45,x=0.5cm,y=0.5cm]
\clip(-5.38,-2.66) rectangle (6.22,6.22);
\draw [rotate around={0.:(0.,2.)}] (0.,2.) ellipse (1.7320508075688772cm and 1.cm);
\draw [rotate around={139.7311611040128:(0.,2.)}] (0.,2.) ellipse (2.1666227582959783cm and 0.7994242656857872cm);
\draw [rotate around={-139.7311611040128:(0.,2.)}] (0.,2.) ellipse (2.1666227582959783cm and 0.7994242656857872cm);
\draw [samples=50,domain=-0.99:0.99,rotate around={0.:(0.,2.)},xshift=0.cm,yshift=1.cm] plot ({2.267786838055363*(1+(\x)^2)/(1-(\x)^2)},{1.1547005383792508*2*(\x)/(1-(\x)^2)});
\draw [samples=50,domain=-0.99:0.99,rotate around={0.:(0.,2.)},xshift=0.cm,yshift=1.cm] plot ({2.267786838055363*(-1-(\x)^2)/(1-(\x)^2)},{1.1547005383792508*(-2)*(\x)/(1-(\x)^2)});
\end{tikzpicture}
\caption{Four conics defined over the reals where three conics pass
through each point of intersection.}
\label{ChudConics}
\end{center}
\end{figure}

\subsection{Semi-effectivity}
\begin{definition}
Let $C$ be a plane curve defined as a scheme by a nonzero homogeneous polynomial 
$F\in R=\KK[x,y,z]=\KK[\P^2]$. Then the multiplicity of $C$ or $F$ at $p\in\P^2$,
denoted $\mult_p(C)$ or $\mult_p(F)$, is the largest $m$ such that
$F\in I(p)^m$.
\end{definition}

One way to determine $\mult_p(C)$ is by making a linear change of coordinates so that $p=[0:0:1]$.
If $F=F(x,y,z)$ is the homogeneous form defining $C$ after the change of variables, 
then $\mult_p(C)$ is the least degree among the terms of $F(x,y,1)$.

\begin{example}
The multiplicity of $F=x^3y^4+x^5z^2$ at $p=[0:0:1]$ is 5.
\end{example}

\begin{definition}
We will denote the $\KK$-vector space spanned by the homogeneous forms of degree $t$ by $[R]_t$.
Given a homogeneous ideal $I\subseteq R$, $[I]_t$ denotes $[R]_t\cap I$, and
$[R/I]_t$ denotes $[R]_t/[I]_t$.
Then, given two curves $C$ and $D$ defined by nonconstant forms $F$ and $G$ 
with no common factors, we define
the intersection multiplicity of $C$ and $D$ at $p$ by
$I_p(C,D)=\dim_\KK [R/J]_t$ for $t\gg0$, where $J=I(p)^m+(F,G)$ for any 
$m\geq \deg(F)\deg(G)$. 
\end{definition}

\begin{theorem}[Bezout's Theorem]
Let $C$ and $D$ be curves defined by nonconstant forms $F$ and $G$ 
with no common factors. Then
$\sum_pI_p(C,D)=\deg(C)\deg(D)$. Moreover,
$I_p(C,D)\geq \mult_p(C)\mult_p(D)$
for each point $p\in\P^2$.
\end{theorem}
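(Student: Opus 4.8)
The plan is to reinterpret $I_p(C,D)$ as the length of a local ring and then extract both assertions from a single Hilbert-function computation. Write $a=\deg F$, $b=\deg G$; let $Z=\mathrm{Proj}\bigl(R/(F,G)\bigr)\subseteq\P^2$ be the scheme-theoretic intersection $C\cap D$; and for $p\in\P^2$ let $A=\O_{\P^2,p}$ (a regular local ring of dimension $2$ with residue field $\KK$), with maximal ideal $\mathfrak m\subseteq A$. The key reduction is the identity
$$ I_p(C,D)=\dim_\KK A/(F,G)A=\mathrm{length}\,\O_{Z,p}, $$
valid for every $m\ge ab$ in the definition. To prove it I would first use the Koszul resolution $0\to R(-a-b)\to R(-a)\oplus R(-b)\to R\to R/(F,G)\to0$ (exact because $F,G$ is a regular sequence in the UFD $R$, $F$ and $G$ having no common factor) to get $\dim_\KK[R/(F,G)]_t=\binom{t+2}{2}-\binom{t-a+2}{2}-\binom{t-b+2}{2}+\binom{t-a-b+2}{2}$ for $t\ge a+b$, and observe that the right-hand side equals $ab$ (equivalently, the Hilbert series is $(1-s^a)(1-s^b)/(1-s)^3$, whose coefficients stabilize to the value $ab$ of $(1-s^a)(1-s^b)/(1-s)^2$ at $s=1$). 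Since $F,G$ a regular sequence makes $R/(F,G)$ Cohen--Macaulay of Krull dimension $1$, the ideal $(F,G)$ is saturated and $Z$ is an honestly zero-dimensional subscheme, so its degree is the constant Hilbert polynomial value, giving $\mathrm{length}\,\O_Z=\sum_q\mathrm{length}\,\O_{Z,q}=ab$.

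Now for the localization: since $\mathrm{length}\,\O_{Z,p}\le\mathrm{length}\,\O_Z=ab\le m$ and $\O_{Z,p}$ is Artinian local, its maximal ideal is nilpotent of exponent at most $ab$, so $I(p)^m A=\mathfrak m^m\subseteq(F,G)A$. Hence $J=I(p)^m+(F,G)$ localizes to $(F,G)A$ at $p$ and to the unit ideal at every other point of $\P^2$, so $\mathrm{Proj}(R/J)$ is the closed subscheme of $\P^2$ supported at $p$ with local ring $\O_{Z,p}$; its Hilbert polynomial is the constant $\mathrm{length}\,\O_{Z,p}$, which is thus the stable value $\dim_\KK[R/J]_t$ for $t\gg0$. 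This proves the displayed identity (and its independence of $m\ge ab$). The first assertion of the theorem then follows by summing: $\sum_p I_p(C,D)=\sum_p\mathrm{length}\,\O_{Z,p}=ab=\deg C\,\deg D$.

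For the inequality, set $m_1=\mult_p(C)$ and $m_2=\mult_p(D)$; after a linear change of coordinates placing $p$ at $[0:0:1]$ these are exactly the $\mathfrak m$-adic orders of $F$ and $G$ in $A$, so $F\in\mathfrak m^{m_1}$ and $G\in\mathfrak m^{m_2}$. Consider the $\KK$-linear map
$$ \phi\colon A/\mathfrak m^{m_2}\oplus A/\mathfrak m^{m_1}\longrightarrow A/\mathfrak m^{m_1+m_2},\qquad (\overline a,\overline b)\longmapsto \overline{aF+bG}, $$
which is well defined because $\mathfrak m^{m_2}F\subseteq\mathfrak m^{m_1+m_2}$ and $\mathfrak m^{m_1}G\subseteq\mathfrak m^{m_1+m_2}$. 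Its image is $\bigl((F,G)+\mathfrak m^{m_1+m_2}\bigr)/\mathfrak m^{m_1+m_2}$, so $\mathrm{coker}\,\phi\cong A/\bigl((F,G)+\mathfrak m^{m_1+m_2}\bigr)$. Because $A$ is regular local of dimension $2$ with residue field $\KK$, its associated graded ring is a polynomial ring in two variables, so $\dim_\KK A/\mathfrak m^j=\binom{j+1}{2}$, whence
$$ \dim_\KK A/\bigl((F,G)+\mathfrak m^{m_1+m_2}\bigr)=\dim_\KK\mathrm{coker}\,\phi\ge \binom{m_1+m_2+1}{2}-\binom{m_1+1}{2}-\binom{m_2+1}{2}=m_1m_2. $$
Since $A/(F,G)A$ surjects onto $A/\bigl((F,G)+\mathfrak m^{m_1+m_2}\bigr)$, the first displayed identity yields $I_p(C,D)=\dim_\KK A/(F,G)A\ge m_1m_2=\mult_p(C)\,\mult_p(D)$.

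I expect the main obstacle to be the first step — reconciling the deliberately graded/asymptotic definition of $I_p(C,D)$ with the naive local length — since it rests on two inputs that must be invoked with care: that a complete intersection $R/(F,G)$ is Cohen--Macaulay (hence $(F,G)$ is saturated, $Z$ is genuinely zero-dimensional, and the degree of $Z$ is the sum of local lengths and equals $ab$), and that adjoining $I(p)^m$ with $m\ge ab$ truly isolates the point $p$, via the nilpotence bound on the Artinian local ring $\O_{Z,p}$. Once the identity $I_p(C,D)=\dim_\KK A/(F,G)A$ is established, the Koszul/Hilbert-series computation and the cokernel estimate above are routine.
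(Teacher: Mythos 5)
The paper states Bezout's Theorem as standard background and gives no proof of it, so there is no in-paper argument to compare against. Your proof is correct and is the standard modern one. The key reduction $I_p(C,D)=\dim_\KK \O_{\P^2,p}/(F,G)$ is handled carefully: you justify that $(F,G)$ is a regular sequence (coprimality in a UFD), deduce via Koszul that $R/(F,G)$ has constant Hilbert polynomial $ab$ and hence a zero-dimensional $\operatorname{Proj}$ of total length $ab$, bound the nilpotency exponent of $\mathfrak m\O_{Z,p}$ by the local length $\le ab\le m$ to see that $J=I(p)^m+(F,G)$ cuts out exactly the connected component of $Z$ at $p$, and read off $\dim_\KK[R/J]_t$ for $t\gg 0$ as the length of that component. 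The summation gives $\sum_p I_p(C,D)=ab$, and the cokernel estimate $\dim_\KK A/((F,G)+\mathfrak m^{m_1+m_2})\ge\binom{m_1+m_2+1}{2}-\binom{m_1+1}{2}-\binom{m_2+1}{2}=m_1m_2$, combined with the surjection from $A/(F,G)A$, gives the local inequality. The one step you pass over quickly — that the paper's homogeneous definition of $\mult_p$ agrees with the $\mathfrak m$-adic order of $F$ in the local ring — is exactly the dehomogenization remark the paper makes immediately before the definition, so this is fine. No gaps.
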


\begin{corollary}
Let $C$ and $D$ be plane curves defined by nonconstant forms $F$ and $G$. 
Let $S\subseteq \P^2$ be a finite set of points.
If $\sum_{p\in S}\mult_p(C)\mult_p(D)>\deg(C)\deg(D)$, then $C$ and $D$ have a common component
(i.e., $F$ and $G$ have a common factor of positive degree).
\end{corollary}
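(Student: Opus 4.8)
The plan is to argue by contraposition directly from Bezout's Theorem. Suppose that $F$ and $G$ have no common factor of positive degree; I will show that this forces $\sum_{p\in S}\mult_p(C)\mult_p(D)\le\deg(C)\deg(D)$, which contradicts the hypothesis and hence proves the corollary.

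First, since $F$ and $G$ are nonconstant forms with no common factor of positive degree, Bezout's Theorem applies and yields $\sum_{p\in\P^2}I_p(C,D)=\deg(C)\deg(D)$. This sum is genuinely finite: $I_p(C,D)=0$ for all but the finitely many points lying on both $C$ and $D$, and that set is finite precisely because $F$ and $G$ share no positive-degree factor. Moreover each term $I_p(C,D)$ is a nonnegative integer.

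Next, the second assertion of Bezout's Theorem gives $I_p(C,D)\ge\mult_p(C)\mult_p(D)$ for every $p\in\P^2$, in particular for every $p\in S$. Summing this inequality over the finite set $S$, and then enlarging the index set to all of $\P^2$ (which only adds nonnegative terms), I obtain
$$\sum_{p\in S}\mult_p(C)\mult_p(D)\ \le\ \sum_{p\in S}I_p(C,D)\ \le\ \sum_{p\in\P^2}I_p(C,D)\ =\ \deg(C)\deg(D).$$
This is exactly the negation of the hypothesis $\sum_{p\in S}\mult_p(C)\mult_p(D)>\deg(C)\deg(D)$, so the supposition that $F$ and $G$ have no common factor of positive degree is untenable; hence $C$ and $D$ have a common component.

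There is essentially no hard step here, since the entire content is already packaged in the two clauses of Bezout's Theorem. The only points deserving a word of care are that the bound $I_p(C,D)\ge\mult_p(C)\mult_p(D)$ is asserted at \emph{every} point (including those $p\in S$ that happen to lie on neither curve, where both sides are simply $0$), and that replacing the sum over $S$ by the sum over $\P^2$ is legitimate because every omitted term is nonnegative; both observations are immediate.
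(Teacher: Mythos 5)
Your proof is correct and is exactly the standard contrapositive reading of Bezout's Theorem that the Corollary is meant to encode; the paper leaves the proof implicit because it is immediate from the two clauses of the preceding theorem, and your writeup fills in precisely those routine steps.
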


Consider distinct points $p_1,\ldots,p_s\in\P^N$. Let $\pi:X\to\P^N$ be the blow up of the points. Let $L$ be the pullback of a general hyperplane
and let $E_i$ be the inverse image of $p_i$. Then the divisor class group $\Cl(X)$ is free abelian with basis given by the divisor classes
$[L], [E_1],\ldots,[E_s]$. When $N=2$, this is an orthogonal basis for the intersection form on $\Cl(X)$, with $-L^2=E_1^2=\cdots=E_s^2=-1$ and we have
$-K_X =3L-E_1-\cdots-E_s$.

Given $m_i\geq 0$, consider the 
homogeneous ideal $I=\cap_i I(p_i)^{m_i}\subseteq \KK[\P^N]=\KK[x_0,\ldots,x_N]$. It defines a 0-dimensional subscheme
$Z=m_1p_1+\cdots +m_sp_s\subseteq \P^N$ called a {\it fat point} subscheme, where by definition we have $I(Z)=I$. 
Let $E_Z=m_1E_1+\cdots +m_sE_s$. We will sometimes refer to the degree $\deg(Z)$ of $Z$. By this we will mean the scheme theoretic
degree, hence not the sum of the coefficients $m_i$, but rather $\sum_i\binom{m_i+N-1}{N}$. This will turn up in a number of contexts;
see for example, Examples \ref{Roe} and \ref{ContFacts}, and Theorem \ref{virtdim}.

We refer to \cite{3refHart} for definitions of sheaf cohomology, line bundles and their associated divisors, and for notation such as 
$|D|$ when $D$ is a divisor on a variety. However, reliance on the next example will to some extent make it possible to avoid
dealing with some of this background.

\begin{exercise}\label{iso}
(See \cite[Proposition IV.1.1]{2refCracow}.) There is a canonical $\KK$-vector space isomorphism
$$H^0(X,\O_X(tL-E_Z))\cong [I(Z)]_t.$$
\end{exercise}

\begin{definition}
Given a divisor $D$ on a smooth projective surface $X$, we say $D$ is {\it semi-effective} if
for some $m>0$ we have $h^0(X,\O_X(mD))>0$ (i.e., for some $m>0$, $|mD|\neq\varnothing$, so $mD$ is linearly equivalent to
an effective divisor).
\end{definition}

Here is a question raised by Eisenbud and Velasco (2009) regarding semi-effectivity.

\begin{openproblem}[Eisenbud-Velasco]\label{EVprob}
Given an arbitrary $t\geq0$ and $E_Z=m_1E_1+\cdots +m_sE_s$ with $m_i\geq0$, 
is there an algorithm to determine whether $tL-E_Z$ is semi-effective
(or equivalently $\dim [I(mZ)]_{mt}>0$ for $Z=m_1p_1+\cdots +m_sp_s$)?
\end{openproblem}

\subsection{Waldschmidt constants}
Eisenbud and Velasco's question can be partially addressed by Waldschmidt constants \cite{2refW}.
Let $I\subseteq \KK[\P^N]$ be a nonzero homogeneous ideal. We define 
$\alpha(I)$ to be the least $t$ such that $[I]_t\neq0$. 

\begin{exercise}\label{alphaLikeLog}
If $I, J\subseteq \KK[\P^N]$ are nonzero homogeneous ideals, then
$\alpha(IJ)=\alpha(I)+\alpha(J)$. In particular, we have
$\alpha(I^r)=r\,\alpha(I)$.
\end{exercise}

\SolnEater{\vskip\baselineskip 
\noindent{\it Details}: Let $f\in I$ and $g\in J$ be homogeneous and nonzero
such that $\deg(f)=\alpha(I)$ and $\deg(g)=\alpha(J)$; then 
$fg\in IJ$ so $\alpha(I)+\alpha(J)=\deg(f)+\deg(g)=\deg(fg)\geq\alpha(IJ)$.
But $IJ$ is generated by elements of the form $FG$
where $F\in I$ and $G\in J$ are homogeneous and nonzero, hence
$\deg(FG)\geq \alpha(I)+\alpha(J)$. Thus
$\alpha(IJ)\geq\alpha(I)+\alpha(J)$.
\newline\qedsymbol\vskip\baselineskip}

Note that given $Z=m_1p_1+\cdots+m_sp_s\subseteq\P^N$,
its ideal is $I=I(Z)=I(p_1)^{m_1}\cap\cdots\cap I(p_s)^{m_s}$, and
the $m$th {\it symbolic power} of $I$, denoted $I^{(m)}$, is
$I^{(m)}=I(Z)^{(m)}=I(mZ)=I(p_1)^{mm_1}\cap\cdots\cap I(p_s)^{mm_s}$.
This terminology is often used in the literature. Moreover, one 
can define symbolic powers of any homogeneous ideal,
but doing so involves technicalities, so we will avoid that for now.

\begin{definition}\label{WCdef}
Let $Z=m_1p_1+\cdots+m_sp_s$ be a nonzero fat point subscheme of $\P^N$. 
The {\it Waldschmidt constant} $\widehat{\alpha}(I(Z))$ of $I(Z)$ is
$$\widehat{\alpha}(I(Z)) = \inf\Big\{\frac{\alpha(I(mZ))}{m} : m>0\Big\}.$$
\end{definition}

\begin{exercise}
Let $X$ be the surface obtained by blowing up distinct points $p_1,\ldots,p_r$. Let $I(Z)$ be the ideal of $Z=m_1p_1+\cdots+m_rp_r$
and let $F_{t,m}=tL-mE_Z$. Then 
$$\widehat{\alpha}(I(Z))=\inf\Big\{\frac{t}{m}: h^0(X, \O_X(F_{t,m}))>0\Big\}.$$
\end{exercise}

\SolnEater{\vskip\baselineskip 
\noindent{\it Details}: This is immediate since $h^0(X, \O_X(F_{t,m}))>0$ if and only if
$t\geq \alpha(I(mZ))$, so 
$$\inf\left\{\frac{t}{m}: h^0(X, \O_X(F_{t,m}))>0\right\}=\inf\left\{\frac{\alpha(I(mZ))}{m} : m>0\right\}.$$
\newline\qedsymbol\vskip\baselineskip}

It turns out to be useful to know that $\widehat{\alpha}(I(Z))$ is a limit.
Among other things, the following example shows how to see the infimum is actually a limit.

\begin{exercise}\label{Fekete}
Let $Z$ be a nonzero fat point subscheme of $\P^N$. 
\begin{enumerate}
\item[(a)] Then $1\leq \widehat{\alpha}(I(Z))\leq \sum_im_i$.
\item[(b)] Let $m,n$ be positive integers.
Then $$\alpha(I((m+n)Z))\leq \alpha(I(mZ))+\alpha(I(nZ)).$$
\item[(c)] Let $m,n$ be positive integers.
Then $$\frac{\alpha(I(mnZ))}{mn}\leq \frac{\alpha(I(mZ))}{m}.$$
\item[(d)] Fekete's Subadditivity Lemma \cite{2refFe2} implies for each $n$ that 
$$\widehat{\alpha}(I(Z)) = \lim_{m\to\infty}\frac{\alpha(I(mZ))}{m}\leq \frac{\alpha(I(nZ))}{n}.$$
\item[(e)] We have $\widehat{\alpha}(I(nZ)) =n\,\widehat{\alpha}(I(Z))$.
\item[(f)] 
Over the complexes, Waldschmidt and Skoda \cite{2refW, 2refSk} obtained the bound
$$\frac{\alpha(I(Z))}{N}\leq \widehat{\alpha}(I(Z))$$
using some rather hard analysis. A proof using multiplier ideals is given in
\cite{2refLa}. Here is another approach which comes from \cite[p.\ 2]{refHR} (also see \cite{2refHaHu}).
It is known that 
$$I((N+m-1)rZ)\subseteq I(mZ)^r$$ 
for all $m,r>0$ \cite{2refELS, 2refHH}. 
Assuming this, one can show for each $n>0$ that 
$$\frac{\alpha(I(mZ))}{N+m-1}\leq \widehat{\alpha}(I(Z))$$
and hence that $$\frac{\alpha(I(mZ))}{N+m-1}\leq \widehat{\alpha}(I(Z))\leq \frac{\alpha(I(mZ))}{m}.$$
\end{enumerate}
\end{exercise}

\SolnEater{\vskip\baselineskip 
\noindent{\it Details}: (a) A nonzero form cannot vanish at a point to order more than its degree,
so $\alpha(I(mZ))\geq m$, hence $1\leq \widehat{\alpha}(I(Z))$. 
By taking $mm_i$ hyperplanes through each point $p_i$, we get a form
of degree $m(\sum_im_i)$ in $I(mZ)$, hence $\alpha(ImZ)\leq m(\sum_im_i)$. 
Thus $\widehat{\alpha}(I(Z))\leq \sum_im_i$.

(b) Let $F\in I(mZ)$ of degree $\alpha(I(mZ))$ and $G\in I(nZ)$ of degree $\alpha(I(nZ))$.
Then $FG\in I((m+n)Z)$, so $\alpha(I((m+n)Z))\leq \alpha(I(mZ))+\alpha(I(nZ))$.

(c) This follows from (b).

(d) The equality is immediate from Fekete's Lemma. Then
$\widehat{\alpha}(I(Z))= \lim_{m\to\infty}\frac{\alpha(I(mnZ))}{mn}\leq \frac{\alpha(I(nZ))}{n}$ follows from 
this and from (c).

(e) Note $\widehat{\alpha}(I(nZ))=\lim_{m\to\infty}\frac{\alpha(I(nmZ))}{m}=
n\,\lim_{m\to\infty}\frac{\alpha(I(nmZ))}{nm}$\\
\ \hbox to1.45in{\hfil}$=n\lim_{m\to\infty}\frac{\alpha(I(mZ))}{m}=n\widehat{\alpha}(I(Z))$.

(f) From $I((N+m-1)rZ)\subseteq I(mZ)^r$ we get
$$ \alpha(I((N+m-1)rZ))\geq \alpha(I(mZ)^r)=r\,\alpha(I(mZ)),$$
hence 
$$ \frac{\alpha(I((N+m-1)rZ))}{r(N+m-1)}\geq \frac{r\,\alpha(I(mZ))}{r(N+m-1)}.$$
The result follows by taking limits as $r\to\infty$.
\newline\qedsymbol\vskip\baselineskip}

\begin{exercise}
Let $Z$ be a nonzero fat point subscheme of $\P^N$ and $I=I(Z)$.
If $\frac{\alpha(I^{(m)})}{m}\leq\frac{\alpha(I^{(n)})}{n}$, then 
$$\frac{\alpha(I^{(m)})}{m}\leq \frac{\alpha(I^{(m+n)})}{m+n}\leq \frac{\alpha(I^{(n)})}{n}.$$
\end{exercise}

\SolnEater{\vskip\baselineskip 
\noindent{\it Details}: 
If $a,b,c,d>0$ and $a/b\leq c/d$, then it is easy to see that
$a/b\leq (a+c)/(b+d)\leq c/d$.
Thus 
$$\frac{\alpha(I^{(m)})}{m}\leq \frac{\alpha(I^{(m)})+\alpha(I^{(n)})}{m+n}\leq \frac{\alpha(I^{(n)})}{n},$$
so by \Exercise\ \ref{Fekete}(b) we have
$$\frac{\alpha(I((m+n)Z))}{m+n}\leq \frac{\alpha(I^{(m)})+\alpha(I^{(n)})}{m+n}\leq \frac{\alpha(I^{(n)})}{n}.$$
\newline\qedsymbol\vskip\baselineskip}

\begin{exercise}\label{compare}
Let $Z=m_1p_1+\cdots+m_sp_s$ and
$Z'=m_1'p_1+\cdots+m_s'p_s$ be fat point subschemes of $\P^N$
for distinct points $p_i$ with $0\leq m_i\leq m_i'$ for all $i$.
Then $\widehat{\alpha}(I(Z))\leq \widehat{\alpha}(I(Z'))$.
It is also possible to have $Z\neq Z'$ but $\widehat{\alpha}(I(Z)) = \widehat{\alpha}(I(Z'))$.
\end{exercise}

\SolnEater{\vskip\baselineskip 
\noindent{\it Details}: Since $I(mZ')\subseteq I(mZ)$, we get $\alpha(I(mZ'))\geq \alpha(I(mZ))$ 
and hence $\widehat{\alpha}(I(Z))\leq \widehat{\alpha}(I(Z'))$. For the rest,
let $Z$ be two points on a line and $Z'$ those two points plus a third point on the same line.
Then $\widehat{\alpha}(I(Z)) = \widehat{\alpha}(I(Z'))=1$.
\newline\qedsymbol\vskip\baselineskip}

Given a fat point subscheme $Z=m_1p_1+\cdots+m_sp_s\subset\P^N$,
then $tL-E_z$ is not semi-effective if $t<\widehat{\alpha}(I(Z))$ and it is semi-effective
if $t>\widehat{\alpha}(I(Z))$. (Semi-effectivity is not clear when $t=\widehat{\alpha}(I(Z))$.)
Thus knowing the value of $\widehat{\alpha}(I(Z))$ or at least having bounds on 
$\widehat{\alpha}(I(Z))$ is useful in trying to address Problem \ref{EVprob}.

One can give an upper bound for $\widehat{\alpha}(I(Z))$ that does not depend on the positions of the points.
No examples are known of this bound being attained when it is not rational.

\begin{exercise}\label{Roe}
Let $Z=m_1p_1+\cdots+m_sp_s$ be a nonzero fat point subscheme of $\P^N$. 
Then $\widehat{\alpha}(I(Z))\leq\sqrt[N]{\sum_im_i^N}$.
\end{exercise}

\SolnEater{\vskip\baselineskip 
\noindent{\it Details}: Note that the point $p_i$ of multiplicity $m_i$ imposes $\binom{m_i+N-1}{N}$
conditions for a form to vanish at $p_i$ (i.e., there are that many
homogeneous linear equations
on the coefficients of a form that need to be met in order for
the form to vanish at the point). Pick a positive integer $n$.
Thus there are at most $\sum_i\binom{mnm_i+N-1}{N}$ conditions
for a form to be in $I(mnZ)$. Expanding $\sum_i\binom{mnm_i+N-1}{N}$
as a polynomial in $m$ we get a polynomial whose leading term
is $m^Nn^N\frac{\sum_im_i^N}{N!}$. 
Let $d_m=\raise2pt\hbox{$\Big\lceil$}\frac{m}{n}+m\sqrt[N]{\sum_im_i^N}\raise3pt\hbox{$\Big\rceil$}$. 
Then the number of forms of degree $nd_m$
is $\binom{nd_m+N}{N}$, which is bounded below by a polynomial in $m$ whose
leading term is at least $m^Nn^N(\frac{1}{n^N(N!)}+\frac{\sum_im_i^N}{N!})$. 
Thus for $m\gg0$, there must be a nonzero solution,
hence $\alpha(I(mnZ))\leq nd_m$, hence $\widehat{\alpha}(I(Z))\leq \lim_{m\to\infty}\frac{d_m}{m}=\frac{1}{n}+\sqrt[N]{\sum_im_i^N}$.
This is true for each $n$, hence $\widehat{\alpha}(I(Z))\leq\sqrt[N]{\sum_im_i^N}$.
\newline\qedsymbol\vskip\baselineskip}

By \Exercise\ \ref{Fekete}(f), it is possible to compute $\widehat{\alpha}(I(Z))$
to any desired number of decimal places by just computing $\alpha(I(mZ))$ for large $m$.
Thus for any real number $a\neq\widehat{\alpha}(I(Z))$, it is possible to computationally verify
that $a\neq\widehat{\alpha}(I(Z))$. What is not clear is how to computationally verify
that $a=\widehat{\alpha}(I(Z))$ when $a$ in fact does equal $\widehat{\alpha}(I(Z))$.

\begin{corollary}\label{WCcor}
Let $Z=m_1p_1+\cdots+m_sp_s\subseteq\P^N$ be a nonzero fat point subscheme. Let $t$ be rational.
If $t>\widehat{\alpha}(I(Z))$, then $\dim [I(mZ)]_{mt}>0$ for all $m\gg0$ such that $mt$ is an integer, and
if $t<\widehat{\alpha}(I(Z))$, then $\dim [I(mZ)]_{mt}=0$ for all $m>0$ such that $mt$ is an integer.
\end{corollary}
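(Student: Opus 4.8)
The plan is to deduce both halves of the corollary directly from the fact, established in Exercise~\ref{Fekete}, that $\widehat{\alpha}(I(Z))$ is the infimum of $\alpha(I(mZ))/m$ over $m>0$, together with the monotone convergence $\widehat{\alpha}(I(Z)) = \lim_{m\to\infty}\alpha(I(mZ))/m$. Throughout I will use the canonical isomorphism $H^0(X,\O_X(mtL - E_{mZ}))\cong [I(mZ)]_{mt}$ of Exercise~\ref{iso} tacitly; the corollary is really a statement about the integers $\alpha(I(mZ))$, since $\dim[I(mZ)]_{mt}>0$ is equivalent to $mt \ge \alpha(I(mZ))$, i.e.\ to $\alpha(I(mZ))/m \le t$.

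First the easy direction: suppose $t < \widehat{\alpha}(I(Z))$ and $m>0$ is such that $mt\in\Z$. Since $\widehat{\alpha}(I(Z))$ is the infimum of $\alpha(I(mZ))/m$, we have $\alpha(I(mZ))/m \ge \widehat{\alpha}(I(Z)) > t$, hence $\alpha(I(mZ)) > mt$, and therefore $[I(mZ)]_{mt} = 0$, i.e.\ $\dim[I(mZ)]_{mt}=0$. This needs no largeness hypothesis on $m$ and works for every such $m$.

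For the other direction, suppose $t > \widehat{\alpha}(I(Z))$ and $t$ is rational. Because $\alpha(I(mZ))/m \to \widehat{\alpha}(I(Z))$ as $m\to\infty$, there is an $m_0$ and some fixed index $n$ with $\alpha(I(nZ))/n < t$; more usefully, fix any single $n$ with $\alpha(I(nZ))/n < t$ (such $n$ exists since the limit is $\widehat{\alpha}(I(Z)) < t$, and in fact Exercise~\ref{Fekete}(d) gives $\widehat{\alpha}(I(Z)) \le \alpha(I(nZ))/n$ for every $n$, so I want an $n$ where the ratio has dropped strictly below $t$). Write $t = p/q$ in lowest terms. I claim that for every $m$ which is a multiple of $q$ and large enough, $\alpha(I(mZ)) \le mt$. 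Indeed, by subadditivity (Exercise~\ref{Fekete}(b)), writing $m = kn + r$ with $0 \le r < n$, we get $\alpha(I(mZ)) \le k\,\alpha(I(nZ)) + \alpha(I(rZ))$ (with the convention that the last term is $0$ when $r=0$); dividing by $m$ and letting $m\to\infty$ shows $\limsup_m \alpha(I(mZ))/m \le \alpha(I(nZ))/n < t$. Hence there is an $M$ such that for all $m \ge M$ we have $\alpha(I(mZ))/m < t$, i.e.\ $\alpha(I(mZ)) < mt$; restricting further to those $m \ge M$ with $mt\in\Z$ (e.g.\ $q \mid m$) gives $\alpha(I(mZ)) \le mt$, hence $[I(mZ)]_{mt} \ne 0$, i.e.\ $\dim[I(mZ)]_{mt} > 0$ for all $m \gg 0$ with $mt$ an integer.

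The substantive point — and the only place where any care is needed — is the second direction: one must pass from ``$\widehat{\alpha}(I(Z)) < t$'' to a uniform bound ``$\alpha(I(mZ)) \le mt$ for all large $m$ with $mt\in\Z$,'' and the infimum alone does not give this for all large $m$, only along a subsequence. The remedy is exactly the subadditivity/Fekete argument above, which upgrades the liminf-type information coming from the infimum into a genuine $\limsup$ bound, using that $\alpha(I(rZ))$ for $0 \le r < n$ ranges over a finite set of constants and is therefore $o(m)$. Everything else is elementary manipulation of the inequality $\dim[I(mZ)]_{mt}>0 \iff \alpha(I(mZ)) \le mt$ together with Definition~\ref{WCdef}. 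I expect no obstacle beyond keeping straight the difference between ``for all large $m$'' and ``for all large $m$ with $mt$ an integer,'' which is why the statement restricts to the latter.
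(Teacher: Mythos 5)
Your proof is correct and takes essentially the same route as the paper's. The paper's proof is terse: for $t>\widehat{\alpha}(I(Z))$ it simply asserts that $mt>\alpha(I(mZ))$ for $m\gg0$ with $mt\in\Z$, tacitly invoking the fact (already established via Fekete's lemma in \Exercise~\ref{Fekete}(d)) that $\widehat{\alpha}(I(Z))=\lim_m \alpha(I(mZ))/m$; you re-derive that limit from subadditivity rather than just citing it, which makes the argument more self-contained but does not change its substance.
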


\begin{proof}
Say $t>\widehat{\alpha}(I(Z))$. Then for $m\gg0$ such that $mt$ is an integer, we have $mt > \alpha(I(mZ))$, so $\dim [I(mZ)]_{mt}>0$.
If $t<\widehat{\alpha}(I(Z))$, then $mt<m\widehat{\alpha}(I(Z))\leq \alpha(I(mZ))$ for all $m$ such that $mt$ is an integer, so $\dim [I(mZ)]_{mt}=0$.
\end{proof}

In addition to computing Waldschmidt constants, recent work \cite{2refCKLLMMT} raises the question
of how large the least $m$ can be in Problem \ref{EVprob}, 
given that $h^0(X,\O_X(tmL-mE_Z))>0$ for some $m>0$.

\begin{exercise}\label{starconfigs}
Let $r>1$. Given distinct lines $L_1,\ldots,L_{2r}\subseteq\P^2$ with $t_k=0$ for $k>2$, let $Z=p_1+\cdots+p_s$
be the $t_2$ points of intersections of the lines (so $t_2=\binom{2r}{2}$).
Then $\dim [I(mZ)]_{mr}=0$ for all odd $m>0$ and $\dim [I(mZ)]_{mr}=1$ for all even $m>0$.
We can conclude that $\widehat{\alpha}(I(Z))=r$ and that the least $m$ such that $h^0(X,\O_X(mrL-mE_Z))>0$
is $m=2$. Moreover, $h^0(X,\O_X(2rL-2E_Z))=1$ and the intersection matrix of the components of the
unique divisor in $|2(rL-E_Z)|$ is negative definite.
\end{exercise}

\SolnEater{\vskip\baselineskip 
\noindent{\it Details}: Note that each line contains $2r-1$ of the $t_2$ points.
Let $\ell_i$ be the linear form defining $L_i$ for each $i$.
Suppose there is a form $F$ of degree $mr$ in $I(mZ)$. The form thus vanishes 
to order $(2r-1)m$ on $L_i$. Since $r>1$, we have $(2r-1)m>rm =\deg(F)$,
so $\ell_i$ divides $F$. This is true for all $i$, so 
$F=G\ell_1\cdots\ell_{2r}$ for some $G\in I((m-2)Z)$.
If $m=1$, this means $G=0$. If $m=2$, this means $G$ is
a constant times $\ell_1\cdots\ell_{2r}$.
By induction we get that $G=0$ if $m$ is odd,
and $G$ is a constant times $(\ell_1\cdots\ell_{2r})^{m/2}$ if $m$ is even.
I.e., $\dim [I(mZ)]_{mr}=0$ for all odd $m>0$ and $\dim [I(mZ)]_{mr}=1$ for all even $m>0$.
Taking the limit over odd $m$, we get $\widehat{\alpha}(I(Z))\geq r$ and over even $m$ we get
$\widehat{\alpha}(I(Z))\leq r$, hence $\widehat{\alpha}(I(Z))=r$.
This also shows $\dim [I(Z)]_r=0$ but $\dim [I(2Z)]_{2r}>0$,
the least $m$ such that $h^0(X,\O_X(mrL-mE_Z))>0$ is $m=2$.

We saw that  $[I(2Z)]_{2r}$ is 1-dimensional, spanned by
$\ell_1\cdots\ell_{2r}$. I.e., $|2rL-2E_Z|$ has a unique element $D$,
the sum of the proper transforms $L_i'$ of the lines $L_i$. 
Note that $(L_i')^2=-2(r-1)$ and $L_i'\cdot L_j'=0$ for $i\neq j$, so the
intersection matrix of the components of $D$ is negative definite.
\newline\qedsymbol\vskip\baselineskip}

\begin{exercise}\label{B3ex}
Let $Z$ be the reduced scheme consisting of the 9 crossing points defined in Figure \ref{B3config}.
Then the least $m>0$ such that $\dim [I(m2Z)]_{5m}>0$ is $m=2$.

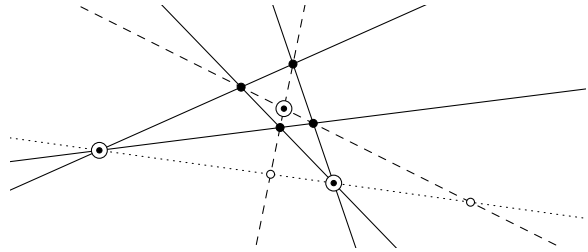
\begin{figure}[htbp]
\begin{center}
\definecolor{ffffff}{rgb}{1.0,1.0,1.0}
\definecolor{uuuuuu}{rgb}{0.26666666666666666,0.26666666666666666,0.26666666666666666}
\definecolor{qqffqq}{rgb}{0.0,1.0,0.0}
\definecolor{qqqqff}{rgb}{0.0,0.0,1.0}
\definecolor{ffqqqq}{rgb}{1.0,0.0,0.0}
\begin{tikzpicture}[line cap=round,line join=round,>=triangle 45,x=.66cm,y=.66cm]
\clip(-4.3,-.58) rectangle (7.3,4.3);
\draw [color=black,domain=-4.3:7.3] plot(\x,{(--9.732000000000001--1.7400000000000002*\x)/3.9});
\draw [color=black,domain=-4.3:7.3] plot(\x,{(--6.1732000000000005--0.4600000000000002*\x)/3.6399999999999997});
\draw [color=black,domain=-4.3:7.3] plot(\x,{(-3.2640000000000007--1.12*\x)/-1.0800000000000003});
\draw [color=black,domain=-4.3:7.3] plot(\x,{(-5.918400000000001--2.4000000000000004*\x)/-0.8200000000000003});
\draw [dashed, color=black,domain=-4.3:7.3] plot(\x,{(--0.9807999999999999-1.28*\x)/-0.26});
\draw [dashed, color=black,domain=-4.3:7.3] plot(\x,{(--4.115843921544174-0.7293839073552844*\x)/1.4532648186217125});
\draw [dotted, domain=-4.3:7.3] plot(\x,{(--4.8636-0.6599999999999999*\x)/4.720000000000001});
\begin{scriptsize}
\draw [fill=white] (-2.5,1.38) circle (3pt);
\draw [fill=black] (-2.5,1.38) circle (1pt);
\draw [fill=black] (1.4,3.12) circle (1.5pt);
\draw [fill=black] (1.14,1.84) circle (1.5pt);
\draw [fill=white] (2.22,0.72) circle (3pt);
\draw [fill=black] (2.22,0.72) circle (1pt);
\draw [fill=black] (0.35520553207837097,2.653860929696504) circle (1.5pt);
\draw [fill=black] (1.8084703507000834,1.9244770223412198) circle (1.5pt);
\draw [fill=white] (0.9486113435910641,0.8977789222944698) circle (1.5pt);
\draw [fill=white] (4.976242847173766,0.3345931612002787) circle (1.5pt);
\draw [fill=white] (1.2174156922996795,2.221123408244577) circle (3pt); 
\draw [fill=black] (1.2174156922996795,2.221123408244577) circle (1pt); 
\end{scriptsize}
\end{tikzpicture}
\caption{Four general points (the small black dots) determine
three reducible conics (given by the three pairs of lines meeting at the large open dotted circles)
which have one singular point each (viz., the three large, dotted circles).
The additional (dotted) line defines two more points (the two small open circles),
thus altogether $4+3+2=9$ points have been specified.}
\label{B3config}
\end{center}
\end{figure}
\end{exercise}

\SolnEater{\vskip\baselineskip 
\noindent{\it Details}: Bezout tells us that any form in $[I(2Z)]_5$ vanishes on all seven lines,
hence must be identically zero. Thus $[I(2Z)]_5=0$.
However, taking the solid lines once each and the dashed and dotted lines 
twice each, we get an element of $[I(m2Z)]_{5m}$ for $m=2$.
\newline\qedsymbol\vskip\baselineskip}

\begin{example}\label{MFOexample}
Consider points $p_1, p_2$ and $p_3$ on an irreducible conic $C'$, and the three points
$p_4, p_5$ and $p_6$ of the conic infinitely near to these first three points, as shown in Figure \ref{MFOexampleFig}
(where the infinitely near points are represented by tangent directions).
Blow up all 6 points to get a surface $X$, let $C$ be the proper transform of $C'$, and let $E_i$ be the blow up of point $p_i$.
Thus $E_i=N_i+E_{i+3}$ for $i=1,2,3$ has two components, as shown. 
Let $L$ be the pullback of a line from $\P^2$ to $X$. Let $F=L-E_4-E_5-E_6$.
Since $F\cdot N_i<0$, if $F$ were linearly equivalent to an effective 
divisor, then $F-N_1-N_2-N_3=L-E_1-E_2-E_3$ would be also, but it is not,
since the points $p_1,p_2,p_3$ are not collinear.

However, $2F\sim D=C+N_1+N_2+N_3$ is linearly equivalent to an effective divisor,
and the intersection matrix of the components of $D$ is clearly negative definite.

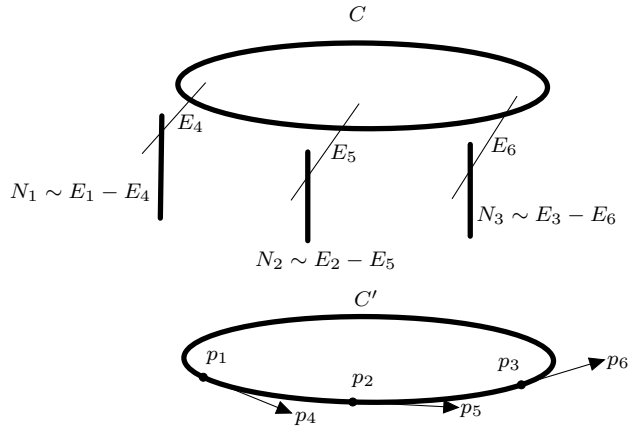
\begin{figure}[htbp]
\begin{center}
\definecolor{xdxdff}{rgb}{0,0,0}
\begin{tikzpicture}[line cap=round,line join=round,>=triangle 45,x=1.0cm,y=1.0cm]
\clip(-4.3,-1.3) rectangle (7.3,4.8);
\draw [rotate around={-0.47945139879656823:(0.7500000000000032,3.4199999999999995)},line width=2.0pt] (0.7500000000000032,3.4199999999999995) ellipse (2.457092923313457cm and 0.5699172166175637cm);
\draw [line width=2.0pt] (-1.94,1.66)-- (-1.92,3.02);
\draw (-2.18,2.52)-- (-1.34,3.46);
\draw [line width=2.0pt] (0.02,1.36)-- (0.02,2.54);
\draw (-0.2,1.9)-- (0.7,3.18);
\draw [line width=2.0pt] (2.18,1.42)-- (2.18,2.64);
\draw (1.94,1.92)-- (2.8,3.28);
\draw [rotate around={-0.47945139879656773:(0.8299999999999997,-0.21999999999999995)},line width=2.0pt] (0.8299999999999997,-0.21999999999999995) ellipse (2.457092923313463cm and 0.5699172166175668cm);
\draw [->] (-1.3690265428333985,-0.45681681022430987) -- (-0.17999999999999994,-0.94);
\draw [->] (0.6181486293852619,-0.7861361362091532) -- (2.04,-0.86);
\draw [->] (2.8592295435913497,-0.5574123445323818) -- (3.98,-0.21999999999999997);
\begin{scriptsize}
\draw[color=black] (0.6799999999999999,4.380000000000001) node {$C$};
\draw[color=black] (-3,2) node {$N_1\sim E_1-E_4$};
\draw[color=black] (-1.54,2.920000000000001) node {$E_4$};
\draw[color=black] (0.26,1.1) node {$N_2 \sim E_2-E_5$};
\draw[color=black] (0.5,2.520000000000001) node {$E_5$};
\draw[color=black] (3.2,1.7) node {$N_3\sim E_3-E_6$};
\draw[color=black] (2.63,2.620000000000001) node {$E_6$};
\draw[color=black] (0.8,.6) node {$C'$};
\draw [fill=xdxdff] (-1.3690265428333985,-0.45681681022430987) circle (1.5pt);
\draw[color=xdxdff] (-1.18,-0.19) node {$p_1$};
\draw[color=xdxdff] (0,-1) node {$p_4$};
\draw [fill=xdxdff] (0.6181486293852619,-0.7861361362091532) circle (1.5pt);
\draw[color=xdxdff] (0.76,-0.49999999999999883) node {$p_2$};
\draw[color=xdxdff] (2.2,-0.9) node {$p_5$};
\draw [fill=xdxdff] (2.8592295435913497,-0.5574123445323818) circle (1.5pt);
\draw[color=xdxdff] (2.7,-0.285) node {$p_3$};
\draw[color=xdxdff] (4.15,-0.26) node {$p_6$};
\end{scriptsize}
\end{tikzpicture}
\caption{A conic with 3 points and 3 infinitely near points blown up. }
\label{MFOexampleFig}
\end{center}
\end{figure}
\end{example}

\begin{exercise}\label{GeneralizeMFOexample}
An example from \cite{InPrep} shows that
Example \ref{MFOexample} generalizes by replacing the conic with a reduced irreducible curve of degree $d>1$ 
to obtain a surface $X$ and a divisor
$F=L-E_{i_1}-\cdots-E_{i_t}$ where $t=\binom{d+1}{2}$ such that the least $m$ with $h^0(X,\O_X(mF))>0$ 
is $d$ and the intersection matrix of the effective divisor 
$D\sim dF$ is negative definite. (This contrasts with other examples in the literature,
where typically either the least $m$ is bounded, or the intersection matrix is not negative definite.)
\end{exercise}

\SolnEater{\vskip\baselineskip 
\noindent{\it Details}: Take $C'$ to be a smooth plane curve of degree $d$.
Let $p_1,\ldots,p_t$, for $t=\binom{d+1}{2}$, be distinct points of $C'$ which do not lie on any
curve of degree $d-1$. This is possible by picking the points one at a time, 
since $C'$ can never be in the base locus of any vector space of forms of degree $d-1$. 
Now successively blow up these points and points on $C'$ infinitely near them for a total of 
$s=d$ 
blow ups at each of the $t$ original points. Let $X$ be the surface obtained after 
doing all of these blow ups.
Index these points so that $p_{i,j}$ is the $j$th point blown up infinitely near to
$p_i$ (so $p_{1,1}=p_1$) and let $E_{i,j}$ be the exceptional curve on $X$
corresponding to $p_{i,j}$ (i.e., the scheme theoretic 
inverse image of $p_{i,j}$ under the blow ups of $p_{i,j}$ and the points blown up subsequent to 
$p_{i,j}$). Denote the prime divisor linearly equivalent to $E_{i,j}-E_{i,j+1}$ by $N_{i,j}$;
thus $E_{i,1}\sim N_{i,1}+N_{i,2}+\cdots+N_{i,s-1}+E_{i,s}$. The proper transform of $C'$
is $C\sim dL-\sum_i\sum_j E_{i,j}$ and, as long as $d\geq2$, we have $C^2=d^2-st=d^2-\binom{d+1}{2}d<0$.

Take $F=L-\sum_iE_{i,s}$. 
If $hF$ is linearly equivalent to an effective divisor for some $h>0$, then so is
$hF+(h-1)\sum_iE_{i,s}\sim hL-\sum_iE_{i,s}$, and, since $F\cdot N_{i,s-1}<0$, 
so is $hF+(h-1)\sum_iE_{i,s}-\sum_iN_{i,s-1}\sim hL-\sum_iE_{i,s}-\sum_iN_{i,s-1}\sim hL-\sum_iE_{i,s-1}$.
But then so is $hL-\sum_iE_{i,s-1}-\sum_iN_{i,s-2}\sim hL-\sum_iE_{i,s-2}$, since
$(hL-\sum_iE_{i,s-1})\cdot N_{i,s-2}<0$. Repeating this, we eventually find that
$hL-\sum_iE_{i,1}$ would be linearly equivalent to an effective divisor,
but by construction this implies $h\geq d$, since there is no curve of degree less than $d$
through the points $p_1,\ldots,p_t$. 

\begin{figure}[htbp]
\begin{center}
\definecolor{qqqqff}{rgb}{0,0,0}
\definecolor{uuuuuu}{rgb}{0,0,0}    
\begin{tikzpicture}[line cap=round,line join=round,>=triangle 45,x=1.0cm,y=1.0cm]
\clip(-4.981707543919536,-5.3) rectangle (7.957347714174262,1.2337441369611737);
\draw[line width=2.0pt](-4.9136623537469735,-0.8872316150000001) -- (-4.833321707519529,-0.8860109118750001) -- (-4.753344989655048,-0.8847902087500001) -- (-4.673731501661603,-0.8835695056250001) -- (-4.59448054504726,-0.8823488025000001) -- (-4.5155914213200905,-0.8811280993750001) -- (-4.4370634319881646,-0.8799073962500001) -- (-4.3588958785595455,-0.8786866931250001) -- (-4.28108806254231,-0.8774659900000001) -- (-4.203639285444522,-0.8762452868750001) -- (-4.1265488487742505,-0.8750245837500001) -- (-4.049816054039569,-0.8738038806250001) -- (-3.9734402027485416,-0.8725831775000001) -- (-3.8974205964092388,-0.8713624743750001) -- (-3.8217565365297297,-0.8701417712500001) -- (-3.7464473246180834,-0.8689210681250001) -- (-3.6714922621823725,-0.8677003650000001) -- (-3.5968906507306606,-0.8664796618750001) -- (-3.522641791771017,-0.8652589587500001) -- (-3.4487449868115156,-0.8640382556250001) -- (-3.3751995373602224,-0.8628175525000001) -- (-3.3020047449252043,-0.8615968493750001) -- (-3.229159911014534,-0.8603761462500001) -- (-3.156664337136281,-0.8591554431250001) -- (-3.084517324798508,-0.8579347400000001) -- (-3.012718175509292,-0.8567140368750001) -- (-2.941266190776698,-0.8554933337500001) -- (-2.8701606721087938,-0.8542726306250001) -- (-2.7994009210136515,-0.8530519275000001) -- (-2.7289862389993385,-0.8518312243750001) -- (-2.658915927573924,-0.8506105212500001) -- (-2.5891892882454766,-0.8493898181250001) -- (-2.5198056225220675,-0.8481691150000001) -- (-2.450764231911762,-0.8469484118750001) -- (-2.382064417922633,-0.8457277087500001) -- (-2.3137054820627494,-0.8445070056250001) -- (-2.2456867258401765,-0.8432863025000001) -- (-2.1106669583392446,-0.8408448962500001) -- (-1.976999527484395,-0.8384034900000001) -- (-1.8446788453401775,-0.8359620837500001) -- (-1.7136993239711433,-0.8335206775000001) -- (-1.5840553754418476,-0.8310792712500001) -- (-1.45574141181684,-0.8286378650000001) -- (-1.3287518451606752,-0.8261964587500001) -- (-1.2030810875379023,-0.8237550525000001) -- (-1.078723551013078,-0.8213136462500001) -- (-0.9556736476507508,-0.8188722400000001) -- (-0.8339257895154741,-0.8164308337500001) -- (-0.7134743886718011,-0.8139894275000001) -- (-0.5943138571842832,-0.8115480212500001) -- (-0.47643860711747354,-0.8091066150000001) -- (-0.3598430505359227,-0.8066652087500001) -- (-0.24452159950418562,-0.8042238025000001) -- (-0.13046866608681107,-0.8017823962500001) -- (-0.0176786623483558,-0.7993409900000001) -- (0.09385399964663055,-0.7968995837500001) -- (0.20413490783359656,-0.7944581775000001) -- (0.31316965014798814,-0.7920167712500001) -- (0.4209638145252539,-0.7895753650000001) -- (0.5275229889008406,-0.7871339587500001) -- (0.6328527612101968,-0.7846925525000001) -- (0.7369587193887703,-0.7822511462500001) -- (0.839846451372007,-0.7798097400000001) -- (0.941521545095358,-0.7773683337500001) -- (1.0419895884942676,-0.7749269275000001) -- (1.141256169504186,-0.7724855212500001) -- (1.2393268760605585,-0.7700441150000001) -- (1.3362072960988352,-0.7676027087500001) -- (1.431903017554462,-0.7651613025000001) -- (1.5264196283628877,-0.7627198962500001) -- (1.619762716459559,-0.7602784900000001) -- (1.7119378697799235,-0.7578370837500001) -- (1.8029506762594307,-0.7553956775000001) -- (1.8928067238335267,-0.7529542712500001) -- (1.9815116004376607,-0.7505128650000001) -- (2.069070894007278,-0.7480714587500001) -- (2.155490192477828,-0.7456300525000001) -- (2.240775083784758,-0.7431886462500001) -- (2.3249311558635157,-0.7407472400000001) -- (2.407963996649549,-0.7383058337500001) -- (2.489879194078305,-0.7358644275000001) -- (2.5706823360852322,-0.7334230212500001) -- (2.650379010605777,-0.7309816150000001) -- (2.728974805575389,-0.7285402087500001) -- (2.8064753089295147,-0.7260988025000001) -- (2.8828861086036017,-0.7236573962500001) -- (2.958212792533098,-0.7212159900000001) -- (3.0324609486534513,-0.7187745837500001) -- (3.105636164900109,-0.7163331775000001) -- (3.1777440292085193,-0.7138917712500001) -- (3.248790129514129,-0.7114503650000001) -- (3.318780053752387,-0.7090089587500001) -- (3.38771938985874,-0.7065675525000001) -- (3.455613725768637,-0.7041261462500001) -- (3.5882897487408494,-0.6992433337500001) -- (3.716852826152607,-0.6943605212500001) -- (3.8413476614874904,-0.6894777087500001) -- (3.961818958229082,-0.6845948962500001) -- (4.078311419860962,-0.6797120837500001) -- (4.190869749866714,-0.6748292712500001) -- (4.299538651729918,-0.6699464587500001) -- (4.404362828934156,-0.6650636462500001) -- (4.5053869849630095,-0.6601808337500001) -- (4.60265582330006,-0.6552980212500001) -- (4.696214047428889,-0.6504152087500001) -- (4.786106360833077,-0.6455323962500001) -- (4.872377466996208,-0.6406495837500001) -- (4.9550720694018615,-0.6357667712500001) -- (5.034234871533621,-0.6308839587500001) -- (5.109910576875065,-0.6260011462500001) -- (5.182143888909778,-0.6211183337500001) -- (5.3164621469933335,-0.6113527087500001) -- (5.437547273652938,-0.6015870837500001) -- (5.545756896757245,-0.5918214587500001) -- (5.641448644174907,-0.5820558337500001) -- (5.724980143774575,-0.5722902087500001) -- (5.856992910994541,-0.5527589587500001) -- (5.944656221366364,-0.5332277087500001) -- (5.990831097839259,-0.5136964587500001) -- (5.998378563362448,-0.49416520875000014) -- (5.970159640885149,-0.47463395875000014) -- (5.909035353356581,-0.45510270875000014) -- (5.8178667237259605,-0.43557145875000014) -- (5.699514774942509,-0.41604020875000014) -- (5.631039125540102,-0.40627458375000014) -- (5.556840529955444,-0.39650895875000014) -- (5.477276616057187,-0.38674333375000014) -- (5.392705011713985,-0.37697770875000014) -- (5.303483344794488,-0.36721208375000014) -- (5.209969243167349,-0.35744645875000014) -- (5.112520334701221,-0.34768083375000014) -- (5.011494247264757,-0.33791520875000014) -- (4.907248608726608,-0.32814958375000014) -- (4.800141046955426,-0.31838395875000014) -- (4.690529189819865,-0.30861833375000014) -- (4.578770665188577,-0.29885270875000014) -- (4.465223100930213,-0.28908708375000014) -- (4.350244124913425,-0.27932145875000014) -- (4.234191365006868,-0.26955583375000014) -- (4.117422449079192,-0.25979020875000014) -- (4.000295004999051,-0.25002458375000014) -- (3.8831666606350956,-0.24025895875000014) -- (3.76639504385598,-0.23049333375000014) -- (3.650337782530355,-0.22072770875000014) -- (3.5353525045268737,-0.21096208375000014) -- (3.4217968377141883,-0.20119645875000014) -- (3.310028409960951,-0.19143083375000014) -- (3.200404849135815,-0.18166520875000014) -- (3.093283783107431,-0.17189958375000014) -- (2.9890228397444525,-0.16213395875000014) -- (2.887979646915532,-0.15236833375000014) -- (2.7905118324893206,-0.14260270875000014) -- (2.696977024334472,-0.13283708375000014) -- (2.607732850319638,-0.12307145875000014) -- (2.5231369383134714,-0.11330583375000014) -- (2.443546916184624,-0.10354020875000014) -- (2.3693204118017483,-0.09377458375000014) -- (2.3008150530334963,-0.08400895875000014) -- (2.182398283815474,-0.06447770875000014) -- (2.0911576314797755,-0.04494645875000014) -- (2.0299541189756205,-0.02541520875000014) -- (2.001648769252227,-0.00588395875000014) -- (2.009102605258814,0.01364729124999986) -- (2.0551766499446003,0.03317854124999986) -- (2.1427319262588043,0.05270979124999986) -- (2.2746294571506454,0.07224104124999986) -- (2.35810063773481,0.08200666624999986) -- (2.453730265569342,0.09177229124999986) -- (2.5618759685228913,0.10153791624999986) -- (2.682895374464112,0.11130354124999986) -- (2.817146111261656,0.12106916624999986) -- (2.889344987440503,0.12595197874999986) -- (2.964985806784175,0.13083479124999986) -- (3.0441132727762548,0.13571760374999986) -- (3.126772088900323,0.14060041624999986) -- (3.2130069586399608,0.14548322874999986) -- (3.3028625854787506,0.15036604124999986) -- (3.3963836729002734,0.15524885374999986) -- (3.493614924388111,0.16013166624999986) -- (3.594601043425844,0.16501447874999986) -- (3.699386733497056,0.16989729124999986) -- (3.8080166980853267,0.17478010374999986) -- (3.9205356406742387,0.17966291624999986) -- (4.0369882647473725,0.18454572874999986) -- (4.15741927378831,0.18942854124999986) -- (4.281873371280634,0.19431135374999986) -- (4.410395260707925,0.19919416624999986) -- (4.543029645553764,0.20407697874999986) -- (4.610902993597259,0.20651838499999986) -- (4.679821229301734,0.20895979124999986) -- (4.749789940602636,0.21140119749999986) -- (4.820814715435414,0.21384260374999986) -- (4.892901141735516,0.21628400999999986) -- (4.966054807438389,0.21872541624999986) -- (5.04028130047948,0.22116682249999986) -- (5.115586208794237,0.22360822874999986) -- (5.1919751203181095,0.22604963499999986) -- (5.269453622986543,0.22849104124999986) -- (5.348027304734986,0.23093244749999986) -- (5.427701753498886,0.23337385374999986) -- (5.508482557213691,0.23581525999999986) -- (5.590375303814849,0.23825666624999986) -- (5.673385581237806,0.24069807249999986) -- (5.7575189774180116,0.24313947874999986) -- (5.842781080290913,0.24558088499999986) -- (5.929177477791958,0.24802229124999986) -- (6.016713757856594,0.25046369749999986) -- (6.105395508420267,0.25290510374999986) -- (6.195228317418429,0.25534650999999986) -- (6.286217772786523,0.25778791624999986) -- (6.378369462459999,0.26022932249999986) -- (6.471688974374306,0.26267072874999986) -- (6.566181896464888,0.26511213499999986) -- (6.661853816667197,0.26755354124999986) -- (6.758710322916677,0.26999494749999986) -- (6.856757003148778,0.27243635374999986) -- (6.955999445298946,0.27487775999999986) -- (7.056443237302632,0.27731916624999986) -- (7.1580939670952795,0.27976057249999986) -- (7.260957222612338,0.28220197874999986) -- (7.365038591789255,0.28464338499999986) -- (7.470343662561479,0.28708479124999986) -- (7.576878022864457,0.28952619749999986) -- (7.6846472606336365,0.29196760374999986) -- (7.793656963804466,0.29440900999999986) -- (7.903912720312392,0.29685041624999986) -- (8.015420118092862,0.29929182249999986);
\draw [line width=2.0pt](-3.888580461770232,-5.254333064837555)-- (-3.8662717458080014,-3.4473270718968716);
\draw [line width=2.0pt](-4.178593769279231,-4.138897266726022)-- (-2.86237952750762,-2.610361001483184);
\draw [line width=2.0pt](-3.1300841190543887,-3.358092208047949)-- (-3.1300841190543887,-1.6180123629939573);
\draw (-3.3531712786766956,-2.121904581294337)-- (-2.036957036905085,-0.5933683160514991);
\draw [line width=2.0pt](0.7815925358467538,-5.1325366423554275)-- (0.8039012518089849,-3.3255306494147483);
\draw [line width=2.0pt](0.491579228337755,-4.017100844243898)-- (1.807793470109367,-2.4885645790010607);
\draw [line width=2.0pt](1.5400888785625988,-3.236295785565826)-- (1.5400888785625988,-1.4962159405118345);
\draw (1.3170017189402914,-2.0001081588122136)-- (2.6332159607119032,-0.47157189356937146);
\draw (3.2948260780680494,0.8690202449647235) node[anchor=north west] {$C$};
\begin{scriptsize}
\draw [fill=uuuuuu] (9.267,0.325) circle (2.0pt);
\draw[color=qqqqff] (-2.5,-4.931245905215249) node {$N_{1,1}\sim E_{1,1}-E_{1,2}$};
\draw[color=qqqqff] (-1.8,-2.3) node {$N_{1,3}\sim E_{1,3}-E_{1,4}$};
\draw[color=qqqqff] (-2.4,-3.7) node {$N_{1,2}\sim E_{1,2}-E_{1,3}$};
\draw[color=black] (-2.25,-1.339542635296112) node {$E_{1,4}$};
\draw [fill=qqqqff] (-.2,-1.3)  circle (1.5pt);
\draw [fill=qqqqff] (-.5,-1.3)  circle (1.5pt);
\draw [fill=qqqqff] (-.8,-1.3)  circle (1.5pt);
\draw[color=qqqqff] (2.1,-4.752776177517403) node {$N_{t,1}\sim E_{t,1}-E_{t,2}$};
\draw[color=black] (2.25,-3.6) node {$N_{t,2}\sim E_{t,2}-E_{t,3}$};
\draw[color=black] (2.78,-2.15) node {$N_{t,3}\sim E_{t,3}-E_{t,4}$};
\draw[color=black] (2.4,-1.205690339522728) node {$E_{t,4}$};
\end{scriptsize}
\end{tikzpicture}
\caption{A curve $C$ obtained by blowing up $s=4$ times at each of $t$ points on a smooth plane curve $C'$ of degree $d$. }
\label{MFOexampleFig2}
\end{center}
\end{figure}
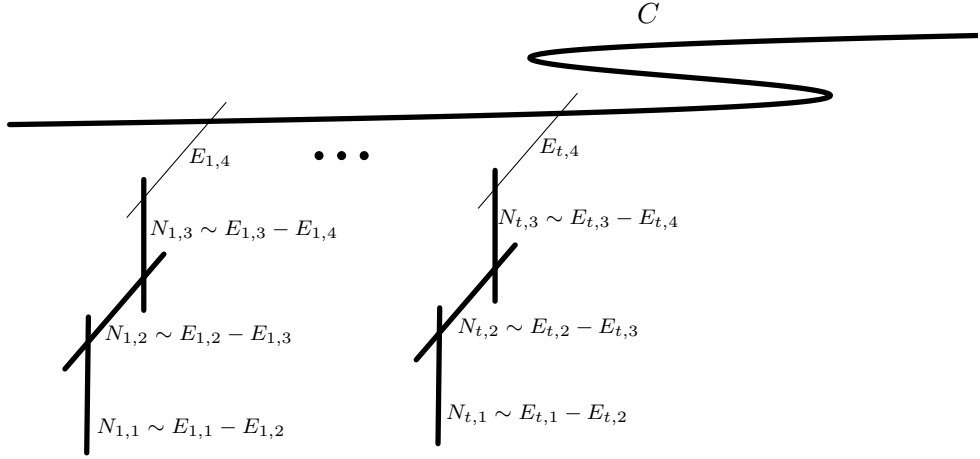

To see in fact that $dF$ is linearly equivalent to an effective divisor, 
note that the total transform of $C'\sim dL$ is 
$C+\sum_{i=1}^t\sum_{j=1}^{s-1}jN_{i,j}+\sum_{i=1}^tsE_{i,s}$.
Since $s=d$, we have $dF=dL-d\sum_iE_{i,s}\sim C+\sum_{i=1}^t\sum_{j=1}^{s-1}jN_{i,j}$. 
The intersection matrix of the components
is a block diagonal matrix with one $1\times1$ block of $C^2$,
and $t$ blocks of size $(s-1)\times (s-1)$ with each diagonal entry being $-2$
and each entry just above and just below the diagonal being 1.
It is not hard to show that each block is negative definite, and thus
the whole matrix is negative definite.
(The $t$ blocks of size $(s-1)\times (s-1)$ correspond to 
the divisors $N_{i,1},\ldots,N_{i,s-1}$, for $1\leq i\leq t$. 
The span of their divisor classes in the divisor class group tensored by the rationals
is the same as 
$N_{i,1}=E_{i,1}-E_{i,2}$, $N_{i,1}+2N_{i,2}=E_{i,1}+E_{i,2}-2E_{i,3}$, $\ldots$, $N_{i,1}+2N_{i,2}+\cdots+(s-1)N_{i,s-1}=E_{i,1}+\ldots+E_{i,s-2}-(s-1)E_{i,s-1}$.
It is easy to see that these are orthogonal with each class having negative self-intersection, 
and hence the subspace spanned by $N_{i,1},\ldots,N_{i,s-1}$ is negative definite.)
\newline\qedsymbol\vskip\baselineskip}

We now present some examples bounding or computing Waldschmidt constants, and relating this to
the question of how large the least $m$ can be in Problem \ref{EVprob}.

\begin{exercise}\label{Fermatn=2}
Let $Z=p_1+\cdots+p_7$ for the 7 points $p_i$ of the Fermat arrangement for $n=2$.
Recall that the Fermat arrangement consists of $n^2+3$ points, three of which are the 
coordinate vertices of $\P^2$; assume that these three are $p_5, p_6$ and $p_7$. 
\begin{enumerate}
\item[(a)] Then $h^0(X,\O_X(3F))>0$ for $F=5L-2E_Z$; conclude that $\alpha(I(6mZ))\leq 15m$.
(Note: One can show that $|3F|$ contains a curve which is a sum of proper transforms of lines.)
\item[(b)] The least $m>0$ such that $h^0(X,\O_X(mF))>0$ is $m=2$.
\item[(c)] We have that $H=4L-E_1-\cdots-E_4-2(E_5+E_6+E_7)$ is nef.
(Recall that {\it nef} means $H\cdot C\geq 0$ for every effective divisor $C$.
Note: One can show that $|3H|$ or even $|H|$ contains a curve $B'$ which is a sum of the proper transforms $B_i$ of lines,
and that $H\cdot B_i\geq 0$ for each summand.)
\item[(d)] One can conclude that $\alpha(I(6mZ))\geq 15m$.
(Note that $H\cdot F=0$.)
\item[(e)] It follows that $\widehat{\alpha}(I(Z))=\frac{15}{6}=2.5$.
\end{enumerate}
\end{exercise}

\SolnEater{\vskip\baselineskip 
\noindent{\it Details}: (a) Take the 6 lines of the Fermat twice each, and the three coordinate axes
once each. That gives a form $D$ of degree 15 vanishing to order 6 at each of the Fermat points.
Now $D^m$ has degree $15m$ and is in $I(6mZ)$, so $\alpha(I(6mZ))\leq 15m$ and
$\dim [I(6Z)]_{15}>0$; i.e., $h^0(X,\O_X(3F))>0$.

(b) It is enough to show $\dim [I(2Z)]_5=0$ but $\dim [I(4Z)]_{10}>0$. A form of degree 5
vanishing to order 2 at each of the points of $Z$, has a total of 6 roots on each Fermat line,
so would by Bezout's Theorem have to be divisible by the linear
forms of all 6 Fermat lines. Thus it is 0, so $\dim [I(2Z)]_5=0$.
To see $\dim [I(4Z)]_{10}>0$, take the 6 Fermat lines, add to them the two Fermat lines
through one coordinate vertex, and add to this the line through the other two
coordinate vertices, taken twice. This gives a divisor $A$ defined by 
a form of degree 10 vanishing to order 6 at all 7 points.

(c) Let $B$ consist of the two Fermat lines
through one coordinate vertex, and add to this the line through the other two
coordinate vertices, taken twice. This gives a curve of degree
4 vanishing to order 1 at the points $p_1,\ldots,p_4$ and to order 2
at $p_5,p_6$ and $p_7$. Its proper transform $B'$ meets each of its components
nonnegatively, hence $B'$ is nef, and thus so is $H$ (since $H$ is linearly equivalent
to $B'$).

(d) Let $t=\alpha(I(6mZ))$. Then $|tL-6m(E_1+\cdots+E_7)|\neq\varnothing$,
so $0\leq (tL-6m(E_1+\cdots+E_7))\cdot H = 4t-24m-36m$, hence $t\geq 15m$.

(e) Together, (a) and (d) give $\alpha(I(6mZ))=15m$, hence $\widehat{\alpha}(I(Z))=\frac{15}{6}$.
\newline\qedsymbol\vskip\baselineskip}

\begin{exercise}\label{Fermatn>2}
Let $Z=p_1+\cdots+p_s$ be the $s=n^2+3$ points $p_i$ of the Fermat arrangement for $n>2$,
where $p_{n^2+1}=p_{s-2}, p_{n^2+2}=p_{s-1}, p_{n^2+3}=p_s$ are the coordinate vertices.
Let $Y=p_1+\cdots+p_{n^2}$. 
\begin{enumerate}
\item[(a)] One can show that $n=\widehat{\alpha}(I(Y))\leq\widehat{\alpha}(I(Z))$.
\item[(b)] The least $m>0$ such that $\dim [I(mZ)]_{mn}>0$ is $m=3$,
hence $\alpha(I(3Z))\leq 3n$ and thus that $\widehat{\alpha}(I(Z))=n$.
\end{enumerate}
\end{exercise}

\SolnEater{\vskip\baselineskip 
\noindent{\it Details}: (a) By Bezout, a form $F$ of degree $t<mn$ vanishing to order $m$ at each point of $Y$
is divisible by the linear form defining every line through $n$ of the points. 
Thus $F$ is divisible by $x^n-y^n$. Factoring it out and applying induction, we see
in fact that $(x^n-y^n)^m$ divides $F$, hence $F=0$. Thus $\alpha(I(mY))\geq mn$.
Since $(x^n-y^n)^m\in I(mY)$, we get $\alpha(I(mY)) = mn$ and hence $n=\widehat{\alpha}(I(Y))$.
Since $mY\subseteq mZ$, we have $I(mZ)\subseteq I(mY)$ and hence 
$\alpha(I(mY))\leq\alpha(I(mZ))$ and so $\widehat{\alpha}(I(Y))\leq\widehat{\alpha}(I(Z))$.

(b) By Bezout, for every line through $n+1$ of the points of $Z$, the line's defining form must divide any form in
$[I(mZ)]_{mn}$. In order for $[I(mZ)]_{mn}\neq0$, we thus need the number of such lines, which is 
at least $3n$, to satisfy $3n\leq mn$; i.e., $m\geq3$. Since the $3n$ Fermat lines indeed give a nonzero element
of $[I(3Z)]_{3n}$, we see that the least $m>0$ such that $\dim [I(mZ)]_{mn}>0$ is $m=3$.
This also shows that $\widehat{\alpha}(I(Z))\leq n$, so applying (a) gives $\widehat{\alpha}(I(Z))= n$.
\newline\qedsymbol\vskip\baselineskip}

\begin{exercise}\label{Klein Lst m}
Let $Z=p_1+\cdots+p_{49}$ be the points of the Klein arrangement. 
One can show that the least $m>0$ such that $\dim [I(mZ)]_{m7}>0$ is $m=3$,
hence $\widehat{\alpha}(I(Z))\leq 7$ and $\widehat{\alpha}(I(3Z))\leq 21$. 
\end{exercise}

\SolnEater{\vskip\baselineskip 
\noindent{\it Details}: A nonzero form $F$ in $[I(mZ)]_{7m}$ has $8m$ roots on every Klein line
but degree only $7m$, and so by Bezout all 21 lines must give factors of $F$. 
Thus we must have $7m=\deg(F)\geq 21$, hence $m\geq 3$. Since the 21 lines do
give a nonzero form in $[I(3Z)]_{21}$, we see that the least $m>0$ such that $\dim [I(mZ)]_{m7}>0$ is $m=3$,
and that $\widehat{\alpha}(I(Z))\leq 7$ and $\widehat{\alpha}(I(3Z))\leq 21$.

Alternatively, let $X=p_1+\cdots+p_{21}$ and $Y=p_{22}+\cdots+p_{49}$,
where $X$ consists of the $t_4=21$ points of the Klein arrangement of multiplicity 4
and $Y$ consists of the $t_3=28$ points of the Klein arrangement of multiplicity 3.
Let $V=4X+3Y$. One can show that $\widehat{\alpha}(I(V))=21$, and hence by \Exercise\ \ref{compare}
that $\widehat{\alpha}(I(3Z))\leq 21$, so $\widehat{\alpha}(I(Z))\leq 7$ by \Exercise\ \ref{Fekete}(e).
\newline\qedsymbol\vskip\baselineskip}

\begin{exercise}\label{Wiman Lst m}
Let $Z=p_1+\cdots+p_{201}$ be the points of the Wiman arrangement. 
Then the least $m>0$ such that $\dim [I(mZ)]_{m15}>0$ is $m=3$, and one can
conclude that $\widehat{\alpha}(I(Z))\leq 15$ and $\widehat{\alpha}(I(3Z))\leq 45$. 
\end{exercise}

\SolnEater{\vskip\baselineskip 
\noindent{\it Details}: A nonzero form $F$ in $[I(mZ)]_{15m}$ has $16m$ roots on every Wiman line
but degree only $15m$, and so by Bezout all 45 lines must give factors of $F$. 
Thus we must have $15m=\deg(F)\geq 45$, hence $m\geq 3$. Since the 45 lines do
give a nonzero form in $[I(3Z)]_{45}$, we see that the least $m>0$ such that $\dim [I(mZ)]_{m15}>0$ is $m=3$,
and that $\widehat{\alpha}(I(Z))\leq 15$ and $\widehat{\alpha}(I(3Z))\leq 45$.

Alternatively, let $W=p_1+\cdots+p_{36}$, $X=p_{37}+\cdots+p_{81}$ and $Y=p_{82}+\cdots+p_{201}$
where $W$ consists of the $t_5=36$ points of the Wiman arrangement of multiplicity 5,
$X$ consists of the $t_4=45$ points of the Wiman arrangement of multiplicity 4 and
$Y$ consists of the $t_3=120$ points of the Wiman arrangement of multiplicity 3.
Let $V=5W+4X+3Y$. One can show that $\widehat{\alpha}(I(V))=45$, and hence by \Exercise\ \ref{compare}
that $\widehat{\alpha}(I(3Z))\leq 45$, so $\widehat{\alpha}(I(Z))\leq 15$ by \Exercise\ \ref{Fekete}(e).
\newline\qedsymbol\vskip\baselineskip}

If $Z$ is the reduced scheme of singular points of the Wiman arrangement of 45 lines, then $\widehat{\alpha}(I(Z))=27/2$
\cite{2refBetal}. The Klein is a little harder, but it is looking like $\widehat{\alpha}(I(Z))=13/2$ for the Klein \cite{2refBetal}.

\begin{openproblem}\label{WCKlein}
Compute $\widehat{\alpha}(I(Z))$ if $Z=\sum_ip_i$ is the reduced scheme consisting of the crossing points
of the Klein arrangement of lines.
\end{openproblem}

Given a fat point scheme $Z\subseteq \P^2$ and a positive integer $t$, the least $m$ 
such that $\dim [I(mZ)]_{mt}>0$, when such an $m$ exists, 
can be bigger than just 3, even without using infinitely near points (as was done in Example \ref{GeneralizeMFOexample}).

\begin{exercise}\label{char p Lst m}
Assume $\chr(\KK)>0$. Let ${\mathbb F}_q\subseteq \KK$ be a subfield of order $q$.
Let $Z=p_1+\cdots+p_s$ be all but one of the points of $\P^2$ defined over ${\mathbb F}_q$
(so $s=q^2+q$). Then the least $m>0$ such that $\dim [I(mZ)]_{mq}>0$
is $m=q$. Moreover, $\widehat{\alpha}(I(Z))=q$.
\end{exercise}

\SolnEater{\vskip\baselineskip 
\noindent{\it Details}: There are $q^2$ ${\mathbb F}_q$-lines that do not contain the missing point.
A nonzero form $F$ in $[I(mZ)]_{qm}$ has $(q+1)m$ roots on every such line
but degree only $qm$, and so by Bezout all $q^2$ lines must give factors of $F$. 
Thus we must have $qm=\deg(F)\geq q^2$, hence $m\geq q$. Since the $q^2$ lines do
give a nonzero form in $[I(qZ)]_{q^2}$, we see that the least $m>0$ such that $\dim [I(mZ)]_{mq}>0$ is $m=q$,
and that $\widehat{\alpha}(I(Z))\leq q$. Since a form in $[I(mqZ)]_t$ for $t\leq q^2m$
is divisible by the form $Q$ defined by the $q^2$ lines, an induction argument shows 
$\alpha(I(mqZ))=q^2m$ and hence $\widehat{\alpha}(I(Z)) = q$
(note that $\deg(Q)=q^2$ and $Q$ vanishes to order $q$ at each of the points of $Z$).
\newline\qedsymbol\vskip\baselineskip}

For additional examples, it is helpful to know the dimension of $[I(Z)]_t$ in each $t$.
For general points in $\P^2$, there is a conjecture for this, the SHGH Conjecture.
But first, we put it in context by recalling a general result.

\begin{theorem}\label{virtdim}
Given a fat point subscheme $Z=m_1p_1+\cdots+m_sp_s\subseteq\P^N$, we have
$$\dim [I(Z)]_t\geq\max\Big\{0,\binom{t+N}{N}-\sum_i\binom{m_i+N-1}{N}\Big\},$$
with equality for $t\geq \sum_im_i-1$.
\end{theorem}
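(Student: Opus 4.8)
The plan is to recast the statement in terms of the Hilbert function of $R/I(Z)$ and exploit the primary decomposition $I(Z)=\bigcap_i I(p_i)^{m_i}$. Since $\dim_\KK[R]_t=\binom{t+N}{N}$, the assertion is equivalent to $\dim_\KK[R/I(Z)]_t\le\sum_i\binom{m_i+N-1}{N}$ for all $t$, with equality once $t\ge\sum_i m_i-1$. The natural graded ring homomorphism $R/I(Z)\to\bigoplus_i R/I(p_i)^{m_i}$ has kernel $\bigcap_i I(p_i)^{m_i}=I(Z)$, hence is injective, so $[R/I(Z)]_t\hookrightarrow\bigoplus_i[R/I(p_i)^{m_i}]_t$ in every degree. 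This reduces the problem to (i) computing $\dim_\KK[R/I(p)^m]_t$ for a single fat point, and (ii) showing that in the asserted degree range this injection is also onto.

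For (i) I would move $p$ to $[0:\cdots:0:1]$ by a linear change of coordinates, so $I(p)=(x_0,\dots,x_{N-1})$ and $[I(p)^m]_t$ is spanned by the degree-$t$ monomials of $x_0,\dots,x_{N-1}$-degree at least $m$. Thus $[R/I(p)^m]_t$ has a monomial basis indexed by degree-$t$ monomials whose degree $k$ in $x_0,\dots,x_{N-1}$ satisfies $k\le m-1$, the leftover degree being absorbed by a power of $x_N$ (possible precisely when $k\le t$). A count plus the hockey-stick identity gives $\dim_\KK[R/I(p)^m]_t=\binom{m+N-1}{N}$ when $t\ge m-1$ and $\dim_\KK[R/I(p)^m]_t=\binom{t+N}{N}$ when $t<m-1$; in either case it is at most $\binom{m+N-1}{N}$. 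Feeding this into the injection of the first paragraph, together with $\dim_\KK[I(Z)]_t\ge0$, yields the stated lower bound.

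For (ii) I would show $[R]_t\to\bigoplus_i[R/I(p_i)^{m_i}]_t$ is surjective whenever $t\ge\sum_i m_i-1$. Fix an index $i$ and put $D_i=\sum_{j\ne i}m_j$. For each $j\ne i$ pick a linear form vanishing at $p_j$ but not at $p_i$, and let $F_i$ be the product of these forms with multiplicities $m_j$; then $\deg F_i=D_i$, $F_i\in I(p_j)^{m_j}$ for every $j\ne i$, and $F_i\notin I(p_i)$. Since $I(p_i)^{m_i}$ is $I(p_i)$-primary, the class of $F_i$ is a nonzerodivisor on $R/I(p_i)^{m_i}$ (if $F_iG\in I(p_i)^{m_i}$ with $F_i\notin I(p_i)$, comparing orders of vanishing at $p_i$ forces $G\in I(p_i)^{m_i}$), so multiplication by $F_i$ gives an injection $[R/I(p_i)^{m_i}]_{t-D_i}\hookrightarrow[R/I(p_i)^{m_i}]_t$. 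The hypothesis $t\ge\sum_j m_j-1$ is exactly $t-D_i\ge m_i-1$, so by part (i) both spaces have dimension $\binom{m_i+N-1}{N}$ and this multiplication is an isomorphism. Hence $F_i\cdot[R]_{t-D_i}$ maps onto the $i$-th summand and to zero in every other summand; summing over $i$ proves surjectivity. Therefore $[R/I(Z)]_t\cong\bigoplus_i[R/I(p_i)^{m_i}]_t$ has dimension $\sum_i\binom{m_i+N-1}{N}$, so $\dim_\KK[I(Z)]_t=\binom{t+N}{N}-\sum_i\binom{m_i+N-1}{N}$; since the left-hand side is nonnegative, this equals the claimed maximum.

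The bookkeeping in (i) — the monomial count and the hockey-stick identity — and the primary-ideal remark are routine. The real content is the surjectivity in (ii): the crux is that multiplication by the ``separator'' $F_i$ is a nonzerodivisor on $R/I(p_i)^{m_i}$, which upgrades an obvious injection to a bijection exactly in the range where the Hilbert function of $R/I(p_i)^{m_i}$ has stabilized, and the bound $t\ge\sum_i m_i-1$ is precisely what makes this stabilization happen for every $i$ simultaneously.
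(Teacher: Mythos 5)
Your proof is correct, and it reaches the same conclusion as the paper but with noticeably different bookkeeping. The paper dehomogenizes: it chooses a coordinate $x_0$ nonvanishing at all $p_i$, passes to the affine ring $S=\KK[y_1,\dots,y_N]$, invokes the Chinese Remainder Theorem $S/J\cong\bigoplus_i S/J(q_i)^{m_i}$ for the pairwise-coprime ideals $J(q_i)^{m_i}$, and then traces degrees through the explicit inverse isomorphism $(f_1,\dots,f_s)\mapsto\sum_i f_ig_i$ built from separators $g_i$ with $\deg g_i=\sum_{j\ne i}m_j$. You instead stay entirely in the graded setting: the injection $[R/I(Z)]_t\hookrightarrow\bigoplus_i[R/I(p_i)^{m_i}]_t$ is immediate from $I(Z)=\bigcap_i I(p_i)^{m_i}$ and needs no CRT, and you then prove surjectivity in the stable range by a dimension count, using that multiplication by the separator form $F_i$ (your analogue of $g_i$) is a nonzerodivisor on $R/I(p_i)^{m_i}$ and hence an isomorphism of finite-dimensional spaces of equal dimension once $t-\deg F_i\ge m_i-1$. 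Both proofs hinge on the same separator of degree $\sum_{j\ne i}m_j$ and the same Hilbert-function stabilization of $R/I(p_i)^{m_i}$ at degree $m_i-1$; the trade-off is that the paper's version is shorter because CRT does the surjectivity for free in the Artinian quotient, while yours avoids dehomogenization, makes the degree bound transparent, and isolates the nonzerodivisor lemma as the real working part of the argument. One small item worth making explicit in a final write-up: the monomial count for $[R/I(p)^m]_t$ that you wave at with the hockey-stick identity is the ingredient that makes both the upper bound in (i) and the equal-dimension claim in (ii) go through, so it should be stated as a clean lemma rather than left as ``routine.''
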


\begin{proof}
Let $I=I(Z)$. The forms in $[I]_t$ are the solutions to $\sum_i\binom{m_i+N-1}{N}$ homogeneous
linear equations (possibly not independent) on the $\binom{t+N}{N}$ dimensional vector space of forms
of degree $t$ (i.e., vanishing on $Z$ imposes 
$\sum_i\binom{m_i+N-1}{N}$ conditions on all forms of degree $t$),
so we get the lower bound on the dimension as claimed.

The equality can be thought of as a form of the Chinese Remainder Theorem.
Let $R=\KK[\P^N]=\KK[x_0,\ldots,x_N]$. Let $S=\KK[y_1,\ldots,y_N]$, where we think of $y_i$ as $x_i/x_0$,
assuming that the coordinates $x_i$ have been chosen such that $x_0$ does not 
vanish at any of the points $p_i$. If $p_i=(a_0,\ldots,a_n)$, let $q_i=(a_1/a_0,\ldots,a_N/a_0)$.
Define $J=J(q_1)^{m_1}\cdots J(q_s)^{m_s}$, where $J(q_i)$ is the ideal of all polynomials in $S$
that vanish at $q_i$. We have a vector space isomorphism $(S/J)_t\cong[R/I]_t=[R]_t/[I]_t$
given for any polynomial in $S$ of degree at most $t$ 
by $f(y_1,\ldots,y_N)\mapsto x_0^tf(x_1/x_0,\ldots,x_N/x_0)$, 
where by $(S/J)_t$ we mean the vector space image under $S\to S/J$ of all polynomials of degree
$t$ or less in $S$.

The ideals $J(q_i)^{m_i}$ are pairwise coprime, so $J=\cap_iJ(q_i)^{m_i}$ and
$S\cong \oplus_i S/J(q_i)^{m_i}$. Since up to a linear change of coordinates $S/J(q_i)^{m_i}$ is
$S/(y_1,\ldots,y_N)^{m_i}$, we see that $\dim S/J(q_i)^{m_i}=\binom{m_i+N-1}{N}$, hence
$\dim S/J=\sum_i S/J(q_i)^{m_i}=\sum_i \binom{m_i+N-1}{N}$, so for $t\gg0$ we have 
$S/J=(S/J)_t\cong[R/I]_t=[R]_t/[I]_t$, hence $\dim [I(Z)]_t=\dim [R]_t - \dim[R/I]_t
=\binom{t+N}{N}-\sum_i \binom{m_i+N-1}{N}$ for $t\gg0$. 

The inverse isomorphism $\sum_i S/J(q_i)^{m_i}\to S/J$ is given by
$(f_,\ldots,f_s)\mapsto \sum_if_ig_i$, where we can represent $f_i$
by a polynomial of degree $m_i-1$ and $g_i$ is represented by a polynomial that doesn't vanish
at $q_i$ and is in $\Pi_{j\neq i}J(q_j)^{m_j}$. By picking linear forms $L_i$ that vanish at $q_i$ but not
at any other $q_j$, we can take $g_i$ to be $L_1^{m_1}\cdots L_s^{m_s}/L_i^{m_i}$.
Thus $\deg(f_ig_i) = \sum_i m_i-1$, so for $t=\sum_i m_i-1$ we have isomorphisms
$S/J=(S/J)_t\cong[R/I]_t=[R]_t/[I]_t$, hence 
$\dim [I(Z)]_t=\binom{t+N}{N}-\sum_i \binom{m_i+N-1}{N}$ for $t\geq \sum_i m_i-1$.
\end{proof}

When $N=2$, this also follows from Riemann-Roch for a blow up $X$ of $\P^2$.

\begin{exercise}\label{RR}
Given distinct points $p_1,\ldots,p_s\in \P^2$ and integers $t, m_1,\ldots,m_s\geq0$, let $Z=m_1p_1+\cdots+m_sp_s$.
Using Riemann-Roch and Serre duality with $F=tL-E_Z$, one can show that 
$$h^0(X, \O_X(F))\geq \frac{F^2-K_XF}{2}+1,$$
and conclude that $$\dim [I(Z)]_t=h^0(X, \O_X(F))\geq\max\Big\{0,\binom{t+2}{2}-\sum_i\binom{m_i+1}{2}\Big\}.$$
\end{exercise}

\SolnEater{\vskip\baselineskip 
\noindent{\it Details}: Since $t\geq0$, we see $(K_X-F)\cdot L<0$. Since $L$ is nef, this means
$0=h^0(X, \O_X(K_X-F))=h^2(X, \O_X(F))$. Thus 
$$h^0(X, \O_X(F))\geq h^0(X, \O_X(F))-h^1(X, \O_X(F))+h^2(X, \O_X(F))=\frac{F^2-K_XF}{2}+1.$$
Of course, $h^0(X, \O_X(F))\geq0$, while $\dim [I(Z)]_t=h^0(X, \O_X(F))$ is \Exercise\ \ref{iso}.
Finally, using the intersection form, $\frac{F^2-K_XF}{2}+1$ simplifies to $\binom{t+2}{2}-\sum_i\binom{m_i+1}{2}$.
\newline\qedsymbol\vskip\baselineskip}

We now recall a special case of the SHGH Conjecture
(see \cite{2refSe, 2refVanc, 2refG, 2refHi} for various equivalent versions of the full conjecture).

\begin{conjecture}\label{SHGHconj}
Let $Z=p_1+\cdots+p_s\subseteq \P^2$ for general points $p_i$, where either $s$ is a square or $s>8$.
Then $\dim [I(mZ)]_t=\max\Big\{0,\binom{t+2}{2}-s\binom{m+1}{2}\Big\}$.
\end{conjecture}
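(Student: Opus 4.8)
The statement is (the uniform-multiplicity case of) the SHGH conjecture, which is open in general; accordingly, what follows is the line of attack one would take, together with an honest indication of where it currently stalls.

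One half of the asserted equality is free. By \Exercise\ \ref{RR} (equivalently Theorem \ref{virtdim} with $N=2$), for \emph{any} distinct points $p_1,\ldots,p_s\in\P^2$, blowing up to get $X$ and writing $F=tL-mE_Z$ with $Z=p_1+\cdots+p_s$, one has $\dim[I(mZ)]_t=h^0(X,\O_X(F))\ge\max\{0,\binom{t+2}{2}-s\binom{m+1}{2}\}$. So the entire content is the reverse inequality for \emph{general} points, i.e.\ the upper bound $\dim[I(mZ)]_t\le\max\{0,\binom{t+2}{2}-s\binom{m+1}{2}\}$; via Riemann--Roch this amounts to showing $|F|$ is non-special (it is either empty or has $h^1(X,\O_X(F))=0$). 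Since $h^0$ is upper semicontinuous in the configuration of the $p_i$, it attains its minimum on a dense open subset, so it suffices to exhibit, for each pair $(t,m)$, a \emph{single} configuration of $s$ points realizing $\dim[I(mZ)]_t\le\max\{0,\binom{t+2}{2}-s\binom{m+1}{2}\}$; together with the universal lower bound this forces equality for general points.

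The plan is then two-step. First, reduce the class: using the standard quadratic Cremona transformation of $\P^2$ centered at three of the $p_i$, which acts on $\Cl(X)$ by $tL-\sum_i m_iE_i\mapsto(2t-m_1-m_2-m_3)L-\sum_i m_i'E_i$, repeatedly lower $t$ until either some $m_i'<0$ (so $|F|$ reduces to a class on fewer points, or is visibly empty), or $t\ge 3m$, or one reaches a range where non-speciality is classical: few points, small $m$, or $t$ large relative to $sm$ (Theorem \ref{virtdim}); the base case $s$ a perfect square is moreover linked to Nagata's theorem on Seshadri constants of general points. Second, in the surviving range, run the Ciliberto--Miranda degeneration of the plane: place $X$ in a flat family whose general fibre is $\P^2$ and whose special fibre is a transverse union of two rational surfaces glued along a curve $R$, specialize $a$ of the $s$ points onto one component and $s-a$ onto the other, and split $\O_X(F)$ into a matching pair of line bundles along $R$; a dimension count together with induction on $s$ and on $t$ then gives the bound, \emph{provided} each of the two limit linear systems is itself non-special --- which is the inductive hypothesis, now on surfaces carrying fewer points or a lower degree.

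The hard part, and the reason the conjecture has resisted proof for so long, is exactly that this induction does not close up. The integer $a$ and the multiplicity matched along $R$ must be chosen so that \emph{neither} limit system is special, and the numerology obstructing such a choice fails for infinitely many triples $(t,m,s)$; each failure produces new ``special'' limit systems that would then have to be classified and shown harmless, with no uniform mechanism known for doing so. Geometrically, the missing input is precisely the content of SHGH: ruling out an unexpected $(-1)$-curve that would appear with multiplicity $\ge 2$ in the base locus of $|F|$. Moreover, for $s\ge10$ the statement already implies Nagata's conjecture (let $t/m\to\sqrt s$ and use \Exercise\ \ref{Roe} for the matching upper bound $\widehat\alpha(I(Z))\le\sqrt s$), so no complete proof can be easier than that problem, open since 1959; realistically the deliverable is the free lower bound, the Cremona reduction, and the degeneration scheme, with the combinatorial control of the special limit systems left as the genuinely open step.
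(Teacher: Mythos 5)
This is a stated \emph{conjecture} in the paper, not a theorem; the paper offers no proof, only a note that the case $s$ a square is known (citing Evain, Ciliberto--Miranda, and Ro\'e) while $s>8$ remains open. So there is no proof to compare your attempt against, and you have correctly recognized the situation: you give the unconditional lower bound (via Riemann--Roch, as in \Exercise\ \ref{RR}), reduce the claim by semicontinuity to exhibiting one good configuration, sketch the standard Cremona reduction plus Ciliberto--Miranda degeneration strategy, and --- crucially and honestly --- flag exactly where every known attempt stalls, namely controlling the special limit systems (equivalently, the multiple $(-1)$-curves in the base locus). That is the right assessment.

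Two small refinements to keep you calibrated. First, the square case is not established via ``Nagata's theorem on Seshadri constants'' (Nagata's result for squares gives only the vanishing statement, i.e.\ the lower bound on $\alpha$); the full SHGH statement for $s$ a square is the content of the cited Evain/Ciliberto--Miranda/Ro\'e work, which uses degeneration and limit linear series. Second, your Nagata implication is correct as stated: SHGH for $s>9$ forces $\dim[I(mZ)]_t=0$ whenever $\binom{t+2}{2}\le s\binom{m+1}{2}$, which gives $\widehat\alpha(I(Z))\ge\sqrt{s}$, and combined with the universal upper bound of \Exercise\ \ref{Roe} this pins down $\widehat\alpha(I(Z))=\sqrt{s}$; this is a sound sanity check that no complete proof can be elementary. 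Your write-up should make explicit at the outset that you are describing an open problem and a program, not a proof, which you do.
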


Conjecture \ref{SHGHconj} is known to be true when $s$ is a square \cite{2refEvain, 2refCM, 2Quim1}.

\begin{exercise}\label{s^2 points}
Consider $Z=p_1+\cdots+p_{s^2}$ for $s^2$ general points $p_i$ for $s>6$.
Let $F=(s+1)L-E_Z$. Then the least $m$ such that
$h^0(X, \O_X(mF))>0$ is $m=\lceil \frac{s-3}{2}\rceil$.
\end{exercise}

\SolnEater{\vskip\baselineskip 
\noindent{\it Details}: The least $m$ such that $h^0(X, \O_X(mF))>0$
is, by Conjecture \ref{SHGHconj} and \Exercise\ \ref{RR}, 
the least $m$ such that $\binom{m(s+1)+2}{2}>s^2\binom{m+1}{2}$.
This simplifies to $(2s+1)m^2-(s^2-3s-3)m+2>0$.
This is positive for $m=(s-3)/2$ and negative for $m=(s-4)/2$,
so the least (integral) $m$ is $m=\lceil \frac{s-3}{2}\rceil$.
\newline\qedsymbol\vskip\baselineskip}

Similar examples are expected to arise where the least 
$m$ can be arbitrarily large even when the number of points is fixed,
but these examples are still only conjectural, since they assume the SHGH Conjecture.

\begin{exercise}\label{Padova example}
(See \cite{2refCKLLMMT}.) Let $s>49$ not be a square and consider 
positive integers $t$ and $r$ such that $t^2-sr^2=1$.
Let $Z=rp_1+\cdots+rp_s$ for general points $p_i\in\P^2$.
Let $F=tL-E_Z=tL-r(E_1+\cdots+E_s)$. Since $F^2>0$ and $F\cdot L>0$,
we know $h^0(X, \O_X(mF))>0$ for $m\gg0$. 
Assuming the SHGH Conjecture, it follows that the least such $m$ 
satisfies $m>r(s-3\sqrt{2s})/2$.
(Since there are examples of $t$ and $r$ with $t^2-sr^2=1$ and $r$ arbitrarily large,
there is no bound on the least $m$ such that $h^0(X, \O_X(mF))>0$.)
\end{exercise}

\SolnEater{\vskip\baselineskip 
\noindent{\it Details}: Assuming the SHGH Conjecture, the least $m$ is the one giving
$$\binom{tm+2}{2}-s\binom{mr+1}{2}>0.$$
This simplifies to $m^2-(sr-3t)m+2>0$. 
For $m=(sr-3t)/2$ this is $8>(sr-3t)^2=sr(sr-6t)+9t^2$.
If $s>49$, then $s> 6(\sqrt{s}+1)\geq 6\sqrt{s+\frac{1}{r^2}}=\frac{6t}{r}$,
so $sr-6t>0$, hence $sr(sr-6t)+9t^2\geq 8$. I.e., 
$m^2-(sr-3t)m+2>0$ is still negative for $m=(sr-3t)/2$,
so the least $m$ with $m^2-(sr-3t)m+2>0$ satisfies 
$m>(sr-3t)/2=(sr-3\sqrt{sr^2+1})/2>(sr-3\sqrt{2sr^2})/2=r(s-3\sqrt{2s})/2$.
\newline\qedsymbol\vskip\baselineskip}

\subsection{Zariski Decompositions}

The existence of Zariski decompositions was first proved for effective divisors \cite{2refZ2} on any smooth projective surface $X$. See \cite{2refB}
for a simplified proof. A more general version can
be found in \cite{2refF2}. Here we prove they exist for any effective divisor $D$ on a blow up $X$ of the plane. It is not hard to see that
it actually is enough to assume $D$ is semi-effective (i.e., $tD$ is linearly equivalent to an effective divisor for some $t>0$).

\begin{theorem}\label{ZDthm}
Let $X$ be the blow up of a finite set of points of the plane.
If $D=m_1N_1+\cdots+m_rN_r$ where the $m_i$ are positive integers
and each $N_i$ is a reduced irreducible curve on $X$, then 
we can write $D=P+N$ where $P=a_1N_1+\cdots+a_rN_r$ is nef, the $a_i$ are nonnegative rationals,
$P\cdot N=0$ and either $N=0$ or $N=b_{i_1}N_{i_1}+\cdots+b_{i_j}N_{i_s}$ with the $b_{i_j}$
positive rationals and the matrix $(N_{i_j}\cdot N_{i_k})$ negative definite.
Moreover, if $D'$ is effective with Zariski decomposition $P'+N'$, and linearly equivalent to $D$,
then $P'$ and $P$ are linearly equivalent and $N'=N$.
\end{theorem}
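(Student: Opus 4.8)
The plan is to build the decomposition by taking the negative part $N$ to be supported on a maximal negative-definite subconfiguration of the curves $N_1,\dots,N_r$, and then to obtain the uniqueness statement from a short intersection-theoretic computation. This follows the strategy of the simplified proofs in \cite{2refB, 2refF2}, and is somewhat easier here since $\Cl(X)$ is free of finite rank and $D$ is effective, so that only finitely many subsets of $\{N_1,\dots,N_r\}$ are in play. First I would reduce to the case that $D$ is effective --- if only $mD$ is linearly equivalent to an effective divisor $D_0$, one applies the result to $D_0$ and divides by $m$. I would also record at the outset that in any decomposition $D=P+N$ with $P,N$ effective and $P\cdot N=0$, each component of $N$ is a component of $D$, so $N$ automatically has the shape asserted; the actual content is to produce a valid $N$ and to prove it is unique.

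For existence, for each subset $S\subseteq\{N_1,\dots,N_r\}$ whose intersection matrix $A_S=(N_i\cdot N_j)_{i,j\in S}$ is negative definite, let $N_S=\sum_{i\in S}a_iN_i$ be the unique $\Q$-divisor supported on $S$ with $(D-N_S)\cdot N_i=0$ for all $i\in S$; this makes sense because $A_S$ is invertible, and since distinct irreducible curves meet nonnegatively, $-A_S$ is in fact a Stieltjes matrix (symmetric positive definite with nonpositive off-diagonal entries), so $-A_S^{-1}$ has nonnegative entries --- this is the linear-algebra fact that keeps the relevant coefficients from going negative. Call $S$ admissible if in addition $0\le N_S\le D$ coefficientwise. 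The empty set is admissible, so I would fix an admissible $S$ of maximal cardinality and set $N:=N_S$, $P:=D-N$. Then $P$ is effective, $P\cdot N_i=0$ for all $i\in S$ (hence $P\cdot N=0$), and $N$ is supported on $\{N_i:a_i>0\}$, whose intersection matrix is a principal submatrix of $A_S$ and hence still negative definite, with the required positive coefficients. It then remains to show $P$ is nef. If not, there is an irreducible $C$ with $P\cdot C<0$; since $P\cdot N_i=0$ for $i\in S$ we have $C\notin S$, and since $P$ is effective $C$ must be a component of $P$, so $C=N_j$ for some $j\notin S$ with $N_j^2<0$. The crux is then to show that $S\cup\{N_j\}$ is still admissible, contradicting maximality: that its bordered intersection matrix remains negative definite comes down to checking that the Schur complement $N_j^2-v^TA_S^{-1}v$, with $v=(N_i\cdot N_j)_{i\in S}$, is negative, which is precisely where one uses that $P$ is effective, $P\cdot N_i=0$, and $P\cdot N_j<0$; granting that, solving the enlarged linear system and again using nonnegativity of $-A_{S\cup\{N_j\}}^{-1}$ shows the new coefficients are still nonnegative and still bounded by those of $D$. \textbf{I expect this enlargement step --- adjoining the offending curve while staying admissible --- to be the main obstacle}, and it is exactly the point treated with care in \cite{2refB, 2refF2}.

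For uniqueness, suppose $D'$ is effective, linearly equivalent to $D$, with Zariski decomposition $D'=P'+N'$ of the same shape. Set $M:=N'-N$. From $D\sim D'$ one gets $M\sim P-P'$, hence
$$M^2=M\cdot(P-P')=(N'-N)\cdot(P-P')=N'\cdot P+N\cdot P',$$
the cross terms $N'\cdot P'$ and $N\cdot P$ being zero; both surviving terms are nonnegative since $P,P'$ are nef and $N,N'$ are effective, so $M^2\ge 0$. On the other hand, writing $M=M_+-M_-$ with $M_+,M_-$ effective $\Q$-divisors having no common component, we have $\mathrm{Supp}(M_+)\subseteq\mathrm{Supp}(N')$ and $\mathrm{Supp}(M_-)\subseteq\mathrm{Supp}(N)$, so $M_+$ and $M_-$ are each supported on a negative-definite configuration; hence $M_+^2\le 0$ and $M_-^2\le 0$, with equality only for the zero divisor, while $M_+\cdot M_-\ge 0$. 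Consequently $M^2=M_+^2-2\,M_+\cdot M_-+M_-^2\le M_+^2+M_-^2\le 0$, so $M^2=0$, which forces $M_+=M_-=0$, i.e.\ $N'=N$; and then $P-P'\sim M=0$, so $P$ and $P'$ are linearly equivalent. That would complete the argument.
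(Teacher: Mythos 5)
Your proof is correct, and both halves take a genuinely different route from the paper's.

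For existence, you take a ``variational'' approach in the spirit of Fujita and Bauer: fix a maximal negative-definite admissible subset $S$ and show that non-nefness of $P=D-N_S$ would let you adjoin the offending curve $N_j$ and stay admissible, contradicting maximality. The paper instead builds the decomposition iteratively from the other end, first peeling off nef nonnegative combinations from $N$, then using the dual basis $\{N_i'\}$ of Corollary \ref{dualBasis2Cor} to push $M\cdot N_{i_j}$ down to zero, with a two-part termination argument. Your route localizes the whole difficulty in the single enlargement step, which you correctly identify as the crux; and the tools you cite there are exactly the right ones. Concretely: that $A_{S\cup\{j\}}$ stays negative definite can be seen either via your Schur complement computation (writing $P=cN_j+P'$ with $c>0$, $P'$ effective without $N_j$, and noting that the Stieltjes property makes $P'\cdot N_j^\perp\geq0$ so the diagonal entry of the orthogonal complement is negative), or even more quickly via Lemma \ref{CriterionNegDef} (a nef nonnegative combination of $S\cup\{N_j\}$ would have to involve $N_j$ and then meets $P$ negatively, impossible); that $N_{S'}\geq N_S\geq0$ and $N_{S'}\leq D$ both follow from nonnegativity of $(-A_{S'})^{-1}$ applied to the right right-hand sides ($v=(N_i\cdot N_j)_i\geq0$ for the increment, and $(D^c\cdot N_i)_i\geq0$ where $D^c$ is the part of $D$ off $S'$ for the upper bound). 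So the sketch completes.

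For uniqueness, your argument — $M=N'-N$ satisfies $M^2=N'\cdot P+N\cdot P'\geq0$ by nefness and effectivity, while the decomposition $M=M_+-M_-$ into effective pieces supported on negative-definite configurations forces $M^2\leq0$, so $M=0$ — is the clean quadratic-form sandwich that appears in the standard references, and it is both shorter and more conceptual than the paper's argument, which picks out a component $C_1$ of $tN$ with $C_1\cdot tN<0$, deduces $C_1$ is a component of $tN'$, and iterates to get $tN'-tN$ and $tN-tN'$ both effective. Both approaches work; yours buys brevity and avoids the bookkeeping of clearing denominators and tracking components one at a time, while the paper's is elementary in the sense of not invoking the decomposition $M_+ - M_-$.
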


\begin{exercise}\label{LinIndep}
Let $X$ be the blow up of $r$ points of the plane.
If $N_1,\ldots,N_s$ are prime divisors such that the matrix $(N_i\cdot N_j)$ is negative definite,
then the $N_i$ are linearly independent in the divisor class group, and
$s\leq r$.
\end{exercise}

\SolnEater{\vskip\baselineskip 
\noindent{\it Details}: 
If they are not linearly independent, then there are integers $n_i$ not all 0 such that
$\sum_in_iN_i\sim 0$, hence $N^2=0$ but $N^2=\sum_{ij} (N_i\cdot N_j)n_in_j<0$,
where the inequality is because of negative definiteness.
Now note that the divisor class group of $X$ has index $(1,-r)$, hence the biggest 
negative definite subspace has dimension $r$; i.e., $r\geq s$.
\newline\qedsymbol\vskip\baselineskip}

\begin{exercise}\label{6star}
Let $X$ be the blow up of the $r=\binom{6}{2}=15$ points of intersection of 6 general lines in the plane. 
Let $D$ be the sum of the proper transforms of the 6 lines
(so up to linear equivalence $D\sim 6L-2E_1-\cdots-2E_{15}$).
Let $L$ be the proper transform of a general line.
We show how to find a Zariski decomposition for each of the following divisors:
$D_3=D/2\sim 3L-E_1-\cdots-E_{15}$, 
$D_4=L+D/2\sim 4L-E_1-\cdots-E_{15}$, 
$D_5=2L+D/2\sim 5L-E_1-\cdots-E_{15}$,
$D_6=D\sim 6L-2E_1-\cdots-2E_{15}$, and
$D_7=4L+D\sim 10L-2E_1-\cdots-2E_{15}$.
\end{exercise}

\SolnEater{\vskip\baselineskip 
\noindent{\it Details}: 
Since $2D_3=D$ is the sum of the proper transforms
of the $6$ lines, and since these proper transforms are orthogonal,
the intersection matrix of the sum of the components of $D$ is negative definite.
I.e., $D_3=D/2$ is the Zariski decomposition of $D_3$.

Let $H$ be the proper transform of one of the lines.
For $D_4$, look at $2D_4=2L+D=(2L+aD)+(1-a)D$. Since $2L+aD$ must be nef,
so $0\leq(2L+aD)H=2-4a$ so $a\leq(1/2)$. In fact $2L+D/2$ is nef (since
$4L+D$ meets its components nonnegatively) and $D/2$ has negative definite
intersection matrix (as we saw) and $(2L+D/2)(D/2)=0$, so
$D_4=(L+D/2)+(D/2)$ is the Zariski decomposition.

Since $D_5$ is already nef ($2D_5=D_7$ and meets its components nonnegatively), 
$D_5$ is its own Zariski decomposition.

 And $D_6=2D_3=D$ is its own Zariski decomposition, as is $D_7=2D_5$. 
\newline\qedsymbol\vskip\baselineskip}

For the proof of Theorem \ref{ZDthm} we will use a lemma and some examples. 

\begin{exercise}\label{SortOfGramSchmidt}
Let $N_1,\ldots,N_r$ be distinct reduced irreducible curves with $N_i^2<0$ for all $i$
such that no nonzero nonnegative sum $m_1N_1+\cdots+m_rN_r$ is nef.
Then the $N_i$ are linearly independent in the ${\mathbb Q}$-span of $N_1,\ldots,N_r$.
This is because there is an orthogonal basis $N_1^*,\ldots,N_r^*$ for the ${\mathbb Q}$-span of $N_1,\ldots,N_r$.
This basis has the property that $N_1^*=N_1$, $N_2^*=c_{21}N_1^*+N_2$, $N_3^*=c_{31}N_1^*+c_{32}N_2^*+N_3$,
$\ldots$, $N_r^*=c_{r1}N_1^*+c_{r2}N_2^*+\cdots+c_{r,r-1}N_{r-1}^*+N_r$
with each $c_{ij}$ rational and $c_{ij}\geq 0$ (so each $N_i^*$ is a 
nonnegative rational linear combination of the $N_j$) and $(N_i^*)^2<0$ for each $i$. 
\end{exercise}

\SolnEater{\vskip\baselineskip 
\noindent{\it Details}: 
Use Gram-Schmidt orthogonalization without the normalization. I.e.,
define $N_1^*=N_1$, and $N_{i+1}^*=N_{i+1}+\sum_{1\leq j \leq i}\frac{N_{i+1}\cdot N_j^*}{|N_j^*\cdot N_j^*|}N_j^*$
for $1\leq i<r$, so each $c_{i+1,j}=\frac{N_{i+1}\cdot N_j^*}{|N_j^*\cdot N_j^*|}$ is nonnegative and rational,  
the $N_1^*,\ldots,N_r^*$ are orthogonal and each $N_i^*$ is a nonnegative rational linear combination of $N_1,\ldots, N_i$.
Moreover, $N_i^*$ is orthogonal to $N_1,\ldots, N_{i-1}$. Thus $(N_i^*)^2=N_i^*\cdot N_i$, so $N_i^*$ would be nef
if $N_i^*\cdot N_i\geq 0$, hence we must have $N_i^*\cdot N_i<0$, so $(N_i^*)^2<0$.
\newline\qedsymbol\vskip\baselineskip}

\begin{lemma}\label{CriterionNegDef}
Let $N_1,\ldots,N_r$ be reduced irreducible curves.
Then the matrix $(N_i\cdot N_j)$ is negative definite if and only if 
no nonzero nonnegative sum $m_1N_1+\cdots+m_rN_r$ is nef.
\end{lemma}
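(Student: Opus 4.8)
The plan is to prove the two implications directly, using only the defining property of nefness (nonnegative intersection with every effective curve) together with Exercise \ref{SortOfGramSchmidt}.

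For the forward implication, I would assume the Gram matrix $(N_i\cdot N_j)$ is negative definite and suppose, toward a contradiction, that some nonzero nonnegative sum $D=m_1N_1+\cdots+m_rN_r$ is nef. Because each $N_j$ is an effective curve, nefness of $D$ gives $D\cdot N_j\ge 0$ for every $j$, hence $D^2=\sum_j m_j(D\cdot N_j)\ge 0$. On the other hand, since the coefficient vector $(m_i)$ is nonzero, negative definiteness forces $D^2=\sum_{i,j}m_im_j(N_i\cdot N_j)<0$, a contradiction. This half is essentially immediate and needs no further input.

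For the converse, assume no nonzero nonnegative sum $m_1N_1+\cdots+m_rN_r$ is nef. The first step is the observation that $N_i^2<0$ for every $i$: an irreducible curve meets every other irreducible curve nonnegatively (they share no component), so if $N_i^2\ge 0$ then $N_i$ alone would be a nonzero nonnegative nef sum, contradicting the hypothesis. With $N_i^2<0$ verified, I can apply Exercise \ref{SortOfGramSchmidt}, which produces an orthogonal basis $N_1^*,\ldots,N_r^*$ of the $\mathbb{Q}$-span of the $N_i$ with $(N_i^*)^2<0$ for each $i$ (and in particular shows the $N_i$ are linearly independent). Then for any nonzero $v=\sum_i a_iN_i^*$ in that span one has $v^2=\sum_i a_i^2(N_i^*)^2<0$; since the $N_i$ also form a basis of this span, this is exactly the statement that $(N_i\cdot N_j)$ is negative definite.

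I do not anticipate a genuine obstacle here: the combinatorial/linear-algebra heart of the converse has already been isolated in Exercise \ref{SortOfGramSchmidt}, and the only new ingredient is the elementary remark that an irreducible curve with nonnegative self-intersection is automatically nef. The one point to be careful about is bookkeeping — making sure the hypothesis $N_i^2<0$ for all $i$ is established before invoking the Gram--Schmidt exercise, since that exercise assumes it.
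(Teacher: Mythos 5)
Your proof is correct and follows essentially the same route as the paper: the forward direction is the immediate observation that a nonzero nonnegative combination has negative self-intersection, and the converse deduces $N_i^2<0$ and then invokes Exercise \ref{SortOfGramSchmidt} to produce an orthogonal basis with negative self-intersections. The one place you add detail the paper elides — justifying $N_i^2<0$ by noting an irreducible curve with nonnegative self-intersection meets every curve nonnegatively and is therefore nef — is exactly the right bookkeeping step.
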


\begin{proof}
Assume the matrix $(N_i\cdot N_j)$ is negative definite. Thus for any 
nonzero nonnegative linear combination $N=m_1N_1+\cdots+m_rN_r$ we have
$N^2<0$ and hence $N$ is not nef.

Conversely, assume no nonzero nonnegative sum $m_1N_1+\cdots+m_rN_r$ is nef.
Thus $N_i^2<0$ for all $i$. Now apply \Exercise\ \ref{SortOfGramSchmidt}.
Thus the span of $N_1,\ldots,N_r$ has an orthogonal basis where each basis
element has negative self-intersection, hence $(N_i\cdot N_j)$ is negative definite.
\end{proof}

\begin{exercise}\label{dualBasis2Ex}
Let $V$ be a finite dimensional vector space with a positive definite inner product.
Let $v_1,\ldots,v_r$ be a basis such that $v_iv_j\leq0$ for all $i\neq j$.
If $v\in V$ has $vv_i\geq 0$ for all $i$, then $v=a_1v_1+\cdots+a_rv_r$
where $a_i\geq0$ for all $i$.
\end{exercise}

\SolnEater{\vskip\baselineskip 
\noindent{\it Details}: 
Induct on $r$. This is clearly true for $r=1$.
Now assume $r>1$. Let $w$ be orthogonal to $v_1,\ldots,v_{r-1}$ with $wv_r>0$.
Let $p$ be the orthogonal projection of $v$ into the span of $v_1,\ldots,v_{r-1}$.
Then $vv_i=pv_i$ for all $i<r$, so $p$ is a nonnegative linear combination of the $v_i$, $i<r$, and
likewise, so is the orthogonal projection of $-v_r$ (hence $w$ is a nonnegative 
linear combination of $v_1,\ldots,v_r$). But $p=v-cw$ for some nonnegative $c$,
so $v=p+cw$ is a nonnegative linear combination of the $v_i$.
\newline\qedsymbol\vskip\baselineskip}

\begin{corollary}\label{dualBasis2Cor}
Let $N_1,\ldots,N_r$ be reduced irreducible curves with $N_i^2<0$ for all $i$
such that no nonzero nonnegative sum $m_1N_1+\cdots+m_rN_r$ is nef.
Then there is a dual basis $N_1',\ldots,N_r'$ where: 
$N_i'N_j=0$ for all $i\neq j$; 
$N_i'N_i=(N_i')^2<0$ for all $i$; and
each $N_i'$ is a nonnegative rational linear combination of the $N_j$.
\end{corollary}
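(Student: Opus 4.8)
The plan is to transport the problem to the positive definite inner product $\langle u,v\rangle:=-(u\cdot v)$ on the $\mathbb Q$-span of the $N_i$, construct the dual basis there, and then rescale so that the self-intersection normalization comes out right.

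First I would fix the setup. By Lemma~\ref{CriterionNegDef} the matrix $(N_i\cdot N_j)$ is negative definite, hence nonsingular, so the divisor classes of $N_1,\ldots,N_r$ are linearly independent (this is also part of \Exercise\ \ref{SortOfGramSchmidt}). Let $V$ be their $\mathbb Q$-span inside the divisor class group tensored with $\mathbb Q$, equipped with the inner product $\langle u,v\rangle=-(u\cdot v)$, which is positive definite precisely because $(\,\cdot\,)$ is negative definite on $V$. Since the $N_i$ are distinct reduced irreducible curves, $N_i\cdot N_j\ge 0$ for $i\ne j$, so $\langle N_i,N_j\rangle\le 0$ for $i\ne j$; thus the basis $N_1,\ldots,N_r$ of $V$ meets the hypotheses of \Exercise\ \ref{dualBasis2Ex}.

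Next I would introduce the dual basis $M_1,\ldots,M_r\in V$, characterized by $\langle M_i,N_j\rangle=\delta_{ij}$, equivalently $M_i\cdot N_j=-\delta_{ij}$. Writing $M_i=\sum_j a_{ij}N_j$, the coefficients $a_{ij}$ are obtained by inverting the integer Gram matrix $(N_i\cdot N_j)$, so they are rational. Because $\langle M_i,N_j\rangle=\delta_{ij}\ge 0$ for every $j$, \Exercise\ \ref{dualBasis2Ex} applies to the vector $M_i$ and, by uniqueness of the expansion in a basis, forces $a_{ij}\ge 0$ for all $j$. Moreover $a_{ii}=\sum_j a_{ij}\langle M_i,N_j\rangle=\langle M_i,M_i\rangle>0$ since $M_i\ne 0$.

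Finally I would rescale: set $N_i':=a_{ii}^{-1}M_i=\sum_j(a_{ij}/a_{ii})N_j$, a nonnegative rational linear combination of the $N_j$. Then $N_i'\cdot N_j=-a_{ii}^{-1}\delta_{ij}$, so $N_i'\cdot N_j=0$ for $i\ne j$ and $N_i'\cdot N_i=-a_{ii}^{-1}<0$; and since the coefficient of $N_i$ in $N_i'$ is $a_{ii}/a_{ii}=1$ while $N_i'\cdot N_j=0$ for $j\ne i$, expanding $N_i'$ in the $N_j$ leaves only the $j=i$ term, giving $(N_i')^2=N_i'\cdot N_i<0$. So $N_1',\ldots,N_r'$ has all the required properties. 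I expect the only genuinely delicate point to be recognizing that the bare dual basis $M_i$ does \emph{not} satisfy $(M_i)^2=M_i\cdot N_i$ in general, so the normalization by $a_{ii}^{-1}$ is essential; everything else is a direct application of \Exercise\ \ref{dualBasis2Ex} once one observes the sign condition $N_i\cdot N_j\ge 0$ for distinct irreducible curves.
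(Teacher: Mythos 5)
Your proof is correct and follows essentially the same route as the paper: negative definiteness from Lemma~\ref{CriterionNegDef}, the dual basis defined by the linear equations $N_i'\cdot N_j=0$ with rational coefficients, and \Exercise~\ref{dualBasis2Ex} (transported to the negative-definite setting) for nonnegativity of the coefficients. The one place you go further than the paper is in spelling out the normalization: the paper dismisses $N_i'N_i=(N_i')^2$ as ``a choice of scaling,'' while you explicitly compute that the raw dual basis $M_i$ has $(M_i)^2=-a_{ii}$ versus $M_i\cdot N_i=-1$, so one must divide by $a_{ii}=\langle M_i,M_i\rangle$; that extra care is worthwhile since the normalization is exactly what makes $(N_i')^2=N_i'\cdot N_i$ hold.
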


\begin{proof}
By Lemma \ref{CriterionNegDef}, the intersection form on the span of 
$N_1,\ldots,N_r$ is negative definite.
The dual basis elements $N_i'$ are solutions to the linear equations $N_i'N_j=0$,
which are defined over the integers,
so the solutions are rational linear combinations of the $N_j$. 
Negative definiteness gives $(N_i')^2<0$, and $N_i'N_i=(N_i')^2$
comes down to a choice of scaling.
The fact that each $N_i'$ is a {\it nonnegative} rational linear combination of the $N_j$
comes from \Exercise\ \ref{dualBasis2Ex} (after converting the result to the negative definite case).
\end{proof}

\begin{proof}[Proof of Theorem \ref{ZDthm}]
Start with $D=M+N$, where $M=0$ and $N=m_1N_1+\cdots+m_rN_r$.
If some nonzero nonnegative sum $S=n_1N_1+\cdots+n_rN_r$ is nef,
let $c$ be the minimum of the ratios $m_i/n_i$ for which $n_i>0$.
Replace $M$ by $M+cS$ and replace $N$ by $N-cS$. Then $D=M+N$ and $M$ and $N$ are still
nonnegative sums of the $N_i$, with $M$ still nef but $N$ having one fewer summand.
Repeat this process until either $N=0$ or $N$ is a sum 
$N=b_{i_1}N_{i_1}+\cdots+b_{i_j}N_{i_s}$ 
such that $b_{i_j}>0$ for all $j$ but no nonnegative sum of the $N_{i_j}$ is nef.

Thus we have $D=M+N$ where $M$ is a nef nonnegative rational sum of the curves $N_i$,
and $N$ is either 0 (and we are done) or a positive rational sum
$N=b_{i_1}N_{i_1}+\cdots+b_{i_j}N_{i_s}$ 
where no nonzero nonnegative sum of the $N_{i_j}$ is nef.

In the latter case, if $MN_{i_j}=0$ for all $i$ we take $P=M$ and $N$ as is, and we are done.
So suppose $MN_{i_j}>0$ for some $i$. 
Consider the dual basis $\{N_{j_k}'\}$ given in Corollary \ref{dualBasis2Cor}. 
We can write $N_{i_j}'=\sum_ja_{i_j}N_{i_j}$ with nonnegative rational $a_{i_j}$. 
Choose the maximum $t$ such that $ta_{i_j}\leq b_{i_j}$ for all $j$ and such that
$(M+tN_{i_j}')N_{i_j}\geq0$, and replace $M$ by $M+tN_{i_j}'$ and $N$ by $N-tN_{i_j}'$.
Then either the number of basis elements $N_{j_k}$ meeting $M$ positively has gone down by 1
or the number of terms in $N$ has gone down by 1. Repeating this process
eventually gives a $P=M$ orthogonal to all terms (if any) of $N$.

Moreover, if $D'$ is effective with Zariski decomposition $P'+N'$, and linearly equivalent to $D$,
then $P'$ and $P$ are linearly equivalent and $N'=N$.

For the uniqueness assertion, pick an integer $t>0$ such that
$tP$, $tP'$, $tN$ and $tN'$ are all integral. Then 
some component $C_1$ of $tN$ has $C_1\cdot tN<0$, so $C_1\cdot tN'\leq C_1\cdot tD'=C_1\cdot tD=C_1\cdot tN<0$.
Thus $C_1$ is a component of $tN'$. If $tN\neq C_1$, then for some component $C_2$ of $tN-C_1$, by
negative definiteness we have
$C_2\cdot (tN'-C_1)\leq C_12\cdot (tN-C_1)<0$. Repeating this, we eventually see that
$tN'-tN$ is effective. Reversing the argument shows that $tN'-tN$ is also effective, so $tN'=tN$.
Thus $tP'=tD'-tN'$ is linearly equivalent to $tP=tD-tN$, as claimed.
\end{proof}

Computing $\widehat{\alpha}(I)$ can sometimes come from computing Zariski decompositions.

\begin{proposition}\label{ZarDecProp}
Let $p_1,\cdots,p_r$ be distinct points in the plane and let $I$ be the radical ideal of the points.
Let $X$ be the surface obtained by blowing up of the points and let
$F=dL-m_1E_1-\cdots-m_rE_r$. If 
$F$ has a Zariski decomposition of the form
$P+N$ where $P\neq0$ and $N=aL-b(E_1+\cdots+E_r)\neq0$,
then $\widehat{\alpha}(I)=\frac{a}{b}$.
\end{proposition}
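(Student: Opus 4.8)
The plan is to prove $\widehat{\alpha}(I)\le a/b$ and $\widehat{\alpha}(I)\ge a/b$ separately, where $I=I(Z)$ for the reduced scheme $Z=p_1+\cdots+p_r$, so that $\widehat{\alpha}(I)=\inf\{t/m:m>0,\ h^0(X,\mathcal{O}_X(tL-m(E_1+\cdots+E_r)))>0\}$. From Theorem \ref{ZDthm} I will use that $P$ is nef, that $P\cdot N=0$, and that the support of $N$ is negative definite, so $N^2<0$; since $N\sim aL-b(E_1+\cdots+E_r)$, the inequality $N^2<0$ forces $b>0$ (if $b=0$ then $N^2=a^2\ge0$). The upper bound is then formal: choosing $k>0$ that clears denominators makes $kN=(kb_{i_1})N_{i_1}+\cdots+(kb_{i_s})N_{i_s}$ an honest effective divisor, necessarily of class $kaL-kb(E_1+\cdots+E_r)$, so $h^0(X,\mathcal{O}_X(kaL-kb(E_1+\cdots+E_r)))>0$, which by \Exercise\ \ref{iso} says $\alpha(I((kb)Z))\le ka$ and hence $\widehat{\alpha}(I)\le\alpha(I((kb)Z))/(kb)\le a/b$.

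For the lower bound it suffices to show that $bs-ma\ge0$ whenever $G:=sL-m(E_1+\cdots+E_r)$ is linearly equivalent to an effective divisor. Writing $P=F-N=(d-a)L-\sum_i(m_i-b)E_i$ and expanding $P\cdot N=0$ using $L^2=1$, $L\cdot E_i=0$, $E_i\cdot E_j=-\delta_{ij}$ yields the relation $b\sum_i m_i=(d-a)a+b^2r$; substituting this into $P\cdot G=(d-a)s-m\sum_i(m_i-b)$ collapses the expression to
$$P\cdot G=\frac{(d-a)(bs-ma)}{b}.$$
Since $P$ is nef and $G$ is linearly equivalent to an effective divisor, $P\cdot G\ge0$. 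The remaining ingredient is $d-a>0$: the proper transform of a general line is irreducible of class $L$, so $d-a=P\cdot L\ge0$ by nefness; and $d-a=0$ would make $P=-\sum_i(m_i-b)E_i$, forcing $m_i\ge b$ for each $i$ (pair with $E_i$) and then $P^2=-\sum_i(m_i-b)^2$, which together with $P^2\ge0$ forces $P=0$, contradicting the hypothesis $P\ne0$. Combining $d-a>0$, $b>0$ and $P\cdot G\ge0$ gives $bs-ma\ge0$, i.e. $s/m\ge a/b$; taking the infimum over all admissible pairs $(s,m)$ gives $\widehat{\alpha}(I)\ge a/b$, and with the upper bound this proves $\widehat{\alpha}(I)=a/b$.

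The computation is short, and the two places that genuinely need care are the verification that $d-a>0$ — this is exactly where the hypothesis $P\ne0$ is used, and the identity for $P\cdot G$ is vacuous without it — and the bookkeeping that $N$ is a priori only a $\mathbb{Q}$-divisor, so denominators must be cleared before an $h^0$ can be invoked (and one should note that the uniform $E_i$-coefficient of $N$ is what produces the clean factor $bs-ma$; a non-uniform negative part would not give this). Nothing beyond Theorem \ref{ZDthm} and the intersection form on the blow-up is needed; in particular neither the SHGH Conjecture nor the analytic bounds of \Exercise\ \ref{Fekete}(f) play any role here.
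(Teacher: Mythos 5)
Your proof is correct and rests on the same two facts the paper uses: $P$ is nef and $P\cdot N=0$. The paper compresses the lower bound by pairing $P$ directly against the limit class $\widehat{\alpha}(I)L-E$ (tacitly using that a nef class meets a pseudo-effective class nonnegatively), whereas you pair $P$ against each effective $G=sL-mE$ and take the infimum afterward; these are the same computation, just packaged at the finite level. What you add that the paper leaves tacit is the verification that $d-a=P\cdot L>0$, which is genuinely needed (otherwise the final division is vacuous) and is exactly where $P\neq0$ enters; that is a real improvement in completeness. One small caveat: $N^2<0$ rules out $b=0$ but not $b<0$ on its own, so strictly speaking $b>0$ is an extra (implicit) hypothesis — one that the paper's own proof also assumes without comment, and which holds in all the intended applications where the $m_i$ are positive.
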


\begin{proof}
Since $N$ is effective we have $\widehat{\alpha}(I)\leq\frac{a}{b}$.
Let $E=E_1+\cdots+E_r$.
Since $P$ is nef, we have $(PL)b(\widehat{\alpha}(I))-bPE=P(b(\widehat{\alpha}(I))L-bE)\geq0=PN=(PL)a-bPE$,
so $b(\widehat{\alpha}(I))\geq a$ or $\widehat{\alpha}(I))\geq a/b$.
\end{proof}

\begin{example}\label{AlmostCollinear}
We demonstrate Proposition \ref{ZarDecProp} by computing $\widehat{\alpha}(I)$ for the ideal $I$ of 
$n\geq2$ points on a line and one point off. Note that these points are the points of intersection
of $n+1$ lines; the case of $n=3$ is shown in Figure \ref{4and3}.
Let $p_0$ be the point off the line, $p_1,\ldots,p_n$ the collinear points.
Take $E=E_1+\cdots+E_n$ and $F=(3n-1)L-(2n-1)E_0-(n+1)E$. Its Zariski decomposition is
$P=nL-(n-1)E_0-E$ and $N=(2n-1)L-nE_0-nE$, so $N$ is the sum of the proper transforms of the lines through $p_0$
and $n-1$ times the proper transform of the line through the other $n$ points.
Thus $\widehat{\alpha}(I)=(2n-1)/n$.
\end{example}

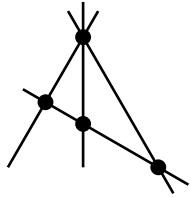
\begin{figure}[htbp]
\begin{center}
\begin{tikzpicture}[x=1.0cm,y=1.0cm]
\draw [line width=1,color=black] (0,0)-- (1.2,2.0784);
\draw [line width=1,color=black] (.8,2.0784)-- (2.2,-.3464);
\draw [line width=1,color=black] (1,0)-- (1,2.2);
\draw [line width=1,color=black] (2.4,-.230854)-- (.2,1.0386);
\fill (2,0) circle (3 pt);
\fill (1,.5773) circle (3 pt);
\fill (.5,.866) circle (3 pt);
\fill (1,1.732) circle (3 pt);
\end{tikzpicture}
\caption{A configuration of four lines with a triple point.}
\label{4and3}
\end{center}
\end{figure}

\begin{exercise}
Here we demonstrate Proposition \ref{ZarDecProp} by computing $\widehat{\alpha}(I)$ for the ideal $I$ 
of the points of intersection of the lines in Figure \ref{TriplePtand7doublePts}.
Let $p_1$ be the triple point and $p_2,\ldots,p_7$ the other six points on lines through the triple point,
and let $p_8$ be the remaining point.
Let $E=E_2+\cdots+E_6$ and take $F=10L-4E_1-4E-4E_8$.
Its Zariski decomposition is $P=3L-E_1-E$ and $N=7L-3E_1-3E-3E_8$,
so $N$ is the sum of the proper transforms
of the lines through the triple point plus twice the sum of proper transforms of the other two lines plus $E_8$,
hence $\widehat{\alpha}(I)=7/3$.
\end{exercise}

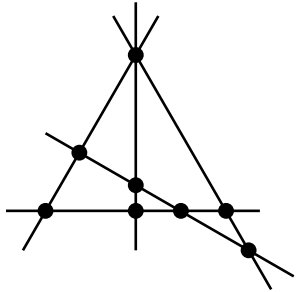
\begin{figure}[htbp]
\begin{center}
\begin{tikzpicture}[x=1.5cm,y=1.5cm]
\draw [line width=1,color=black] (0,0)-- (1.2,2.0784);
\draw [line width=1,color=black] (.8,2.0784)-- (2.2,-.3464);
\draw [line width=1,color=black] (-.15,.35)-- (2.1,.35);
\draw [line width=1,color=black] (1,0)-- (1,2.2);
\draw [line width=1,color=black] (2.4,-.230854)-- (.2,1.0386);
\fill (1.8,.35) circle (3 pt);
\fill (1.4,.35) circle (3 pt);
\fill (1,.35) circle (3 pt);
\fill (.2,.35) circle (3 pt);
\fill (2,0) circle (3 pt);
\fill (1,.5773) circle (3 pt);
\fill (.5,.866) circle (3 pt);
\fill (1,1.732) circle (3 pt);
\end{tikzpicture}
\caption{A configuration of five lines with a triple point.}
\label{TriplePtand7doublePts}
\end{center}
\end{figure}

\begin{exercise}\label{WCforStars}
This time we demonstrate Proposition \ref{ZarDecProp} by computing $\widehat{\alpha}(I)$ for the ideal $I$ of the points of intersection of $d>2$ general lines in the plane.
Here blow up the $r=\binom{d}{2}$ points and let $E=E_1+\cdots+E_r$.
Take $F=(3d-2)L-4E$, $P=2(d-1)L-2E$ and $N=dL-2E$, so $N$ is the sum of the proper transforms of the $d$ lines.
Thus $\widehat{\alpha}(I)=d/2$. 
\end{exercise}

\section{Bounded Negativity Conjecture (BNC) and $H$-constants}

\subsection{Bounded Negativity}
Let $X$ be a smooth projective surface. If $C$ is a curve on $X$,
how negative can $C^2$ be? This certainly depends on $X$. For example,
for $X=\P^2$ we have $C^2>0$ for all $C$.

\begin{exercise}\label{blowupPtsOnLine}
Let $X\to \P^2$ be the blow up of $n\geq 2$ distinct points $p_1,\ldots,p_n$ on a line $L\subseteq \P^2$.
Let $L$ be the total transform of a line and $E_i$ the blow up of $p_i$.
Consider the divisor $F=dL-m_1E_1-\cdots-m_nE_n$ on $X$.
\begin{enumerate}
\item[(a)] Then $|F|$ is nonempty if and only if $d\geq\max(m_1,\ldots,m_n,0)$.
\item[(b)] If $D$ is a divisor on $X$ such that $D\cdot E_i\geq0$ for all $i$ and
$D\cdot H\geq0$ where $H$ is the proper transform of $L$
(so $H\sim L-E_1-\cdots-E_n$), then $D^2\geq 0$, and one can 
conclude that the only reduced irreducible curves $C$ on $X$ with $C^2<0$
are $E_1,\ldots, E_n$ and $H$.
\item[(c)] Let $C$ be an effective divisor and let $m$ be the multiplicity
of the irreducible component of $C$ of maximum multiplicity. Then $C^2\geq -m^2n$
and curves $C$ exist such that equality holds.
(Note: Write $C=P+N$, where $P$ is the sum of the components of $C$ with nonnegative self-intersection,
and $N$ is the sum of the irreducible components of $C$ of negative self-intersection,
hence $N=a_0H+a_1E_1+\cdots+a_nE_n$ for some $a_i\geq 0$, so
$N^2\geq -\sum_i(a_i-a_0)^2$. Conclude that $C^2\geq N^2 \geq -m^2n$. )
\end{enumerate}
\end{exercise}

\SolnEater{\vskip\baselineskip 
\noindent{\it Details}: (a) If $d<\max(m_1,\ldots,m_n,0)$, then either $d<0$ (hence $|F|$
is empty), or some $m_i$ is positive but $d<m_i$, hence again $|F|$ is empty.
Now let $m_i'=\max\{m_i,0\}$. 
If $d\geq\max(m_1,\ldots,m_n,0)$, then $\ell^d\in [I(\sum_im_i')]_d$ 
(where $\ell$ is the linear form defining $L$), so $|dL-\sum_im_i'E_i|$
is nonempty. Let $G\in |dL-\sum_im_i'E_i|$. Then $G-\sum_{m_i<0}m_iE_i\in |F|$,
so $|F|$ is nonempty.

(b) Since $D\cdot E_i\geq 0$, we have $D=dL-\sum_im_iE_i$ for $m_i\geq0$.
Since $H\cdot D\geq 0$ we have $d\geq \sum_im_i$, hence $D^2=d^2-\sum_im_i^2=(\sum_im_i)^2-\sum_im_i^2\geq 0$.
Thus a reduced irreducible curve $C$ with $C^2<0$ must either have $C\cdot E_i<0$ for some $i$
(hence $C=E_i$) or $C\cdot H<0$ (hence $C=H$).

(c) Using the note we have $C^2\geq N^2 \geq -\sum_i(a_i-a_0)^2\geq -nm^2$.
Taking $C=m(E_1+\cdots+E_n)$ gives $C^2=-m^2n$.
\newline\qedsymbol\vskip\baselineskip}

This brings us to the Bounded Negativity Conjecture (BNC), an old still open folklore conjecture
that goes back at least to F.\ Enriques. (There seems only to be oral evidence of its provenance, however.
I heard this conjecture from my advisor, M.~Artin, around 1980.
C.~Ciliberto heard this conjecture from his advisor, A.~Franchetta.
Franchetta was Enriques's last student, who told Ciliberto that
he had heard it from Enriques; see \cite{3refBNC}.)

There are various versions of the BNC. Here's one.

\begin{conjecture}\label{BNC1}
Let $X$ be a smooth projective surface, either rational or complex (i.e., 
either $X$ is a rational surface and the ground field is an arbitrary algebraically closed field,
or $X$ is any smooth projective surface defined over ${\mathbb C}$).
Then there is a bound $B_X$ such that for any effective divisor $C$ on $X$,
we have $C^2/m^2\geq B_X$, as long as $m$ is a positive integer 
at least as big the multiplicity of every component of $C$.
\end{conjecture}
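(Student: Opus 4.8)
This is the (still open) Bounded Negativity Conjecture, so rather than a complete proof I describe a reduction and record where the argument can be pushed through. The plan is to reduce the stated inequality for an arbitrary effective divisor to the \emph{classical} form of the conjecture --- a lower bound for $C^2$ over reduced irreducible curves $C$ on $X$ --- using Zariski decomposition (Theorem \ref{ZDthm} for blow ups of the plane, and the Zariski--Fujita theory in general) together with the finite generation of the N\'eron--Severi group; these two forms turn out to be equivalent.

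First I would carry out the reduction. Suppose there is a constant $b\ge 0$ with $C^2\ge -b$ for every reduced irreducible curve $C$ on $X$. Let $D=m_1N_1+\cdots+m_rN_r$ be effective with every $m_i\le m$. Write $D=P+N$ for its Zariski decomposition: $P$ is nef, $P\cdot N=0$, and either $N=0$ (and $D^2=P^2\ge 0$) or $N=b_1N_{i_1}+\cdots+b_sN_{i_s}$ with $0<b_j\le m_{i_j}\le m$ and $(N_{i_j}\cdot N_{i_k})$ negative definite. Since $P$ is nef we have $P^2\ge 0$, so $D^2=P^2+N^2\ge N^2$; and since distinct reduced irreducible curves meet nonnegatively,
$$N^2=\sum_{j}b_j^2\,N_{i_j}^2+2\!\!\sum_{1\le j<k\le s}\!\!b_jb_k\,(N_{i_j}\cdot N_{i_k})\ \ge\ \sum_j b_j^2\,N_{i_j}^2\ \ge\ -b\sum_j b_j^2.$$
By \Exercise\ \ref{LinIndep} (or, in general, finiteness of the Picard number $\rho(X)$) we have $s\le\rho(X)$, so $\sum_j b_j^2\le\rho(X)\,m^2$ and hence $D^2/m^2\ge -b\,\rho(X)$. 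Thus $B_X:=-b\,\rho(X)$ works, and it suffices to prove bounded negativity for reduced irreducible curves on $X$.

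For the irreducible case I would split according to the positivity of $\pm K_X$. If $-K_X$ is $\Q$-effective, fix $\Delta\ge 0$ with $-K_X\sim_{\Q}\Delta$; for a reduced irreducible $C\not\subseteq\operatorname{Supp}\Delta$ we get $(-K_X)\cdot C=\Delta\cdot C\ge 0$, so adjunction gives $C^2=2p_a(C)-2+(-K_X)\cdot C\ge -2$, while the finitely many components of $\Delta$ contribute a fixed further bound; this disposes of $\P^2$, Hirzebruch surfaces, (weak) del Pezzo surfaces, and any surface carrying an effective anticanonical $\Q$-divisor, and a symmetric argument using $\Q$-effectivity of $K_X$ (together with the finiteness of negative curves on minimal surfaces of nonnegative Kodaira dimension) handles several further classes. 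The remaining surfaces --- and the step I expect to be the genuine obstacle, and which is open --- are those with neither $K_X$ nor $-K_X$ pseudo-effective, in particular rational surfaces such as $\P^2$ blown up at many (general or special) points, where adjunction gives no lower bound on $C^2$ at all. Here the only available leverage is the circle of ideas developed in the rest of these notes: configurations of lines or curves forcing proper transforms of increasingly negative self-intersection (compare \Exercise\ \ref{starconfigs}) and the $H$-constants that quantify, in the limit, how negative such configurations can become. Turning those heuristics into an actual numerical bound $B_X$ for a fixed such surface is precisely the unresolved content of the conjecture.
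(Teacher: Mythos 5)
The statement in question is an open conjecture, so there is no proof to compare against; what the paper does prove are the equivalences \ref{BNC1}$\Leftrightarrow$\ref{BNC2}$\Leftrightarrow$\ref{BNC3} (Theorems \ref{BNC1and2} and \ref{BNC2and3}), and you have correctly recognized this and confined yourself to the reduction. Your reduction is correct, but it travels a slightly different road: the paper first passes from the non-reduced case (\ref{BNC1}) to the reduced case (\ref{BNC2}) via the combinatorial Lemma \ref{nonreducedLemma}, and only then applies Zariski decomposition to reduced divisors in Theorem \ref{BNC2and3}; you apply Zariski decomposition directly to the non-reduced effective divisor $D=\sum m_i N_i$, using the fact (implicit in Theorem \ref{ZDthm}, since the positive part $P=\sum a_iN_i$ has $a_i\geq 0$) that the coefficients $b_{i_j}=m_{i_j}-a_{i_j}$ of the negative part are at most $m_{i_j}\leq m$. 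This one-step route is cleaner and avoids Lemma \ref{nonreducedLemma} entirely. One small sharpening: by the Hodge index theorem the negative definite span of the $N_{i_j}$ lies in the orthogonal complement of an ample class, so $s\le\rho(X)-1$ rather than $\rho(X)$, which recovers exactly the paper's bound $B_X\le(\rho(X)-1)b_X$. Your adjunction discussion of the cases where $\pm K_X$ is pseudo-effective is standard and correct but is not in the paper; you are right that it does not touch the genuinely open case (e.g.\ blow ups of $\P^2$ at many points), which is where the conjecture lives.
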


Here's another.

\begin{conjecture}\label{BNC2}
Let $X$ be a smooth projective surface, either rational or complex.
Then there is a bound $B_X$ such that for any effective reduced divisor $C$ on $X$,
we have $C^2\geq B_X$.
\end{conjecture}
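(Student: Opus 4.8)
\textbf{The plan} is to attack the conjecture directly by combining Zariski decomposition with the adjunction formula, reducing it first to reduced \emph{irreducible} curves and then to a boundedness statement for $K_X\cdot C$; this settles the cases in which the anticanonical class is favourable and isolates exactly the step that leaves the statement open. \textbf{Step 1 (reduction to irreducible curves).} Let $C$ be a reduced effective divisor on the smooth projective surface $X$ and take its Zariski decomposition $C=P+N$ (Theorem \ref{ZDthm} for a blow up of the plane, and \cite{2refZ2, 2refB, 2refF2} in general). Since $P$ is nef we have $P^2\geq0$, and $P\cdot N=0$, so $C^2=P^2+N^2\geq N^2$. As $C$ is reduced and the Zariski process only deletes components from the negative part, $N=\sum_{i\in S}a_iN_i$ with the $N_i$ distinct prime components of $C$, each $0<a_i\leq1$, and (Lemma \ref{CriterionNegDef}) the matrix $(N_i\cdot N_j)_{i,j\in S}$ negative definite; by the Hodge index theorem (or \Exercise\ \ref{LinIndep} in the blow-up case, where it reads $|S|\leq r$) we get $|S|\leq\rho(X)-1=:n_X$. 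Since distinct prime divisors meet nonnegatively,
\[
N^2=\sum_{i\in S}a_i^2N_i^2+\sum_{i\neq j}a_ia_j\,(N_i\cdot N_j)\ \geq\ \sum_{i\in S}a_i^2N_i^2\ \geq\ \sum_{\{i\in S:\,N_i^2<0\}}N_i^2,
\]
using $0<a_i^2\leq1$. Hence, if every reduced irreducible curve $D$ on $X$ satisfies $D^2\geq-B$, then $C^2\geq N^2\geq-B\,n_X$, so the conjecture for $X$ follows from the same statement for reduced irreducible curves on $X$, with $B_X=-B\,n_X$.

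\textbf{Step 2 (adjunction and the favourable cases).} For a reduced irreducible curve $D$, the genus formula gives $D^2=2p_a(D)-2-K_X\cdot D\geq-2-K_X\cdot D$, since $p_a(D)\geq0$. So it suffices to bound $K_X\cdot D$ \emph{from above} over all reduced irreducible $D$ with $D^2<0$; if that supremum is some $c_X<\infty$, then Step 1 yields $B_X=(-2-c_X)\,n_X$. This already handles: $X$ with $-K_X$ nef (del Pezzo surfaces, blow ups of $\P^2$ at few points), where $K_X\cdot D\leq0$ and so $D^2\geq-2$; $X$ with $K_X$ numerically trivial over $\C$ (K3, abelian, Enriques, bielliptic surfaces), where $K_X\cdot D=0$ and so $D^2\geq-2$; and, more generally, any $X$ for which some multiple $-mK_X$ ($m>0$) is effective, where — writing $-mK_X\sim M+F$ with $F$ the fixed part — any $D$ with $K_X\cdot D>0$ must be a component of $F$, hence lies in a fixed finite list, so $c_X<\infty$. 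Through Step 1 this recovers explicit bounds in examples such as \Exercise\ \ref{blowupPtsOnLine} and those of Section 2.

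\textbf{The main obstacle}, and the reason the statement is a conjecture rather than a theorem, is the finiteness of $\sup_D(K_X\cdot D)$ over reduced irreducible curves with $D^2<0$ for the remaining surfaces — principally blow ups of $\P^2$ at many points and minimal surfaces of general type. Nothing a priori excludes a sequence of reduced irreducible curves $D_k$ with $K_X\cdot D_k\to+\infty$, which by adjunction forces $D_k^2=2p_a(D_k)-2-K_X\cdot D_k\to-\infty$; ruling this out is equivalent to the conjecture itself, and no general mechanism is known. Thus the plan genuinely proves the conjecture only when the anticanonical class is nef, torsion, or effective up to a finite exceptional set; the general case would require a new source of control over negative curves on rational surfaces — for blow ups of $\P^2$ at general points one expects this from the SHGH Conjecture of Section 1, under which the only negative curves are the $(-1)$-curves, so that $D^2\geq-1$.
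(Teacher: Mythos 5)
You have been asked to prove a statement that the paper itself presents as an open conjecture: Conjecture \ref{BNC2} is one of the equivalent forms of the Bounded Negativity Conjecture, and the paper gives no proof of it (none is known); what the paper proves is only the equivalence of its three formulations, in Theorems \ref{BNC2and3} and \ref{BNC1and2}. Your proposal is honest about exactly this, and that is the right assessment: nothing you wrote, and nothing anyone currently knows, closes the gap in general.

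What you do prove is correct, and your Step 1 is essentially the paper's own proof of Theorem \ref{BNC2and3}: take the Zariski decomposition $C=P+N$ (Theorem \ref{ZDthm}), use $C^2\geq N^2$, note that for reduced $C$ the coefficients of $N$ lie in $(0,1]$, drop the off-diagonal terms, and bound the number of components of $N$ by $\rho(X)-1$ via linear independence (Example \ref{LinIndep}); this yields exactly the paper's bound $B_X=(\rho(X)-1)b_X$ in terms of a bound $b_X$ for irreducible curves, i.e.\ the reduction of Conjecture \ref{BNC2} to Conjecture \ref{BNC3}. (A small point in your favor: your justification of $N^2\geq\sum_i a_i^2N_i^2$ — distinct prime divisors meet nonnegatively — is the correct reason; the paper attributes this inequality to negative definiteness, which is not what is actually used.) Your Step 2 is the standard adjunction argument $D^2\geq -2-K_X\cdot D$ (the same inequality the paper quotes later, citing \cite[Example 3.2]{3refBPS2}), and your treatment of the cases $-K_X$ nef, $K_X$ numerically trivial, and $-mK_X$ effective is fine: in the last case an irreducible $D$ with $K_X\cdot D>0$ must indeed be a component of the fixed part of $|-mK_X|$, so $K_X\cdot D$ is bounded. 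But these are precisely the surfaces where bounded negativity was never in doubt; the substance of the conjecture — blow ups of $\P^2$ at many (especially special) points, and complex surfaces where no multiple of $-K_X$ is effective — is exactly the part you identify as the obstacle, and your observation that the SHGH Conjecture \ref{SHGHConj} would give $D^2\geq-1$ for general points (cf.\ Example \ref{corOfSHGH}) is consistent with the paper's perspective. So: no error in what you claim, the reduction step coincides with the paper's equivalence proof, and the conjecture itself remains open, as you state.
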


And one more:

\begin{conjecture}\label{BNC3}
Let $X$ be a smooth projective surface, either rational or complex.
Then there is a bound $b_X$ such that for any effective reduced irreducible divisor $C$ on $X$,
we have $C^2\geq b_X$.
\end{conjecture}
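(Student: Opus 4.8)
This statement is the Bounded Negativity Conjecture in its most primitive form; it is open in general, so what follows is a plan of attack together with the cases in which it can actually be carried out.

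The first point is that, for $X$ a blow up of $\P^2$, the three versions \ref{BNC1}, \ref{BNC2}, \ref{BNC3} are equivalent, so it suffices to establish whichever is most convenient. Given a constant $b_X\le 0$ as in \ref{BNC3}, take any effective $C$ and let $C=P+N$ be its Zariski decomposition (Theorem~\ref{ZDthm}). Since $P$ is nef, $P^2\ge 0$, and $P\cdot N=0$, so $C^2=P^2+N^2\ge N^2$. Write $N=\sum_i b_iN_i$ with the $N_i$ distinct reduced irreducible curves and $(N_i\cdot N_j)$ negative definite; by the construction in the proof of Theorem~\ref{ZDthm} each $b_i$ is at most the multiplicity of $N_i$ in $C$, and by Exercise~\ref{LinIndep} there are at most $r=\rho(X)-1$ of them. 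Using $N_i\cdot N_j\ge 0$ for $i\ne j$, $N_i^2\ge b_X$, and $N_i^2<0$, one finds that if every component of $C$ has multiplicity at most $m$ then $N^2\ge m^2\sum_i N_i^2\ge m^2r\,b_X$, which is \ref{BNC1}; and \ref{BNC1}$\Rightarrow$\ref{BNC2}$\Rightarrow$\ref{BNC3} is trivial. So I would aim for \ref{BNC3}.

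Next I would dispose of everything controlled by the canonical class. For a reduced irreducible curve $C$, adjunction gives
\[
C^2=2p_a(C)-2-K_X\cdot C\ \ge\ -2-K_X\cdot C ,
\]
so it is enough to bound $K_X\cdot C$ from above. When $-K_X$ is nef --- $X$ del Pezzo, or a blow up of $\P^2$ at points on a cubic --- or when $K_X$ is numerically trivial ($X$ abelian, K3, Enriques, bielliptic), every effective $C$ has $K_X\cdot C\le 0$, so $b_X=-2$ works; more of the non-negative Kodaira dimension range (properly elliptic surfaces, ruled surfaces over a positive-genus base) yields to similar adjunction and fibration arguments. What this does not touch is the genuinely rational part, and --- over $\C$ --- surfaces of general type.

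The heart of the problem, and where I expect the plan to stall, is a blow up $X\to\P^2$ of many very general points (ten or more): here $-K_X$ is not effective, adjunction only gives $C^2\ge -2-K_X\cdot C$ with $K_X\cdot C$ a priori unbounded, and one must exclude reduced irreducible curves of large geometric genus and very negative self-intersection. For general points this is precisely what the SHGH Conjecture~\ref{SHGHconj} predicts: together with the Riemann--Roch estimate of Exercise~\ref{RR} it forces the only reduced irreducible negative curves to be the expected ones --- exceptional curves and proper transforms of low-degree curves through the points --- whose self-intersections are bounded in terms of $r$ alone, and then the equivalence of the first paragraph finishes. Unconditionally no such control is known; and in positive characteristic bounded negativity for rational surfaces is also open, while for non-rational surfaces over $\overline{\mathbb F}_p$ it genuinely fails (graphs of iterated Frobenius on a curve of positive genus have self-intersections tending to $-\infty$), which is why \ref{BNC1} is stated as it is. So the realistic outcome of this approach is the conditional statement: \ref{BNC3} holds unconditionally whenever $-K_X$ is nef or $K_X\equiv 0$ (and, with more care, for further surfaces with $\kappa(X)\ge 0$), and holds for blow ups of $\P^2$ at general points assuming Conjecture~\ref{SHGHconj}; the general case --- equivalently the general case of \ref{BNC1} --- remains the open core.
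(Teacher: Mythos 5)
The statement you were asked about is a conjecture, explicitly presented by the paper as open, so there is no proof of record to compare against; you correctly recognized this and gave a survey rather than a proof. Your survey is accurate and consistent with the paper: the direct derivation of Conjecture~\ref{BNC1} from a bound $b_X$ as in Conjecture~\ref{BNC3} --- pass to the Zariski decomposition $C=P+N$, note $C^2\geq N^2$, bound $N^2\geq m^2\sum_i N_i^2$ using negative definiteness, nonnegativity of the off-diagonal intersections and $b_i\leq m$, then invoke Exercise~\ref{LinIndep} to cap the number of components at $\rho(X)-1$ --- recovers exactly the bound $B_X\leq(\rho(X)-1)b_X$ that the paper obtains by chaining Theorems~\ref{BNC2and3} and \ref{BNC1and2} through Conjecture~\ref{BNC2}, just in one step rather than two. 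The adjunction argument giving $b_X=-2$ when $-K_X$ is nef or $K_X$ is numerically trivial is correct and standard; your observation that assuming SHGH forces $C^2\geq -1$ for reduced irreducible curves on a blowup of $\P^2$ at general points is precisely Exercise~\ref{corOfSHGH}; and the positive-characteristic failure via graphs of iterated Frobenius is the Hartshorne exercise the paper cites, which is exactly why the conjecture restricts to rational or complex surfaces. In short, you did not prove the conjecture, but you were not in a position to, and every reduction and conditional result you state is correct.
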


Over fields of positive characteristic bounded negativity can fail; see \cite[Exercise V.1.10]{3refHart}.
But no counterexamples are known for rational surfaces in any characteristic or for smooth
complex projective surfaces.

All three versions of the BNC given above are equivalent.
For the equivalence of the second and third, see \cite[Proposition 5.1]{3refBNC}.
The method of proof is to apply Zariski decompositions.
In the following theorem statement, $\rho(X)$ is the Picard number for $X$ (i.e., the rank of the N\'eron-Severi group).

\begin{theorem}\label{BNC2and3}
Conjecture \ref{BNC2} holds for $X$ if and only if Conjecture \ref{BNC3} holds for $X$.
Moreover, given a bound $b_X<0$ for the latter, we can always take $B_X\leq (\rho(X)-1)b_X$ for the 
bound in the former.
\end{theorem}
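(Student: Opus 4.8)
The plan is to prove the two implications separately, with the quantitative bound falling out of the harder direction, and the only real tool being the Zariski decomposition (for a general smooth projective $X$ this is the classical existence result quoted above; on a blow up of the plane it is Theorem~\ref{ZDthm}). That Conjecture~\ref{BNC2} implies Conjecture~\ref{BNC3} is immediate: an effective reduced irreducible divisor is in particular an effective reduced divisor, so the same constant works and one may take $b_X=B_X$. Everything of substance is in the converse.

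So suppose $b_X<0$ is a bound as in Conjecture~\ref{BNC3}, and let $C$ be an arbitrary effective reduced divisor on $X$. I would write its Zariski decomposition $C=P+N$, with $P$ nef, $P\cdot N=0$, and $N=b_1C_1+\cdots+b_sC_s$ either zero or a positive rational combination of distinct prime components $C_i$ of $C$ whose intersection matrix $(C_i\cdot C_j)$ is negative definite. Since $P$ is nef, $P^2\ge0$, and since $P\cdot C_i=0$ for each $i$ in the support of $N$ we get $P\cdot N=0$, hence $C^2=P^2+N^2\ge N^2$. Thus it suffices to bound $N^2$ from below; if $N=0$ then $C^2\ge0>(\rho(X)-1)b_X$ and we are done.

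To bound $N^2=\sum_{i,j}b_ib_j(C_i\cdot C_j)$ I would assemble four facts. First, because $C$ is \emph{reduced} and the positive part $P=C-N$ of the Zariski decomposition is an honest nonnegative combination of the components of $C$, each coefficient satisfies $0<b_i\le1$. Second, the $C_i$ are distinct prime divisors, so $C_i\cdot C_j\ge0$ for $i\ne j$, whence $N^2\ge\sum_i b_i^2 C_i^2$. Third, each $C_i$ is an effective reduced irreducible divisor, so $C_i^2\ge b_X$, and since $0<b_i\le1$ and $b_X<0$ we get $b_i^2C_i^2\ge b_i^2 b_X\ge b_X$. Fourth, negative definiteness of $(C_i\cdot C_j)$ forces the $C_i$ to be linearly independent in $\Cl(X)\otimes\Q$ and to span a negative definite subspace, so by the Hodge Index Theorem (the intersection form on the N\'eron--Severi group has signature $(1,\rho(X)-1)$) we have $s\le\rho(X)-1$; this is the analogue of \Exercise~\ref{LinIndep}. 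Combining, $C^2\ge N^2\ge\sum_{i=1}^s b_X=s\,b_X\ge(\rho(X)-1)b_X$, which is the asserted bound $B_X=(\rho(X)-1)b_X$.

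Once the Zariski decomposition is in hand the rest is bookkeeping; the one input that deserves care — and the only genuine obstacle — is that $N\le C$ coefficientwise, so that the $b_i$ are bounded by $1$ for reduced $C$. This is part of the standard Zariski decomposition package (compare the explicit form of $P$ in Theorem~\ref{ZDthm}), but it is precisely what makes the ``reduced'' hypothesis usable: without it the $b_i$ could be arbitrarily large and $N^2$ uncontrolled. I would also note that passing from ``rational surface'' or ``complex surface'' to the availability of a Zariski decomposition uses only the general existence theorem, not the blow-up-of-the-plane hypothesis, so nothing extra is needed in that generality.
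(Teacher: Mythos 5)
Your proof is correct and follows essentially the same argument as the paper's: take the Zariski decomposition of the reduced divisor, use reducedness to get the coefficients of $N$ at most $1$, drop the nonnegative cross terms, bound each $b_i^2 C_i^2 \ge b_X$, and bound the number of components by $\rho(X)-1$ via negative definiteness. The only cosmetic differences are that the paper bounds $n_i^2N_i^2\ge N_i^2$ and then takes $\sum_i N_i^2\ge(\rho(X)-1)\min_i N_i^2$, whereas you bound each term by $b_X$ directly, and that you invoke the Hodge index theorem for the rank bound where the paper cites Example~\ref{LinIndep} (stated only for blow-ups of the plane) — your phrasing is in fact the right generality for the theorem as stated.
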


\begin{proof}
Certainly, if self-intersections of reduced curves are bounded below, then so are the 
self-intersections of irreducible curves on $X$.
Conversely, let $D$ be any reduced effective divisor. Write
$D=C_1+\cdots+C_r$ where the $C_i$ are distinct reduced, irreducible curves.
Let $D=P+N$ be a Zariski decomposition (see Theorem \ref{ZDthm}), so $N=n_1N_1+\cdots+n_sN_s$ with
$0\leq n_i\leq 1$ rational for all $i$ and the $N_i$ prime divisors of negative self-intersection. 
Since the intersection matrix for $N$ is negative definite, we have
$D^2\geq N^2\geq (n_1N_1)^2+\cdots+(n_sN_s)^2\geq N_1^2+\cdots+N_s^2$,
and since the components $N_i$ are linearly independent by Example \ref{LinIndep} we have
$N_1^2+\cdots+N_s^2\geq (\rho(X)-1)\min_i\{N_i^2\}\geq (\rho(X)-1)b_X$.
\end{proof}

Now we show that the first and second versions are equivalent.

\begin{theorem}\label{BNC1and2}
Conjecture \ref{BNC1} holds for $X$ if and only if Conjecture \ref{BNC2} holds for $X$,
using the same bound $B_X$.
\end{theorem}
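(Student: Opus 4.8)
The plan is to prove the two implications separately; the substance lies entirely in the direction Conjecture \ref{BNC2} $\Rightarrow$ Conjecture \ref{BNC1}. For Conjecture \ref{BNC1} $\Rightarrow$ Conjecture \ref{BNC2}, I would just observe that a reduced effective divisor $C$ has every component of multiplicity $1$, so taking $m=1$ in Conjecture \ref{BNC1} gives $C^2 \ge B_X$ with the same constant. One may also assume throughout that $B_X \le 0$: if Conjecture \ref{BNC2} holds with a positive bound it holds a fortiori with bound $0$, and a positive $B_X$ can never work for Conjecture \ref{BNC1} since replacing $C$ by a fixed curve and letting $m \to \infty$ sends $C^2/m^2$ to $0$.

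For the converse, write an arbitrary effective divisor as $C = \sum_{i=1}^r a_i C_i$ with the $C_i$ distinct reduced irreducible curves and $1 \le a_i \le m$, put $m' = \max_i a_i \le m$, and introduce the \emph{layer divisors} $D_k := \sum_{i:\, a_i \ge k} C_i$ for $k = 1, \dots, m'$. Each $D_k$ is a nonzero reduced effective divisor, their supports are nested, $D_1 \supseteq D_2 \supseteq \cdots \supseteq D_{m'}$, and $C = D_1 + \cdots + D_{m'}$ because $C_i$ occurs in exactly the first $a_i$ layers. Expanding the self-intersection,
$$C^2 = \sum_{k,l=1}^{m'} D_k\cdot D_l = \sum_{k=1}^{m'} D_k^2 + 2\sum_{1\le k<l\le m'} D_k\cdot D_l ,$$
and for $k<l$ the divisor $D_k - D_l$ is reduced, effective, and shares no component with $D_l$ (its components are the $C_i$ with $k \le a_i < l$, those of $D_l$ are the $C_i$ with $a_i \ge l$), so $D_k\cdot D_l = D_l^2 + (D_k - D_l)\cdot D_l \ge D_l^2$. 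Substituting and collecting, the coefficient of $D_l^2$ becomes $1 + 2(l-1) = 2l-1$, giving $C^2 \ge \sum_{l=1}^{m'} (2l-1)\,D_l^2$.

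To conclude, apply Conjecture \ref{BNC2} to each reduced divisor $D_l$, obtaining $D_l^2 \ge B_X$; since $2l-1>0$ and $B_X \le 0$ this yields $(2l-1)D_l^2 \ge (2l-1)B_X$, and summing, $C^2 \ge B_X\sum_{l=1}^{m'}(2l-1) = (m')^2 B_X \ge m^2 B_X$, i.e. $C^2/m^2 \ge B_X$. I expect the crux to be the combinatorial bookkeeping: recognizing the layer decomposition, verifying that the only cross terms one cannot control directly are bounded below by $D_l^2$ precisely because $D_k - D_l$ and $D_l$ have disjoint support, and checking that the telescoped coefficients sum to $(m')^2$. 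A slightly heavier alternative would be to Zariski-decompose $C = P + N$ with $N = \sum n_i N_i$, $0 \le n_i \le m$, and use $C^2 \ge N^2 \ge \sum_i (n_i N_i)^2 \ge m^2 \sum_i N_i^2$ together with Theorem \ref{BNC2and3}; the layer argument, however, is self-contained and gives the sharp constant (witness $C = mE$ on the blow-up of a single point).
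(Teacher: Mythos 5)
Your proof is correct, and it proceeds by a genuinely different route than the paper. The paper reduces to a single reduced subdivisor: it proves (Lemma \ref{nonreducedLemma}, with a somewhat intricate two-stage reduction through $P$- and $N$-parts) that for $C=\sum m_iC_i$ there is a nonempty subset $\{C_{i_1},\ldots,C_{i_r}\}$ of components with $C^2\geq m^2(C_{i_1}+\cdots+C_{i_r})^2$, and then applies Conjecture \ref{BNC2} to that one reduced divisor. Your argument instead slices $C$ into the nested reduced layers $D_k=\sum_{a_i\ge k}C_i$, uses only the elementary fact that effective divisors with no common component intersect nonnegatively to bound all cross terms, and arrives at the explicit weighted inequality $C^2\geq\sum_{l=1}^{m'}(2l-1)D_l^2\geq (m')^2B_X\geq m^2B_X$. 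This is self-contained (no appeal to an auxiliary lemma), the bookkeeping is transparent, and you even isolate the exact constant $(m')^2$ with the witness $C=mE$, whereas the paper's lemma is also invoked for the later comparison with Zariski decompositions and so is stated in a form that carries more machinery than this theorem alone needs. Both proofs rely, tacitly in the paper and explicitly in yours, on the harmless normalization $B_X\le 0$, which is forced in Conjecture \ref{BNC1} by letting $m\to\infty$ on a fixed curve; it is good that you flagged this. Your closing remark about the Zariski-decomposition alternative correctly identifies the connection to Theorem \ref{BNC2and3} in the paper, which uses precisely that route for the equivalence of Conjectures \ref{BNC2} and \ref{BNC3}.
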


\begin{proof}
Clearly, Conjecture \ref{BNC1} implies Conjecture \ref{BNC2}.
Conversely, given an effective divisor $C=m_1C_1+\cdots+m_nC_n$,
we have $C^2/m^2\geq (C_{i_1}+\cdots+C_{i_r})^2\geq B_X$
for some subset of components $C_{i_j}$, by Lemma \ref{nonreducedLemma},
where $m$ is the maximum of the $m_i$.
\end{proof}

\begin{lemma}\label{nonreducedLemma}
Let $X$ be a smooth projective surface.
Let $C=m_1C_1+\cdots+m_nC_n$ for distinct reduced irreducible curves $C_i$
on $X$ and integers $m\geq m_i>0$ with $m=\max(m_1,\ldots,m_n)$.
Then for some nonempty subset $C_{i_1},\ldots,C_{i_r}$ of the components $C_i$ we have
$C^2\geq m^2(C_{i_1}+\cdots+C_{i_r})^2$.
\end{lemma}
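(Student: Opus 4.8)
The plan is to decompose $C$ into reduced ``layers'' and then run an averaging argument over those layers. For each integer $\ell$ with $1\leq\ell\leq m$, set $D_\ell=\sum_{i\,:\,m_i\geq\ell}C_i$. Since $m=\max_i m_i$, for every such $\ell$ there is at least one index $i$ with $m_i\geq\ell$, so each $D_\ell$ is a \emph{nonempty} reduced divisor whose set of components is a subset of $\{C_1,\ldots,C_n\}$. Counting, for a fixed $i$, the number of values $\ell\in\{1,\ldots,m\}$ for which $C_i$ is a component of $D_\ell$ gives exactly $m_i$ (because $m_i\leq m$), so $C=D_1+D_2+\cdots+D_m$.

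The key point is the inequality $D_j\cdot D_k\geq D_{\max(j,k)}^2$ for all $j,k\in\{1,\ldots,m\}$. Indeed, if $j\leq k$ then every component of $D_k$ is also a component of $D_j$, so $D_j-D_k$ is an effective divisor sharing no component with $D_k$; since two distinct reduced irreducible curves on a smooth projective surface meet nonnegatively, $(D_j-D_k)\cdot D_k\geq0$, whence $D_j\cdot D_k\geq D_k^2=D_{\max(j,k)}^2$. The case $j>k$ follows by symmetry. Expanding the square and using this bound,
\[ C^2=\sum_{j=1}^m\sum_{k=1}^m D_j\cdot D_k\;\geq\;\sum_{j=1}^m\sum_{k=1}^m D_{\max(j,k)}^2\;=\;\sum_{\ell=1}^m(2\ell-1)\,D_\ell^2, \]
since exactly $\ell^2-(\ell-1)^2=2\ell-1$ ordered pairs $(j,k)$ in $\{1,\ldots,m\}^2$ have $\max(j,k)=\ell$.

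Finally, because $\sum_{\ell=1}^m(2\ell-1)=m^2$, the last sum is at least $m^2\min_{1\leq\ell\leq m}D_\ell^2$. Choosing $\ell_0$ to attain this minimum and letting $C_{i_1},\ldots,C_{i_r}$ be the components of $D_{\ell_0}$ (a nonempty subset of $\{C_1,\ldots,C_n\}$), we get $C^2\geq m^2(C_{i_1}+\cdots+C_{i_r})^2$, as required. The only steps needing any care are the inequality $D_j\cdot D_k\geq D_{\max(j,k)}^2$ and the observation that no $D_\ell$ with $1\leq\ell\leq m$ is empty; both are immediate from the remarks above, so I do not expect a genuine obstacle — the remainder is bookkeeping with the binomial-type identity $\sum_\ell(2\ell-1)=m^2$.
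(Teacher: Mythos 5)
Your proof is correct, and it takes a genuinely different route from the paper's. You stratify $C$ by multiplicity into the decreasing chain of reduced divisors $D_\ell=\sum_{m_i\geq\ell}C_i$, note that $C=D_1+\cdots+D_m$, prove the clean inequality $D_j\cdot D_k\geq D_{\max(j,k)}^2$ (using that $D_j-D_{\max(j,k)}$ is effective with support disjoint from $D_{\max(j,k)}$), and then close with the averaging identity $C^2\geq\sum_{\ell=1}^m(2\ell-1)D_\ell^2\geq m^2\min_\ell D_\ell^2$. The paper instead runs a two-stage, Zariski-style sign decomposition: first split $C=P+N$ according to the sign of $C\cdot C_i$ and argue $C^2\geq N^2$, then (after relabeling so that $C\cdot C_i<0$ for all $i$) split again according to the sign of $C_j\cdot\sum_i C_i$, replace multiplicities by $1$ on the negative part to get a reduced $Q$, and chain inequalities to reach $C^2\geq m^2Q^2$. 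Your approach buys a single uniform argument with no case analysis and produces the subset explicitly as a sublevel set $\{i:m_i\geq\ell_0\}$; the paper's approach is more in the spirit of the surrounding material on Zariski decompositions and makes visible which components are ``causing'' the negativity, at the cost of a nested case split and a relabeling step. Both are complete; yours is arguably the more transparent of the two.
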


\begin{proof}
If $C\cdot C_i\geq 0$ for all $i$, we may assume that $m=m_n$, and then $C^2\geq m^2C_n^2$, so
assume that $C\cdot C_i< 0$ for some $i$. Let $P=\sum_{C\cdot C_i\geq0}m_iC_i$ and $N=\sum_{C\cdot C_j<0}m_jC_j$. 
Note that $PN\geq 0$ since $P$ and $N$ have no components in common.
Then $C^2=CP+CN\geq CN =PN + N^2\geq N^2$. Note that $N\cdot C_j<0$ for each $C_j$ that appears in $N$.

It now is enough to prove the claim for $N$, so we are reduced to the case that
$C=m_1C_1+\cdots+m_nC_n$ with $C\cdot C_i<0$ for all $i$ and $m\geq m_i$ for all $i$.
We have $0>C\cdot m_iC_i\geq C\cdot mC_i$, hence $0>C^2=C\cdot \sum_i m_iC_i\geq C\cdot \sum_i mC_i=mC\cdot \sum_i C_i$.
Now write $C=P+N$ where now $P$ is the sum of the terms $m_jC_j$ in $C$ such that $C_j\cdot \sum_i C_i\geq0$
and $N$ is the sum of those terms $m_jC_j$ with $C_j\cdot \sum_i C_i<0$. 
Let $Q$ be the same as $N$ except where the coefficient $m_j$ of $C_j$ in each term is replaced by 1.
Then $0>C\cdot \sum_i C_i=(P+N)\cdot\sum_i C_i\geq N\cdot \sum_i C_i\geq mQ\cdot  \sum_i C_i \geq mQ^2$.
Thus $C^2\geq C\cdot m\sum_i C_i\geq m^2Q^2$.
\end{proof}

\begin{example}\label{nonreducedExample}
Given an effective divisor $C=m_1C_1+\cdots+m_nC_n$ it's clear 
in general that $C^2\geq m^2(C_1+\cdots+C_n)^2$ is false,
when $m$ is the maximum of the $m_i$. Take $C=L_1+2L_2$
for two different lines $L_i$ in the plane. Then $C^2=9$, but
$2^2(L_1+L_2)^2=16$. However,
in the proof of Lemma \ref{nonreducedLemma}, we reduce to the case that
$C=m_1C_1+\cdots+m_nC_n$ with $C\cdot C_i<0$ for all $i$.
One might hope in this case that $C^2\geq m^2(C_1+\cdots+C_n)^2$, but alas no.
Blow up the 11 points shown in Figure \ref{ConicFigure} and let 
$A$ and $B$ be the proper transforms of $A'$ and $B'$.
Then $(A+2B)^2=A^2+4AB+4B^2=-6<-4=2^2(A+B)^2$.
However we do have $(A+2B)^2\geq 2^2B^2$.

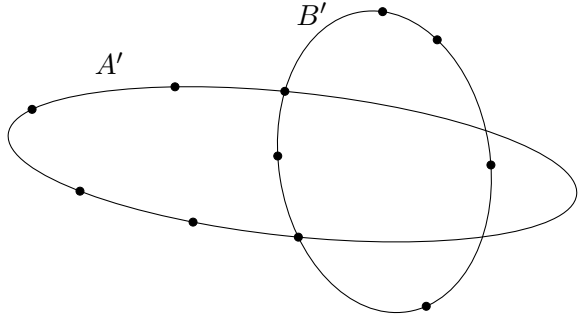
\begin{figure}[htbp]
\begin{center}
\definecolor{xdxdff}{rgb}{0,0,0}
\definecolor{qqqqff}{rgb}{0,0,0}
\begin{tikzpicture}[line cap=round,line join=round,>=triangle 45,x=1.0cm,y=1.0cm]
\clip(-4.3,.5) rectangle (7.3,5.2);
\draw [rotate around={-6.078404330331954:(0.32144191980215925,2.8889288784633465)}] (0.32144191980215925,2.8889288784633465) ellipse (3.7996388686780844cm and 0.9562451906165926cm);
\draw [rotate around={-81.13232785816251:(1.5439407511733567,2.9229148849221755)}] (1.5439407511733567,2.9229148849221755) ellipse (2.0186781860047476cm and 1.404276482564587cm);
\draw (-2.44,4.5200000000000005) node[anchor=north west] {$A'$};
\draw (0.24,5.180000000000001) node[anchor=north west] {$B'$};
\begin{scriptsize}
\draw [fill=qqqqff] (-3.14,3.62) circle (1.5pt);
\draw [fill=qqqqff] (-1.24,3.92) circle (1.5pt);
\draw [fill=qqqqff] (0.22,3.86) circle (1.5pt);
\draw [fill=qqqqff] (0.4,1.92) circle (1.5pt);
\draw [fill=qqqqff] (-1.0,2.12) circle (1.5pt);
\draw [fill=qqqqff] (1.52,4.92) circle (1.5pt);
\draw [fill=qqqqff] (2.96,2.88) circle (1.5pt);
\draw [fill=qqqqff] (2.1,1.0) circle (1.5pt);
\draw [fill=xdxdff] (2.2460053338712918,4.5453368542140495) circle (1.5pt);
\draw [fill=xdxdff] (-2.5031928667690013,2.5317226239881028) circle (1.5pt);
\draw [fill=xdxdff] (0.12592136284642663,2.9992571821416436) circle (1.5pt);
\end{scriptsize}
\end{tikzpicture}
\caption{Two conics, $A'$ and $B'$, in the plane, one through 6 points, the other through 7 points,
giving 11 points with 2 in common.}
\label{ConicFigure}
\end{center}
\end{figure}
\end{example}

\subsection{$H$-constants.}
Given the longstanding difficulty of resolving BNC, it is worth considering variations on the problem,
such as the problem of $H$-constants. A number of different versions have been defined
\cite{3refIMRN,
3refDHS,
3refP,
3refSz}.
Here we define them for any curve (typically they have been defined for reduced curves).

\begin{definition}\label{Hconst}
Let $C_1,\ldots,C_r$ be distinct reduced irreducible plane curves and let $C=m_1C_1+\cdots+m_rC_r$ 
where $m_i>0$ are integers with $m=\max(m_1,\ldots,m_r)$.
Then for any nonempty finite subset $S\subseteq \P^2_\KK$ we define
$$H(C,S)=\frac{d^2-\sum_{p\in S}(\mult_pC)^2}{m^2|S|},$$
where $d=\deg(C)$.
We also define
$$H(C)=\inf\Big\{H(C,S): S\subseteq \P^2, 0 < |S| <\infty\Big\},$$
$$H_{red}(\P^2_\KK)=\inf\Big\{H(D): D\hbox{ is a reduced curve in }\P^2_\KK\Big\},$$
$$H_{rir}(\P^2_\KK)=\inf\Big\{H(D): D\hbox{ is a reduced, irreducible curve in }\P^2_\KK\Big\}$$
and
$$H(\P^2_\KK)=\inf\Big\{H(D): D\hbox{ is a curve in }\P^2_\KK\Big\}$$
\end{definition}
(Clearly $H(\P^2_\KK)\leq H_{red}(\P^2_\KK)\leq H_{rir}(\P^2_\KK)$.)

\begin{exercise}\label{CvP2}
Let $C$ be a plane curve. Then
$$\inf\Big\{H(C,S): S\subseteq C, 0 < |S| <\infty\Big\}=\inf\Big\{H(C,S): S\subseteq \P^2, 0 < |S| <\infty\Big\}.$$
\end{exercise}

\SolnEater{\vskip\baselineskip 
\noindent{\it Details}: Clearly $\inf\Big\{H(C,S): S\subseteq C, 0 < |S| <\infty\Big\}\geq \inf\Big\{H(C,S): S\subseteq \P^2, 0 < |S| <\infty\Big\}$.
By taking $S$ to contain lots of points on the curve, we see both infimums are negative.
If $H(C,S)<0$ but $S$ also contains points off the curve, removing those points from $S$ decreases
the denominator of $H(C,S)$ but leaves the numerator the same, hence gives a more negative ratio.
Thus $\inf\Big\{H(C,S): S\subseteq C, 0 < |S| <\infty\Big\}\leq\inf\Big\{H(C,S): S\subseteq \P^2, 0 < |S| <\infty\Big\}.$
\newline\qedsymbol\vskip\baselineskip}

\begin{theorem}\label{HconstBNCthm}
If $H_{rir}(\P^2_\KK)>-\infty$, then Conjecture \ref{BNC1} holds for every smooth projective 
rational surface $X$ over the field $\KK$.
\end{theorem}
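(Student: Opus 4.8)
The plan is first to reduce to Conjecture \ref{BNC3}, which is legitimate since Theorems \ref{BNC1and2} and \ref{BNC2and3} show Conjectures \ref{BNC1}, \ref{BNC2}, \ref{BNC3} are all equivalent; so it suffices to bound $C^2$ from below over all reduced irreducible curves $C$ on a given rational $X$. Next I would reduce to the case that $X$ is a blow-up of $\P^2$: every smooth projective rational surface admits, after finitely many further blow-ups, a birational morphism to $\P^2$ (pass to a minimal model $\P^2$ or ${\mathbb F}_n$, and in the latter case use elementary transformations to reach ${\mathbb F}_1=\Bl_p\P^2$, which maps to $\P^2$). If $X'\to X$ is such a blow-up and $C\subseteq X$ is reduced irreducible, its proper transform $\widetilde C\subseteq X'$ is again reduced irreducible with $\widetilde C^{\,2}\le C^2$, so a bound valid for $X'$ is valid for $X$. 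Hence I may assume $\pi\colon X\to\P^2$ is a composition of $n$ blow-ups, so $\Cl(X)$ has the orthogonal basis $L,E_1,\dots,E_n$ with $L^2=1$, $E_i^2=-1$.

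Now let $C\subseteq X$ be reduced irreducible. If $C$ is $\pi$-exceptional it is a component of the exceptional locus and $C^2\ge -n$, so assume not; then $C'=\pi(C)$ is a reduced irreducible plane curve of some degree $d\ge 1$, and writing $C=dL-\sum_i m_iE_i$ we have $C^2=d^2-\sum_i m_i^2$, where $m_i$ is the multiplicity at $p_i$ of the iterated proper transform of $C'$. Suppose first that every center $p_i$ is an honest point of $\P^2$, and put $S=\{p_i:m_i>0\}\subseteq\P^2$. Since $C'$ is reduced irreducible, $H_{rir}(\P^2_\KK)\le H(C')\le H(C',S)$, and, using $m=1$ in Definition \ref{Hconst}, $H(C',S)=\big(d^2-\sum_{p\in S}(\mult_pC')^2\big)/|S|=C^2/|S|$; therefore $C^2\ge |S|\,H_{rir}(\P^2_\KK)\ge n\,H_{rir}(\P^2_\KK)$ because $H_{rir}(\P^2_\KK)\le -1<0$ (lines alone already give $H$-constants arbitrarily close to $-1$). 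The same bound covers the exceptional case, so $b_X=n\,H_{rir}(\P^2_\KK)$ works when no infinitely near points occur — and hence Conjecture \ref{BNC1} holds for such $X$.

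The remaining, and hardest, case is that of blow-ups at infinitely near points: then the centers $p_i$ of the infinitely near blow-ups are not points of $\P^2$, so the $H$-constant does not directly control those $m_i$. I would keep from the previous step the bound $\sum_{p_i\in\P^2}m_i^2\le d^2-k\,H_{rir}(\P^2_\KK)$ obtained by applying the $H$-constant with $S$ the set of honest centers ($k=|S|$), and then try to control the remaining summands using the proximity inequalities $m_i\ge\sum_{p_k\ \mathrm{proximate\ to}\ p_i}m_k$ — valid because $C$, being reduced, irreducible, and non-exceptional, satisfies $C\cdot\mathcal E_i\ge 0$ for the proper transform $\mathcal E_i=E_i-\sum_{k\to i}E_k$ of each exceptional curve of $\pi$ — together with the genus inequality $\sum_i\binom{m_i}{2}\le\binom{d-1}{2}$ coming from $p_a(C')\ge 0$. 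Organizing $\sum_i m_i^2$ by the honest center each blow-up lies over and bounding each infinitely near $m_i$ by the multiplicity at its center, one aims for a bound $\sum_i m_i^2\le d^2+c_X$ with $c_X$ depending only on the combinatorics of $\pi$, whence $C^2\ge -c_X=:b_X$. The genuine difficulty — where the full strength of the hypothesis $H_{rir}(\P^2_\KK)>-\infty$ is indispensable, and the naive genus bound is not enough — is arranging this bookkeeping so that the resulting constant is truly independent of $\deg C'$; this is the step I expect to be the main obstacle.
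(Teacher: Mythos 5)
Your reduction to Conjecture \ref{BNC3} and then to the case where $X\to\P^2$ is a composition of blow-ups is correct and matches the paper's setup, and your computation in the case that all centers are honest points of $\P^2$ — that $C^2 = d^2-\sum_{p\in S}(\mult_pD)^2 = |S|\,H(D,S)\ge |S|\,H_{rir}(\P^2_\KK)$ — is exactly the identity the paper uses. You are also right that the exceptional components contribute a bound depending only on the number of blow-ups. The problem is the last paragraph, and you correctly flag it yourself: the infinitely near case is a genuine gap in what you have written, not a technicality.

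Where you differ from the paper is that the paper does not attempt the proximity/genus bookkeeping at all. It sidesteps the issue with a structural observation about rational surfaces: since $X$ is a blow-up of some Hirzebruch surface $H_n$ (possibly at infinitely near points), one blows up $n$ \emph{additional general} points of $X$ to obtain $Y\to X$, and the paper asserts that this $Y$ is \emph{also} a blow-up of $\P^2$ at a finite set $S$ of \emph{distinct} points. Once one has such a $Y$, the $H$-constant argument goes through verbatim: every non-exceptional reduced irreducible $C$ on $Y$ is the strict transform of a plane curve $D$, $C^2=|S|\cdot H(D,S)$ exactly, and $C^2/|S|\to-\infty$ would force $H_{rir}(\P^2_\KK)=-\infty$. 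The insight you are missing is that one is free to choose the dominating surface $Y\to X$, and should choose it to have a distinct-points model of $\P^2$, rather than taking whichever birational morphism $X'\to\P^2$ the minimal-model reduction hands you and then trying to tame the infinitely near centers.

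You should also be aware that the alternative route you sketch is unlikely to close the gap even in principle. The $H$-constant hypothesis controls $\sum_{p\in S}(\mult_pD)^2$, but the multiplicities $m_q$ of $D$ at infinitely near points $q$ over $p\in S$ only satisfy $m_q\le\mult_pD$, which can be of the same order as $\deg D$. The genus inequality $\sum_i\binom{m_i}{2}\le\binom{d-1}{2}$ likewise only gives $\sum_i m_i^2\lesssim d^2$, so the resulting lower bound on $C^2$ degrades linearly in $\deg D$ rather than being independent of it. In other words, no combination of $H$-constants plus proximity plus genus, applied naively point-by-point, produces a bound that is uniform in the degree, which is what the theorem requires. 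This is why the paper avoids the infinitely near case entirely rather than trying to estimate through it.
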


\begin{proof}
Consider a birational morphism $Y\to X$ of smooth projective surfaces.
Let $C'$ be a reduced, irreducible curve on $X$ and $C$ its proper transform on $Y$.
Then $C^2\leq (C')^2$. Thus if Conjecture \ref{BNC3} holds for $Y$, it also holds for $X$.
However, if $X$ is rational, it is a blow up of points (possibly infinitely near) 
on a Hirzebruch surface $H_n$ for some $n$. By blowing up $n$ general points 
of $H_n$, we obtain a surface that is also obtained by blowing up distinct
points of $\P^2$. (Note, for example, that by blowing up $n$ points $p_i$ on a line in $\P^2$
and any point $p$ off that line, we get a surface $B$ which by contracting  
the proper transforms of the lines through $p$ and each $p_i$ gives
a birational morphism $B\to H_n$.) Thus by blowing up $n$ general 
points of $X$ we get a birational morphism $Y\to X$, where there is also 
a birational morphism $Y\to \P^2$
obtained by blowing up a finite set $S$ of distinct points of $\P^2$. 

If Conjecture \ref{BNC3} did not hold for $Y$, there would be an infinite
sequence $C_1, C_2, \ldots$ of reduced, irreducible curves on $Y$
such that $C_1^2> C_2^2 > \cdots$. In all but finitely many cases,
$C_i$ maps to a plane curve $D_i$ under $Y\to \P^2$,
and so $C_i$ is the proper transform of $D_i$, hence
we have $C_i^2/|S|=(\deg(D_i)^2 - \sum_{p\in S}(\mult_pD_i)^2)/|S|=H(D_i,S)$,
which implies $H_{rir}(\P^2)=-\infty$. Thus $H_{rir}(\P^2_\KK)>-\infty$ implies Conjecture \ref{BNC3}
which in turn implies Conjecture \ref{BNC1}. 
\end{proof}

\begin{example}\label{HCharp}
In fact $H_{red}(\P^2_\KK)=-\infty$ if $\chr(\KK)=p>0$. Let $C$ be the union of all of the lines in $\P^2$
defined over a finite field ${\mathbb F}_q\subseteq\KK$ of order $q$.
There are $q^2+q+1$ such lines with $q^2+q+1$ crossing points,
and each point lies on $q+1$ lines. Let $S$ be the points. Then $H(C, S)=-q$,
so $H_{red}(\P^2_\KK)=-\infty$.
\end{example}

\begin{openproblem}\label{Hred}
Is $H(\P^2_\KK)=H_{red}(\P^2_\KK)$ true for all $\KK$?
\end{openproblem}

\begin{openproblem}\label{Cubics}
Is $H_{red}(\P^2_\C)= -4$? We know $H_{red}(\P^2_\C)\leq -4$
due to sequences $C_n$ of reducible curves whose components are plane cubics
(see \cite{3Roulleau14, 3RU:Chern, 3refBHRT}), but 
no complex plane curve $C$ is known with $H(C)\leq -4$.
Thus it is of interest to find some examples or show that none exist.
\end{openproblem}

\begin{openproblem}\label{Hrir}
Is $H_{rir}(\P^2_\KK)=-2$?
In fact, there is no reduced irreducible plane curve $C$ known over any $\KK$
with $H(C)\leq -2$. 
\end{openproblem}

\begin{exercise}\label{Hirr-2}
One can show that $H_{rir}(\P^2_\KK)\leq-2$ over any $\KK$ by giving a sequence of 
reduced, irreducible curves $C_n$ with $\lim_{n\to\infty}H(C_n)=-2$.
\end{exercise}

\SolnEater{\vskip\baselineskip 
\noindent{\it Details}: Take a general map of $\P^1$ into $\P^2$ of degree $n$.
The image is a rational curve $C_n$ of degree $n$ with $\binom{n-1}{2}$ nodes.
By Theorem \ref{singPtsBest}, $H(C_n)=H(C_n,S)$ where $S$ is some subset of
the nodes. Thus $H(C_n)=\frac{n^2-4s}{s}=\frac{n^2}{s}-4$. This is least when 
$s$ is most, so we take $s=\binom{n-1}{2}$ which gives
$H(C_n)=-2+\frac{6n-4}{n^2-3n+2}$, which in the limit gives $-2$.
\newline\qedsymbol\vskip\baselineskip}

\begin{exercise}\label{smoothHconst}
If $C$ is a smooth plane curve of degree $d$, $m\geq 1$ and $S$ any nonempty 
finite subset of $C$, then $H(mC)= -1<H(mC, S)$.
\end{exercise}

\SolnEater{\vskip\baselineskip 
\noindent{\it Details}: We have $H(mC,S)=\frac{m^2d^2-sm^2}{m^2s}=-1+\frac{m^2d^2}{sm^2}$,
where $s=|S|$. Thus the infimum $H(mC)$ over all $S$ is $-1$.
\newline\qedsymbol\vskip\baselineskip}

\begin{exercise}\label{reducedHconst}
If $C$ is a reduced plane curve, $m\geq 1$ and $S$ any nonempty 
finite subset of smooth points of $C$, then $H(mC)\leq -1<H(mC, S)$.
\end{exercise}

\SolnEater{\vskip\baselineskip 
\noindent{\it Details}: Essentially the same solution as for \Exercise\ \ref{smoothHconst} shows
$-1<H(mC, S)$ and the infimum over all such $S$ is $-1$. Thus $H(mC)\leq -1$.
It's possible to have $H(mC)< -1$, since $C$ could have singularities which lower the infimum.
\newline\qedsymbol\vskip\baselineskip}

\begin{exercise}\label{HCbnd}
If $C$ is any plane curve, then $-\infty<H(C)\leq -1$.\\
(Note: One can show $\min\{-\max\{m_1^2,\ldots,m_n^2,0\},-1\}\leq H(C)$, where
the $m_i$ are the multiplicities, if any, of the singular points of the
reduced curve $\operatorname{red}(C)$.)
\end{exercise}

\SolnEater{\vskip\baselineskip 
\noindent{\it Details}: We get $H(C)\leq -1$ by looking at $H(C,S)$ where $S$ is a subset of points
which are smooth points on the component of $\operatorname{red}(C)$
which occurs with maximum multiplicity in $C$. 
Let $d=\deg(C)$, $\mu=\max\{m_1^2,\ldots,m_n^2\}$ and let $m$ be the multiplicity
of the component of $C$ of maximum multiplicity. Given a finite set $S\subseteq\P^2$, 
let $a$ be the number of points of $S$ which are singular points of 
$\operatorname{red}(C)$, and let $b$ be the number of remaining points of $S$.
Then $H(C,S)\geq \frac{d^2-am^2\mu^2-bm^2}{(a+b)m^2}$. If $a=0$ this is at least $-1$.
If $a\neq0$, this is at least $-\mu^2$. This gives the result.
\newline\qedsymbol\vskip\baselineskip}

\begin{theorem}\label{singPtsBest}
Let $C$ be a reduced singular plane curve of some degree $d$, let $T$ be the set of singular points of $C$.
Then $H(C) < -1$ if and only if $|T|>0$ and $H(C,T)<-1$,
in which case $H(C)=H(C,U)$ for some nonempty subset $U\subseteq T$.
\end{theorem}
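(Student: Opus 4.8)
Since $C$ is reduced, every component occurs with multiplicity $1$, so in Definition \ref{Hconst} we have $m=1$ and $H(C,S)=\big(d^2-\sum_{p\in S}(\mult_pC)^2\big)/|S|$ for every nonempty finite $S$, where $d=\deg C$. The plan is to work throughout with the auxiliary quantity $\sigma(S):=\sum_{p\in S}\big((\mult_pC)^2-1\big)$, for which $H(C,S)+1=\big(d^2-\sigma(S)\big)/|S|$; thus $H(C,S)<-1$ holds precisely when $\sigma(S)>d^2$. The basic observation I would record first is that the contribution of a point $p$ to $\sigma$ equals $(\mult_pC)^2-1$, which is $\geq 3$ when $p\in T$ (since then $\mult_pC\geq 2$), is $0$ when $p$ is a smooth point of $C$, and is $-1$ when $p\notin C$. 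Hence $\sigma(S)=\sigma(S\cap T)-b$, where $b$ is the number of points of $S$ off $C$; in particular $\sigma$ is monotone on subsets of $T$, i.e.\ $\sigma(U)\leq\sigma(T)$ whenever $U\subseteq T$.

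The ``if'' direction is then trivial: if $|T|>0$ and $H(C,T)<-1$, then $H(C)\leq H(C,T)<-1$. For the ``only if'' direction and the last assertion, the key step I would isolate is a \emph{comparison claim}: for every nonempty finite $S\subseteq\P^2$, writing $U=S\cap T$, either $H(C,S)\geq -1$, or else $U\neq\varnothing$, $H(C,U)<-1$, and $H(C,S)\geq H(C,U)$. To prove this, let $a$ be the number of smooth points of $C$ lying in $S$, let $b$ be the number of points of $S$ off $C$, and set $x=d^2-\sigma(U)$, so that $|S|=|U|+a+b$ and, by the identity above, $H(C,S)+1=(x+b)/(|U|+a+b)$. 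If $U=\varnothing$, this equals $(d^2+b)/(a+b)>0$, so $H(C,S)>-1$. If $U\neq\varnothing$ and $x\geq 0$, then the numerator $x+b$ is nonnegative, so again $H(C,S)\geq -1$. If $U\neq\varnothing$ and $x<0$, then $H(C,U)=x/|U|-1<-1$, and the elementary identity $(H(C,S)+1)-(H(C,U)+1)=\big(|U|b-x(a+b)\big)/\big(|U|(|U|+a+b)\big)$ shows the left side is $\geq 0$ (because $x<0$ and $a,b\geq 0$); hence $H(C,S)\geq H(C,U)$. That proves the claim.

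With the comparison claim in hand, combine it with the bounds $H(C)\leq -1$ from \Exercise\ \ref{HCbnd} and $H(C)\leq H(C,U)$ for all nonempty $U\subseteq T$ to get $H(C)=\min\big(\{-1\}\cup\{H(C,U):\varnothing\neq U\subseteq T\}\big)$, which is an honest minimum because $T$ is finite. Now suppose $H(C)<-1$. Then $-1$ is not the minimizer, so $H(C)=H(C,U_0)<-1$ for some nonempty $U_0\subseteq T$; this $U_0$ is the subset $U$ claimed in the theorem. Moreover $H(C,U_0)<-1$ means $\sigma(U_0)>d^2$, so by monotonicity $\sigma(T)\geq\sigma(U_0)>d^2$, i.e.\ $H(C,T)<-1$, and $|T|\geq|U_0|>0$. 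This establishes the ``only if'' direction and completes the proof.

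The step I expect to be the main obstacle is keeping the case analysis in the comparison claim airtight, since a general $S$ may simultaneously contain singular points of $C$, smooth points of $C$, and points off $C$, each affecting numerator and denominator differently. If that becomes unwieldy, I would first apply \Exercise\ \ref{CvP2} to reduce at the outset to $S\subseteq C$, killing the parameter $b$; one is then left only with the dichotomy $x\geq 0$ versus $x<0$ above (with $b=0$), which governs whether, and how much, the smooth points of $S$ should be discarded.
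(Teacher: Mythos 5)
Your proof is correct, and the underlying idea is the same as the paper's: compare $H(C,S)$ with $H(C,U)$ for $U=S\cap T$, observing that smooth points (and points off the curve) can only raise the ratio once $H(C,U)<-1$. What you do differently is organizational, and it pays off. The paper implicitly restricts to $S\subseteq C$ (via \Exercise\ \ref{CvP2}) and then argues in several ad hoc cases — in particular it detours through the auxiliary fact that $H(C,T)\geq -1$ forces $H(C,U)\geq -1$ for every nonempty $U\subseteq T$ before it can conclude. Your $\sigma$-bookkeeping handles points off $C$ directly (the $-1$ per-point contribution), collapses the case analysis into one clean comparison claim, and produces the sharper intermediate statement $H(C)=\min\bigl(\{-1\}\cup\{H(C,U):\varnothing\neq U\subseteq T\}\bigr)$, from which both directions of the theorem and the attainment claim fall out in one stroke. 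The paper's extra lemma is not wasted effort (it is the fact behind Open Problem \ref{UvTOP} and \Exercise\ \ref{UvTEx}), but for proving this theorem your route is tighter and more transparent.
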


\begin{proof}
First, assume $|T|>0$ and $H(C,T)<-1$. Then clearly $H(C)<-1$, since $H(C)$ is an infimum over all finite subsets of $C$. 
Conversely, first assume $|T|=0$. Then $C$ is smooth, so $H(C)=-1$ by \Exercise\ \ref{smoothHconst}.

Next, assume $|T|>0$ but $H(C,T)\geq-1$. Let $|T|=t$ and let $m_1,\ldots,m_t$ be the multiplicities of $C$ at these points.
Let $S$ be a finite set of smooth points of $C$; let $s=|S|$.
Then $H(C,T)=(d^2-\sum_im_i^2)/t \geq -1$, so
$H(C,S\cup T) = (d^2-s-\sum_im_i^2)/(s+t) \geq (-t-s)/(s+t)=-1$.
Also, if $s>0$, then $H(C,S)>-1$ by \Exercise\ \ref{reducedHconst}.

Now assume $t>0$ and $H(C,T)\geq-1$, and let $U\cup V=T$ be a disjoint union of nonempty subsets.
Let $u=|U|$, $v=|V|$ and $m_p$ be the multiplicity of $C$ at a point $p$. Then
$H(C,U)=(d^2-\sum_{p\in U}m_p^2)/u$. If this were less than $-1$, then 
$-1\leq H(C,T)=(d^2-\sum_{p\in U}m_p^2-\sum_{p\in V}m_p^2)/(u+v) < (-u-\sum_{p\in V}m_p^2)/(u+v)\leq (-u-4v)/(u+v) < -1$.
Thus $H(C,U)\geq -1$ for every nonempty subset $U\subseteq T$. Now arguing as before for 
finite any set of smooth points $S$ of $C$ we have $H(C,S\cup U)\geq -1$. Thus $H(C)\geq -1$.

Finally, assume $H(C)<-1$. Thus there are finite subsets $W$ of $C$ with $H(C,W)<-1$.
For any finite subset $S$ of smooth points we saw $H(C,S)>-1$, so $W$ must include points from $T$.
Write $W$ as a disjoint union $W=S\cup U$ where $U\subseteq T$ and the points in $S$ are smooth.
If $H(C,U)\geq -1$, then we saw above that we would have $H(C,W)=H(C,S\cup U)\geq -1$.
Thus $H(C,U)< -1$, and so
$H(C,W)=H(C,S\cup U) = (d^2-s-\sum_{p\in U}m_p^2)/(s+u) = (-s+uH(C,U))/(s+u)>H(C,U)$,
where the last inequality is because $-1>H(C,U)$. Thus the least values of $H$
come from subsets of $T$, but $T$ is finite so the infimum is a minimum, and 
this minimum is attained for a subset of $T$.
\end{proof}

\begin{openproblem}\label{UvTOP}
Is there an example of a singular plane curve $C$ such that $H(C, U)< H(C, T)$
for some nonempty proper subset $U$ of the set $T$ of singular points of $\operatorname{red}(C)$?
\end{openproblem}

\begin{exercise}\label{UvTEx}
If $C$ is a reduced singular plane curve $C$ such that $H(C, U)< H(C, T)$
for some nonempty proper subset $U$ of the set $T$ of singular points of $C$,
then $H(C, T)<-4$.
\end{exercise}

\SolnEater{\vskip\baselineskip 
\noindent{\it Details}: Let $V$ be the complement of $U$ in $T$ and let $u=|U|$, $v=|V|$ and $t=|T|$.
Then $\frac{d^2-\sum_{p\in U}m_p^2}{u}=H(C, U)< H(C, T)=\frac{d^2-\sum_{p\in T}m_p^2}{t}\leq\frac{d^2-\sum_{p\in U}m_p^2-4v}{u+v}$.
Simplifying $\frac{d^2-\sum_{p\in U}m_p^2}{u}<\frac{d^2-\sum_{p\in U}m_p^2-4v}{u+v}$
gives $d^2-\sum_{p\in U}m_p^2<-4u$, hence $H(C, T)<\frac{-4u-\sum_{p\in V}m_p^2}{t}\leq \frac{-4u-4v}{u+v}=-4$.
\newline\qedsymbol\vskip\baselineskip}

Given Open Problem \ref{Hrir}, attention turned to the opposite extreme, curves which are unions of lines 
\cite{3refIMRN, 3refSz}. Here are the main facts (see \cite{3refIMRN}). Define
$$H_{rlin}(\P^2_\KK)=\inf\Big\{H(D): D\hbox{ is a reduced union of lines in }\P^2_\KK\Big\}.$$

We have:
$$-2.6\geq H_{rlin}(\P^2_\Q)\geq-3,$$ 
$$H_{rlin}(\P^2_\R)=-3,$$ 
and 
$$-3.358>-\frac{225}{67}\geq H_{rlin}(\P^2_\C)\geq -4.$$
The bound $-2.6\geq H_{rlin}(\P^2_\Q)$ comes from taking horizontal, vertical and diagonal lines.
The equality $H_{rlin}(\P^2_\R)=-3$ comes from $H_{rlin}(\P^2_\R)\geq-3$ (apply Theorem \ref{MelchiorThm})
and by giving examples $H(C)$ approaching $-3$ (there are lots; e.g., regular polygons with their lines of bilateral symmetry).
The bound $-\frac{225}{67}\geq H_{rlin}(\P^2_\C)$ comes from the Wiman arrangement. 
The bound $H_{rlin}(\P^2_\C)\geq -4$ comes from applying an inequality due to Hirzebruch \cite{3Hir83}:
given any complex arrangement of $n>3$ lines such that $t_n=t_{n-1}=0$, we have
\begin{equation*}\label{eq:Hirzebruch}
t_2+\frac34t_3\geq d+\sum\limits_{k\geq 5}(k-4)t_k.
\end{equation*}


\begin{exercise}\label{BndOnHoverC}
Let $L_1,\ldots,L_d$ be distinct lines in the $\P^2_\KK$. Assume that neither the lines nor any subset of $d-1$ of the lines are concurrent.
Also assume that $t_2=0$.
Let $C$ be the curve given by the union of the lines. Let $S$ be the set of the singular points of $C$, and set $s=|S|$.
\begin{enumerate}
\item[(a)] Then $H(C,S)\leq -2$; examples occur where equality holds.
\item[(b)] If $\KK=\C$, then $d\leq 3s/4$.
\item[(c)] If $\KK=\C$, then $H(C,S)\leq -2.25$; examples occur where equality holds.
\end{enumerate}
\end{exercise}

\SolnEater{\vskip\baselineskip 
\noindent{\it Details}: (a) By \Exercise\ \ref{t_k}(e), we have $s\geq d$. Thus $H(C,S)=\frac{d-\sum_kt_kk}{s}\leq 1 - \frac{3s}{s}=-2$.
Equality holds for the 7 lines of the Fano plane in characteristic 2.

(b) This follows from Hirzebruch's inequality, since $(3/4)s\geq (3/4)t_3\geq d$.

(c) Using (b) we have $H(C,S)=\frac{d-\sum_kt_kk}{s}\leq (3/4) - \frac{3s}{s}=-2.25$.
Equality holds for the Fermat arrangement with $n=3$. 
\newline\qedsymbol\vskip\baselineskip}

\begin{openproblem}\label{Hlines}
Can more be said about $H_{rlin}(\P^2_\Q)$ and $H_{rlin}(\P^2_\C)$?
\end{openproblem}

\subsection{Another formulation of bounded negativity}

Let $X$ be the blow up of the plane at a finite set of points $S$.
We say that $X$ has bounded Zariski denominators if there is an integer
$d$ such that for each 
divisor $D$ and integer $t>0$ such that $tD$ is linearly equivalent to an effective divisor,
there is an integer $0\leq e\leq d$ such that
the Zariski decomposition $etD=P+N$ has integral divisors $P$ and $N$ 
(equivalently, there is an integer $d>0$ such that for each 
divisor $D$ and integer $t>0$ such that $tD$ is linearly equivalent to an effective divisor,
the Zariski decomposition $d!tD=P+N$ has integral divisors $P$ and $N$).

We now state a version of the main theorem of \cite{3refBPS2}.

\begin{thm}\label{BPStheorem}
Let $X$ be the blow up of the plane at a finite set of points $S$.
Then bounded negativity holds on $X$ (i.e.,
the set of self-intersections $C^2$ of reduced curves on $X$ is bounded below)
if and only if $X$ has bounded Zariski denominators. 
\end{thm}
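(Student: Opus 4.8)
The plan is to prove both implications by tying the denominator of a Zariski decomposition to the determinant of the intersection matrix of its negative part. Let $D$ be an integral semi-effective divisor on $X$ with Zariski decomposition $D=P+N$ (Theorem~\ref{ZDthm}), and write $N=b_1N_1+\cdots+b_sN_s$ with the $N_i$ distinct reduced irreducible curves, the $b_i>0$ rational, and $M:=(N_i\cdot N_j)$ negative definite. Since $P$ is nef and $N$ effective with $P\cdot N=0$, in fact $P\cdot N_j=0$ for every $j$, so the vector $b=(b_i)$ satisfies the integral linear system $Mb=v$ with $v=(D\cdot N_j)_j\in\Z^s$. By Cramer's rule every $b_i$ has denominator dividing $|\det M|$; since the $N_i$ are linearly independent (Example~\ref{LinIndep}) we also have $s\le\rho(X)$. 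Hence the least $e>0$ making both $eP$ and $eN$ integral divides $|\det M|$, and the theorem reduces to deciding whether $|\det M|$ is bounded over all negative parts $N$ that occur on $X$.

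Suppose bounded negativity holds, say $C^2\ge B_X$ for every reduced curve $C$ on $X$. Each component $N_i$ above is a reduced irreducible curve with $N_i^2<0$, so $1\le|N_i^2|\le|B_X|$. The matrix $-M$ is symmetric positive definite with integer entries, so Hadamard's inequality gives $|\det M|=\det(-M)\le\prod_i(-N_i^2)\le|B_X|^s\le|B_X|^{\rho(X)}$. Thus every integral semi-effective $D$ has Zariski denominator at most $d:=|B_X|^{\rho(X)}$, which is the desired boundedness.

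For the converse I argue by contraposition: if bounded negativity fails on $X$ there are reduced irreducible curves $C$ with $k:=-C^2$ arbitrarily large, and from each such $C$ I build a semi-effective divisor with Zariski denominator at least $\sqrt k$. Write the class of $C$ as $[C]=g\,w$ in $\mathrm{NS}(X)$ with $w$ primitive and $g$ the positive generator of $\{\,v\cdot C:v\in\mathrm{NS}(X)\,\}$; then $g^2\le g^2|w^2|=k$, so $k/g=g|w^2|\ge\sqrt k$. Because the ample cone is open, the integers $A\cdot C$ realized by integral ample divisors $A$ include every sufficiently large multiple of $g$, so I may fix an ample $A$ with $A\cdot C=gm$ for some large $m$ coprime to $k/g$; then $\gcd(A\cdot C,k)=g$, and (as $k/g\ge 2$ once $k\ge 2$) the rational $\alpha/k$, with $\alpha:=A\cdot C$, is not an integer. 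Let $c$ be the least integer exceeding $\alpha/k$ and set $D:=A+cC$. Then $D$ is integral and semi-effective (a large multiple is effective, $A$ being ample and $C$ effective), $D\cdot C=\alpha-ck<0$, and a direct check shows $P:=A+\tfrac{\alpha}{k}C$ is nef — it meets $C$ in $0$ and every other irreducible curve strictly positively — while $N:=(c-\tfrac{\alpha}{k})C$ is effective with negative definite $1\times1$ intersection matrix $(C^2)$ and $P\cdot N=0$. By the uniqueness in Theorem~\ref{ZDthm}, $D=P+N$ is the Zariski decomposition of $D$, and its denominator equals the denominator of $c-\alpha/k=\lceil m/(k/g)\rceil-m/(k/g)$, namely $k/g\ge\sqrt k$. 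Letting $k\to\infty$ shows the Zariski denominators on $X$ are unbounded, completing the contrapositive.

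The routine half is ``bounded negativity $\Rightarrow$ bounded denominators'': after observing the relation $Mb=v$, it is only Cramer's rule plus Hadamard's inequality for the negative definite intersection form on the negative part. The main obstacle is the converse, because boundedness of $|\det M|$ over \emph{all} occurring negative parts is not automatic and must be defeated by an explicit construction. Two points require care: realizing a single prescribed reduced irreducible curve $C$ as the \emph{entire} negative part of some semi-effective divisor — this is what dictates the choice $D=A+cC$ with $A$ ample, the delicate step being that the candidate $P=A+\tfrac{\alpha}{k}C$ stays nef after the correction is subtracted; and preventing the denominator from collapsing, which forces the small lattice/number-theoretic argument that chooses $A$ so that $\gcd(A\cdot C,C^2)$ remains small — possible precisely because the ample cone is open, so $X$ carries integral ample classes in every relevant residue class.
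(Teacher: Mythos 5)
Your forward implication is exactly the paper's: the negative part's coefficients solve the integral linear system $(N_i\cdot N_j)\,b=(D\cdot N_j)$, Cramer's rule bounds the common denominator by $|\det(N_i\cdot N_j)|$, and Hadamard's inequality (the paper's Example~\ref{DetFact}) together with the bound $s\le\rho(X)$ from Example~\ref{LinIndep} gives a uniform bound in terms of $B_X$ and $\rho(X)$. For the converse you also use the paper's central device — perturbing a negative prime $C$ by an ample class $A$ and writing out the Zariski decomposition of $A+cC$ explicitly, so that the negative part is $(c-\alpha/k)\,C$ with $\alpha=A\cdot C$, $k=-C^2$, giving denominator $k/\gcd(\alpha,k)$ after a suitable coprimality choice of $A$ (your open-ample-cone argument replaces the paper's Exercise~\ref{relprimeEx}, to the same end $\gcd(\alpha,k)=g$). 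Where you genuinely depart is in controlling $g$: the paper deduces $g\le b$ from the Zariski decomposition of the primitive representative $C/g$, an argument that leans rather delicately on how bounded Zariski denominators is parsed, whereas you observe that unimodularity of the intersection lattice forces $[C]=gw$ with $w$ primitive, hence $g^2\,|w^2|=k$ and $g\le\sqrt k$, so $k/g\ge\sqrt k$ in one stroke. This lattice bound decouples the two inequalities the paper squeezes together, turns the converse into a clean one-shot contrapositive, and avoids reasoning about Zariski decompositions of non-effective primitive classes; in that sense it is a genuine (and arguably more robust) improvement. One small point to make explicit: since $D=A+cC$ need not itself be effective, in matching your construction to the definition you should pick a large $t$ coprime to $k/g$ with $tD$ effective — the minimal $e$ with $etD$ having integral decomposition is then still $k/g$, so the unboundedness conclusion stands.
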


Two examples will be helpful.

\begin{exercise}\label{DetFact}
Let $N_1,\ldots,N_r\in{\mathbb R}^r$, where we endow ${\mathbb R}^r$ with the standard inner product, and we write
$N_i=n_{i1}e_1+\cdots+n_{ir}e_r$, where $e_i$ is the standard basis for ${\mathbb R}^r$.
Let $M$ be the matrix $M=(n_{ij})$, so $M^TM$ is the intersection matrix $(N_i\cdot N_j)$.
Then $\det(N_i\cdot N_j)=(\det(M))^2$ is clear, and $|\det(M)|\leq |N_1|\cdots|N_r|$,
since the volume of a parallelepiped with edges of fixed length is largest 
when the edges are orthogonal.
Thus we have $(\det(M))^2\leq |N_1|^2\cdots|N_r|^2=|(N_1\cdot N_1)\cdots(N_r\cdot N_r)|$.
(We include the absolute value sign at the end, since we will apply this in situations where we 
have divisors $N_1,\ldots,N_r$ that span a {\it negative} definite
subspace of the N\'eron-Severi group, but clearly the same result holds, with essentially the same proof.)
\end{exercise}
\vskip\baselineskip

\begin{exercise}\label{relprimeEx}
Let $X$ be a blow up of the plane at $s$ points.
Let $d,m_1,\ldots,m_s>0$ be integers, let
$C=dL-m_1E_1-\cdots-m_sE_s$ be any divisor with $C^2<0$ and let
the gcd of $d,m_1,\ldots,m_s$ be $g$.
Then there is an ample divisor $F$ such that $FC$ and $C^2$ have gcd $g$.
\end{exercise}

\SolnEater{\vskip\baselineskip 
\noindent{\it Details}: 
We can write $g=dd'-m_1m_1'-\cdots-m_sm_s'>0$ for some integers $d', m_1',\ldots,m_s'$.
Thus $g=d(d'+t(m_1+\cdots+m_s))-m_1(m_1'+td)-\cdots-m_s(m_s'+td)$
for all $t$. Now define $F=(d'+t(m_1+\cdots+m_s)+2gr|C^2|)L-(m_1'+td)E_1-\cdots-(m_s'+td)E_s$; then
$FC=g(2rd|C^2|+1)$ and $C^2$ have gcd $g$.
For $t\gg0$, the coefficients $d'+t(m_1+\cdots+m_s)+gr|C^2|$ and $m_i'+td$ for all $i$ will be positive for every $r>0$.
Fix such a $t$.
Now for $r\gg0$, $H=gr|C^2|L-(m_1'+td)E_1-\cdots-(m_s'+td)E_s$, and hence $F=H+(d'+t(m_1+\cdots+m_s)+gr|C^2|)L$,
will be linearly equivalent to an effective divisor. Thus there are only finitely many prime divisors
$D$ such that we can have $F\cdot D<0$. If for such a divisor we were to have $D\cdot L=0$, then $D=E_i$ for some $i$,
but then $F\cdot D=m_i'+td>0$, so we must have $D\cdot L>0$. Therefore, by increasing $r$ some more, 
$F$ will be nef. And if we still have a prime divisor $D$ such that $F\cdot D=0$, then 
we must have $D\cdot L>0$, so any additional increase in $r$ makes $F$ ample.
Thus for $r\gg0$, $F$ is ample.
\newline\qedsymbol\vskip\baselineskip}

\begin{proof}[Proof of Theorem \ref{BPStheorem}]
Assume bounded negativity holds on $X$; i.e., $C^2\geq -b$ for some $b>0$ and every reduced, irreducible curve $C$.
Let $D=d_1D_1+\cdots+d_rD_r$ be effective (so each $d_i$ is positive and
each $D_i$ is a prime divisor) with Zariski decomposition $D=P+N$.
Then $P$ and $N$ are sums of the $D_i$ with nonnegative rational coefficients.
Since $D$ is integral, the largest denominator used for $P$ is also the largest
denominator used for $N$, so it's enough to look at $N$. Say
$N=n_1N_1+\cdots+n_sN_s$ where each $n_i$ is positive rational
and each $N_i$ is a prime divisor of negative self-intersection. Note that
$DN_i=(n_1N_1+\cdots+n_sN_s)N_i$ gives linear equations for the $n_i$.
The solution involves dividing by $\det(N_iN_j)$, so the largest possible
denominator is $\det(N_iN_j)$, but $|\det(N_iN_j)|\leq |N_1^2\cdots N_s^2|$
by Example \ref{DetFact}.
By \Exercise\ \ref{LinIndep}, we have $s\leq |S|$. 
Thus the largest possible denominator is $|b|^{|S|}$, where $b$ is a lower bound for
self-intersections of irreducible curves on $X$.

Conversely, assume $X$ has bounded Zariski denominators, with bound $b$.
Let $C\sim dL-m_1E_1-\cdots-m_rE_r$ be any prime divisor with $C^2<0$ and define
$D=(dL-m_1E_1-\cdots-m_rE_r)/g$ where $g$ is the gcd of $d,m_1,\ldots,m_r$.
Thus $D$ is primitive (i.e., not linearly equivalent to $tD'$ for any integral divisor $D'$
with $t$ an integer bigger than 1). 
By \Exercise\ \ref{relprimeEx} we can pick an ample divisor $F$ such that 
$FC$ and $C^2$ have gcd $g$. Since the Zariski decomposition of $D$ is $D=C/g$, we have
$g\leq b$. But for large $m$, the Zariski decomposition of
$F+mC$ is $P=F+(m-a)C$ and $N=aC$ for some $a$,
so $a=(CF+mC^2)/C^2$, hence (putting $a$ into reduced terms) the denominator
needed here is $|C^2|/\operatorname{gcd}(CF,|C^2|)\leq b$, hence
$C^2\geq -b\operatorname{gcd}(CF,C^2) = -bg\leq -b^2$.
\end{proof}

\begin{exercise}\label{denomCriterion}
Let $X$ be the blow up of the plane at a finite number of points
such that there is a finite list $A=\{a_1,\ldots,a_r\}$ of integers such that
for every prime divisor $D$ with $D^2<-1$ we have $D^2\in A$.
Assume that there are at most $n_i$ distinct divisors $D$ with $D^2=a_i$
for each $i$ with $a_i<-1$.
Then no denominator bigger than $|a_1^{n_1}\cdots a_r^{n_r}|$
is ever needed for a Zariski decomposition on $X$.
\end{exercise}

\SolnEater{\vskip\baselineskip 
\noindent{\it Details}: 
For each Zariski decomposition having a nonzero negative part, the negative part
has an intersection matrix $(N_iN_j)$. Then $|\det(N_iN_j)|$ bounds the denominators which can occur
for that Zariski decomposition, but $|\det(N_iN_j)|$ is just the volume of a parallelepiped 
whose sides have length $|N_i^2|$. The volume is greatest when the sides are perpendicular,
hence $|\det(N_iN_j)|\leq |N_1^2\cdots N_s^2|\leq |a_1^{n_1}\cdots a_r^{n_r}|$. 
\newline\qedsymbol\vskip\baselineskip}

\begin{exercise}
Here we determine the largest denominator needed for a Zariski decomposition
when $X$ is the blow up of $r$ collinear points of the plane.
There is a unique reduced irreducible $D$ with $D^2<-1$, the proper transform $H$ of the line
for which $H^2=1-r$. Thus, by Example \ref{denomCriterion}, no denominator is needed larger than $r-1$. 
But $L+H=(L+aH)+((1-a)H)$ gives the Zariski decomposition when $a=1/(r-1)$ so $r-1$ 
is the biggest denominator.
\end{exercise}
\vskip\baselineskip

\begin{exercise}
And here we determine the largest denominator needed for a Zariski decomposition
when $X$ is the blow up of $r+1$ points of the plane on a line $L_1$
and $s+1$ points on a different line $L_2$, where one of the points
is the point of intersection of the two lines. Assume $r$ and $s$
are coprime and each is at least 2. [Note: By ``adjunction'', if $C$ is a prime divisor on $X$, we have
$C^2\geq -2 - C\cdot K_X$; see \cite[Example 3.2]{3refBPS2}.]
The proper transforms $H_1$ and $H_2$ of the two lines have $H_1^2=-r$ and $H_2^2=-s$.
Note that $-K_X=3L-E-E_1-\cdots-E_{r+s+1}\sim L + H_1 +H_2 + E$ where $E$ is the blow up of the point of intersection
of the two lines. 
By adjunction, if $C$ is a prime divisor other than $H_1$, $H_2$ or $E_i$, we have 
$C^2\geq -2 + C(L + H_1 +H_2 + E)\geq -1$ (since $CL>0$). Thus by \Exercise\ \ref{denomCriterion},
no Zariski denominator larger than $rs$ is ever needed.
But $L+H_1+H_2=(L+aH_1+bH_2)+((1-a)H_1+(1-b)H_2)$ is a Zariski decomposition when $a=1/r$ and $b=1/s$.
\end{exercise}

\section{Containment Problems} 

\subsection{Powers and symbolic powers}
Given distinct points $p_i\in\P^N$, let $Z=m_1p_1+\cdots+m_sp_s\subseteq \P^N$ be a fat point subscheme.
Recall that the $m$th symbolic power of $I(Z)$ is $I(mZ)=I(p_1)^{m_im}\cap\cdots\cap I(p_s)^{mm_i}$, sometimes denoted
$I(Z)^{(m)}$.
It is interesting to compare this with the $r$th ordinary power
$I(Z)^r=(I(p_1)^{m_1}\cap\cdots I(p_s)^{m_s})^r$ for various $m$ and $r$.
A useful fact here is that $I(Z)^r=Q\cap I(rZ)$ for some $M$-primary ideal
$Q$, where $M=(x_0,\ldots,x_N)$, and thus $I(rZ)$ is the saturation of $(I(Z))^r$. 
In particular, we see that $I(Z)^m\subseteq I(Z)^{(m)}$ for all $m\geq1$.
Moreover, $Q$ contains a power of $M$,
hence $[Q]_t=[M]_t$ for all $t\gg0$, hence $[I(Z)^r]_t=[I(rZ)]_t$ for all $t\gg0$.

\begin{exercise}\label{ContFacts}
Let $I=I(Z)$ for $Z=m_1p_1+\cdots+m_sp_s\subseteq \P^N$ with $m_i>0$ for all $i$. Then:
\begin{enumerate}
\item[(a)] $I^m\subseteq I^r$ if and only if $m\geq r$.
\item[(b)] $I^{(m)}\subseteq I^{(r)}$ if and only if $m\geq r$.
\item[(c)] $I^{m}\subseteq I^{(r)}$ if and only if $m\geq r$.
\item[(d)] $I^{(m)}\subseteq I^{r}$ implies $m\geq r$ but $m\geq r$ does not in general imply $I^{(m)}\subseteq I^{r}$.
\end{enumerate}
\end{exercise}

\SolnEater{\vskip\baselineskip 
\noindent{\it Details}: 
(a) Clearly $m\geq r$ implies $I^m\subseteq I^r$. Conversely, if $I^m\subseteq I^r$,
then $m\alpha(I)\geq r\alpha(I)$, hence $m\geq r$.

(b) If $m\geq r$, then $I(mm_ip_i)\subseteq I(rm_ip_i)$ for all $i$, so $I(mZ)=\cap_iI(mm_ip_i)\subseteq \cap_iI(rm_ip_i)=I(rZ)$.
Conversely, if $I^{(m)}\subseteq I^{(r)}$, then $[I^{(m)}]_t\subseteq [I^{(r)}]_t$ for $t\geq0$
so, using Theorem \ref{virtdim}, we have
$\sum_i\binom{mm_i+N-1}{N}=\deg(mZ)=\dim[\KK[\P^N]/I^{(m)}]_t\geq \dim[\KK[\P^N]/I^{(r)}]_t=\deg(rZ)=\sum_i\binom{rm_i+N-1}{N}$ for $t\gg0$.
But $\binom{b+N-1}{N}$ is an increasing function of $b$, so $m<r$ would imply $\binom{mm_i+N-1}{N}<\binom{rm_i+N-1}{N}$ for all $i$.

(c) If $m\geq r$, then $I^m\subseteq I^{(m)}\subseteq I^{(r)}$. Conversely, for $t\gg0$, we have $[I^m]_t=[I^{(m)}]_t$ so if 
$I^m\subseteq I^{(r)}$, then $[I^{(m)}]_t\subseteq [I^{(r)}]_t$ for $t\gg0$, and the argument for (b) shows that $m\geq r$.

(d) The first part follows from (a) and the fact that $I^m\subseteq I^{(m)}$. For the second, let $Z$ be three noncollinear points.
It is not hard to see that $I(Z)^2\subsetneq I(2Z)$.
\newline\qedsymbol\vskip\baselineskip}

It is a subtle and generally open problem to determine for which $m$ and $r$ we have $I^{(m)}\subseteq I^r$,
but for $m\gg0$ we always do have containment. To see this, we 
define the saturation degree of $I^r$:
$\operatorname{satdeg}(I^r)$ is the least $t$ such that $(I^r)_j=(I^{(r)})_j$ for all $j\geq t$.
(The original version of the next result used $m\geq \max(\operatorname{satdeg}(I^r),r)$;
the referee had the very nice suggestion to use $m\geq \max((\operatorname{satdeg}(I^r))/\widehat{\alpha}(I(Z)),r)$ instead.)

\begin{proposition}\label{satdegProp}
Let $I=I(Z)$ be a fat point scheme $Z\subseteq \P^N$. If 
$$m\geq \max\left(\frac{\operatorname{satdeg}(I^r)}{\widehat{\alpha}(I(Z))},r\right),$$ then
$I^{(m)}\subseteq I^r$.
\end{proposition}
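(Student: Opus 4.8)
The plan is to reduce the containment to a single degree count. The two ingredients already available are: (i) $I^{(m)}\subseteq I^{(r)}$ whenever $m\ge r$, which is Exercise~\ref{ContFacts}(b); and (ii) $[I^r]_t=[I^{(r)}]_t$ for all $t\ge\operatorname{satdeg}(I^r)$, which is exactly the definition of the saturation degree (and this degree is finite because, as noted before Exercise~\ref{ContFacts}, $[I(Z)^r]_t=[I(rZ)]_t$ for all $t\gg0$). Granting these, the only way $I^{(m)}$ could fail to lie in $I^r$ is through homogeneous elements of $I^{(m)}$ of degree strictly less than $\operatorname{satdeg}(I^r)$, and the point of the hypothesis on $m$ is precisely that there are no such elements.

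First I would record the lower bound $\alpha(I^{(m)})=\alpha(I(mZ))\ge m\,\widehat{\alpha}(I(Z))$. This is immediate from Definition~\ref{WCdef}: since $\widehat{\alpha}(I(Z))$ is the infimum of $\alpha(I(kZ))/k$ over $k>0$, in particular $\widehat{\alpha}(I(Z))\le\alpha(I(mZ))/m$. Combining this with the assumed inequality $m\ge\operatorname{satdeg}(I^r)/\widehat{\alpha}(I(Z))$ yields $\alpha(I^{(m)})\ge m\,\widehat{\alpha}(I(Z))\ge\operatorname{satdeg}(I^r)$. Hence every nonzero homogeneous element of $I^{(m)}$ has degree at least $\operatorname{satdeg}(I^r)$.

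Next I would take an arbitrary nonzero homogeneous $f\in I^{(m)}$ of degree $t$; since $I^{(m)}$ is a homogeneous ideal it suffices to treat such $f$. By the previous step $t\ge\operatorname{satdeg}(I^r)$, so $[I^r]_t=[I^{(r)}]_t$. Since $m\ge r$, Exercise~\ref{ContFacts}(b) gives $f\in I^{(m)}\subseteq I^{(r)}$, so $f\in[I^{(r)}]_t=[I^r]_t\subseteq I^r$. Therefore $I^{(m)}\subseteq I^r$.

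There is no genuinely hard step here; the only things to get right are the direction of the estimate coming from $\widehat{\alpha}$ (it is a \emph{lower} bound $\alpha(I(mZ))\ge m\,\widehat{\alpha}(I(Z))$, which is exactly what makes the degree comparison work) and the finiteness of $\operatorname{satdeg}(I^r)$, which guarantees that $[I^r]_t=[I^{(r)}]_t$ holds for every sufficiently large $t$. If one wanted a version of the statement that is genuinely effective — replacing $\operatorname{satdeg}(I^r)$ by an explicit function of $r$ and $Z$ — one would need a concrete bound on $\operatorname{satdeg}(I^r)$, but for the stated proposition no such input is required.
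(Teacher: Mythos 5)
Your proof is correct and takes exactly the same route as the paper's: use $\widehat{\alpha}(I(Z))\le\alpha(I(mZ))/m$ to show every nonzero homogeneous element of $I^{(m)}$ has degree at least $\operatorname{satdeg}(I^r)$, then combine $I^{(m)}\subseteq I^{(r)}$ (since $m\ge r$) with the defining property of $\operatorname{satdeg}(I^r)$ to conclude degreewise. The only difference is expository — you spell out the element-by-element argument that the paper compresses into one line.
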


\begin{proof}
Since $m\geq r$, we have $I^{(m)}\subseteq I^{(r)}$.
Since $m\geq \operatorname{satdeg}(I^r)$, if $[I^{(m)}]_t\neq0$, then 
$t\geq\alpha(I(mZ))\geq m\widehat{\alpha}(I(Z))\geq \operatorname{satdeg}(I^r)$, 
so $[I^{(m)}]_t\subseteq [I^{(r)}]_t=[I^r]_t$. Hence $I^{(m)}\subseteq I^r$.
\end{proof}

\begin{example}\label{containmentExample} 
The expression $\operatorname{satdeg}(I^r)$ in Proposition \ref{satdegProp} is complicated.
The quantity $\widehat{\alpha}(I(Z))$ is often not known exactly, and even after normalizing by dividing by $r$,
it is not known how large $\operatorname{satdeg}(I^r)/(r\widehat{\alpha}(I(Z)))$ can get, but when $N=2$ it can definitely be bigger than 2.
For example, let $Z$ be the reduced scheme consisting of $n+1=5$ points in $\P^2$, where 4 are on a line and one is off that line. 
Then by Example \ref{AlmostCollinear}, $\widehat{\alpha}(I(Z))=7/4$. A brute force calculation with $r=5$ shows that
$\operatorname{satdeg}(I(Z)^r)=18$, and hence that $\operatorname{satdeg}(I(Z)^r)/(r\widehat{\alpha}(I(Z)))=72/35\approx 2.057$
and $\operatorname{satdeg}(I(Z)^r)/\widehat{\alpha}(I(Z))=72/7\approx10.286$. Thus 
Proposition \ref{satdegProp} requires $m\geq 11$ to ensure $I(mZ)\subseteq I(Z)^r$ in this case.
This can be compared to Theorem \ref{ELSHH}, which shows that $m\geq rN$ suffices
for any fat points subscheme $Z\subseteq \P^N$ and any $r$ to ensure that $I(mZ)\subseteq I(Z)^r$.
\end{example}

Indeed, a formerly open question was: 

\begin{question}
Given $I=I(Z)$ for a fat point subscheme $Z\subseteq\P^N$, 
we know for each $r$ there is an $n$ such that $m\geq rn$ implies
$I^{(m)}\subseteq I^r$ (take $n=\max\{1,(\operatorname{satdeg}(I^r))/r\}$), but is there one $n$ that works for all $r$
and $Z$?
\end{question}
 
Motivated by \cite{refI3}, the papers \cite{2refELS, 2refHH} found the very general simple answer
given in Theorem \ref{ELSHH}. We do not give the definition here of symbolic powers for ideals that are not ideals of 
fat points; the definition used in \cite{2refHH} has the property that $I^{(1)}=I$, and that $I^{(m)}=I^m$ for all $m\geq1$
when $I$ is not saturated. 

\begin{theorem}\label{ELSHH}
Let $I\subseteq \KK[\P^N]$ be a homogeneous ideal. Let $r,s\geq1$.
Then $I^{(r(s+N-1))}\subseteq (I^{(s)})^r$.
In particular (taking $s=1$), if $m\geq rN$, then we have $I^{(m)}\subseteq I^r$ (since $I^{(m)}\subseteq I^{(rN)}\subseteq I^r$).
\end{theorem}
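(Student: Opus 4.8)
The plan is to reduce the inclusion $I^{(r(s+N-1))}\subseteq (I^{(s)})^r$ to one clean statement about Frobenius powers in characteristic $p$, following the argument of Hochster--Huneke \cite{2refHH} (the characteristic-zero case then comes from the positive-characteristic one by the usual reduction mod $p$; alternatively one runs the characteristic-zero proof of Ein--Lazarsfeld--Smith \cite{2refELS} directly with asymptotic multiplier ideals). First observe that the ``in particular'' clause is immediate once the main inclusion is known: taking $s=1$ gives $I^{(rN)}\subseteq I^r$, and if $m\ge rN$ then $I^{(m)}\subseteq I^{(rN)}$ by \Exercise\ \ref{ContFacts}(b), so $I^{(m)}\subseteq I^r$. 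Observe also that the inclusion is \emph{locally trivial}: after localizing at any minimal prime $P$ of $I$ (for a fat point scheme, $P=I(p_i)$), both sides become powers of the maximal ideal $PR_P$ and the exponents compare correctly since $r(s+N-1)\ge rs$. So the entire content of the theorem is that these compatible local inclusions assemble into an honest inclusion of homogeneous ideals, and that is exactly what the Frobenius is good for.

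So I would assume first that $\chr(\KK)=p>0$ and, for clarity, that $Z$ is reduced, $I=I(Z)=P_1\cap\cdots\cap P_t$ with $P_i=I(p_i)$ (the fat point case is the same computation carrying the $m_i$ along). Write $J^{[q]}$ for the $q$-th Frobenius power of $J$, $q=p^e$. The key lemma is
$$I^{(q(s+N-1))}\ \subseteq\ (I^{(s)})^{[q]}\qquad\text{for every }q=p^e.$$
To prove it I would work one prime at a time. In $R_{P_i}$ the maximal ideal is generated by $h_i\le N$ elements $x_1,\dots,x_{h_i}$, so $P_i^{(a)}R_{P_i}=(x_1,\dots,x_{h_i})^a$ and $(P_i^{(s)})^{[q]}R_{P_i}$ is generated by $q$-th powers of degree-$s$ monomials. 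Any monomial of degree $q(s+N-1)$ in $\le N$ variables is divisible by $\mu^q$ for some degree-$s$ monomial $\mu$: otherwise, writing $a_j=q\lfloor a_j/q\rfloor+\rho_j$ with $0\le\rho_j<q$ and noting $\sum_j\lfloor a_j/q\rfloor\le s-1$, one gets $q(s+N-1)=\sum_ja_j\le q(s-1)+N(q-1)<q(s+N-1)$, a contradiction. Hence $P_i^{(q(s+N-1))}\subseteq(P_i^{(s)})^{[q]}$ locally, and since $(P_i^{(s)})^{[q]}$ is $P_i$-primary (Frobenius is flat, so it preserves associated primes in a regular ring) the inclusion holds globally. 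Intersecting over $i$ and using that flatness of Frobenius makes $J\mapsto J^{[q]}$ commute with finite intersections,
$$I^{(q(s+N-1))}=\bigcap_i P_i^{(q(s+N-1))}\ \subseteq\ \bigcap_i (P_i^{(s)})^{[q]}=\Big(\bigcap_i P_i^{(s)}\Big)^{[q]}=(I^{(s)})^{[q]}.$$
Finally $(I^{(s)})^{[q]}\subseteq (I^{(s)})^q$ (each generator $g^q$ of the Frobenius power lies in $(I^{(s)})^q$), so $I^{(q(s+N-1))}\subseteq (I^{(s)})^q$, which is the theorem for the exponent $r=q=p^e$.

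The main obstacle is the passage from prime-power $r=p^e$ to arbitrary $r$, and then from characteristic $p$ to characteristic $0$. For general $r$ one cannot simply choose $q=p^e\ge r$: the naive estimates all run the wrong way, essentially because a symbolic power of a product only contains the product of symbolic powers, so the inclusion does not ``iterate.'' The fix in \cite{2refHH} is to prove the Frobenius lemma in the sharp form above for all $q=p^e$ simultaneously and then extract the finite-$r$ statement by a stability/limiting argument, which is where tight-closure technology (or its avatar) enters. For $\KK$ of characteristic zero I would spread $I$ out over a finitely generated $\Z$-algebra, reduce modulo infinitely many $p$, apply the positive-characteristic result, and descend, the containment surviving because it amounts to finitely many membership/Hilbert-function conditions. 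The alternative avoiding reduction mod $p$ is the route of \cite{2refELS}: attach asymptotic multiplier ideals to the graded family $\{I^{(k)}\}_{k\ge1}$, then combine the inclusion of a symbolic power into the pertinent multiplier ideal with subadditivity of multiplier ideals and a codimension-$N$ estimate to land back inside $(I^{(s)})^r$; this treats all $r$ uniformly but requires the multiplier-ideal machinery over $\C$.
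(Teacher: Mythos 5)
The paper does not prove Theorem \ref{ELSHH}; it states it as a known result with a citation to \cite{2refELS,2refHH}, so there is no in-house argument to compare with, and your proposal stands or falls on its own. What you have written is a correct sketch of the Frobenius-power heart of the Hochster--Huneke proof. The deduction of the ``in particular'' clause is right; the local pigeonhole --- a monomial of degree $q(s+N-1)$ in at most $N$ variables satisfies $\sum_j\lfloor a_j/q\rfloor\ge s$ and hence lies in $(\mathfrak m^s)^{[q]}$ --- is correct and is the real key estimate; and the appeals to Kunz's theorem (flatness of Frobenius over a regular ring, so $J\mapsto J^{[q]}$ preserves primariness and commutes with finite intersections) are the right facts. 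So you have genuinely proved $I^{(q(s+N-1))}\subseteq(I^{(s)})^q$ for $q=p^e$ in characteristic $p$.

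The gap is the one you flag, but your account of how to close it is off. Hochster--Huneke do not deduce general $r$ from the prime-power cases by a ``stability/limiting argument''; indeed, as you yourself note, products of symbolic powers sit inside (not equal to) symbolic powers of sums, so your lemma does not iterate. Instead they prove the general-$r$ inclusion directly via tight closure: one shows $I^{(r(s+N-1))}\subseteq\bigl((I^{(s)})^r\bigr)^*$ and then invokes regularity of the polynomial ring to conclude $\bigl((I^{(s)})^r\bigr)^*=(I^{(s)})^r$. The tight-closure membership test asks for a single $c\ne0$ with $c\,u^q\in\bigl((I^{(s)})^{[q]}\bigr)^r$ for all $q$; your same pigeonhole gives the required containment locally at each minimal prime of $I$, and the fixed test element $c$ absorbs the failure at the embedded primes of $\bigl((I^{(s)})^{[q]}\bigr)^r$. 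Producing such a $c$ once and for all, independently of $q$, is the missing content --- it is the serious technical input, not a limiting process. Reduction mod $p$ (or the Ein--Lazarsfeld--Smith multiplier-ideal route over $\C$, which you correctly name) then handles characteristic zero. As written, your argument establishes the theorem only when $r$ is a power of the characteristic.
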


The question now became: is this result optimal? 
There are various approaches to this question.
Here's one showing no constant less than $N$ suffices \cite{refBH} (also see Example \ref{BHstarOptimalityEx}):

\begin{theorem}\label{BHstarOptimalityThm}
If $c<N$, there is an $r>0$ and $m>cr$ such that
$I^{(m)}\not\subseteq I^r$
for some $I=I(Z)$, where $Z=p_1+\cdots+p_s\subseteq \P^N$
for distinct points $p_i$.
\end{theorem}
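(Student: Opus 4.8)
The plan is to exhibit, for each $c<N$, a concrete \emph{star configuration of points} in $\P^N$ together with explicit exponents $m$ and $r$ witnessing the failure of $I^{(m)}\subseteq I^r$. Fix $d$ general hyperplanes $H_1,\ldots,H_d\subseteq\P^N$ (general position: every $N$ of them meet in a single point and no $N+1$ have a common point), let $\ell_i$ be a linear form defining $H_i$, and let $Z$ be the reduced set of the $s=\binom{d}{N}$ points $P_{i_1\cdots i_N}=H_{i_1}\cap\cdots\cap H_{i_N}$; put $I=I(Z)$. I will need two structural facts about such $Z$: first, $\alpha(I)=d-N+1$; second, the form $G=\ell_1\cdots\ell_d$ has $\mult_P(G)=N$ for every $P\in Z$, so that $G^k\in I^{(kN)}$ for all $k\ge1$.

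The second fact is local: at a point $P=P_{i_1\cdots i_N}$ the forms $\ell_{i_1},\ldots,\ell_{i_N}$ are a regular system of parameters at $P$ while the remaining $\ell_j$ are units there, so $\mult_P(G)=\sum_j\mult_P(\ell_j)=N$, and hence $G^k$ vanishes to order $kN$ at every point of $Z$, i.e.\ $G^k\in I(kNZ)=I^{(kN)}$. For the first fact, the upper bound $\alpha(I)\le d-N+1$ is immediate: any product $\ell_{j_1}\cdots\ell_{j_{d-N+1}}$ of $d-N+1$ distinct $\ell$'s lies in $I(Z)$, since a $(d-N+1)$-element and an $N$-element subset of $\{1,\ldots,d\}$ must meet (pigeonhole: $(d-N+1)+N>d$). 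The reverse inequality $\alpha(I)\ge d-N+1$ is a routine double induction on $(d,N)$: given a form $F$ of degree $<d-N+1$ vanishing on $Z$, restrict it to $H_d$, on which $Z\cap H_d$ is again a star configuration of $d-1$ hyperplanes in $H_d\cong\P^{N-1}$; by induction $F|_{H_d}=0$, so $\ell_d\mid F$, and then $F/\ell_d$ vanishes on $Z\setminus H_d$, a star configuration of $d-1$ hyperplanes in $\P^N$, whose $\alpha$ by induction is $d-N$, a contradiction. (Alternatively one may simply cite these facts from \cite{refBH}.)

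Now the counterexample is pure bookkeeping. By Exercise \ref{alphaLikeLog} we have $\alpha(I^r)=r\,\alpha(I)=r(d-N+1)$, so every nonzero element of $I^r$ has degree at least $r(d-N+1)$, whereas $G^k\in I^{(kN)}$ is a nonzero form of degree $kd$. Therefore, if we can pick integers $k\ge1$ and $r\ge1$ with
$$\frac{kd}{d-N+1}\;<\;r\;<\;\frac{kN}{c},$$
then $G^k\in I^{(kN)}$ has degree $kd<r(d-N+1)=\alpha(I^r)$, so $G^k\notin I^r$; that is, $I^{(m)}\not\subseteq I^r$ with $m=kN>cr$, as required. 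The open interval above is nonempty exactly when $N(d-N+1)>cd$, i.e.\ $(N-c)d>N(N-1)$; since $c<N$ this holds once $d>\tfrac{N(N-1)}{N-c}$ (in particular $d>N$). Fix such a $d$; then the interval has length proportional to $k$, so for all $k\gg0$ it contains an integer $r$ (automatically $r\ge1$), which produces the desired pair $(m,r)=(kN,r)$ with $m>cr$ and $I^{(m)}\not\subseteq I^r$.

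The only real content is the two structural facts about the star configuration $Z$; everything after that is the elementary degree comparison sketched above. I expect the main (quite mild) obstacle to be the lower bound $\alpha(I)\ge d-N+1$, which requires the clean inductive bookkeeping and the observation that the natural subschemes of a star configuration are again star configurations — standard, and in any case available from \cite{refBH}.
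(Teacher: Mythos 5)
Your proposal is correct and follows essentially the same route as the paper's own treatment: the paper addresses Theorem \ref{BHstarOptimalityThm} through Exercise \ref{BHstarOptimalityEx} (and the cited source \cite{refBH}), which takes $Z$ to be the double points of a general arrangement of lines in $\P^2$, uses $\widehat{\alpha}$ and the product of the linear forms to bound $\alpha(I^{(m)})$, invokes the $\alpha$-criterion of Exercise \ref{alphaContCrit}, and remarks that ``a similar construction holds for $\P^N$.'' You have simply carried out that $\P^N$ star-configuration construction explicitly, with the same key element (the power $G^k$ of the product of the hyperplane forms lying in $I^{(kN)}$ but of degree below $\alpha(I^r)$) and the same criterion; the only cosmetic difference is that the paper fixes $m,r$ and lets the number of lines grow, whereas you fix $d$ and let $k$ grow to manufacture an integer $r$ in the relevant interval, which amounts to the same arithmetic.
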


\begin{exercise}\label{alphaContCrit}
Let $Z\subseteq\P^N$ be a fat point subscheme, $I=I(Z)$.
If $\alpha(I^{(m)})<r\alpha(I)$, then $I^{(m)}\not\subseteq I^r$.
\end{exercise}

\SolnEater{\vskip\baselineskip 
\noindent{\it Details}: We have $\alpha(I^r)=r\alpha(I)$ by \Exercise\ \ref{alphaLikeLog}
and, for any homogeneous ideals $J, J'\subseteq\KK[\P^N]$,
the fact that $J\subseteq J'$ implies $\alpha(J)\geq \alpha(J')$.
\newline\qedsymbol\vskip\baselineskip}

\begin{exercise}\label{BHstarOptimalityEx}
Pick $s>2$ lines in $\P^2$ so that at most two lines meet at any point. For simplicity, assume $s$ is even.
Let $Z$ be the $\binom{s}{2}$ crossing points and take $I=I(Z)$.
If $m<2r$ (again for simplicity, assume $m$ is even), then $I^{(m)}\not\subseteq I^r$ for $s\gg0$.
This shows that there is no $c<2$, such that $m\geq cr$ is enough to guarantee that $I^{(m)}\subseteq I^r$.
A similar construction holds for $\P^N$.
(Note: see \Exercise\ \ref{starconfigs}.)
\end{exercise}

\SolnEater{\vskip\baselineskip 
\noindent{\it Details}: 
Since $\widehat{\alpha}(I)=s/2$ by \Exercise\ \ref{starconfigs},
we have $ms/2=m\widehat{\alpha}(I)\leq \alpha(I^{(m)})$.
Let $F$ be the product of the linear forms defining the $s$ lines;
then $F^{m/2}\in \alpha(I^{(m)})$, so $\alpha(I^{(m)})\leq sm/2$, hence
$\alpha(I^{(m)}) = sm/2$. Using Bezout we get $\alpha(I)=s-1$. 
(More generally, if $m$ is odd the result is $\alpha(I^{(m)}) = s(m-1)/2+(s-1)$.)

Since $m<2r$, we have $\alpha(I^{(m)}) = sm/2<r(s-1)=\alpha(I^r)$,
as long as $2(s-1)/s > m/r$ (i.e., as long as $s>(2r-m)/(2r)$).
By \Exercise\ \ref{alphaContCrit}, this means $I^{(m)}\not\subseteq I^r$.
\newline\qedsymbol\vskip\baselineskip}

\subsection{The resurgence}
Although $m\geq Nr$ is optimal as a universal bound for homogeneous ideals in $\KK[\P^N]$,
what can one say about bounds for a specific ideal? This question leads to the definition of an asymptotic quantity
known as the resurgence \cite{refBH}.

\begin{definition}\label{resurgence}
Given a fat point scheme $Z\subseteq\P^N$, define the {\it resurgence} $\rho(I)$ for $I=I(Z)$ to be
$$\rho(I(Z))=\sup\Big\{\frac{m}{r}: I^{(m)}\not\subseteq I^r\Big\}.$$
\end{definition}

The following result is from \cite{refBH}. For this we need a new quantity, the regularity.

\begin{definition}\label{regDef}
The {\it regularity} $\reg(I)$ of $I=I(Z)$
for a fat point subscheme $Z\subseteq\P^N$
is defined by specifying that $\reg(I)-1$ is the least $t$ such that $\dim[I]_t=\binom{t+2}{2}-\deg(Z)$. 
\end{definition}

\begin{fact}\label{Dubreil}
Let $I$ be the ideal of a fat point subscheme of projective space.
An important fact about $\reg(I)$
is that $[I^r]_t=[I^{(r)}]_t$ for $t\geq r\reg(I)$ or even $t\geq \reg(I^r)$ (because $r\reg(I)\geq \reg(I^r)\geq \operatorname{satdeg}(I^r)$; see \cite{refBH}).
Another is that $I$ has a set of homogeneous generators each of which has degree at most $\reg(I)$
\cite{refD}. 
\end{fact}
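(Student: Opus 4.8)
The statement bundles together several classical facts about Castelnuovo--Mumford regularity, so the plan is to peel them off one at a time, after first identifying the $\reg(I)$ of Definition \ref{regDef} with the usual regularity of the ideal sheaf $\mathcal{I}_Z$.

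I would begin with that identification. For a zero-dimensional scheme $Z\subseteq\P^N$ the exact sequence $0\to\mathcal{I}_Z\to\mathcal{O}_{\P^N}\to\mathcal{O}_Z\to 0$ gives $H^i(\mathcal{I}_Z(t))\cong H^i(\mathcal{O}_{\P^N}(t))=0$ for all $i\ge 2$ and $t\ge -N$, while $H^1(\mathcal{I}_Z(t))=\operatorname{coker}\!\big(H^0(\mathcal{O}_{\P^N}(t))\to H^0(\mathcal{O}_Z)\big)$ vanishes exactly when $\dim[R/I]_t=\deg Z$, i.e.\ when $\dim[I]_t=\binom{t+N}{N}-\deg Z$; cutting by a general hyperplane shows that once this holds in degree $t$ it holds in all larger degrees. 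Hence the $\reg(I)$ of Definition \ref{regDef} is the least $m$ for which $\mathcal{I}_Z$ is $m$-regular. Granting this, the last assertion of the Fact drops out of Mumford's theorem that an $m$-regular sheaf $\mathcal{F}$ has the maps $H^0(\mathcal{F}(m))\otimes H^0(\mathcal{O}_{\P^N}(k))\to H^0(\mathcal{F}(m+k))$ surjective for all $k\ge 0$: since $I$ is saturated, $[I]_t=H^0(\mathcal{I}_Z(t))$ for every $t$, so this says $[I]_{\reg(I)}\cdot[R]_k=[I]_{\reg(I)+k}$, and therefore $I$ is generated in degrees $\le\reg(I)$ (this is the content of \cite{refD}).

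For the comparison of powers, I would use the fact recalled before Exercise \ref{ContFacts} that $I^{(r)}=I(rZ)$ is the saturation of $I^r$, so by definition $[I^r]_t=[I^{(r)}]_t$ precisely for $t\ge\operatorname{satdeg}(I^r)$; it then suffices to establish the chain $r\reg(I)\ge\reg(I^r)\ge\operatorname{satdeg}(I^r)$. The right-hand inequality is soft: $I^{(r)}/I^r=H^0_{\mathfrak{m}}(R/I^r)$ is a finite-length module, and its top nonzero degree is at most $\reg(R/I^r)=\reg(I^r)-1$, so $[I^r]_t=[I^{(r)}]_t$ for all $t\ge\reg(I^r)$. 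For the left-hand inequality $\reg(I^r)\le r\reg(I)$ one sets $\delta=\reg(I)$, uses global generation of $\mathcal{I}_Z(\delta)$ to get a surjection $\mathcal{O}_{\P^N}(-\delta)^{\oplus b}\twoheadrightarrow\mathcal{I}_Z$, tensors $r$ copies and composes with $\mathcal{I}_Z^{\otimes r}\twoheadrightarrow\mathcal{I}_Z^{\,r}=\mathcal{I}_{rZ}$ to obtain $\mathcal{O}_{\P^N}(-r\delta)^{\oplus b^r}\twoheadrightarrow\mathcal{I}_{rZ}$, and then bounds the regularity of $\mathcal{I}_{rZ}$ by propagating $r\delta$-regularity through this presentation (the syzygy sheaves controlled by the standard lemma that in $0\to\mathcal{K}\to\mathcal{E}\to\mathcal{F}\to 0$ with $\mathcal{E},\mathcal{F}$ both $m$-regular one has $\mathcal{K}$ $(m+1)$-regular). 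Assembling the three relations gives $[I^r]_t=[I^{(r)}]_t$ for $t\ge\reg(I^r)$, and a fortiori for $t\ge r\reg(I)$.

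The only genuinely substantive step is $\reg(I^r)\le r\reg(I)$: regularity of powers is not subadditive for arbitrary homogeneous ideals, so the argument must use that $I$ is the saturated ideal of a zero-dimensional scheme --- in particular the identity $\mathcal{I}_Z^{\,r}=\mathcal{I}_{rZ}$ and that only $H^0$ and $H^1$ of these sheaves enter the estimates, which is exactly where the delicacy (keeping track of the $H^1$-terms of the kernels in the presentation above) lives. I would either invoke this bound directly from \cite{refBH} or, for a self-contained treatment, carry out the Mumford-style induction just sketched in full detail; everything else in the Fact is formal.
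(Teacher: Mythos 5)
The paper does not actually prove this Fact; it is stated as such precisely because it is being imported from \cite{refBH} (for the chain $r\reg(I)\geq\reg(I^r)\geq\operatorname{satdeg}(I^r)$) and \cite{refD} (for generators in degree $\leq\reg(I)$). So there is no in-paper proof to compare yours against, and what matters is whether your sketch actually closes the gap. Several pieces of it do: the identification of Definition~\ref{regDef}'s $\reg(I)$ with the Castelnuovo--Mumford regularity of $\mathcal I_Z$ is correct for a zero-dimensional $Z$ (higher cohomology vanishes automatically, and general-hyperplane restriction shows the vanishing of $h^1$ persists upward); Mumford's global-generation theorem does give the Dubreil statement once you know $[I]_t=H^0(\mathcal I_Z(t))$; and the inequality $\reg(I^r)\geq\operatorname{satdeg}(I^r)$ via $I^{(r)}/I^r=H^0_{\mathfrak m}(R/I^r)$ and the local-cohomology description of regularity is exactly right.

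The genuine gap is in the hard inequality $\reg(I^r)\leq r\reg(I)$. Your presentation $\mathcal O(-r\delta)^{\oplus b^r}\twoheadrightarrow\mathcal I_Z^{\,r}=\mathcal I_{rZ}$ is fine, but regularity does not pass from the source to the target of a surjection without control of the kernel $\mathcal K$: from $0\to\mathcal K\to\mathcal E\to\mathcal F\to 0$ with $\mathcal E$ $m$-regular, the conclusion that $\mathcal F$ is $m$-regular needs $\mathcal K$ to be $(m+1)$-regular. The lemma you quote runs in the \emph{opposite} direction --- it derives regularity of $\mathcal K$ from that of $\mathcal E$ \emph{and} $\mathcal F$ --- so invoking it here is circular: you would be assuming the regularity of $\mathcal I_{rZ}$ that you are trying to prove. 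Nothing in the sketch gives a priori control of $\mathcal K$, and indeed this is where all the work is: $\reg(I^r)\leq r\reg(I)$ fails for general homogeneous ideals, and the correct arguments for fat points (Geramita--Gimigliano--Pitteloud; Chandler --- the results \cite{refBH} relies on) use the zero-dimensionality in an essential, non-formal way (liaison/complete-intersection tricks, or a careful hyperplane-section induction), not a one-pass tensor-power presentation. If the goal is a self-contained treatment, you need to either reproduce one of those arguments or cite them directly; the ``Mumford-style propagation'' as written does not establish the inequality.

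A smaller point worth flagging: bounding $\reg(\mathcal I_{rZ})$ (the sheaf regularity, equivalently $\reg(I^{(r)})$) is not a priori the same as bounding the module regularity $\reg(I^r)$, since $I^r$ is unsaturated; one has $\reg(I^r)=\max\{\reg(I^{(r)}),\operatorname{satdeg}(I^r)\}$. Your sheaf-theoretic surjection, even if it controlled the kernel, would only directly address $\reg(I^{(r)})$; showing $\operatorname{satdeg}(I^r)\leq r\delta$ requires the extra observation that the global sections of $\mathcal O(-r\delta)^{\oplus b^r}$ land in $[I^r]_t$ (products of degree-$\delta$ elements of $I$), so that surjectivity on $H^0$ in degree $t$ yields $[I^r]_t=[I^{(r)}]_t$. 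That observation is sound, but it still sits downstream of the missing kernel bound.
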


\begin{theorem}\label{bndsOnResurgence}
Let $I=I(Z)$ for a nonempty fat point subscheme $Z\subseteq\P^N$.
\begin{enumerate}
\item[(a)] We have $1\leq\rho(I)\leq N$.
\item[(b)] If $m/r<\frac{\alpha(I)}{\widehat{\alpha}(I)}$, then for all $t\gg0$ we have $I^{(mt)}\not\subseteq I^{rt}$.
\item[(c)] If $m/r\geq \frac{\reg(I)}{\widehat{\alpha}(I)}$, then $I^{(m)}\subseteq I^{r}$.
\item[(d)] We have $$\frac{\alpha(I)}{\widehat{\alpha}(I)}\leq\rho(I)\leq \frac{\reg(I)}{\widehat{\alpha}(I)},$$
hence $\frac{\alpha(I)}{\widehat{\alpha}(I)}=\rho(I)$ if $\alpha(I)=\reg(I)$.
\end{enumerate}
\end{theorem}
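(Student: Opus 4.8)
The plan is to prove the two containment criteria (b) and (c) first — these carry all the real content — and then read off (a) and (d) as formal consequences, using Exercises \ref{alphaLikeLog}, \ref{Fekete}, \ref{alphaContCrit}, \ref{ContFacts}, Fact \ref{Dubreil} and Theorem \ref{ELSHH}. Throughout I write $a=\widehat\alpha(I)$ and $b=\alpha(I)$, and I record for later use that $1\le a$ (\Exercise\ \ref{Fekete}(a)), that $b\ge a$ (since $a=\inf_k\alpha(I(kZ))/k\le\alpha(I(Z))=b$), and that $\reg(I)\ge b$ (by Fact \ref{Dubreil}, $I$ has a homogeneous generating set in degrees $\le\reg(I)$, and since $[I]_b\ne0$ such a set must contain a generator of degree exactly $b$).

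For part (b): by \Exercise\ \ref{alphaContCrit} it suffices to produce, for all large $t$, the strict inequality $\alpha(I^{(mt)})<rt\,b$, since $\alpha(I^{rt})=rt\,b$ by \Exercise\ \ref{alphaLikeLog}. The hypothesis $m/r<b/a$ reads $ma<rb$, so I may fix $\delta>0$ with $m(a+\delta)<rb$. By \Exercise\ \ref{Fekete}(d) we have $\alpha(I(kZ))/k\to a$ as $k\to\infty$, hence for all $t$ with $mt$ large enough, $\alpha(I^{(mt)})=\alpha(I(mtZ))<mt(a+\delta)<rt\,b$. Therefore $I^{(mt)}\not\subseteq I^{rt}$ for $t\gg0$, with ratio $mt/(rt)=m/r$.

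For part (c): first $m\ge r$, because the hypothesis $ma\ge r\reg(I)$ together with $\reg(I)\ge b\ge a$ gives $ma\ge ra$, hence $m\ge r$; thus $I^{(m)}\subseteq I^{(r)}$ by \Exercise\ \ref{ContFacts}(b). To promote this to $I^{(m)}\subseteq I^r$ I check graded pieces. If $[I^{(m)}]_t=0$ there is nothing to check; otherwise $t\ge\alpha(I(mZ))\ge ma\ge r\reg(I)$ (using $\alpha(I(mZ))/m\ge a$ and the hypothesis), and then Fact \ref{Dubreil} gives $[I^r]_t=[I^{(r)}]_t$, so $[I^{(m)}]_t\subseteq[I^{(r)}]_t=[I^r]_t$. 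Hence $I^{(m)}\subseteq I^r$.

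Finally I assemble (a) and (d). The upper bounds come from the sufficient conditions for containment: (c) gives $I^{(m)}\subseteq I^r$ whenever $m/r\ge\reg(I)/a$, so $\rho(I)\le\reg(I)/a$, while Theorem \ref{ELSHH} gives $I^{(m)}\subseteq I^{(rN)}\subseteq I^r$ whenever $m\ge rN$, so $\rho(I)\le N$. The lower bounds come from (b): for every rational $q=m/r<b/a$ there is $t>0$ with $I^{(mt)}\not\subseteq I^{rt}$ and $mt/(rt)=q$, so $\rho(I)\ge q$ for every such $q$, whence $\rho(I)\ge b/a\ge1$. (Alternatively $\rho(I)\ge1$ directly: $\alpha(I^{(m)})=\alpha(I(mZ))\le mb<(m+1)b=\alpha(I^{m+1})$ forces $I^{(m)}\not\subseteq I^{m+1}$ by \Exercise\ \ref{alphaContCrit}, and $\sup_m m/(m+1)=1$.) Thus $1\le\rho(I)\le N$ and $b/a\le\rho(I)\le\reg(I)/a$; when $\alpha(I)=\reg(I)$ these two bounds coincide, forcing $\rho(I)=\alpha(I)/\widehat\alpha(I)$. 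The only step needing any care is (b), where one must exploit that $\widehat\alpha(I)$ is genuinely a limit (\Exercise\ \ref{Fekete}(d)), not merely an infimum, in order to convert the asymptotic inequality $ma<rb$ into an honest non-containment for a concrete exponent $mt$; everything else is bookkeeping with graded pieces and the elementary properties of $\alpha$.
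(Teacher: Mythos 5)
Your proposal is correct and follows essentially the same route as the paper: use Exercise~\ref{alphaContCrit} plus Fekete's limit property for~(b), compare graded pieces against $r\reg(I)$ using Fact~\ref{Dubreil} for~(c), and obtain~(a) and~(d) formally from these together with Theorem~\ref{ELSHH}. The one place you are more careful than the paper is in~(c), where the inclusion $[I^{(m)}]_t\subseteq[I^{(r)}]_t$ silently uses $I^{(m)}\subseteq I^{(r)}$, i.e.\ $m\ge r$; your chain $m\widehat\alpha(I)\ge r\reg(I)\ge r\alpha(I)\ge r\widehat\alpha(I)$ (using Fact~\ref{Dubreil} for $\reg(I)\ge\alpha(I)$) supplies the step the paper leaves implicit.
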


\begin{proof}
(a) By Theorem \ref{ELSHH}, we have $\rho(I)\leq N$. By \Exercise\ \ref{ContFacts}(d), we have $\rho(I)\geq 1$.

(b) If $m/r<\frac{\alpha(I)}{\widehat{\alpha}(I)}$, then $\widehat{\alpha}(I)<r\alpha(I)/m$, so for $t\gg0$ we have
$\widehat{\alpha}(I)\leq \alpha(I^{(mt)})/(mt)<r\alpha(I)/m=rt\alpha(I)/(mt)=\alpha(I^{rt})/(mt)$ so also
$\alpha(I^{(mt)})<\alpha(I^{rt})$, hence $I^{(mt)}\not\subseteq I^{rt}$ by \Exercise\ \ref{alphaContCrit}.

(c) Now say $m/r\geq\frac{\reg(I)}{\widehat{\alpha}(I)}$. Then 
$\alpha(I^{(m)})\geq m\widehat{\alpha}(I)\geq r\reg(I)$.
If $t<\alpha(I^{(m)})$, then $[I^{(m)}]_t=(0)\subseteq I^r$. If $t\geq \alpha(I^{(m)})$, then
$t\geq r\reg(I)$ hence $[I^{(m)}]_t\subseteq [I^{(r)}]_t=[I^r]_t$. Thus $I^{(m)}\subseteq I^{r}$.

(d) This follows from (b) and (c).
\end{proof}

No examples are known with $\rho(I) = N$, but there are a lot of examples with $\rho(I)=1$.
For example, if $|Z|=1$, so $Z$ consists of a single reduced point,
then $\rho(I)=1$, since $I^{(m)}=I^m$, but it is not not known if
$\rho(I)=1$ guarantees that $I^m=I^{(m)}$ for all $m$.

An asymptotic version of the resurgence was introduced in \cite{refGHVT}.

\begin{definition}\label{asympresurgence}
Given a fat point scheme $Z\subseteq\P^N$, define the {\it asymptotic resurgence} $\widehat{\rho}(I)$ for $I=I(Z)$ to be
$$\widehat{\rho}(I(Z))=\sup\Big\{\frac{m}{r}: I^{(ms)}\not\subseteq I^{rs}\hbox{ for }s\gg0\Big\}.$$
\end{definition}

In contrast to the case of the resurgence, the result of the following example holds not just for ideals $I$ of 
points, which is one advantage of the asymptotic resurgence (see \cite{refGHVT}).

\begin{exercise}\label{asymresurgEx}
Let $Z\subseteq\P^N$ be a fat point subscheme and let $I=I(Z)\subseteq\P^N$.
\begin{enumerate}
\item[(a)] Then $1\leq\widehat{\rho}(I)\leq \rho(I)$.
\item[(b)] One can show that 
$$\frac{\alpha(I)}{\widehat{\alpha}(I)}\leq\widehat{\rho}(I)\leq \frac{\omega(I)}{\widehat{\alpha}(I)},$$
where $\omega(I)$ is the maximal degree among a minimal set of homogeneous generators of $I$.
(Note: One can mimic the proof of Theorem \ref{bndsOnResurgence}(d), using the fact that 
there is a constant $c$ such that $\reg(I^s)\leq s\omega(I)+c$ for all $s>0$ \cite{refK}.)
\end{enumerate}
\end{exercise}

\SolnEater{\vskip\baselineskip 
\noindent{\it Details}: (a) We have $1\leq\widehat{\rho}(I)$ by \Exercise\ \ref{ContFacts}(d);
$\widehat{\rho}(I)\leq\rho(I)$ is clear by definition.

(b) The same proof as for Theorem \ref{bndsOnResurgence}(b) shows 
$\frac{\alpha(I)}{\widehat{\alpha}(I)}\leq\widehat{\rho}(I)$.

Now say $m/r>\frac{\omega(I)}{\widehat{\alpha}(I)}$; i.e., $m\widehat{\alpha}(I)=r\omega(I)+\delta$ for some $\delta>0$. Then, for $s\gg0$,
$\alpha(I^{(ms)})\geq ms\widehat{\alpha}(I) = rs\omega(I)+s\delta\geq s\omega(I)+c\geq \reg(I^s)$.
Now argue as in the proof of Theorem \ref{bndsOnResurgence}(c), using Fact \ref{Dubreil}.
\newline\qedsymbol\vskip\baselineskip}

\subsection{Other perspectives on optimality}
By Theorem \ref{BHstarOptimalityThm}, the bound $m\geq rN$ in Theorem \ref{ELSHH} is optimal,
in the sense that $N$ cannot be replaced by a smaller number and always still have the containment
$I^{(m)}\subseteq I^{r}$. 
But given the containment $I^{(Nr)}\subseteq I^r$, one can ask whether there are other ways to make the 
$I^{(Nr)}$ bigger or the ideal $I^r$ smaller and still always have containment.

For example, Craig Huneke raised the question: Given a reduced 0-dimensional subscheme $Z\subseteq\P^2$,
to what extent is the result $I(4Z)\subseteq I(Z)^2$ optimal? In particular, is it always true that $I(3Z)\subseteq I(Z)^2$?

Experimentation and partial results suggested the answer is Yes (it is true for example
if $\KK$ has characteristic 2; see \cite{refBetal}).
Thus I raised a more general conjecture \cite{refBetal}, a simplified version of which is:

\begin{conjecture}\label{contConj}
Let $Z\subseteq \P^N$ be a fat point subscheme. Then $I((Nr-N+1)Z)\subseteq I(Z)^r$ for all $r\geq 1$. 
\end{conjecture}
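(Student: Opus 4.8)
The case $r=1$ is trivial since $I^{(1)}=I$, and $N=1$ is trivial since every homogeneous ideal of $\KK[\P^1]$ is principal with $I^{(m)}=I^m$; so assume $N\ge 2$ and $r\ge 2$. The cleanest partial result, which I would establish first, is the case $\chr(\KK)=p$ with $r=q=p^e$ a power of the characteristic. Write $Z=m_1p_1+\cdots+m_sp_s$ and $I=I(Z)=\mathfrak p_1^{\,m_1}\cap\cdots\cap\mathfrak p_s^{\,m_s}$, where $\mathfrak p_i$ is the homogeneous prime of $p_i$; after a linear change of coordinates $\mathfrak p_i=(\ell_1,\dots,\ell_N)$ for independent linear forms, so $\mathfrak p_i$ is a complete intersection and $\mathfrak p_i^{(k)}=\mathfrak p_i^{\,k}$ for all $k$. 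Every monomial in $\ell_1,\dots,\ell_N$ of degree $N(q-1)+1$ is, by pigeonhole, divisible by some $\ell_j^{\,q}$, so $\mathfrak p_i^{\,N(q-1)+1}\subseteq\mathfrak p_i^{[q]}$; multiplying $m_i$ such inclusions gives $\mathfrak p_i^{\,m_i(N(q-1)+1)}=\big(\mathfrak p_i^{\,N(q-1)+1}\big)^{m_i}\subseteq\big(\mathfrak p_i^{[q]}\big)^{m_i}\subseteq\big(\mathfrak p_i^{\,m_i}\big)^{[q]}$. Since $\KK[\P^N]$ is regular, Frobenius is flat (Kunz), so finite intersections commute with $q$-th Frobenius powers, whence $I^{(N(q-1)+1)}=\bigcap_i\mathfrak p_i^{\,m_i(N(q-1)+1)}\subseteq\bigcap_i\big(\mathfrak p_i^{\,m_i}\big)^{[q]}=\big(\bigcap_i\mathfrak p_i^{\,m_i}\big)^{[q]}=I^{[q]}\subseteq I^{q}\subseteq I^{r}$, which is the assertion because $Nr-N+1=N(q-1)+1$. (For $N=2$, $r=2$, $p=2$ this recovers ``$\chr(\KK)=2\Rightarrow I(3Z)\subseteq I(Z)^2$''.)

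For arbitrary $r$ the pigeonhole step genuinely needs $q$ to be a prime power, and the elementary route via Proposition~\ref{satdegProp} is too lossy: Example~\ref{containmentExample} shows $\operatorname{satdeg}(I^r)/(r\,\widehat{\alpha}(I))$ can exceed what a bound with exponent $Nr-N+1$ would allow. So a new idea is required. My first plan would be to rerun the multiplier-ideal proof of Ein--Lazarsfeld--Smith recalled near Exercise~\ref{Fekete}(f) --- which as it stands yields only the weaker $I^{(Nr)}\subseteq I^r$ of Theorem~\ref{ELSHH} --- and to save the extra ``$N-1$''. Concretely, for the graded system $\mathfrak a_\bullet=\{I^{(k)}\}_{k\ge 0}$ one combines (i) a subadditivity inclusion of asymptotic multiplier ideals, $\mathcal J\big(\|\mathfrak a_\bullet^{\,kr}\|\big)\subseteq\mathcal J\big(\|\mathfrak a_\bullet^{\,r}\|\big)^{k}$; (ii) the Skoda-type identity $\mathcal J\big(\|\mathfrak a_\bullet\|\cdot I\big)=I\cdot\mathcal J\big(\|\mathfrak a_\bullet\|\big)$, available because $I$ is locally generated by $N$ elements; and (iii) the containment $I^{(m)}\subseteq\mathcal J\big(\|\mathfrak a_\bullet^{\,m}\|\big)$, valid once $m$ exceeds a jet-separation threshold of the family. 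The ``$+N-1$'' in the ELS estimate is lost at a single boundary step (the passage between $\mathcal J\big(\|\mathfrak a_\bullet^{\,m}\|\big)$ and $\mathfrak a_{m-N+1}$); the numerology of the characteristic-$p$ argument --- a symbolic power of order $N(r-1)+1$ already packages ``$N$ vanishing directions worth $r-1$ each, plus one more'' --- suggests this estimate is not sharp for symbolic powers and is improvable by $N-1$. As an alternative I would try to transport the characteristic-$p$ result: spread $Z$ out over a ring of $S$-integers, apply the prime-power case for infinitely many $p$, and attempt a generic-flatness/semicontinuity argument over $\operatorname{Spec}\Z$; but this only produces $I^{(N(p^e-1)+1)}\subseteq I^{p^e}$ for a possibly far larger exponent, so by itself it does not reach the prescribed exponent $Nr-N+1$.

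The step I expect to be the main obstacle is exactly this bridging: passing from the clean-but-restricted characteristic-$p$ result (exponents $r=p^e$ only) to the general-but-lossy multiplier-ideal result (exponent $Nr$ instead of $Nr-N+1$). In effect one needs a sharpened comparison between the asymptotic multiplier ideals of the symbolic Rees algebra of $Z$ and the symbolic powers themselves --- something like ``the regularity, or jet-separation threshold, of $\{I^{(k)}\}$ beats the naive bound by $N-1$'' --- which is the genuinely new input. One should note that for $N=2$, $r=2$ the statement is Huneke's question $I(3Z)\subseteq I(Z)^2$, so any complete proof must engage the borderline cases where both routes above fall one unit short; a proof along these lines would most plausibly supply the missing refinement only under a positivity hypothesis on $I$ (for instance a bound on the resurgence $\rho(I)$, or $F$-purity of $\KK[\P^N]/I$), and, failing that, would at least isolate where the refinement breaks down and thereby indicate where to look for a counterexample.
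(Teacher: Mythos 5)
Your proposal has a fundamental problem: the statement you are trying to prove is false, and the paper itself documents this. Immediately after Conjecture~\ref{contConj}, the paper explains that for $N=r=2$ the containment $I(3Z)\subseteq I(Z)^2$ fails over $\C$ for the ideal $I$ of the $12$ points of the Fermat arrangement with $n=3$ (the Dumnicki--Szemberg--Tutaj-Gasi\'nska example; see also the Klein and Wiman arrangements and the rational configuration of Figure~\ref{realCntrExmple}), and it fails in characteristic $3$ as well. So there is no ``paper's own proof'' to compare against --- the conjecture is open only in the sense that it is not known exactly when it holds, and its flat statement is refuted by these examples. You close by speculating that a proof ``would at least isolate where the refinement breaks down and thereby indicate where to look for a counterexample,'' which shows good instincts, but the counterexamples already exist in the literature and are quoted in this very paper. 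Any argument aimed at a full proof must therefore have a gap somewhere in the crossing from prime-power $r$ to general $r$; for $N=r=2$ over $\C$ that gap is not repairable.

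That said, your first paragraph is a correct and essentially standard argument (it is the Hochster--Huneke tight-closure/Frobenius argument). For $\chr(\KK)=p$ and $r=q=p^e$, the pigeonhole inclusion $\mathfrak p_i^{\,N(q-1)+1}\subseteq\mathfrak p_i^{[q]}$, the passage $(\mathfrak p_i^{[q]})^{m_i}\subseteq(\mathfrak p_i^{\,m_i})^{[q]}$, and the use of Kunz's theorem (flatness of Frobenius on a regular ring, so finite intersections commute with bracket powers) are all sound, giving $I^{(N(q-1)+1)}\subseteq I^{[q]}\subseteq I^q$. This is exactly why the paper can say Huneke's question $I(3Z)\subseteq I(Z)^2$ is known to hold in characteristic~$2$. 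But the pigeonhole count $N(q-1)+1$ hinges on $q$ being the Frobenius exponent, and the characteristic-$3$ counterexample for $r=2$, together with the complex Fermat/Klein/Wiman counterexamples, shows the numerology does not survive to general $r$ or to characteristic $0$. Your second paragraph is honest about the lossiness of the Ein--Lazarsfeld--Smith multiplier-ideal route and about what a sharper Skoda-type statement would need to deliver; but since the target containment is simply false at $N=r=2$, the ``missing $N-1$'' you are trying to save cannot be saved in general, only (possibly) under extra hypotheses on $Z$ --- which is precisely the direction current research has taken (see the paper's Open Problems~\ref{WimanSubsets} and \ref{ContQues}, and the resurgence computations $\rho(I)=\tfrac32$ for the Fermat configurations which quantify the failure).
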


No counterexamples over the complexes are known except for $N=r=2$.
Huneke's question is for the case that $r=N=2$. The 
first counterexample for any $r$ and $N$ over any field $\KK$ was for $N=r=2$ over $\C$:
take the points $Z$ of the Fermat arrangement for $n=3$ (see Remark \ref{t_2=0Rem}). Then $I(3Z)\not\subseteq I(Z)^2$ \cite{refDST13}.
Additional counterexamples were soon found: there is a version with $N=r=2$ in characteristic 3 \cite{refBCH}, 
and additional positive characteristic counterexamples are now known for various $r$ and $N$ \cite{refHS14}.
Over $\C$, one can also take $Z$ to be the points of the Fermat for any $n\geq 3$ \cite{refHS14}, 
or the Klein or Wiman \cite{3refIMRN,refSec} (again see Remark \ref{t_2=0Rem}).
Additional failures of containment for $N=r=2$ are given in \cite{refCzEtEl,refDHNSST}.
A recent paper \cite{refA} leverages these examples, by obtaining others by pulling them back by a finite cover of $\P^2$.

\begin{example}\label{FermatConfig}
Here is Macaulay2 code for verifying $I^{(3)}\not\subseteq I^2$ for
the $n^2+3$ points of the Fermat arrangement with $3n$ lines.
\begin{verbatim}
R=QQ[x,y,z];
n=5;
I=ideal(x^n-y^n, x^n-z^n);
J=ideal(x*y,x*z,y*z);
K=intersect(I,J);  -- Ideal of the n^2+3 Fermat points 
K3=intersect(I^3,saturate(J^3));  -- I is a complete intersection
                                  -- so I^3 is already saturated
isSubset(K3,K^2)
\end{verbatim}
\end{example}

\begin{example}\label{KleinConfig}
Here is Macaulay2 code for verifying $I^{(3)}\not\subseteq I^2$ for
the 49 points of the Klein arrangement of 21 lines.
\begin{verbatim}
-- Define the field
K=toField(QQ[c]/(c^2+c+2))
R=K[x,y,z];
-- Define the lines
F={x, x+c*y-z, -x+c*y-z, x+c*y+z, -x+y+c*z, y+z, c*x+y-z, z, c*x+y+z, c*x-y-z, 
-x+z, -x-y+c*z, -x+y, c*x-y+z, -x+c*y+z, x+z, -y+z, x+y, x-y+c*z, x+y+c*z, y};
-- Find the product of the 21 linear forms
H=product F;
-- Make a list of the ideals of the 49 intersection points of pairs of lines
W=subsets(21,2);
W4={};
apply(W,s->(flag=0;apply(W4,t->(if ideal(F_(t_0),F_(t_1))==ideal(F_(s_0),F_(s_1)) 
            then flag=1)); if flag==0 then W4=W4|{s}));
-- Define the ideal of the points
I=ideal(1_R);
apply(W4,s->(I=intersect(I,ideal(F_(s_0),F_(s_1)))));
-- Since H is in I^(3), it is enough to check that H is not in I^2
isSubset(ideal(H),I^2)
\end{verbatim}
\end{example}

\begin{example}\label{WimanConfig}
Here is Macaulay2 code for verifying $I^{(3)}\not\subseteq I^2$ for
the 201 points of the Wiman arrangement of 45 lines.
\begin{verbatim}
-- Define the field
K=toField(QQ[a]/(a^4-a^2+4))
R=K[x,y,z];
-- Define the lines
A=(-1/4)*(a^3-3*a-2);
B=(1/4)*(a^3+a-2);
F={y,(-1+A)*x+A*y+z,z,(1-A)*x+A*y-z,A*x+y+(-1+A)*z,-A*x+y+(1-A)*z,
(-1+A)*x-B*y+(-A-A*B)*z,(1-A)*x-B*y+(A+A*B)*z, (1-A)*x+A*y+z, 
A*x+y+(1-A)*z, -x+(-1+A)*y+A*z, (-1-A*B)*x+y+(-1-B)*z, 
(1-A)*x+B*y+(-A-A*B)*z, A*x+(B-A*B)*y+(-1-B)*z, (-A-A*B)*x+(1-A)*y-B*z, 
(-1+A)*x+A*y-z, -A*x+y+(-1+A)*z, x+(-1+A)*y-A*z, (1+A*B)*x+y+(1+B)*z, 
(-1+A)*x+B*y+(A+A*B)*z, -A*x+(B-A*B)*y+(1+B)*z, (A+A*B)*x+(1-A)*y+B*z, 
(1+B)*x+(-1-A*B)*y+z, x+(-1+A)*y+A*z, x+(1-A)*y+A*z, (-1-A*B)*x+y+(1+B)*z, 
(-A-B)*x+(-1+A+A*B)*y, -B*x+y+(-A+B-A*B)*z, (-1-A*B)*x-y+(1+B)*z, 
(-1-B)*x+A*y+(B-A*B)*z, (-1-B)*x+(-1-A*B)*y-z, (A+B)*x+(-1+A+A*B)*y, 
B*x+y+(A-B+A*B)*z, (1+B)*x+A*y+(-B+A*B)*z, (-1+A+A*B)*x+(-A-B)*z, x, 
(-1-B)*x+A*y+(-B+A*B)*z, (-A-B)*y+(-1+A+A*B)*z, -B*x+y+(A-B+A*B)*z, 
(1+B)*x-B*y+(1-A+B)*z, x-A*B*y+(1-A+B-A*B)*z, (-A-B)*y+(1-A-A*B)*z, 
(1+B)*x+(1+A*B)*y-z, (A+B)*x+(1+B-A*B)*z, B*x+(-1+A-B)*y+(-1-B)*z};
-- Find the product of the 45 linear forms
H=product F;
-- Make a list of the ideals of the 49 intersection points of pairs of lines
W=subsets(45,2);
W4={};
apply(W,s->(flag=0;apply(W4,t->(if ideal(F_(t_0),F_(t_1))==ideal(F_(s_0),F_(s_1)) 
            then flag=1)); if flag==0 then W4=W4|{s}));
-- Define the ideal of the points
I=ideal(1_R);
apply(W4,s->(I=intersect(I,ideal(F_(s_0),F_(s_1)))));
-- Since H is in I^(3), it is enough to check that H is not in I^2
isSubset(ideal(H),I^2)
\end{verbatim}
\end{example}

Additional counterexamples arise by taking subsets of points of the Wiman arrangement.

\begin{example}\label{WimanConfigDropA3Point}
Here is Macaulay2 code for verifying $I^{(3)}\not\subseteq I^2$ for
200 of the 201 points of the Wiman arrangement of 45 lines. The missing point
has multiplicity 3 in this case, but similar failures of containment occur
by instead excluding a 4-point or a 5-point. The ideal of the 201 Wiman points 
is generated by three forms of degree 16. The ideal of the 200 points
has an additional generator of degree 25, but the symbolic cube is
generated in degree at most 49 (it has the usual degree 45 element,
20 generators of degree 48 and 6 of degree 49). Thus all 
homogeneous elements of $I^2$ of degree 49 or less
vanish at all 201 points, but $I^{(3)}$ has elements of degree
49 that do not vanish at the missing point, and so 
$I^{(3)}\not\subseteq I^2$.
\begin{verbatim}
-- Define the field
K=toField(QQ[a]/(a^4-a^2+4))
R=K[x,y,z];
-- Define the lines
A=(-1/4)*(a^3-3*a-2);
B=(1/4)*(a^3+a-2);
F={y,(-1+A)*x+A*y+z,z,(1-A)*x+A*y-z,A*x+y+(-1+A)*z,-A*x+y+(1-A)*z,
(-1+A)*x-B*y+(-A-A*B)*z, (1-A)*x-B*y+(A+A*B)*z, (1-A)*x+A*y+z, 
A*x+y+(1-A)*z, -x+(-1+A)*y+A*z, (-1-A*B)*x+y+(-1-B)*z, 
(1-A)*x+B*y+(-A-A*B)*z, A*x+(B-A*B)*y+(-1-B)*z, (-A-A*B)*x+(1-A)*y-B*z, 
(-1+A)*x+A*y-z, -A*x+y+(-1+A)*z, x+(-1+A)*y-A*z, (1+A*B)*x+y+(1+B)*z, 
(-1+A)*x+B*y+(A+A*B)*z, -A*x+(B-A*B)*y+(1+B)*z, (A+A*B)*x+(1-A)*y+B*z, 
(1+B)*x+(-1-A*B)*y+z, x+(-1+A)*y+A*z, x+(1-A)*y+A*z, (-1-A*B)*x+y+(1+B)*z, 
(-A-B)*x+(-1+A+A*B)*y, -B*x+y+(-A+B-A*B)*z, (-1-A*B)*x-y+(1+B)*z, 
(-1-B)*x+A*y+(B-A*B)*z, (-1-B)*x+(-1-A*B)*y-z, (A+B)*x+(-1+A+A*B)*y, 
B*x+y+(A-B+A*B)*z, (1+B)*x+A*y+(-B+A*B)*z, (-1+A+A*B)*x+(-A-B)*z, x, 
(-1-B)*x+A*y+(-B+A*B)*z, (-A-B)*y+(-1+A+A*B)*z, -B*x+y+(A-B+A*B)*z, 
(1+B)*x-B*y+(1-A+B)*z, x-A*B*y+(1-A+B-A*B)*z, (-A-B)*y+(1-A-A*B)*z, 
(1+B)*x+(1+A*B)*y-z, (A+B)*x+(1+B-A*B)*z, B*x+(-1+A-B)*y+(-1-B)*z};
-- Make a list of the ideals of the 49 intersection points of pairs of lines
W=subsets(45,2);
W4={};
apply(W,s->(flag=0;apply(W4,t->(if ideal(F_(t_0),F_(t_1))==ideal(F_(s_0),F_(s_1)) 
            then flag=1)); if flag==0 then W4=W4|{s}));
-- Find the multiplicity of the point where line i and line j intersect
W5={}
W5=apply(W4,s->(n=0;apply(F,t->(if isSubset(ideal(t),ideal(F_(s_0),F_(s_1))) 
         then n=n+1)); W5|{s,n}));
-- {1,2} turns out to be a 3-point:
W5_2
-- Remove this 3-point
W6=delete({1,2},W4);
-- Define the ideal of the points
I=ideal(1_R);
apply(W6,s->(I=intersect(I,ideal(F_(s_0),F_(s_1)))));
-- It turns out that the product H of the linear forms is in I^2 so we need to  
-- compute I^(3), which is slow.
I3=ideal(1_R);
apply(W6,s->(I3=intersect(I3,(ideal(F_(s_0),F_(s_1)))^3)));
-- Alternatively, one could try: I3=saturate(I^3);
isSubset(I3,I^2)
\end{verbatim}
\end{example}

\begin{openproblem}\label{WimanSubsets}
For which subsets $Z$ of the 201 points of the Wiman
arrangement do we have $I^{(3)}\not\subseteq I^2$,
for $I=I(Z)$? 
\end{openproblem}

Counterexamples also occur over the reals \cite{refCzEtEl} and 
one of them can be made to work over the rationals \cite{refDHNSST}.
This one is displayed in Figure \ref{realCntrExmple}. Take for $Z$ the 19 crossing points
of multiplicity 3. Then $I(3Z)\not\subseteq I(Z)^2$. In all of these counterexamples
(i.e., the counterexample coming from the Fermat, Klein and Wiman line arrangements and the
counterexample coming from the arrangement displayed in Figure \ref{realCntrExmple}),
the failure is due to the fact that the form $F$ coming from taking all of the lines of a line arrangement
satisfies $F\in I(3Z)$ but $F\not\in I(Z)^2$. 

Another common feature of all of these counterexamples is that $t=\deg(F)/3$ is an integer, 
and the least $m$ with $\dim [I(mZ)]_{mt}>0$ is $m=3$. One might hope that 
$I(3Z)\not\subseteq I(Z)^2$ if and only if $t=\deg(F)/3$ is an integer, 
and the least $m$ with $\dim [I(mZ)]_{mt}>0$ is $m=3$.
It is possible that this gives a necessary condition, but it is not sufficient.

For example, consider the line arrangement shown in Figure \ref{realCntrExmple}.
It has 12 lines and 19 triple points. Let $Z$ be the reduced scheme consisting of those 19 points.
Then $I(3Z)\not\subseteq I(Z)^2$, 3 divides $\deg(F)$ and the least $m$ with $\dim [I(mZ)]_{m4}>0$ is $m=3$.
Now consider the line arrangement shown in Figure \ref{McKeeDual}. It is the dual
of a combinatorially well-known arrangement of 13 points known as the McKee arrangement, shown in Figure \ref{McKeeConfig}.
Take as the lines of a line arrangement the 12 nondotted lines in Figure \ref{McKeeDual}.
It has 1 quadruple point, 18 triple points and 6 double points. Take for $Z$ the
19 points of multiplicity more than 2. Then $I(3Z)\subseteq I(Z)^2$ even though
3 divides $\deg(F)$ and the least $m$ with $\dim [I(mZ)]_{m4}>0$ is $m=3$.

This raises the question: 
\begin{openproblem}\label{ContQues}
Given a point set $Z$ coming from a line arrangement such that $I(3Z)\not\subseteq I(Z)^2$, must there 
be a $t$ such that the least $m$ with $\dim [I(mZ)]_{mt}>0$ is $m=3$? Must $\deg(F)$ be a multiple of 3
(where $F$ is the form defining the union of  the lines)? 
\end{openproblem}

\begin{figure}[h]
\begin{tikzpicture}[x=.75cm,y=.75cm]
\clip(-6.295541561786057,-2.7118849043725834) rectangle (7.507390761802504,8.288421566710701);
\draw [domain=-6.295541561786057:7.507390761802504] plot(\x,{(--4.308854742566771-3.7781170691944217*\x)/1.4573087803652975});
\draw [domain=-6.295541561786057:7.507390761802504] plot(\x,{(--3.6262606986009076--2.3868366676731565*\x)/2.587598630374626});
\draw [domain=-6.295541561786057:7.507390761802504] plot(\x,{(-21.1897240279081--1.3912804015212652*\x)/-4.044907410739923});
\draw [domain=-6.295541561786057:7.507390761802504] plot(\x,{(--9.052652261566836--1.4354160754323595*\x)/5.353689635521689});
\draw [domain=-6.295541561786057:7.507390761802504] plot(\x,{(-23.336433152348445-7.686967124283885*\x)/-2.7803230550587763});
\draw [domain=-6.295541561786057:7.507390761802504] plot(\x,{(-13.823121538270541-5.621052349141927*\x)/-3.5771950646237727});
\draw [domain=-6.295541561786057:7.507390761802504] plot(\x,{(-21.119980432508115--0.2767862186201042*\x)/-6.0141264063219895});
\draw [domain=-6.295541561786057:7.507390761802504] plot(\x,{(--3.82935328761021--2.3478598593786866*\x)/-0.06914043897792954});
\draw [domain=-6.295541561786057:7.507390761802504] plot(\x,{(-8.429716098031342--1.5803614713398195*\x)/-1.461534024303433});
\draw [domain=-6.295541561786057:7.507390761802504] plot(\x,{(--13.97744076695705--4.8878253451632405*\x)/0.8810581619647937});
\draw [domain=-6.295541561786057:7.507390761802504] plot(\x,{(--3.7685284024981756--2.601269392035054*\x)/4.649191503365426});
\draw [domain=-6.295541561786057:7.507390761802504] plot(\x,{(--22.578370086368736-2.3095271031601747*\x)/3.712783904352978});
\begin{scriptsize}
\draw [fill=black] (-1.0148207931868989,5.587674083345914) circle (1.5pt);
\draw [fill=black] (0.44248798717839866,1.8095570141514923) circle (1.5pt);
\draw [fill=black] (3.0300866175530246,4.196393681824649) circle (1.5pt);
\draw [fill=black](5.796177622700087,3.2449730895838518) circle (1.5pt);
\draw [fill=black](-0.21794878362190206,3.521759308203956) circle (1.5pt);
\draw [fill=black](-3.795143848245675,-2.0992930409379706) circle (1.5pt);
\draw [fill=black] (-2.684591809853696,0.9711332512128636) circle (1.5pt);
\draw [fill=black] (-1.6676251409406595,1.2437994691144594) circle (1.5pt);
\draw [fill=black] (2.179109096757502,3.4114402790218783) circle (1.5pt);
\draw [fill=black] (-1.736765579918589,3.591659328493146) circle (1.5pt);
\draw [fill=black] (0.7175750724540692,4.991801750361698) circle (1.5pt);
\draw [fill=black] (-1.8600762902341084,7.779024512473213) circle (1.5pt);
\draw [fill=black] (3.0211486423482095,2.5009401927647215) circle (1.5pt);
\draw [fill=black] (-1.8035336478889021,5.858958596376104) circle (1.5pt);
\draw [fill=black] (-1.628042861017216,-0.10032919927033278) circle (1.5pt);
\draw [fill=black] (-0.6826972867999531,6.5059207849848235) circle (1.5pt);
\draw [fill=black] (-3.017903127344981,-0.8779699928119649) circle (1.5pt);
\draw [fill=black] (4.460785376258695,3.306431420402868) circle (1.5pt);
\draw [fill=black] (-1.5856672542009285,7.067610790860788) circle (1.5pt);
\end{scriptsize}
\end{tikzpicture}
\caption{An arrangement of $12$ lines with $19$ triple points (and 9 double points).}
\label{realCntrExmple}
\end{figure}
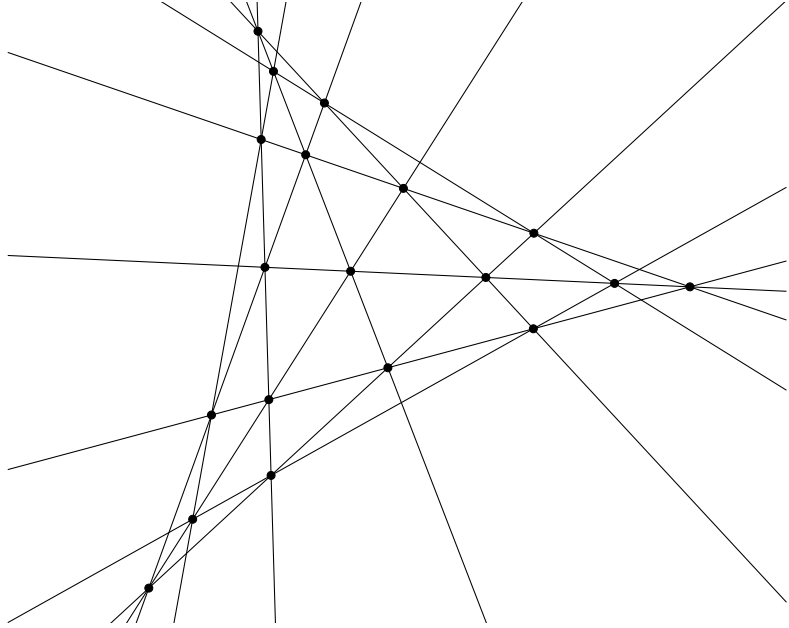

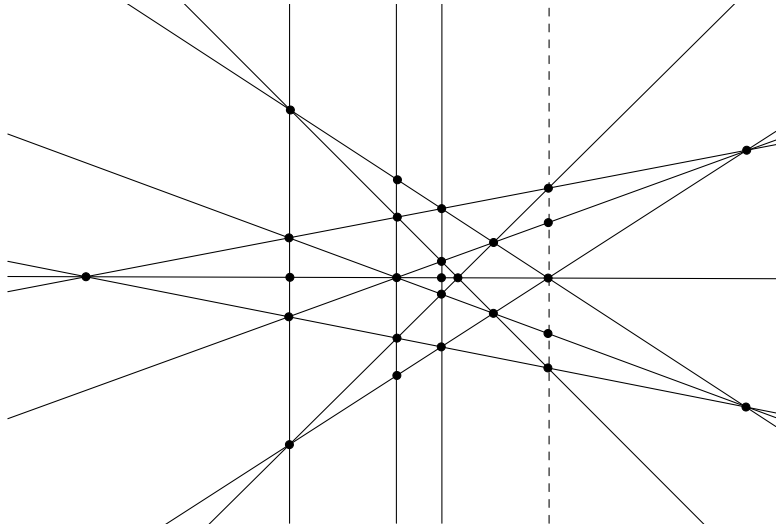
\begin{figure}[h]
\definecolor{uuuuuu}{rgb}{0,0,0}
\definecolor{xdxdff}{rgb}{0,0,0}
\definecolor{qqqqff}{rgb}{0,0,0}
\begin{tikzpicture}[line cap=round,line join=round,>=triangle 45,x=.5cm,y=.5cm]
\clip(-9.791393015282868,-4) rectangle (10.631344423249304,9.813777815169532);
\draw [domain=-9.791393015282868:10.631344423249304] plot(\x,{(--31.190119039675295-0.03521161627333136*\x)/12.288854079392635});
\draw (-2.3,-4)-- (-2.3,10);
\draw (.54,-4)-- (.54,10);
\draw (1.75,-4)-- (1.75,10);
\draw [dashed](4.6,-4)-- (4.6,10);
\draw [domain=-9.791393015282868:10.631344423249304] plot(\x,{(--49.65427264460399--2.3561329613382114*\x)/12.295706069586362});
\draw [domain=-9.791393015282868:10.631344423249304] plot(\x,{(--12.725965434746593-2.426556193884873*\x)/12.282002089198905});
\draw [domain=-9.791393015282868:10.631344423249304] plot(\x,{(--37.76463828098703-4.472040914700182*\x)/6.853653088250459});
\draw [domain=-9.791393015282868:10.631344423249304] plot(\x,{(--32.26752198361097-6.863385492311724*\x)/6.846801098056729});
\draw [domain=-9.791393015282868:10.631344423249304] plot(\x,{(-2.9096123949887405--4.432691856906153*\x)/6.879168081865377});
\draw [domain=-9.791393015282868:10.631344423249304] plot(\x,{(--2.587503902387315--6.824036434517696*\x)/6.886020072059106});
\draw [domain=-9.791393015282868:10.631344423249304] plot(\x,{(--3.2361294530801166--0.5026766643280705*\x)/1.384488680372556});
\draw [domain=-9.791393015282868:10.631344423249304] plot(\x,{(--3.7843930648259576-0.5106023777446933*\x)/1.3815853020285078});
\begin{scriptsize}
\draw [fill=qqqqff] (-7.713907655156318,2.56018486286327) circle (1.5pt);
\draw [fill=qqqqff] (4.574946424236316,2.524973246589939) circle (1.5pt);
\draw [fill=xdxdff] (4.581798414430045,4.916317824201482) circle (1.5pt);
\draw [fill=xdxdff] (4.568094434042586,0.13362866897839698) circle (1.5pt);
\draw [fill=xdxdff] (-2.2914641608216018,2.5446477754869528) circle (1.5pt);
\draw [fill=xdxdff] (-2.278706664014143,6.997014161290121) circle (1.5pt);
\draw [fill=xdxdff] (-2.3042216576290606,-1.9077186103162145) circle (1.5pt);
\draw [fill=uuuuuu] (0.5648410882553065,4.146578737488936) circle (1.5pt);
\draw [fill=uuuuuu] (1.7435819315658139,4.372451911268611) circle (1.5pt);
\draw [fill=uuuuuu] (0.555614142704531,0.9263747402683429) circle (1.5pt);
\draw [fill=uuuuuu] (1.7330412394674868,0.6937503689526409) circle (1.5pt);
\draw [fill=uuuuuu] (1.739560618893815,2.9690137887411328) circle (1.5pt);
\draw [fill=uuuuuu] (3.124049299266371,3.471690453069203) circle (1.5pt);
\draw [fill=uuuuuu] (1.7370625521394856,2.0971884914801193) circle (1.5pt);
\draw [fill=uuuuuu] (3.1186478541679934,1.586586113735426) circle (1.5pt);
\draw [fill=uuuuuu] (9.853959736012978,5.92658213899344) circle (1.5pt);
\draw [fill=uuuuuu] (9.834379753471463,-0.9068317679941086) circle (1.5pt);
\draw [fill=uuuuuu] (-2.314997040391133,3.5947371840375295) circle (1.5pt);
\draw [fill=uuuuuu] (-2.321014308920466,1.49471046730219) circle (1.5pt);
\draw [fill=uuuuuu] (0.5483465453734465,2.53651078205946) circle (1.5pt);
\draw [fill=uuuuuu] (4.579172899364175,4.0000130662128415) circle (1.5pt);
\draw [fill=uuuuuu] (4.570719949108456,1.0499334269670344) circle (1.5pt);
\draw [fill=uuuuuu] (1.7383115855166502,2.533101140110626) circle (1.5pt);
\draw [fill=uuuuuu] (2.175670094769602,2.5318479638663476) circle (1.5pt);
\draw [fill=uuuuuu] (0.567686791093013,5.139729027848474) circle (1.5pt);
\draw [fill=uuuuuu] (0.5527684398668246,-0.06677555009119462) circle (1.5pt);
\end{scriptsize}
\end{tikzpicture}
\caption{The dual of the McKee arrangement: 13 lines with 6 double points, 18 triple points and 3 quadruple points
(one of which is at infinity, in the direction of the vertical lines).}
\label{McKeeDual}
\end{figure}

\begin{figure}[h]
\definecolor{zzttqq}{rgb}{0,0,0}
\definecolor{xdxdff}{rgb}{0,0,0}
\definecolor{uuuuuu}{rgb}{0,0,0}
\begin{tikzpicture}[line cap=round,line join=round,>=triangle 45,x=.5cm,y=.5cm]
\clip(-5,-5) rectangle (7,5);
\fill[color=zzttqq,fill=zzttqq,fill opacity=0.1] (0.0,-0.0) -- (2.0,0.0) -- (2.618033988749895,1.9021130325903064) -- (1.0000000000000002,3.077683537175253) -- (-0.6180339887498947,1.9021130325903073) -- cycle;
\fill[color=zzttqq,fill=zzttqq,fill opacity=0.1] (0.0,-0.0) -- (2.0,0.0) -- (2.618033988749895,-1.9021130325903064) -- (1.0000000000000002,-3.077683537175253) -- (-0.6180339887498947,-1.9021130325903073) -- cycle;
\draw (0.0,-7.394651999999992) -- (0.0,6.906676799999993);
\draw (2.0,-7.394651999999992) -- (2.0,6.906676799999993);
\draw [domain=-7.249149600000005:11.432766400000004] plot(\x,{(-1.9021130325903064--1.9021130325903064*\x)/2.618033988749895});
\draw [domain=-7.249149600000005:11.432766400000004] plot(\x,{(--1.9021130325903064-1.9021130325903064*\x)/2.618033988749895});
\draw [domain=-7.249149600000005:11.432766400000004] plot(\x,{(--6.155367074350506-0.0*\x)/2.0});
\draw [domain=-7.249149600000005:11.432766400000004] plot(\x,{(-6.155367074350506--0.0*\x)/2.0});
\draw [dash pattern=on 8pt off 8pt,domain=-7.249149600000005:11.432766400000004] plot(\x,{(-0.0--1.9021130325903064*\x)/2.618033988749895});
\draw [dash pattern=on 8pt off 8pt,domain=-7.249149600000005:11.432766400000004] plot(\x,{(-0.0-1.9021130325903064*\x)/2.618033988749895});
\begin{scriptsize}
\draw [fill=uuuuuu] (0.0,-0.0) circle (1.5pt);
\draw [fill=xdxdff] (1.0,0.0) circle (1.5pt);
\draw [fill=xdxdff] (2.0,0.0) circle (1.5pt);
\draw [fill=uuuuuu] (2.618033988749895,1.9021130325903064) circle (1.5pt);
\draw [fill=uuuuuu] (1.0000000000000002,3.077683537175253) circle (1.5pt);
\draw [fill=uuuuuu] (-0.6180339887498947,1.9021130325903073) circle (1.5pt);
\draw [fill=uuuuuu] (-0.6180339887498947,-1.9021130325903073) circle (1.5pt);
\draw [fill=uuuuuu] (1.0000000000000002,-3.077683537175253) circle (1.5pt);
\draw [fill=uuuuuu] (2.618033988749895,-1.9021130325903064) circle (1.5pt);
\end{scriptsize}
\end{tikzpicture}
\caption{The McKee arrangement is based on two abutting regular pentagons. It has  $n=13$ points (including 4 at infinity in the directions of the lines)
having only 6 ordinary lines (i.e., less than $n/2$ lines through exactly two points of the arrangement, shown as solid lines; 
note that the solid diagonal lines are parallel to the dashed lines).}
\label{McKeeConfig}
\end{figure}
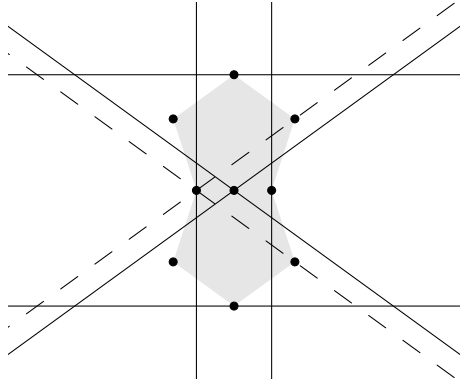

\begin{example}\label{ContFails}
All of the complex examples $I(3Z)\not\subseteq I(Z)^2$ known so far come via \cite{refA} by starting with a subset $Z$ of the singular points of
a line arrangement, where $Z$ excludes the points of multiplicity 2 and includes 
most of the points of multiplicity at least 3, for line arrangements that have only a few or no points of multiplicity 2
(i.e., $t_2$ is small or 0).
To see why the size of $t_2$ might be relevant,
consider a line arrangement having $t_2=0$. Let $F$ be the product of the linear forms of the lines.
Let $Z$ be the crossing points of the lines. Then since at least three lines cross at each crossing point,
we have $F\in I(3Z)$. If it turns out that $F\not\in I(Z)^2$, then we have  
$I(3Z)\not\subseteq I(Z)^2$. It can take work to check whether $F\not\in I(Z)^2$ (see \cite{refSec}).
The simplest case might be as follows \cite{refBCH}.
Take $\chr(\KK)=3$. Choose the point $p_0=[0:0:1]$ (represented in Figure \ref{12ptsChar3} by the open dot). There are 9 
lines defined over the prime
subfield $\mathbb F_3$ which do not contain this point. They give an arrangement of 9 lines with 12 crossing points, and
every crossing point has multiplicity 3. Take $Z$ to be these 12 points. Note that for each point, the 3 lines
of the arrangement through that point also go through 3 more of the points.
By Bezout's Theorem, $\alpha(I(Z))>3$, and clearly $\alpha(I(Z))\leq 4$.

The claim is that $\dim[I(Z)]_4=3$. There are various ways to verify this: for example, use facts about Hilbert functions,
or run it on a computer. Here's a third way (based on the method of \cite{refCHT}), in reference to Figure \ref{12ptsChar3}. 
Blow up the 11 points $p_2,\ldots,p_{12}$ shown 
to get a surface $X$. Let $E_i$ be the blow up of $p_i$. Denote the proper transforms of the lines $L_i$ 
also by $L_i$. 
We have
$$0\to \O_X\to \O_X(L_4)\to \O_{L_4}(-1)\to 0$$
and since $h^1(X, \O_X)=0=h^1(\P^1,\O_{\P^1}(-1))=h^1(L_4,\O_{L_4}(-1))$, we get
$h^1(X,\O_X(L_4))=0$. Then from
$$0\to \O_X(L_4)\to \O_X(L_4+L_3)\to \O_{L_3}(-1)\to 0$$
we get $h^1(X,\O_X(L_4+L_3))=0$.
Now from
$$0\to \O_X(L_4+L_3)\to \O_X(L_4+L_3+L_2)\to \O_{L_2}\to 0$$
we get $h^1(X,\O_X(L_4+L_3+L_2))=0$.
Note that $L_4+L_3+L_2=3L-E_5-\cdots-E_{12}$. Now blow up $p_1$ to get $Y$.
Then from
$$0\to \O_Y(3L-E_5-\cdots-E_{12})\to \O_Y(4L-E_1-\cdots-E_{12})\to \O_{L_1}\to 0$$
we get $h^1(Y,\O_Y(4L-E_1-\cdots-E_{12}))=0$ and hence $h^0(Y,\O_Y(4L-E_1-\cdots-E_{12}))=3$,
so $\dim[I(Z)]_4=3$. It's easy to check that the three quartics (namely 
$x^2y^2(x^2-y^2)$, 
$x^2z^2(x^2-z^2)$ and 
$y^2z^2(y^2-z^2)$) 
given by the four $F_3$-lines through each of the coordinate vertices $p_0, p_1, p_2$
are linearly independent and so give a basis of $[I(Z)]_4$. Note that they all vanish at all 13 $F_3$-points of $\P^2$.
Thus every element of $[I(Z)^2]_8$ vanishes at all 13 points.
But $F(p_0)\neq0$, so $F\not\in [I(Z)^2]_8$. Hence $I(3Z)\not\subseteq I(Z)^2$.

\begin{figure}[h]
\definecolor{uuuuuu}{rgb}{0.0,0.0,0.0}
\definecolor{xdxdff}{rgb}{0.0,0.0,0.0}
\definecolor{qqqqff}{rgb}{0.0,0.0,0.0}
\begin{tikzpicture}[line cap=round,line join=round,>=triangle 45,x=1.5cm,y=1.5cm]
\clip(-2.893892758828592,1.5255102359774015) rectangle (3.487225599963166,5.4103663589145405);
\draw (-1.74,4.96)-- (-1.74,1.9);
\draw (-1.74,1.9)-- (1.48,1.88);
\draw (1.48,1.88)-- (-1.74,4.96);
\draw (-1.74,4.96)-- (-0.8197870534680965,1.894284391636448);
\draw (-1.74,4.96)-- (0.12017668389784739,1.8884461075534298);
\draw (1.48,1.88)-- (-1.74,2.7);
\draw (1.48,1.88)-- (-1.7399999999999998,3.58);
\begin{scriptsize}
\draw [fill=qqqqff] (-1.74,4.96) circle (1.5pt);
\draw[color=qqqqff] (-1.661676799889494,5.113314297384579) node {$p_4$};
\draw [fill=white] (-1.74,1.9) circle (1.5pt);
\draw[color=qqqqff] (-1.85,1.75) node {$p_0$};
\draw [fill=qqqqff] (1.48,1.88) circle (1.5pt);
\draw[color=qqqqff] (1.561888164120825,2.0327744000368337) node {$p_1$};
\draw[color=black] (-0.24242806146856846,1.75) node {$L_4$};
\draw[color=black] (-0.05539528198674106,3.573044348710707) node {$L_1$};
\draw [fill=xdxdff] (-1.7399999999999998,3.58) circle (1.5pt);
\draw[color=xdxdff] (-1.9,3.6) node {$p_7$};
\draw [fill=xdxdff] (-1.74,2.7) circle (1.5pt);
\draw[color=xdxdff] (-1.9,2.7) node {$p_{10}$};
\draw [fill=xdxdff] (-0.8197870534680965,1.894284391636448) circle (1.5pt);
\draw[color=xdxdff] (-0.7375148306851703,1.7) node {$p_{12}$};
\draw [fill=xdxdff] (0.12017668389784739,1.8884461075534298) circle (1.5pt);
\draw[color=xdxdff] (0.19764906672396657,1.7) node {$p_{11}$};
\draw[color=black] (-0.5394801229985295,2.25) node {$L_3$};
\draw[color=black] (0.16464328210952645,2.76990358975933) node {$L_2$};
\draw [fill=uuuuuu] (-1.2477718782915457,3.320128010278146) circle (1.5pt);
\draw[color=uuuuuu] (-1.166590030672892,3.474026994867386) node {$p_6$};
\draw [fill=uuuuuu] (-0.5114396015606132,2.9313811561034298) circle (1.5pt);
\draw[color=uuuuuu] (-0.42946084095039583,3.088959507698918) node {$p_5$};
\draw [fill=uuuuuu] (-1.0054873443557129,2.512950193283132) circle (1.5pt);
\draw[color=uuuuuu] (-0.9245476101669977,2.6708862359160097) node {$p_9$};
\draw [fill=uuuuuu] (-0.12173546345690864,2.287895366470393) circle (1.5pt);
\draw[color=uuuuuu] (.05,2.439845743614929) node {$p_{8}$};
\draw [fill=xdxdff] (-0.5334649555774924,3.8059230009871667) circle (1.5pt);
\draw[color=xdxdff] (-0.4514646973600226,3.958111835879175) node {$p_3$};
\draw [fill=xdxdff] (0.2948075024679171,3.0136623889437315) circle (1.5pt);
\draw[color=xdxdff] (0.3736799180009806,3.165973005132612) node {$p_2$};
\end{scriptsize}
\end{tikzpicture}
\caption{The 13 ${\mathbb F}$-points $p_i$ of $\P^2$ over a field $\KK$ of characteristic 3.}
\label{12ptsChar3}
\end{figure}
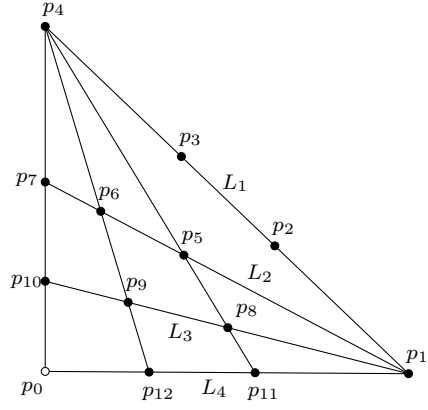
\end{example}

\begin{exercise}\label{FermatResurg}
Let $Z\subseteq\P^2$ be the 12 points of the Fermat arrangement for $n=3$.
Let $I=I(Z)$. It is easy to check by computer that
$\alpha(I)=\omega(I)=4$ and we know from above that $I^{(3)}\not\subseteq I^2$.
Hence $\widehat{\rho}(I)=\frac{4}{3}<\rho(I)$.
\end{exercise}

\SolnEater{\vskip\baselineskip 
\noindent{\it Details}: We know $\widehat{\alpha}(I)=3$ by \Exercise\ \ref{Fermatn>2}.
We know $\frac{\alpha(I)}{\widehat{\alpha}(I)}\leq \widehat{\rho}(I)\leq \frac{\omega(I)}{\widehat{\alpha}(I)}$,
by \Exercise\ \ref{asymresurgEx}. Thus $\widehat{\rho}(I) = \frac{4}{3}$.
But $I^{(3)}\not\subseteq I^2$ implies by definition of $\rho(I)$ that
$\frac{3}{2}\leq \rho(I)$.
\newline\qedsymbol\vskip\baselineskip}

In fact, by \cite[Theorem 2.1]{refDHNSST}, we have $\rho(I)=\frac{3}{2}$ and $\widehat{\rho}(I)=\frac{n+1}{n}$ for the ideal $I$ of the 
$n^2+3$ points of the Fermat arrangement for $n\geq 3$.

If there are nontrivial complex line arrangements in addition to the Fermat, Klein and Wiman with $t_2=0$,
it seems reasonable to expect that the ideal $I$ of their singular points would give additional counterexamples to 
$I^{(3)}\subseteq I^2$.

Another way to address optimality of $I(rNZ)\subseteq I(Z)^r$ is to make the right hand side of the containment
smaller. This led to the following conjecture \cite{2refHaHu}:

\begin{conjecture}\label{HHConj}
Let $Z\subseteq \P^N$ be a fat point subscheme and let $M=(x_0,\ldots,x_N)$ for $R=\KK[\P^N]=\KK[x_0,\ldots,x_N]$. 
Then $I(NrZ)\subseteq M^{Nr-r}I(Z)^r$ for all $r\geq 1$. 
\end{conjecture}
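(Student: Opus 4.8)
The plan is to start from the Ein--Lazarsfeld--Smith/Hochster--Huneke containment $I(NrZ)\subseteq I(Z)^r$ of Theorem~\ref{ELSHH} and to squeeze out of its proof the extra twisting factor $M^{Nr-r}=M^{(N-1)r}$. The first point is that the conjecture has no \emph{local} content: away from the vertex of the affine cone over $\P^N$ the ideal $M$ is the unit ideal, so on $\operatorname{Spec}\KK[\P^N]\setminus\{0\}$ the asserted inclusion is exactly the known ELS/HH one. Thus the whole difficulty lives at the vertex, i.e.\ in the grading, and the conjecture is equivalent to the statement that for every $t$ one has $[I(NrZ)]_t\subseteq [\KK[\P^N]]_{Nr-r}\cdot[I(Z)^r]_{t-Nr+r}$; equivalently, every homogeneous element of $I(NrZ)$ is, modulo $\KK[\P^N]$-combinations of elements of $I(Z)^r$, divisible by a form of degree $Nr-r$.

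It is worth recording what the conjecture already forces. Comparing initial degrees, $I(NrZ)\subseteq M^{(N-1)r}I(Z)^r$ gives $\alpha(I(NrZ))\ge (N-1)r+r\alpha(I(Z))$, and dividing by $Nr$ and letting $r\to\infty$ (using $\widehat\alpha(I(Z))=\lim_m\alpha(I(mZ))/m$ from Definition~\ref{WCdef} and \Exercise\ \ref{Fekete}) yields $\widehat\alpha(I(Z))\ge\frac{\alpha(I(Z))+N-1}{N}$, which is the (fat-point form of) Chudnovsky's conjecture and strengthens the Waldschmidt--Skoda bound of \Exercise\ \ref{Fekete}(f). Hence a proof must in particular establish Chudnovsky's bound; this is a theorem when $N=2$, and in several higher-dimensional situations such as general points and monomial ideals, so those are the natural settings in which to attempt the full statement. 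Conversely, Chudnovsky's bound supplies the degree inequality for free, so in those settings one already controls the \emph{initial} part of $I(NrZ)$, and what remains is to control all of it.

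The main line of attack I would pursue is to revisit the multiplier-ideal proof of Theorem~\ref{ELSHH} over $\C$, where $I(NrZ)$ sits inside an asymptotic multiplier ideal attached to the symbolic filtration $\{I(mZ)\}$ and subadditivity lands one in $I(Z)^r$. Working on the affine cone, the hope is to insert into this computation the exceptional divisor obtained by blowing up the vertex $0$: since the cone has multiplicity one at $0$, the discrepancy of that divisor should convert each unit of symbolic exponent beyond the codimension $N$ into one unit of vanishing at $0$, i.e.\ into one power of $M$, producing exactly the $M^{Nr-r}$ factor. Concretely one wants inclusions of the shape $I(NrZ)\subseteq\mathcal J\big(M^{\,Nr-r}\cdot(\text{asymptotic multiplier data of }I(Z))\big)\subseteq M^{Nr-r}I(Z)^r$. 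In positive characteristic one would try to mimic the tight-closure argument of \cite{2refHH}, where a Frobenius power $I(Z)^{[q]}$ is generated in degree $q$ times larger and the spare degrees near $M$ should come from the freedom in the choice of a test element. A complementary, safer track is inductive: prove the base case $I(NZ)\subseteq M^{N-1}I(Z)$ (the $r=1$ conjecture, which for a single point reduces to $\mathfrak p^{N}\subseteq M^{N-1}\mathfrak p$) together with a one-step inclusion $I(NrZ)\subseteq M^{N-1}\,I(Z)\cdot I\big(N(r-1)Z\big)$, from which the general case follows by induction, while in parallel settling the combinatorially transparent cases---monomial fat-point ideals, star configurations (\Exercise\ \ref{starconfigs}), and generic fat points via \Exercise\ \ref{RR}---with a view toward a semicontinuity reduction of the general case to the generic one. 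Note also that the degree range $t\ge(N-1)r+\reg(I(Z)^r)$ is already under control, since there $[I(Z)^r]_{t-(N-1)r}=[I(rZ)]_{t-(N-1)r}$, so the real fight is confined to the finitely many intermediate degrees between $\alpha(I(NrZ))$ and that bound.

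The step I expect to be the main obstacle is precisely producing those ``$(N-1)r$ extra degrees of cancellation'' in a uniform way: at present there is no mechanism that, given only the ELS/HH inclusion $I(NrZ)\subseteq I(Z)^r$, also shows that each minimal generator of $I(NrZ)$ is literally a degree-$(N-1)r$ form times an element of $I(Z)^r$. Even granting Chudnovsky (hence the $\alpha$-inequality), ruling out sporadic elements of $I(NrZ)$ in the intermediate degrees---where $[I(Z)^r]_{\bullet}$ and $[I(rZ)]_{\bullet}$ genuinely differ---is the hard part, and is why the conjecture is open beyond small $N=r=2$-type cases and the structured families above.
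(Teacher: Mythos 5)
The statement you were asked about is labeled a \emph{Conjecture} in the paper, and the paper states explicitly, just after it, that ``So far this conjecture remains open in all characteristics.''\ There is therefore no proof in the paper to compare your attempt against, and you have correctly responded not with a claimed proof but with a survey of plausible attack routes and an honest identification of the obstruction.

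Your preliminary observations check out against what the paper does record. The degree comparison $\alpha(I(NrZ))\ge(N-1)r+r\alpha(I(Z))$ followed by the limit $r\to\infty$ is exactly the content of \Exercise\ \ref{HaHuImpliesChudConj} in the paper, showing Conjecture~\ref{HHConj} implies Chudnovsky's Conjecture~\ref{ChudConj}; and your remark that Chudnovsky's bound is a theorem for $N=2$ and reduced $Z$ matches the discussion following \Exercise\ \ref{HaHuImpliesChudConj}, where a geometric (Bezout-type) argument is sketched. Your single-point sanity check $\mathfrak p^N\subseteq M^{N-1}\mathfrak p$ is also fine, since $\mathfrak p\subseteq M$. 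The proposed routes---inserting the vertex exceptional divisor into the asymptotic multiplier-ideal argument for Theorem~\ref{ELSHH} over $\C$, mimicking the tight-closure proof in positive characteristic, and the inductive reduction via a one-step inclusion $I(NrZ)\subseteq M^{N-1}I(Z)\cdot I(N(r-1)Z)$---are all reasonable, and you correctly isolate the crux: ELS/HH gives set-theoretic containment in $I(Z)^r$ but provides no mechanism forcing each generator of $I(NrZ)$ to factor off $(N-1)r$ degrees into $M$. The paper offers no resolution of this either, so there is no gap to point to beyond the one you already name: the conjecture is open, and nothing here closes it.

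One small caution: your heuristic that the discrepancy of the vertex blow-up ``converts each unit of symbolic exponent beyond the codimension $N$ into one power of $M$'' would, if it worked literally, prove the conjecture outright; it should be treated as a hope to be tested, not a fact. In particular, the asymptotic multiplier ideal of the symbolic filtration is computed on a log resolution that need not behave as simply as the statement suggests near the cone point, and subadditivity alone does not obviously track the extra $M$-twist. But as a research plan rather than a proof, what you wrote is appropriate and consistent with the paper.
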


So far this conjecture remains open in all characteristics. The motivation was a conjecture of Chudnovsky \cite{refCh},
aimed at improving the bound of Waldschmidt and Skoda (see \Exercise\ \ref{Fekete}(f)):

\begin{conjecture}\label{ChudConj}
Let $Z\subseteq \P^N$ be a fat point subscheme (in the original statement, $Z$ was reduced).
Then 
$$\frac{\alpha(I(Z))+N-1}{N}\leq\widehat{\alpha}(I(Z)).$$
\end{conjecture}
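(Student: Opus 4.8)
The plan is to reduce the inequality to a lower bound on $\alpha(I(mZ))$, record how it would follow from the Harbourne--Huneke containment, and then describe the routes that actually go through (the case $N=2$ and the case of general points), while isolating where the general case breaks.

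\emph{Reformulation and a conditional proof.} By \Exercise\ \ref{Fekete}(d), $\widehat{\alpha}(I(Z))=\lim_{m\to\infty}\alpha(I(mZ))/m$, so it suffices to prove $N\,\alpha(I(mZ))\ge m\bigl(\alpha(I(Z))+N-1\bigr)$ for all $m\ge1$, or even just along the subsequence $m=Nr$. I would first observe that Conjecture \ref{HHConj} gives exactly this: if $I(NrZ)\subseteq M^{Nr-r}I(Z)^r$, then applying $\alpha(\cdot)$ (which reverses inclusions) and using \Exercise\ \ref{alphaLikeLog} to compute $\alpha\bigl(M^{Nr-r}I(Z)^r\bigr)=(Nr-r)+r\,\alpha(I(Z))=r\bigl(\alpha(I(Z))+N-1\bigr)$, we get $\alpha(I(NrZ))\ge r\bigl(\alpha(I(Z))+N-1\bigr)$; dividing by $Nr$ and letting $r\to\infty$ yields the claim. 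So one legitimate line of attack is simply to prove the Harbourne--Huneke containment---but that is itself open, so this only relocates the difficulty.

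\emph{Unconditional cases.} For $N=2$ the target $\alpha(I(mZ))\ge m\bigl(\alpha(I(Z))+1\bigr)/2$ is the Esnault--Viehweg/Chudnovsky bound, and I would prove it by picking $F\in[I(mZ)]_t$ with $t=\alpha(I(mZ))$, removing common factors, and applying B\'ezout's Theorem to $F$ against a minimal-degree curve $G\in[I(Z)]_{\alpha(I(Z))}$ together with suitable polars and partial derivatives of $F$: summing the local bound $I_p(F,G)\ge\mult_p(F)\,\mult_p(G)$ over $Z$ produces the main term, while the classical fact that the first partials of $F$ vanish to order $m-1$ at each point of $Z$ supplies the extra $+1$ beyond the crude estimate. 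For a set $Z$ of \emph{general} points of $\P^N$ with arbitrary $N$, I would instead establish the bound for one well-chosen configuration $Z_0$ that is a flat degeneration of a general point set---points on a rational normal curve, or a generic star configuration, where $\alpha(I(mZ_0))$ can be estimated combinatorially---and then invoke semicontinuity: since specialization can only decrease $\alpha$, a lower bound for $Z_0$ forces the same lower bound for the general $Z$.

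\emph{Main obstacle.} The step that fails in general is the unconditional argument for an \emph{arbitrary} fat point scheme when $N\ge3$. The B\'ezout argument loses a factor once $N>2$: intersecting a hypersurface with $N-1$ auxiliary ones and using only elementary multiplicity bounds cannot recover the full $+N-1$. The multiplier-ideal method of Waldschmidt--Skoda recalled in \Exercise\ \ref{Fekete}(f) yields only the weaker $\alpha(I(Z))/N\le\widehat{\alpha}(I(Z))$. And the semicontinuity trick degenerates \emph{toward} the special fibre, so it propagates a lower bound on $\alpha$ only from a special configuration up to a general one, not down to an arbitrary $Z$. Consequently the realistic output here is a full proof for $N=2$, the proof for general points in every $N$, and those cases where Theorem \ref{ELSHH} or a proven instance of Conjecture \ref{HHConj} already suffices; the conjecture in full strength remains open.
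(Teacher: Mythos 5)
Your proposal correctly reads the logical status of the statement: it is a conjecture, so no full proof exists, and your conditional derivation from Conjecture~\ref{HHConj} (apply $\alpha$ to $I(NrZ)\subseteq M^{Nr-r}I(Z)^r$, use \Exercise~\ref{alphaLikeLog}, divide by $Nr$, let $r\to\infty$) is exactly the argument of \Exercise~\ref{HaHuImpliesChudConj}. Your survey of what remains open also agrees with the paper.

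Where you diverge is the unconditional $N=2$, reduced $Z$ case, which the paper does prove, and your sketch both introduces unnecessary machinery and contains a real gap. The paper's proof is purely numerical and does not use polars or partial derivatives. Set $a_1=\alpha(I(Z))$. The one fact your sketch never uses, and which is the whole source of the ``$+1$'', is that $[I(Z)]_{a_1-1}=0$ forces $|Z|\geq\binom{a_1+1}{2}$. The paper then picks a subscheme $U\subseteq Z$ of exactly $\binom{a_1+1}{2}$ points imposing independent conditions in degree $a_1-1$, so that $I(U)_{a_1}$ has zero-dimensional base locus. This is precisely what makes a coprime $G\in I(U)_{a_1}$ available for any given $F\in I(mZ)_{a_m}$; B\'ezout then gives $a_m a_1\geq m\binom{a_1+1}{2}$, i.e.\ $a_m/m\geq(a_1+1)/2$, and the bound drops out with no derivative argument at all. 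By contrast, your step ``removing common factors'' is not a legitimate move: if $F$ and $G$ share a factor $H$, the quotients $F/H$ and $G/H$ need no longer vanish to the required orders at $Z$, so the local multiplicity bounds you want to sum are lost. The paper's choice of $U$ is designed precisely to sidestep this, and without some such device your Bezout count does not go through. The claim that partials vanishing to order $m-1$ ``supply the extra $+1$'' is also not how the bound arises: the extra $+1$ comes from the binomial identity $\binom{a_1+1}{2}/a_1=(a_1+1)/2$, i.e.\ from the lower bound on $|U|$, not from any derivative of $F$.

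Finally, your suggested semicontinuity argument for general points is a plausible heuristic but is not in the paper and would need to be written carefully (one has to produce a specific configuration in $\P^N$ for which the bound is actually established, then show the degeneration is set up so the inequality transfers in the right direction); as you note, it would not recover the conjecture for arbitrary $Z$ in $\P^N$ with $N\geq 3$, which is the genuinely open part.
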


\begin{exercise}\label{HaHuImpliesChudConj}
Conjecture \ref{HHConj} implies Conjecture \ref{ChudConj}.
(Note: One can mimic \Exercise\ \ref{Fekete}(f).) 
\end{exercise}

\SolnEater{\vskip\baselineskip 
\noindent{\it Details}: 
From $I(NrZ)\subseteq M^{Nr-r}I(Z)^r$ we get
$$ \alpha(I(NrZ))\geq \alpha(I(Z)^r)+\alpha(M^{Nr-r})=r\,\alpha(I(Z))+Nr-r,$$
hence 
$$ \frac{\alpha(I(NrZ))}{Nr}\geq \frac{r\alpha(I(Z))+Nr-r}{Nr}=\frac{\alpha(I(Z))+N-1}{N}.$$
The result follows by taking limits as $r\to\infty$.
\newline\qedsymbol\vskip\baselineskip}

When $N=2$ and $Z=p_1+\cdots+p_s$ is reduced, Conjecture \ref{ChudConj} 
is a result of Chudnovksy \cite{refCh}; see \cite{2refHaHu} for one proof.
Here's a more geometric proof that is probably along the lines of how Chudnovksy did it (but he wasn't very explicit
in his paper). Let $I=I(Z)$ and let $a_m=\alpha(I(mZ))$. 
Since $[I(Z)]_{a_1-1}=0$, we can pick a subscheme $U=q_1+\cdots+q_r$ of $Z$ (so $\{q_1,\ldots,q_r\}\subseteq \{p_1,\ldots,p_s\}$)
where $r=\dim \KK[x,y,z]_{a_1-1}=\binom{a_1+1}{2}$ such that $\alpha(I(U))=a_1$;
i.e., $U$ imposes independent conditions on points in degree $t = a_1-1$ (and hence also in degrees $t\geq a_1$
but we don't need this). Thus for every $q_i$, there is a form $F_i\in I(U)_{a_1-1}$
with $F_i(q_i)\neq0$ but $F_i(q_j)=0$ for $j\neq i$. This means $I(U)_{a_1}$ has no base points other than $U$.
(Let $p$ be any point not in $U$. Then there is a form $P\in \KK[x,y,z]_{a_1-1}$ with $P(p)\neq0$. But the $F_i$
give a basis of $\KK[x,y,z]_{a_1-1}$, so we have $F_i(p)\neq0$ for some $i$. Now pick a linear form $B$ vanishing at $p_i$
but not at $p$. Then $FB\in I(U)_{a_1}$ but $FB(p)\neq0$. Alternatively, conclude that $\reg(I(U))=a_1$ and use Fact \ref{Dubreil} that 
$I(U)$ is then generated in degree $a_1$, and hence has no base points other than $U$.)
In particular, $I(U)_a$ has 0-dimensional zero locus. Thus, given a nonzero $F\in I(mZ)_{a_m}$, we can pick
a nonzero $G\in I(U)_{a_1}$ with no components in common with $F$. Hence, by Bezout's Theorem,
we have $a_ma=\deg(F)\deg(G)\geq m|U|=m\binom{a+1}{2}$; i.e.,
$\frac{a_m}{m}\geq \frac{a+1}{2}$, so $\widehat{\alpha}(I(Z))\geq \frac{\alpha(I(Z))+1}{2}$.

\section{A new perspective on the SHGH Conjecture}

It is easy to find examples of a fat point subscheme $X$ of $\P^2$ whose points are general
yet $X$ fails to impose independent conditions on the space $V=R_t=(\KK[\P^2])_t$
of all forms of degree $t$.
For example, take $X$ to be a reduced scheme of 5 general points and take $t=1$. The first 3 points get us down to the 0 vector space, so the next two
points do not reduce the dimension any further; i.e., they do not impose additional conditions, so $X$ does not impose independent conditions on linear forms.
This is entirely expected. It is somewhat more unexpected to have a fat point subscheme $X=m_1p_1+\cdots+m_rp_r\subset\P^2$ 
where the points $p_i$ are general and a $t$ where $I(X)_t\neq0$ such that the conditions imposed by $X$ on $V$ are not independent.
In such a situation we will say $X$  {\it unexpectedly} fails to impose independent
conditions on $V$.

\subsection{Conditions imposed by fat points}
\begin{openproblem}\label{SHGHProb} 
Find all degrees $t$ and integers $m_i>0$ such that $X=\sum_im_ip_i\subseteq\P^2$ 
unexpectedly fails to impose independent
conditions on $V=(\KK[\P^2])_t$ when the points $p_i$ are general; i.e., 
$$\dim I(X)_t > \min\Bigg(0, \dim V - \sum_i\Big(\binom{m_i+1}{2}\Big)\Bigg).$$
\end{openproblem} 

The SHGH Conjecture \cite{2refSe, 2refVanc, 2refG, 2refHi} gives a conjectural solution for this.
It says that the following sufficient condition is also necessary.

\begin{exercise}\label{SHGHmotivation}
Suppose we are given a smooth rational surface $X$, an exceptional curve $E$ (i.e., a smooth rational curve with $E^2=-1$),
and a divisor $F$ on $X$. If $h^0(X,\O_X(F))>0$ and $(F+rE)\cdot E\leq -2$
for some $r\geq 1$, then $h^1(X,\O_X(F+rE))>0$.
\end{exercise}

\SolnEater{\vskip\baselineskip 
\noindent{\it Details}: By blowing up additional general points we may assume
$X$ is a blow up of points of $\P^2$. Let $L$ be the pullback of a general line.
Then $-K_X\cdot L<0$.
Since $F\cdot L\geq 0$ we have $h^2(X, \O_X(F+rE))=0$ by duality. Look at
$$0\to \O_X(F+rE-E)\to \O_X(F+rE)\to \O_E((F+rE)\cdot E)\to0.$$
If $(F+rE)\cdot E\leq -2$, then $h^1(E,\O_E((F+rE)\cdot E))>0$.
But $h^0(X,\O_X(F+rE-E))>0$, so $(F+rE-E)\cdot L\geq 0$ (since $L$ is nef),
hence $h^2(X,\O_X(F+rE-E))=h^0(X,\O_X(K_X-F-(rE-E)))=0$ (since 
$(K_X-F-(rE-E))\cdot L<0$);
thus $h^1(X,\O_X(F+rE))>0$.
\newline\qedsymbol\vskip\baselineskip}

If $F$ is a divisor on a surface $S$ obtained by blowing up $s$ general points $p_i$ of $\P^2$, the SHGH Conjecture says:

\begin{conjecture}\label{SHGHConj} 
If $h^0(S,\O_S(F))>0$ and $h^1(S,\O_S(F))>0$, then
there is an exceptional curve $E$ on $S$ such that $F\cdot E\leq -2$.
\end{conjecture}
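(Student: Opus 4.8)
The plan is to reduce Conjecture \ref{SHGHConj} to two sub-statements and then indicate how I would attack them, the proof of the second being the real work. Write $\pi\colon S\to\P^2$ for the blow-up at the general points $p_1,\dots,p_s$, with basis $L,E_1,\dots,E_s$; recall from \Exercise\ \ref{SHGHmotivation} that every exceptional curve $E$ on $S$ satisfies $E^2=E\cdot K_S=-1$, so $-K_S\cdot E=1$, and that $h^2(S,\O_S(F))=0$ whenever $F$ is effective. The two sub-statements are: \textbf{(i)} (structural) for general points, the only reduced irreducible curves $C$ on $S$ with $C^2<0$ are the exceptional curves; and \textbf{(ii)} (effective-nef vanishing) if $h^0(S,\O_S(G))>0$ and $G$ is nef, then $h^1(S,\O_S(G))=0$ (equivalently $h^0=\chi$, using $h^2=0$ as in \Exercise\ \ref{RR}). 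Each of (i), (ii) is itself a form of the conjecture --- (i) is essentially equivalent to the Nagata conjecture, (ii) is Conjecture \ref{SHGHConj} restricted to $G$ with $G\cdot E\ge0$ for all exceptional $E$ --- so nothing is gained cheaply; the point of the reduction is to isolate the combinatorial core.

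\textbf{From (i) and (ii) to the conjecture.} Suppose $h^0(S,\O_S(F))>0$ and $h^1(S,\O_S(F))>0$. Take the Zariski decomposition $F=P+N$ of Theorem \ref{ZDthm}: $P$ nef, $N=n_1N_1+\cdots+n_kN_k$ with $n_i>0$ rational, $(N_i\cdot N_j)$ negative definite, and $P\cdot N_i=0$ for all $i$. If $N=0$ then $F=P$ is effective and nef, so $h^1=0$ by (ii), a contradiction; hence $N\neq0$, and by (i) each $N_i$ is an exceptional curve. Since $P\cdot N=0$ we get $F\cdot N=N^2<0$ by negative definiteness, so $F\cdot N_i<0$ for some $i$. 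To upgrade this to $F\cdot N_i\le-2$, observe that one may reach $F$ from its moving part $M$ (base-point-free, hence effective and nef, so $h^1(S,\O_S(M))=0$ by (ii)) by successively adding back the fixed components $N_i$ with the multiplicities dictated by $N$; the exact sequences $0\to\O_S(D-mE)\to\O_S(D)\to\O_{mE}(D)\to0$, whose middle term restricts on $E\cong\P^1$ to a filtration with quotients $\O_{\P^1}(D\cdot E),\dots,\O_{\P^1}(D\cdot E+m-1)$, show that $h^0$ is unchanged and that $h^1$ receives exactly $\binom{m}{2}$ each time a curve is peeled off with multiplicity $m$ and intersection $-m$. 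Since $h^1(S,\O_S(F))>0=h^1(S,\O_S(M))$, at least one such multiplicity is $\ge2$, and tracking the base scheme of $|F|$ converts this into an exceptional curve $E$ with $F\cdot E\le-2$, which is the conclusion of Conjecture \ref{SHGHConj}.

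\textbf{Attacking (i) and (ii).} For (ii) I would use the Horace method of Hirschowitz together with the degeneration technique of Ciliberto--Miranda, by double induction on the number of points and on the degree. To show that a system of general multiple points imposes independent conditions, specialise a carefully chosen subset of the points onto a line $\ell$ (or, in the degeneration version, distribute them among the two components of a central fibre $X_0$ that is a union of two rational surfaces glued along a $\P^1$); the Castelnuovo restriction sequence $0\to\mathcal I_{Z'}(t-1)\to\mathcal I_Z(t)\to\mathcal I_{Z''/\ell}(t)\to0$ (respectively a Mayer--Vietoris sequence on $X_0$) relates the original system to a \emph{residual} system $Z'$ with fewer points or lower degree and a \emph{trace} system $Z''$ on $\P^1$. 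If the residual system has $h^1=0$ by induction and the trace system has at most $G\cdot\ell+1$ assigned points on $\ell$, then $h^1$ vanishes in the special position, and upper semicontinuity of $h^0$ together with $h^0\ge\chi$ (from $h^2=0$) forces the general position to have $h^0=\chi$. The base cases are the del Pezzo range $s\le 8$, the case $s=9$, and small degrees, all classical; the differential Horace handles higher multiplicities, and the known results --- Alexander--Hirschowitz for double points, Ciliberto--Miranda and Dumnicki in bounded-multiplicity ranges, and $s$ a perfect square (cf.\ Conjecture \ref{SHGHconj}) --- are exactly the ranges in which the induction can be closed by an explicit choice of specialisation. Statement (i) is then read off from the same machinery, applied to the classes that would have to be negative.

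\textbf{The main obstacle.} The genuine difficulty, and the reason the conjecture is still open, is closing the induction uniformly: for \emph{every} system one must produce a specialisation (how many points onto $\ell$, or onto each component of $X_0$, and with what infinitesimal structure) for which both the residual and the trace systems satisfy the inductive hypothesis --- in particular so that the split creates no new ``unexpected'' speciality along $\ell$ --- while the gluing term carries no $H^1$. This is a combinatorial optimisation over all ways of cutting the system, and no recipe is known that works in full generality; I do not expect the reduction to (i) and (ii) to simplify it, and this step is where I would concentrate all effort.
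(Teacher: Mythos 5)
The statement you are trying to prove is the SHGH Conjecture itself; the paper records it only as Conjecture~\ref{SHGHConj} and offers no proof, because none is known. Your proposal does not supply one, and you in effect concede this: sub-statement (i) is exactly the consequence of the conjecture recorded in \ref{corOfSHGH} (and already implies Nagata-type statements for general points), and sub-statement (ii) is the conjecture restricted to nef classes, so the ``reduction'' trades one open conjecture for two statements of essentially the same depth. The Horace/Ciliberto--Miranda induction you then sketch for (ii) is the standard line of attack, and the step you yourself identify as the obstacle --- producing, for \emph{every} system, a specialisation whose residual and trace parts are both non-special and whose gluing carries no $H^1$ --- is precisely the point at which all known attempts stop (the solved cases being double points, square numbers of points, bounded multiplicities, etc.). So what you have written is a research plan whose mathematical content is left open exactly where the conjecture is open; it cannot be assessed as a proof.

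There is also a concrete gap inside the conditional argument ``(i) and (ii) imply the conjecture,'' namely the upgrade from $F\cdot N_i<0$ to $F\cdot N_i\le -2$. Your bookkeeping assumes that when an exceptional curve $E$ sits in the fixed part with multiplicity $m$ one has $F\cdot E=-m$, so that peeling it off changes $h^1$ by exactly $\binom{m}{2}$; neither the equality $F\cdot E=-m$ nor the vanishing of the relevant connecting maps is automatic, and the fixed part of $|F|$ need not coincide with the Zariski negative part $N$ of Theorem~\ref{ZDthm} (its support need not even be negative definite, e.g.\ two $(-1)$-curves meeting once can both be fixed components). Hence ``$h^1(S,\O_S(M))=0$ and $h^1(S,\O_S(F))>0$ force some fixed multiplicity $\ge 2$ with intersection $\le -2$'' does not follow as written; making it rigorous would require an argument of the kind used for anticanonical rational surfaces, where such equivalences are proved only under extra hypotheses. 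So even granting (i) and (ii), the deduction has a hole, on top of the fact that (i) and (ii) are themselves open.
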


If this is true, then standard techniques allow one to compute $h^0(S,\O_S(F))$ exactly for any divisor $F$ on $S$.
For simplicity it is best to assume $s>2$ (the procedure to be described below involves a reduction which can run 
into some special cases when $s=1,2$).

Here is the idea: Given $F=tL-\sum_im_iE_i$ for $s$ general points $p_i$, there is an algorithmic procedure
which gives either a nef divisor $H$ such that $H\cdot F<0$ (and hence $h^0(S,\O_S(F))=0$), or which gives
a Zariski-like decomposition $F=A+\sum_ic_iC_i$ such that $A\cdot E\geq 0$ for all exceptional curves $E$,
and the $C_i$ are exceptional with $c_i\geq 0$, $A\cdot C_i=0$ and $C_i\cdot C_j=0$ for all $i\neq j$.

In this case $h^0(S,\O_S(F))=h^0(S,\O_S(A))\geq \frac{A^2-K_SA}{2}+1$; the content of the
SHGH Conjecture is that this is an equality.

\begin{exercise}\label{corOfSHGH}
Assuming the SHGH Conjecture, if $C^2<0$ for a reduced irreducible curve $C$ on a blow up $S$
of $\P^2$ at general points $p_i$, then $C$ is an exceptional curve.
\end{exercise}

\SolnEater{\vskip\baselineskip 
\noindent{\it Details}: Since $S$ is a smooth projective rational surface, we have  $h^1(S,\O_S)=h^2(S,\O_S)=0$.
Look at
$$0\to \O_S\to \O_S(C)\to \O_C(C^2)\to0.$$
If $C$ is not smooth and rational with $C^2=-1$, then $h^1(C,\O_C(C^2))>0$ and hence $h^1(S,\O_S(C))>0$.
But then $E\cdot C\geq 0$ for all exceptional curves $E$, contradicting the SHGH Conjecture.
\newline\qedsymbol\vskip\baselineskip}

\noindent{\bf A sample more general problem}: Find examples of reduced point schemes 
$Z\subseteq \P^2$ and $t$ such that $\dim I(Z)_t\leq 3$ but for every $p\in \P^2$ there is a curve of degree $t$ containing $Z$ 
and singular at $p$.

\vskip\baselineskip
We will relate this to the following open problem (this and all that follows is based on \cite{5refCHMN},
which in turn was motivated by the paper \cite{5refDIV}):

\begin{openproblem}\label{CHMNProb} 
Find all $t$ and integers $m_i>0$ and all fat point subschemes $Z=\sum_ja_jq_j$ such that
$X=\sum_im_ip_i$ {\it unexpectedly} fails to impose independent conditions on $V=I(Z)_t$
where the points $p_i\in\P^2$ are general; i.e.,
\begin{equation}\label{unexpIneq}
\dim (I(X)_t \cap V) > \min\Big(0, \dim V - \sum_i\binom{m_i+1}{2}\Big).
\end{equation}
\end{openproblem} 

\begin{example}\label{CHMNExample}
Each of the following give examples of an $X$ and $Z$ where $X$ unexpectedly fails to impose independent conditions on $V=I(Z)_t$.
\begin{enumerate}
\item[(a)] If $Z=0$, this is just is just a case of Problem \ref{SHGHProb}, so is solved by the SHGH Conjecture.

\item[(b)] If $Z$ consists of fat points where the points are general, this also in principle is solved by the SHGH Conjecture.

\item[(c)] If $Z$ is reduced and consists of the 7 points of the Fano plane (so $\chr(\KK)=2$),
then this is an example of both the sample problem and Problem \ref{CHMNProb},
where we have $V=I(Z)_t$, $t=3$, $X=2p$, $\dim V=3$. Being singular at $p$ imposes 3 conditions, so we expect no curve,
but for every point $p$ there is a cubic through $Z$ singular at $p$ (specifically $F=\alpha^2yz(y + z)+\beta^2xz(x + z)+\gamma^2xy(x + y)$
vanishes at the 7 points and is singular at $p=(\alpha,\beta,\gamma)$).

\item[(d)] Take $X=mp$ and let $Z$ be reduced, consisting of the points dual to the $3n$ Fermat lines where $n\geq 5$,
$n+1\leq m\leq 2n-4$ and $t=m+1$. (Its splitting type, defined below, is $(n+1,2n-2)$.)

\item[(e)] Take $X=mp$ and let $Z$ be reduced, consisting of the points dual to the Klein lines;
$m=9$ and $t=10$. (Its splitting type is $(9,11)$.)

\item[(f)] Take $X=mp$ and let $Z$ be reduced, consisting of  the points dual to the Wiman lines;
$19\leq m\leq 23$ and $t=m+1$. (Its splitting type is $(19,25)$.)
\end{enumerate}
\end{example}

It is not obvious how to find such examples. Doing so uses some theory.
Let $Z=q_1+\cdots+q_r$ be a reduced point scheme in ${\bf P}^2$, $\ell_j$ the linear form dual to $q_j$,
and let $F=\ell_1\cdots\ell_r$. Now assume that $\chr(\KK)$ does not divide $\deg(F)$, and let
${\mathcal J}_F$ be the Jacobian sheaf (i.e., the sheafification of the ideal $(F_x, F_y, F_z)$
generated by the partial derivatives of $F$), and $\mathcal D$ the syzygy bundle; i.e., the sheaf defined by the exact sheaf sequence
$$ 0\to {\mathcal D}\to {\mathcal O}^3\to {\mathcal J}_F(r-1)\to 0.$$
Restricted to a general line $L$ we get ${\mathcal D}|_L= {\mathcal O}_L(-a)\oplus {\mathcal O}_L(-b)$ where $a+b=d-1$, $a\leq b$.
Call $(a,b)$ the splitting type of $Z$.

To state the results, it's convenient to introduce a quantity $t_Z$, defined as the least $j$ such that  $\dim I(Z)_{j+1} > \binom{j+1}{2}$.

\begin{theorem}\label{mainThm1} 
Let $Z$ be a reduced 0-dimensional subscheme of $\P^2$ 
of splitting type $(a_Z,b_Z)$. Then
\eqref{unexpIneq} holds for some degree $t$ with $X=m_1p_1$ and $m_1=t-1$
if and only if $a_Z < t_Z$. 
Furthermore, in this case the degrees $t$ for which \eqref{unexpIneq} holds 
with $X=m_1p_1$ and $m_1=t-1$
are precisely those in the range $a_Z <  t < b_Z$. For each $t$ in this range,
$\dim I(X+Z)_t=1$ (so there is a unique curve of degree $t$ containing $Z$
with $p_1$ being a point of order $t-1$; this curve is 
denoted $C_t(Z)$ and is said to be the unexpected curve for $Z$ of degree $t$).
\end{theorem}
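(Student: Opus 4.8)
The plan is to reduce the statement to a cohomological computation for the rank-two syzygy bundle $\mathcal D$ of the line arrangement $\mathcal A$ dual to $Z$, by means of the ``standard construction'', which trades a general point $p\in\P^2$ for a general line in the dual plane. Write $F=\ell_1\cdots\ell_r$ for the product of the linear forms dual to the points of $Z$, so $\deg F=r=|Z|$ and $a_Z+b_Z=r-1$; recall also that $t_Z+1$ is the first degree $t$ in which $\dim I(Z)_t$ exceeds $\binom t2$.

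First I would record the elementary reformulation of the left-hand side of \eqref{unexpIneq}. Fix a general $p_1$ and coordinates with $p_1=[0:0:1]$. A form of degree $t$ with a point of multiplicity $t-1$ at $p_1$ is exactly $zG+H$ for unique $G\in\KK[x,y]_{t-1}$ and $H\in\KK[x,y]_t$, so these forms span a subspace $W_t\subseteq(\KK[\P^2])_t$ with $\dim W_t=2t+1$ and $\dim I(X+Z)_t=\dim(W_t\cap I(Z)_t)$. Projecting $Z$ away from $p_1$ converts the conditions imposed by $Z$ on $W_t$ into point-evaluation conditions along the pencil of lines through $p_1$, and the content of the standard construction (in the form due to Faenzi and Vall\`es, as used in the treatment this section follows) is that, for general $p_1$, those conditions are governed by the restriction of $\mathcal D$ to the line $\ell$ dual to $p_1$. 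The crucial lemma to prove is an identity computing $\dim(W_t\cap I(Z)_t)$ out of the cohomology of a suitable twist of $\mathcal D|_{\ell}$ together with $\dim I(Z)_t$. This is the technical heart of the proof.

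Granting the lemma, the rest is bookkeeping. Since $p_1$, hence $\ell$, is general, $\mathcal D|_{\ell}\cong\O_{\ell}(-a_Z)\oplus\O_{\ell}(-b_Z)$; twisting by $\O_{\ell}(t-1)$ one sees that $h^0$ and $h^1$ of the restriction are simultaneously nonzero precisely when $a_Z<t<b_Z$ --- one summand becomes effective while the other stays strictly negative --- and that outside this interval exactly one of the two vanishes. Feeding this into the lemma and comparing with the expected value $\max\{0,\dim I(Z)_t-\binom t2\}$ (whose transition is controlled by $t_Z$) yields the three assertions: \eqref{unexpIneq} fails for $t\le a_Z$ and for $t\ge b_Z$; it holds for every $t$ with $a_Z<t<b_Z$; and for such $t$ the quantity $\dim(W_t\cap I(Z)_t)$ comes out equal to $1$, so the unexpected curve $C_t(Z)$ is unique. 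Finally, the interval $a_Z<t<b_Z$ is ``active'' --- i.e.\ produces a genuine failure of \eqref{unexpIneq} --- if and only if $a_Z<t_Z$; for this equivalence I would use the location of $t_Z$ relative to the splitting type, obtained from semicontinuity of the generic splitting type of $\mathcal D$ (a Grauert--M\"ulich-type input) together with the cohomological interpretation of $t_Z$.

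The step I expect to be hardest is the dictionary lemma itself: one needs $\dim(W_t\cap I(Z)_t)$ to match the cohomology of $\mathcal D|_{\ell}$ uniformly in $t$, in particular at the two boundary degrees $t=a_Z$ and $t=b_Z$ where the ``extra'' contribution must collapse to $0$, and one needs the companion estimate that this contribution never exceeds $1$ on the open interval --- it is exactly that estimate which pins $\dim I(X+Z)_t$ to $1$ and delivers the uniqueness of $C_t(Z)$. After that, the Hilbert-function arithmetic around $t_Z$, the cohomology of decomposable bundles on $\P^1$, and the emptiness criterion $a_Z<t_Z$ follow by direct calculation.
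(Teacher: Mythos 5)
The paper does not give its own proof of this theorem: both Theorem~\ref{mainThm1} and its companion Theorem~\ref{CHMNthm} are stated and then deferred to the CHMN paper~\cite{5refCHMN}, with only a remark that the argument ``uses ideas of \cite{5refFV} to relate syzygies and singular curves via the duality between points and lines on the projective plane.'' Your outline does capture precisely that strategy --- the Faenzi--Vall\`es standard construction, the syzygy bundle $\mathcal D$ of the dual arrangement restricted to the line $\ell$ dual to $p_1$, and the splitting type $(a_Z,b_Z)$ governing cohomology on $\P^1$ --- so at the level of approach you are aligned with the cited source.

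However, what you have written is a plan, not a proof, and the gap is exactly the step you flag yourself: the ``dictionary lemma'' relating $\dim(W_t\cap I(Z)_t)$ to the cohomology of a twist of $\mathcal D|_{\ell}$ is stated but not established, and it is the entire technical content of the theorem. In particular, the two most delicate conclusions both rest on the precise form of that identity and cannot be recovered from the soft observation that $h^0$ and $h^1$ of $\mathcal D|_\ell(t-1)$ are simultaneously nonzero exactly on $a_Z<t<b_Z$. First, $h^0(\mathcal D|_\ell(t-1))=t-a_Z$ on the open interval, which is generally $>1$, so the claim $\dim I(X+Z)_t=1$ does not follow by ``one summand contributes a section'' --- you need the exact formula relating that $h^0$, the expected value $\max\{0,\dim I(Z)_t-\binom t2\}$, and the actual $\dim I(X+Z)_t$ before the count can collapse to $1$. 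Second, the equivalence with $a_Z<t_Z$ is the part of the statement that discriminates between the interval $a_Z<t<b_Z$ being nonempty and it actually producing unexpected curves; your appeal to ``semicontinuity / Grauert--M\"ulich'' is not the right ingredient --- what is needed is a Hilbert-function comparison (note the alternative formulation via $t_Z+1\ge\operatorname{reg}(I(Z))$ in Theorem~\ref{CHMNthm}) tied to the same identity, and this is again part of the unproven lemma. Until that lemma is stated and proved, the three assertions of the theorem are not yet derived.
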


Here's another version:

\begin{theorem}\label{CHMNthm}
Given a reduced 0-dimensional subscheme $Z\subseteq \P^2$ 
of splitting type $(a,b)$ and $X=mp$ for a general point $p\in\P^2$, then
\eqref{unexpIneq} holds in degree $t=m+1$ if and only if 
\begin{enumerate}
\item[(a)] $a\leq m\leq b-2$ and
\item[(b)] $t_Z+1\geq \hbox{reg}(I(Z))$.
\end{enumerate}
\end{theorem}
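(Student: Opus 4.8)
The plan is to deduce Theorem~\ref{CHMNthm} from Theorem~\ref{mainThm1}. Taking $X=mp$ for a general point $p$ and $t=m+1$ gives $m_1:=m=t-1$, so Theorem~\ref{mainThm1} applies verbatim: \eqref{unexpIneq} holds in degree $t$ if and only if $a_Z<t_Z$ and $a_Z<t<b_Z$. Because $a=a_Z$ and $b=b_Z$ are integers and $t=m+1$, the condition $a_Z<t<b_Z$ is literally $a\le m\le b-2$, which is hypothesis~(a). Hence \eqref{unexpIneq} holds in degree $t=m+1$ if and only if (a) holds together with $a_Z<t_Z$, and the whole content of Theorem~\ref{CHMNthm} is reduced to the numerical assertion: \emph{assuming (a), one has $a_Z<t_Z$ if and only if $t_Z+1\ge\reg(I(Z))$.}

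To prove this I would pass to the Hilbert function $h_Z(j)=\dim[R/I(Z)]_j$, with $r=|Z|$. By Definition~\ref{regDef}, $\reg(I(Z))-1$ is the least $j$ with $h_Z(j)=r$; and since $\binom{j+3}{2}-\binom{j+1}{2}=2j+3$, the inequality $\dim I(Z)_{j+1}>\binom{j+1}{2}$ defining $t_Z$ is equivalent to $h_Z(j+1)\le 2(j+1)$, so $t_Z+1$ is the least $k\ge1$ with $h_Z(k)\le 2k$. I would use that $h_Z$ is nondecreasing, equals $\binom{j+2}{2}$ (hence exceeds $2j$) for $j<\alpha(I(Z))$, and has nonincreasing first difference from degree $\alpha(I(Z))$ on (Macaulay's bound for quotients of $\KK[u,v]$), the last point also giving that $\{k:h_Z(k)\le 2k\}$ is an interval, so $t_Z+1$ is its left endpoint. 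From the syzygy bundle $\mathcal D$ of the dual line arrangement I would then import $a_Z+b_Z=r-1$, the fact that $a_Z=0$ exactly when $Z$ is collinear, and the comparisons (established in \cite{5refCHMN}) tying $\reg(I(Z))$ and $t_Z$ to $a_Z,b_Z$ — in particular an upper bound for $\reg(I(Z))-1$ in terms of the splitting numbers.

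Granting this dictionary, observe first that condition (b) is equivalent to $r>2(\reg(I(Z))-1)$: if $r\le 2(\reg(I(Z))-1)$ then $k=\reg(I(Z))-1$ satisfies $h_Z(k)\le 2k$, forcing $t_Z+1\le\reg(I(Z))-1<\reg(I(Z))$; and if $r>2(\reg(I(Z))-1)$ then, since $h_Z$ exceeds $2k$ for $k<\reg(I(Z))-1$ and equals $r$ afterwards, $t_Z+1=\lceil r/2\rceil\ge\reg(I(Z))$. Now assume (a), i.e.\ $b_Z\ge a_Z+2$, equivalently $2a_Z\le r-3$. In the case $r>2(\reg(I(Z))-1)$ we get $t_Z=\lceil r/2\rceil-1$, and $2a_Z\le r-3$ gives $a_Z\le\lceil r/2\rceil-2=t_Z-1<t_Z$. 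In the case $r\le 2(\reg(I(Z))-1)$ one instead uses the upper bound for $\reg(I(Z))-1$ coming from $\mathcal D$, together with (a) and the maximal growth of $h_Z$ below degree $\alpha(I(Z))$, to conclude $t_Z\le a_Z$ (the collinear case $a_Z=0$, where $t_Z=0$, being immediate), so $a_Z<t_Z$ fails. Either way $a_Z<t_Z$ is equivalent to (b) under (a), which is the assertion.

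The step I expect to be the real obstacle is the one just alluded to: pinning down the exact comparison between $\reg(I(Z))$ and the splitting numbers $a_Z,b_Z$, and using it to show that in the regime $r\le 2(\reg(I(Z))-1)$ combined with (a) one necessarily has $t_Z\le a_Z$ — equivalently, that outside that regime the Hilbert function $h_Z$ stays strictly above the line $2k$ until it plateaus. This is where the structure of the syzygy bundle and the combinatorics of the Hilbert function of a point set genuinely interact, and where one must track strict-versus-weak inequalities carefully enough that the \emph{open}-interval shape of (a) is precisely what selects the correct branch of the dichotomy; it is also where the bulk of the work in \cite{5refCHMN} lies.
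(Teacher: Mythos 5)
Your reduction of Theorem~\ref{CHMNthm} to the numerical statement ``assuming~(a), one has $a_Z<t_Z$ if and only if $t_Z+1\ge\reg(I(Z))$'' is correct, and the translation into Hilbert-function language is right: using that the first difference of the Hilbert function of points in $\P^2$ is eventually nonincreasing, the set $\{k:h_Z(k)\le 2k\}$ is a tail interval, from which your equivalence (b)~$\Leftrightarrow$~$|Z|>2(\reg(I(Z))-1)$ follows, and then under~(a) the chain $a_Z\le(|Z|-3)/2\le\lceil|Z|/2\rceil-2=t_Z-1$ gives the implication $|Z|>2(\reg(I(Z))-1)\Rightarrow a_Z<t_Z$ as you say. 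But the converse direction — that under (a) with $|Z|\le 2(\reg(I(Z))-1)$ one necessarily has $t_Z\le a_Z$ — is left entirely to a gesture at ``the upper bound for $\reg(I(Z))-1$ coming from $\mathcal D$,'' which you neither state nor justify. That is not a cosmetic omission: it is precisely the input that ties the splitting type $(a_Z,b_Z)$ to $\reg(I(Z))$, and without a concrete statement of it your dichotomy is not closed. You flag this yourself, which is honest, but it means what you have is a correct reformulation of the theorem, not a proof of it.

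There is also a structural caveat. Theorem~\ref{mainThm1}, which you take as your starting point, is stated in this paper without proof and is itself imported from \cite{5refCHMN}; the paper's own ``proof'' of Theorem~\ref{CHMNthm} is likewise just a citation to \cite{5refCHMN}, noting that the real argument there uses the ideas of \cite{5refFV} on syzygy bundles and the point--line duality. So what you have established is an equivalence between two theorems quoted from \cite{5refCHMN}, modulo a Hilbert-function lemma you have not supplied — a worthwhile observation, and a genuinely different (and more elementary-looking) route than the syzygy-bundle argument the citation points to, but it is not independent of the unproved inputs, and the step you single out as ``the real obstacle'' is exactly the missing content.
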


\begin{proof} See \cite{5refCHMN}. The proof uses ideas of \cite{5refFV} to relate 
syzygies and singular curves via the duality between points and lines on the projective plane.
\end{proof}

Here's an example run using Macaulay2 \cite{5refGS}.
\begin{example}\label{M2example}
Let's verify an instance of Theorem \ref{mainThm1}.
Consider the Fermat line arrangement for $n=5$.
The form defining the lines is $F=(x^5-y^5)(x^5-z^5)(y^5-z^5)$.
The scheme $Z$ of points dual to the 15 lines has ideal
$I(Z)=(x^5+y^5+z^5,xyz)$. First we compute the splitting type.
In this case we do not need to restrict to a general line, since
the syzygy bundle is free, so we can read the splitting off 
directly from the first syzygy module in a minimal free resolution of
the Jacobian ideal $J=(F_x,F_y,F_z)$.

\begin{verbatim}
i1 : R=QQ[x,y,z];

i2 : F=(x^5-y^5)*(x^5-z^5)*(y^5-z^5);

i3 : J=ideal(jacobian(ideal(F)));

i4 : betti res J

            0 1 2
o4 = total: 1 3 2
         0: 1 . .
         1: . . .
         .
         .
         .
        11: . . .
        12: . . .
        13: . 3 .
        14: . . .
        15: . . .
        16: . . .
        17: . . .
        18: . . 1
        19: . . .
        20: . . 1
\end{verbatim}
We see that $J$ has three generators of degree 14, as expected, and the generators
of the syzygy module have degrees 6 and 8, giving the splitting type $(6,8)$,
in agreement with Example \ref{CHMNExample}(d).

It is possible to pick a generic point for $p$. We can take $p=(A,B,1)$
where $A$ and $B$ are variables, but this requires working over the field $\KK=\Q(A,B)$.
The Macaulay2 commands for this are ${\tt \KK=\operatorname{frac}(\Q[A,B])}$, ${\tt R=\KK[x,y,z]}$.
But working over $\KK=\Q(A,B)$ entails a noticeable performance penalty, so we
pick a random point for $p$ instead. 

\begin{verbatim}
i1 : R=QQ[x,y,z];

i2 : p=ideal(random(1,R), random(1,R));

i3 : Z=ideal(x^5+y^5+z^5,x*y*z);

i4 : I5=intersect(p^5,Z);

i5 : betti res I5

            0 1 2
o5 = total: 1 7 6
         0: 1 . .
         1: . . .
         2: . . .
         3: . . .
         4: . . .
         5: . . .
         6: . 6 4
         7: . 1 2
-- Note I5 has no element of degree 6.

i6 : I6=intersect(p^6,Z);

i7 : betti res I6

            0 1 2
o7 = total: 1 7 6
         0: 1 . .
         1: . . .
         2: . . .
         3: . . .
         4: . . .
         5: . . .
         6: . 1 .
         7: . 6 5
         8: . . 1
-- So the least m for which I(Z+mp) has an element of degree m+1 is m=6;
-- i.e., a_Z = 6.

i8 : for i from 2 to 8 do print {i,hilbertFunction(i,R)-hilbertFunction(i,I6),
     hilbertFunction(i,R)-hilbertFunction(i,Z)}
{2, 0, 0}
{3, 0, 1}
{4, 0, 3}
{5, 0, 7}
{6, 0, 13}
{7, 1, 21}
{8, 9, 30}

-- 6p should impose 21 conditions on I(Z)_7, making I(Z+6p)_7 = 0
-- but instead we see dim I(Z+6p)_7 = 1, so 6p unexpectedly
-- fails to impose independent conditions on I(Z)_7.

i9 : I7=intersect(p^7,Z);

i10 : for i from 2 to 9 do print {i,hilbertFunction(i,R)-hilbertFunction(i,I7),
      hilbertFunction(i,R)-hilbertFunction(i,Z)}
{2, 0, 0}
{3, 0, 1}
{4, 0, 3}
{5, 0, 7}
{6, 0, 13}
{7, 0, 21}
{8, 2, 30}
{9, 12, 40}

-- 7p should impose 28 conditions on I(Z)_8, making I(Z+6p)_8 = 2
-- and it is, so 7p imposes independent conditions on I(Z)_8.
\end{verbatim}

\end{example}

\begin{exercise}\label{B3ex2}
Let $Z$ be the 9 points as shown in Figure \ref{B3config}.
Let $p$ be a general point.
Then $Z$ has an unexpected irreducible quartic $C_4(Z)$, so it has a triple point
at $p$; $C_4(Z)$ is shown in Figure \ref{UnexpCurveDeg4}.
(Note: Assume the four general points in the figure, shown in black,
are $(0,0,1)$, $(0,1,0)$, $(1,0,0)$, $(1,1,1)$. The other 5 points then become
$(0,1,1)$, $(1,0,1)$, $(1,1,0)$, $(-1,1,0)$, $(1,1,2)$. Using a computer one can compute the splitting type
and $t_Z$, and apply Theorem \ref{mainThm1}.
Then using Bezout one can verify irreducibility and uniqueness.)
\end{exercise}

\SolnEater{\vskip\baselineskip 
\noindent{\it Details}: 
Given the choice of coordinates in the note, the seven lines become
$A=x+y-z$, $B=z$, $C=x-y$, $D=y$, $E=y-z$, $F=x-z$, $G=x$. 
The product of the linear forms dual to the nine points is
$f=xyz(x+y+z)(y+z)(x+z)(x+y)(x-y)(x+y+2z)$.
Using Macaulay2, we find the splitting type is $(3,5)$, and $t_Z=5$.
Thus, by Theorem \ref{mainThm1}, there is a quartic through the points of 
$Z$ with a triple point at $p$.

Let $Q$ be a quartic through $Z$ with a triple point at $p$. If $C$ were not irreducible,
then it has an irreducible component $T$ of degree at most 3 through $p$. 
Let $m=\mult_p(T)$. If $\deg(T)<3$, then $m=1$. If $\deg(T)=3$, then $m=2$,
since otherwise the other component of $Q$ would be a line vanishing twice at $p$.
Thus if $\deg(T)=3$, the other component is a line through $p$. This line can go through
at most one other point of $Z$ (since $p$ is general), so $T$ goes through 8 points of $Z$,
but any 8 points of $Z$ includes a set of 4 collinear points, so $T$ could not be irreducible.

If $\deg(T)=2$, the other component of $Q$ is a conic vanishing twice at $p$, hence consists
of two lines through $p$. These two lines can contain at most two points of $Z$,
hence $T$ contains at least 7 points of $Z$, so contains at least 3 collinear points
(since there are three sets of 4 collinear points in $Z$, removing any two points still leaves 
at least one set of 3 collinear points). Thus $T$ could not be irreducible.

Thus $\deg(T)<4$ implies all components of $Q$ must be lines, with three of them containing $p$. Therefore these
three can contain at most 3 points of $Z$ total, and the other line can contain at most 4
(since $Z$ does not contain any 5 collinear points). Thus $Q$ is not a union of lines.
It is therefore irreducible.

If it were not unique, there would be two different quartics meeting at least 18 times, at least 9 times at $p$ and
at least once each at the 9 points of $Z$. Since they have degree 4 this is impossible.

We can even write down $Q$ explicitly. We first check that $\dim [(I(Z)]_4=6$.
Blow up the 5 points $(001), (010), (100), (111), (011)$ to get the surface $X$. 
Let $L$ be the pullback of a line. Let $e$ be the sum of the
three exceptional curves for the three coordinate vertices.
Let $e'$ be the sum of the
two exceptional curves for the points $(111)$ and $(110)$, and let
$a\sim L-e'$ be the proper transform of the line through these two points.
From
$$0\to \O_X(2L-e)\to \O_X(3L-e-e')\to \O_a(1)\to0$$
we see that $h^1(X,\O_X(3L-e-e'))=0$.
Now blow up the remaining 4 points of $Z$ (which we note are collinear, since they
are on the line $x+y-z$) and
let $e''$ be the sum of their exceptional curves.
Call the new surface $Y$ and let $b\sim L-e''$ be the proper transform
of the line through these 4 points.
From
$$0\to \O_X(3L-e-e')\to \O_X(4L-e-e'-e'')\to \O_b\to0$$
we see that $h^1(X,\O_X(4L-e-e'-e''))=0$.
Thus $\dim [(I(Z)]_4=6$. 

We can now give a basis for $[(I(Z)]_4$, namely
$ABCD, ABCF, ABCG, ABFG, ACDF, BCDG$.
Using the factorization it is easy to check that each of these is a quartic 
vanishing on $Z$, and that they are linearly independent, hence a basis for
$[(I(Z)]_4$. Given a general point $p=(u,v,w)$, we want to find which linear combination
of these basis elements is in the ideal $I(p)^3=(wx-uz,wy-vz)^3$. Using Macaulay2 we find that it is
$(-4u^3+6u^2w-2w^3)ABCD+
(-2u^3+6u^2v)ABCF+
(-6uv^2-2v^3+6v^2w)ABCG+
(2u^3-6u^2v+6uv^2-2v^3)ABFG
-2w^3ACDF+
(2u^3+6u^2v+6uv^2+2v^3-6u^2w-12uvw-6v^2w+6uw^2+6vw^2-2w^3)BCDG$.
\newline\qedsymbol\vskip\baselineskip}

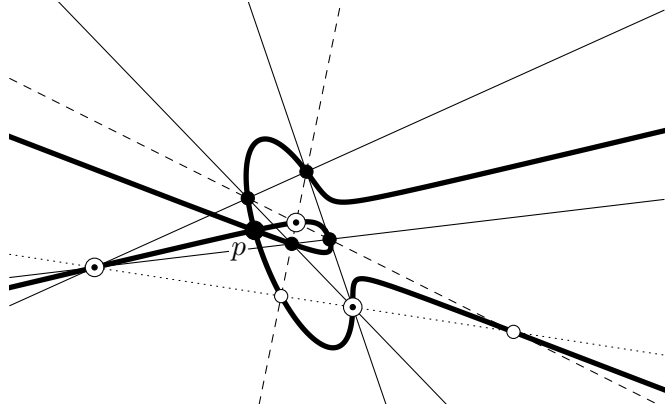
\begin{figure}[h]
\begin{tikzpicture}[line cap=round,line join=round,>=triangle 45,x=0.75cm,y=0.75cm]
\clip(-3.86,-1) rectangle (7.74,6.12);
\draw [domain=-3.86:7.74] plot(\x,{(--2.5868--0.47*\x)/1.04});
\draw [domain=-3.86:7.74] plot(\x,{(--1.1416--0.08*\x)/0.67});
\draw [domain=-3.86:7.74] plot(\x,{(--2.3586-0.81*\x)/0.78});
\draw [domain=-3.86:7.74] plot(\x,{(--2.9592-1.2*\x)/0.41});
\draw [dashed,domain=-3.86:7.74] plot(\x,{(--0.9808-1.28*\x)/-0.26});
\draw [dashed,domain=-3.86:7.74] plot(\x,{(-4.1053--0.73*\x)/-1.45});
\draw [dotted,domain=-3.86:7.74] plot(\x,{(--4.84997347092839-0.7049481606502077*\x)/4.576844052716465});
\draw[line width=2.0pt, color = black] (1.73109,2.08) -- 
(1.72719,2.08544) -- 
(1.7232,2.09085) -- 
(1.7191,2.09622) -- 
(1.7149,2.10156) -- 
(1.71059,2.10685) -- 
(1.70616,2.11211) -- 
(1.70161,2.11732) -- 
(1.69694,2.1225) -- 
(1.69212,2.12762) -- 
(1.68716,2.13271) -- 
(1.68205,2.13774) -- 
(1.67677,2.14272) -- 
(1.67132,2.14765) -- 
(1.66568,2.15252) -- 
(1.65984,2.15733) -- 
(1.65378,2.16208) -- 
(1.64749,2.16676) -- 
(1.64095,2.17137) -- 
(1.63413,2.1759) -- 
(1.62702,2.18035) -- 
(1.61959,2.18471) -- 
(1.6118,2.18898) -- 
(1.60363,2.19314) -- 
(1.59504,2.1972) -- 
(1.58598,2.20112) -- 
(1.57641,2.20492) -- 
(1.56626,2.20857) -- 
(1.55547,2.21205) -- 
(1.54396,2.21535) -- 
(1.53165,2.21845) -- 
(1.51843,2.22132) -- 
(1.50417,2.22394) -- 
(1.48873,2.22626) -- 
(1.47192,2.22824) -- 
(1.45352,2.22984) -- 
(1.43327,2.23098) -- 
(1.41084,2.2316) -- 
(1.38581,2.23158) -- 
(1.35765,2.23081) -- 
(1.32566,2.22911) -- 
(1.28896,2.22628) -- 
(1.24632,2.22203) -- 
(1.19609,2.21595) -- 
(1.1359,2.20749) -- 
(1.06231,2.19583) -- 
(0.970083,2.17971) -- 
(0.85084,2.15713) -- 
(0.690285,2.1247) -- 
(0.461876,2.07606) -- 
(0.110139,1.998) -- 
(-0.503542,1.85745) -- 
(-1.84998,1.54208) -- 
(-7.21482,0.26811); 
\draw[line width=2.0pt, color = black] (19.828,6.72504) -- 
(6.23062,3.48717) -- 
(4.29817,3.03198) -- 
(3.52468,2.85325) -- 
(3.10736,2.75948) -- 
(2.84577,2.70286) -- 
(2.66611,2.66577) -- 
(2.53485,2.64022) -- 
(2.43456,2.62205) -- 
(2.35528,2.60888) -- 
(2.29091,2.59927) -- 
(2.2375,2.59226) -- 
(2.19238,2.58723) -- 
(2.15368,2.58372) -- 
(2.12006,2.58142) -- 
(2.09053,2.58008) -- 
(2.06433,2.57954) -- 
(2.04089,2.57964) -- 
(2.01975,2.58029) -- 
(2.00055,2.5814) -- 
(1.98301,2.5829) -- 
(1.96689,2.58473) -- 
(1.952,2.58685) -- 
(1.93819,2.58922) -- 
(1.92532,2.59181) -- 
(1.91327,2.5946) -- 
(1.90195,2.59755) -- 
(1.89129,2.60065) -- 
(1.88121,2.60389) -- 
(1.87165,2.60725) -- 
(1.86256,2.61071) -- 
(1.85389,2.61428) -- 
(1.84561,2.61793) -- 
(1.83768,2.62166) -- 
(1.83006,2.62546) -- 
(1.82274,2.62933) -- 
(1.81568,2.63326) -- 
(1.80886,2.63724) -- 
(1.80227,2.64127) -- 
(1.79589,2.64535) -- 
(1.7897,2.64947) -- 
(1.78368,2.65363) -- 
(1.77783,2.65783) -- 
(1.77213,2.66206) -- 
(1.76657,2.66632) -- 
(1.76114,2.67061) -- 
(1.75583,2.67493) -- 
(1.75064,2.67927) -- 
(1.74556,2.68364) -- 
(1.74057,2.68803) -- 
(1.73568,2.69244) -- 
(1.73087,2.69686) -- 
(1.72615,2.70131) -- 
(1.72151,2.70577) -- 
(1.71693,2.71025) -- 
(1.71243,2.71474) -- 
(1.70798,2.71925) -- 
(1.7036,2.72377) -- 
(1.69928,2.7283) -- 
(1.695,2.73285) -- 
(1.69078,2.7374) -- 
(1.68661,2.74197) -- 
(1.68248,2.74655) -- 
(1.67839,2.75113) -- 
(1.67434,2.75573) -- 
(1.67034,2.76033) -- 
(1.66636,2.76495) -- 
(1.66242,2.76957) -- 
(1.65851,2.7742) -- 
(1.65463,2.77883) -- 
(1.65078,2.78348) -- 
(1.64696,2.78813) -- 
(1.64316,2.79278) -- 
(1.63938,2.79745) -- 
(1.63563,2.80212) -- 
(1.63189,2.80679) -- 
(1.62818,2.81147) -- 
(1.62449,2.81616) -- 
(1.62081,2.82085) -- 
(1.61715,2.82555) -- 
(1.61351,2.83025) -- 
(1.60988,2.83496) -- 
(1.60626,2.83967) -- 
(1.60266,2.84439) -- 
(1.59906,2.84911) -- 
(1.59548,2.85384) -- 
(1.59191,2.85857) -- 
(1.58835,2.86331) -- 
(1.5848,2.86805) -- 
(1.58126,2.87279) -- 
(1.57772,2.87754) -- 
(1.57419,2.88229) -- 
(1.57067,2.88705) -- 
(1.56715,2.89181) -- 
(1.56363,2.89658) -- 
(1.56012,2.90135) -- 
(1.55662,2.90612) -- 
(1.55312,2.9109) -- 
(1.54962,2.91568) -- 
(1.54612,2.92046) -- 
(1.54263,2.92525) -- 
(1.53913,2.93004) -- 
(1.53564,2.93484) -- 
(1.53215,2.93964) -- 
(1.52866,2.94445) -- 
(1.52517,2.94926) -- 
(1.52167,2.95407) -- 
(1.51818,2.95888) -- 
(1.51468,2.9637) -- 
(1.51119,2.96853) -- 
(1.50769,2.97335) -- 
(1.50418,2.97819) -- 
(1.50068,2.98302) -- 
(1.49717,2.98786) -- 
(1.49366,2.9927) -- 
(1.49014,2.99755) -- 
(1.48662,3.0024) -- 
(1.4831,3.00725) -- 
(1.47956,3.01211) -- 
(1.47603,3.01697) -- 
(1.47249,3.02184) -- 
(1.46894,3.0267) -- 
(1.46539,3.03158) -- 
(1.46183,3.03645) -- 
(1.45826,3.04133) -- 
(1.45469,3.04622) -- 
(1.45111,3.05111) -- 
(1.44752,3.056) -- 
(1.44392,3.06089) -- 
(1.44032,3.06579) -- 
(1.4367,3.0707) -- 
(1.43308,3.0756) -- 
(1.42945,3.08051) -- 
(1.42581,3.08543) -- 
(1.42216,3.09035) -- 
(1.4185,3.09527) -- 
(1.41483,3.10019) -- 
(1.41115,3.10512) -- 
(1.40746,3.11005) -- 
(1.40376,3.11499) -- 
(1.40005,3.11993) -- 
(1.39633,3.12487) -- 
(1.3926,3.12982) -- 
(1.38885,3.13477) -- 
(1.38509,3.13973) -- 
(1.38132,3.14468) -- 
(1.37754,3.14964) -- 
(1.37374,3.15461) -- 
(1.36993,3.15958) -- 
(1.36611,3.16455) -- 
(1.36228,3.16952) -- 
(1.35843,3.1745) -- 
(1.35457,3.17948) -- 
(1.35069,3.18446) -- 
(1.3468,3.18945) -- 
(1.34289,3.19444) -- 
(1.33897,3.19943) -- 
(1.33503,3.20443) -- 
(1.33108,3.20942) -- 
(1.32712,3.21443) -- 
(1.32313,3.21943) -- 
(1.31913,3.22443) -- 
(1.31512,3.22944) -- 
(1.31109,3.23445) -- 
(1.30704,3.23947) -- 
(1.30297,3.24448) -- 
(1.29889,3.2495) -- 
(1.29479,3.25451) -- 
(1.29067,3.25953) -- 
(1.28654,3.26456) -- 
(1.28238,3.26958) -- 
(1.27821,3.2746) -- 
(1.27402,3.27963) -- 
(1.26981,3.28465) -- 
(1.26558,3.28968) -- 
(1.26133,3.29471) -- 
(1.25706,3.29974) -- 
(1.25277,3.30476) -- 
(1.24846,3.30979) -- 
(1.24413,3.31482) -- 
(1.23978,3.31985) -- 
(1.23541,3.32487) -- 
(1.23102,3.3299) -- 
(1.2266,3.33492) -- 
(1.22217,3.33995) -- 
(1.21771,3.34497) -- 
(1.21323,3.34999) -- 
(1.20873,3.35501) -- 
(1.2042,3.36002) -- 
(1.19965,3.36504) -- 
(1.19508,3.37004) -- 
(1.19049,3.37505) -- 
(1.18587,3.38005) -- 
(1.18123,3.38505) -- 
(1.17656,3.39004) -- 
(1.17187,3.39503) -- 
(1.16716,3.40001) -- 
(1.16242,3.40499) -- 
(1.15765,3.40996) -- 
(1.15286,3.41493) -- 
(1.14804,3.41989) -- 
(1.1432,3.42484) -- 
(1.13833,3.42978) -- 
(1.13344,3.43471) -- 
(1.12851,3.43964) -- 
(1.12357,3.44455) -- 
(1.11859,3.44946) -- 
(1.11359,3.45435) -- 
(1.10855,3.45924) -- 
(1.10349,3.46411) -- 
(1.09841,3.46896) -- 
(1.09329,3.47381) -- 
(1.08815,3.47864) -- 
(1.08297,3.48346) -- 
(1.07777,3.48826) -- 
(1.07254,3.49305) -- 
(1.06728,3.49781) -- 
(1.06199,3.50256) -- 
(1.05667,3.5073) -- 
(1.05131,3.51201) -- 
(1.04593,3.5167) -- 
(1.04052,3.52137) -- 
(1.03508,3.52602) -- 
(1.0296,3.53065) -- 
(1.0241,3.53525) -- 
(1.01856,3.53983) -- 
(1.01299,3.54438) -- 
(1.00739,3.5489) -- 
(1.00176,3.55339) -- 
(0.996091,3.55786) -- 
(0.990393,3.56229) -- 
(0.984663,3.56669) -- 
(0.978901,3.57106) -- 
(0.973105,3.57539) -- 
(0.967277,3.57968) -- 
(0.961416,3.58394) -- 
(0.955521,3.58816) -- 
(0.949593,3.59234) -- 
(0.943632,3.59647) -- 
(0.937638,3.60056) -- 
(0.93161,3.60461) -- 
(0.925548,3.6086) -- 
(0.919453,3.61255) -- 
(0.913324,3.61645) -- 
(0.907161,3.62029) -- 
(0.900965,3.62408) -- 
(0.894735,3.62781) -- 
(0.888472,3.63148) -- 
(0.882175,3.63509) -- 
(0.875844,3.63864) -- 
(0.86948,3.64212) -- 
(0.863083,3.64553) -- 
(0.856653,3.64887) -- 
(0.85019,3.65214) -- 
(0.843694,3.65533) -- 
(0.837166,3.65844) -- 
(0.830605,3.66147) -- 
(0.824012,3.66442) -- 
(0.817388,3.66728) -- 
(0.810732,3.67005) -- 
(0.804044,3.67273) -- 
(0.797327,3.67531) -- 
(0.790579,3.67779) -- 
(0.783801,3.68016) -- 
(0.776994,3.68243) -- 
(0.770158,3.68459) -- 
(0.763294,3.68664) -- 
(0.756402,3.68856) -- 
(0.749483,3.69037) -- 
(0.742539,3.69205) -- 
(0.735569,3.69359) -- 
(0.728574,3.69501) -- 
(0.721556,3.69628) -- 
(0.714515,3.69742) -- 
(0.707452,3.6984) -- 
(0.700368,3.69923) -- 
(0.693265,3.6999) -- 
(0.686143,3.70041) -- 
(0.679004,3.70076) -- 
(0.67185,3.70092) -- 
(0.66468,3.70091) -- 
(0.657498,3.70072) -- 
(0.650305,3.70033) -- 
(0.643101,3.69975) -- 
(0.63589,3.69897) -- 
(0.628672,3.69798) -- 
(0.62145,3.69677) -- 
(0.614225,3.69534) -- 
(0.607,3.69369) -- 
(0.599777,3.69179) -- 
(0.592559,3.68966) -- 
(0.585347,3.68727) -- 
(0.578144,3.68463) -- 
(0.570953,3.68173) -- 
(0.563777,3.67855) -- 
(0.556618,3.67509) -- 
(0.54948,3.67135) -- 
(0.542366,3.6673) -- 
(0.535279,3.66296) -- 
(0.528222,3.65829) -- 
(0.521199,3.65331) -- 
(0.514214,3.648) -- 
(0.507271,3.64234) -- 
(0.500374,3.63634) -- 
(0.493527,3.62997) -- 
(0.486734,3.62324) -- 
(0.480001,3.61613) -- 
(0.473331,3.60863) -- 
(0.466729,3.60073) -- 
(0.460202,3.59242) -- 
(0.453753,3.5837) -- 
(0.447389,3.57454) -- 
(0.441116,3.56495) -- 
(0.434938,3.5549) -- 
(0.428863,3.54439) -- 
(0.422896,3.53342) -- 
(0.417043,3.52195) -- 
(0.411313,3.51) -- 
(0.40571,3.49754) -- 
(0.400244,3.48456) -- 
(0.39492,3.47106) -- 
(0.389746,3.45702) -- 
(0.38473,3.44243) -- 
(0.37988,3.42728) -- 
(0.375204,3.41156) -- 
(0.370711,3.39526) -- 
(0.366408,3.37837) -- 
(0.362304,3.36088) -- 
(0.358409,3.34277) -- 
(0.354732,3.32404) -- 
(0.351281,3.30469) -- 
(0.348066,3.28469) -- 
(0.345097,3.26403) -- 
(0.342383,3.24273) -- 
(0.339934,3.22075) -- 
(0.33776,3.1981) -- 
(0.335872,3.17477) -- 
(0.334279,3.15074) -- 
(0.332991,3.12603) -- 
(0.332019,3.10061) -- 
(0.331374,3.07448) -- 
(0.331065,3.04765) -- 
(0.331103,3.0201) -- 
(0.331498,2.99184) -- 
(0.332261,2.96285) -- 
(0.333402,2.93315) -- 
(0.33493,2.90273) -- 
(0.336856,2.8716) -- 
(0.33919,2.83974) -- 
(0.341941,2.80718) -- 
(0.345119,2.77391) -- 
(0.348732,2.73993) -- 
(0.352789,2.70526) -- 
(0.3573,2.66991) -- 
(0.362271,2.63387) -- 
(0.367711,2.59717) -- 
(0.373627,2.55982) -- 
(0.380026,2.52182) -- 
(0.386913,2.4832) -- 
(0.394295,2.44398) -- 
(0.402177,2.40416) -- 
(0.410562,2.36377) -- 
(0.419456,2.32283) -- 
(0.42886,2.28136) -- 
(0.438778,2.2394) -- 
(0.44921,2.19695) -- 
(0.460157,2.15405) -- 
(0.47162,2.11074) -- 
(0.483597,2.06703) -- 
(0.496087,2.02296) -- 
(0.509087,1.97856) -- 
(0.522592,1.93387) -- 
(0.536599,1.88892) -- 
(0.551103,1.84375) -- 
(0.566095,1.7984) -- 
(0.58157,1.75289) -- 
(0.597518,1.70728) -- 
(0.613931,1.6616) -- 
(0.630797,1.61589) -- 
(0.648107,1.5702) -- 
(0.665847,1.52456) -- 
(0.684005,1.47902) -- 
(0.702566,1.43362) -- 
(0.721517,1.3884) -- 
(0.740841,1.34341) -- 
(0.760523,1.29868) -- 
(0.780546,1.25426) -- 
(0.800891,1.21019) -- 
(0.821541,1.16651) -- 
(0.842477,1.12326) -- 
(0.863679,1.08048) -- 
(0.885129,1.03821) -- 
(0.906805,0.996492) -- 
(0.928687,0.955356) -- 
(0.950755,0.91484) -- 
(0.972987,0.874977) -- 
(0.995363,0.835802) -- 
(1.01786,0.797345) -- 
(1.04046,0.759636) -- 
(1.06314,0.722703) -- 
(1.08588,0.686574) -- 
(1.10865,0.651273) -- 
(1.13145,0.616824) -- 
(1.15424,0.583246) -- 
(1.177,0.550561) -- 
(1.19973,0.518786) -- 
(1.22239,0.487937) -- 
(1.24497,0.458028) -- 
(1.26745,0.42907) -- 
(1.28982,0.401075) -- 
(1.31205,0.374051) -- 
(1.33413,0.348004) -- 
(1.35604,0.32294) -- 
(1.37777,0.298861) -- 
(1.3993,0.27577) -- 
(1.42062,0.253666) -- 
(1.44172,0.232549) -- 
(1.46258,0.212414) -- 
(1.4832,0.193257) -- 
(1.50355,0.175072) -- 
(1.52364,0.157853) -- 
(1.54345,0.14159) -- 
(1.56297,0.126274) -- 
(1.58219,0.111894) -- 
(1.60111,0.098438) -- 
(1.61972,0.085894) -- 
(1.63802,0.0742479) -- 
(1.656,0.0634852) -- 
(1.67365,0.0535908) -- 
(1.69097,0.0445488) -- 
(1.70795,0.0363426) -- 
(1.7246,0.0289553) -- 
(1.74092,0.0223693) -- 
(1.75689,0.0165667) -- 
(1.77252,0.0115292) -- 
(1.78781,0.0072383) -- 
(1.80276,0.00367507) -- 
(1.81737,0.000820544) -- 
(1.83164,-0.00134441) -- 
(1.84557,-0.00283904) -- 
(1.85916,-0.00368261) -- 
(1.87241,-0.0038944) -- 
(1.88533,-0.00349365) -- 
(1.89792,-0.00249953) -- 
(1.91018,-0.000931092) -- 
(1.92212,0.00119275) -- 
(1.93373,0.00385326) -- 
(1.94503,0.00703189) -- 
(1.95601,0.0107103) -- 
(1.96668,0.0148705) -- 
(1.97704,0.0194946) -- 
(1.98711,0.024565) -- 
(1.99687,0.0300647) -- 
(2.00634,0.0359766) -- 
(2.01553,0.0422842) -- 
(2.02443,0.0489712) -- 
(2.03305,0.0560218) -- 
(2.0414,0.0634204) -- 
(2.04948,0.0711517) -- 
(2.0573,0.0792011) -- 
(2.06486,0.087554) -- 
(2.07217,0.0961963) -- 
(2.07923,0.105114) -- 
(2.08605,0.114295) -- 
(2.09263,0.123725) -- 
(2.09898,0.133392) -- 
(2.1051,0.143283) -- 
(2.111,0.153387) -- 
(2.11668,0.163693) -- 
(2.12215,0.174189) -- 
(2.12742,0.184864) -- 
(2.13248,0.195707) -- 
(2.13735,0.20671) -- 
(2.14202,0.217862) -- 
(2.14651,0.229153) -- 
(2.15081,0.240575) -- 
(2.15494,0.252118) -- 
(2.1589,0.263775) -- 
(2.16269,0.275536) -- 
(2.16631,0.287395) -- 
(2.16978,0.299343) -- 
(2.17309,0.311374) -- 
(2.17625,0.323479) -- 
(2.17927,0.335653) -- 
(2.18214,0.347889) -- 
(2.18488,0.36018) -- 
(2.18748,0.372521) -- 
(2.18995,0.384905) -- 
(2.1923,0.397328) -- 
(2.19452,0.409783) -- 
(2.19663,0.422266) -- 
(2.19863,0.434772) -- 
(2.20051,0.447296) -- 
(2.20229,0.459833) -- 
(2.20396,0.472379) -- 
(2.20554,0.48493) -- 
(2.20701,0.497482) -- 
(2.2084,0.510031) -- 
(2.20969,0.522574) -- 
(2.21091,0.535107) -- 
(2.21203,0.547626) -- 
(2.21308,0.560128) -- 
(2.21405,0.57261) -- 
(2.21495,0.58507) -- 
(2.21578,0.597504) -- 
(2.21654,0.60991) -- 
(2.21724,0.622284) -- 
(2.21787,0.634626) -- 
(2.21845,0.646931) -- 
(2.21897,0.659198) -- 
(2.21944,0.671424) -- 
(2.21987,0.683608) -- 
(2.22024,0.695748) -- 
(2.22057,0.70784) -- 
(2.22087,0.719884) -- 
(2.22112,0.731878) -- 
(2.22134,0.743819) -- 
(2.22153,0.755707) -- 
(2.22169,0.767538) -- 
(2.22183,0.779313) -- 
(2.22194,0.791028) -- 
(2.22204,0.802683) -- 
(2.22212,0.814276) -- 
(2.22219,0.825805) -- 
(2.22224,0.837269) -- 
(2.2223,0.848666) -- 
(2.22235,0.859995) -- 
(2.2224,0.871254) -- 
(2.22246,0.882441) -- 
(2.22252,0.893556) -- 
(2.22261,0.904596) -- 
(2.22271,0.91556) -- 
(2.22283,0.926445) -- 
(2.22298,0.937251) -- 
(2.22316,0.947975) -- 
(2.22339,0.958616) -- 
(2.22365,0.96917) -- 
(2.22397,0.979637) -- 
(2.22434,0.990013) -- 
(2.22478,1.0003) -- 
(2.22529,1.01048) -- 
(2.22588,1.02057) -- 
(2.22656,1.03056) -- 
(2.22735,1.04044) -- 
(2.22824,1.05021) -- 
(2.22925,1.05987) -- 
(2.2304,1.0694) -- 
(2.2317,1.07882) -- 
(2.23317,1.0881) -- 
(2.23483,1.09725) -- 
(2.23669,1.10625) -- 
(2.23878,1.1151) -- 
(2.24112,1.12379) -- 
(2.24375,1.1323) -- 
(2.2467,1.14063) -- 
(2.25001,1.14875) -- 
(2.25372,1.15666) -- 
(2.25788,1.16433) -- 
(2.26256,1.17174) -- 
(2.26782,1.17886) -- 
(2.27376,1.18567) -- 
(2.28046,1.19211) -- 
(2.28806,1.19815) -- 
(2.29669,1.20373) -- 
(2.30655,1.20878) -- 
(2.31784,1.21322) -- 
(2.33086,1.21693) -- 
(2.34596,1.21978) -- 
(2.36361,1.22159) -- 
(2.3844,1.22213) -- 
(2.40917,1.22108) -- 
(2.43904,1.218) -- 
(2.4756,1.21229) -- 
(2.52119,1.20303) -- 
(2.5794,1.18886) -- 
(2.65593,1.16755) -- 
(2.76063,1.13533) -- 
(2.91185,1.0851) -- 
(3.1483,1.00193) -- 
(3.56821,0.847928) -- 
(4.51463,0.490715) -- 
(8.60136,-1.07823); 
\draw[line width=2.0pt, color = black] (-7.98127,5.32798) -- 
(-0.804646,2.56671) -- 
(0.378574,2.11792) -- 
(0.863408,1.93855) -- 
(1.12649,1.8447) -- 
(1.29111,1.78878) -- 
(1.40344,1.75299) -- 
(1.4847,1.72914) -- 
(1.54598,1.71295) -- 
(1.59366,1.70196) -- 
(1.63166,1.69466) -- 
(1.66252,1.69006) -- 
(1.68798,1.6875) -- 
(1.70923,1.68652) -- 
(1.72716,1.68677) -- 
(1.74241,1.68801) -- 
(1.75546,1.69005) -- 
(1.7667,1.69275) -- 
(1.77642,1.69598) -- 
(1.78485,1.69967) -- 
(1.79218,1.70374) -- 
(1.79856,1.70813) -- 
(1.80412,1.71279) -- 
(1.80896,1.71768) -- 
(1.81318,1.72278) -- 
(1.81683,1.72804) -- 
(1.81999,1.73345) -- 
(1.82271,1.73899) -- 
(1.82503,1.74465) -- 
(1.82699,1.75039) -- 
(1.82863,1.75622) -- 
(1.82997,1.76212) -- 
(1.83105,1.76809) -- 
(1.83188,1.7741) -- 
(1.83248,1.78016) -- 
(1.83288,1.78626) -- 
(1.83309,1.79239) -- 
(1.83312,1.79855) -- 
(1.83298,1.80473) -- 
(1.83269,1.81093) -- 
(1.83226,1.81715) -- 
(1.83169,1.82337) -- 
(1.831,1.82961) -- 
(1.83018,1.83585) -- 
(1.82926,1.84209) -- 
(1.82823,1.84833) -- 
(1.8271,1.85457) -- 
(1.82587,1.86081) -- 
(1.82455,1.86704) -- 
(1.82314,1.87327) -- 
(1.82165,1.87949) -- 
(1.82008,1.8857) -- 
(1.81843,1.89189) -- 
(1.81671,1.89808) -- 
(1.81492,1.90425) -- 
(1.81305,1.91041) -- 
(1.81111,1.91656) -- 
(1.80911,1.92269) -- 
(1.80705,1.9288) -- 
(1.80492,1.9349) -- 
(1.80272,1.94098) -- 
(1.80047,1.94704) -- 
(1.79815,1.95308) -- 
(1.79577,1.9591) -- 
(1.79333,1.9651) -- 
(1.79083,1.97108) -- 
(1.78827,1.97704) -- 
(1.78565,1.98297) -- 
(1.78297,1.98889) -- 
(1.78023,1.99478) -- 
(1.77743,2.00064) -- 
(1.77457,2.00649) -- 
(1.77164,2.01231) -- 
(1.76865,2.0181) -- 
(1.7656,2.02387) -- 
(1.76248,2.02961) -- 
(1.75929,2.03533) -- 
(1.75603,2.04101) -- 
(1.7527,2.04667) -- 
(1.7493,2.0523) -- 
(1.74582,2.0579) -- 
(1.74226,2.06348) -- 
(1.73863,2.06902) -- 
(1.7349,2.07452) -- 
(1.73109,2.08); 
\draw [fill=white, color = white] (0.2,1.7) circle (3.5pt);
\draw[color=black] (0.2,1.7) node {$p$};
\begin{scriptsize}
\draw [fill=black] (1.4,3.12) circle (2.5pt);
\draw [fill=black] (0.36,2.65) circle (2.5pt);
\draw [fill=black] (1.81,1.92) circle (2.5pt);
\draw [fill=black] (1.14,1.84) circle (2.5pt);
\draw [fill=white] (-2.3560293482952033,1.4225636599050508) circle (3.5pt);
\draw [fill=black] (-2.3560293482952033,1.4225636599050508) circle (1pt);
\draw [fill=white] (1.216901945449213,2.2185941929807407) circle (3.5pt);
\draw [fill=black] (1.216901945449213,2.2185941929807407) circle (1pt);
\draw [fill=white] (2.2208147044212625,0.7176154992548431) circle (3.5pt);
\draw [fill=black] (2.2208147044212625,0.7176154992548431) circle (1pt);
\draw [fill=white] (0.951720919100048,0.9130876017233132) circle (2.5pt);
\draw [fill=white] (5.069967759269214,0.2787748522299814) circle (2.5pt);
\draw [fill=black] (0.48,2.08) circle (3.5pt);
\end{scriptsize}
\end{tikzpicture}
\caption{The unexpected curve $C_4(Z)$ (in bold) of Example \ref{B3ex2}.}
\label{UnexpCurveDeg4} 
\end{figure}

\subsection{Curves and syzygies}
Theorem \ref{mainThm1} shows that the occurrence of unexpected curves through a certain fat point scheme $Z$
is related to the occurrence of syzygies of the Jacobian ideal of the form whose factors define the lines dual to the smooth points of $Z$. 
This result raises the question why there should be a connection between such curves and such syzygies.
Assume $\chr(\KK)=0$. Let $F$ be a squarefree product of linear homogeneous form in $\KK[\P^2]$.
Let $s=(s_0,s_1,s_2)$ be a minimal syzygy (i.e., of least degree possible) of $\nabla F=(F_x, F_y, F_z)$; i.e.,
$s\cdot \nabla F=0$, meaning
$$s_0F_x+s_1F_y+s_2F_z=0.$$
Since $s$ is minimal, the $s_i$ have no nonconstant common factor.
Thus $s$ defines a rational map $s:\P^2\dasharrow\P^2$ defined at all but a finite set of points.
Therefore, $s$ restricts as a morphism $s|_L:L\to \P^2$ to a general line $L$.

\begin{exercise}\label{s=Id}
If $s:\P^2\dasharrow\P^2$ is defined at $p\in \P^2$ and $s(p)=p$, then $F(p)=0$.
(Note: Look at $s\cdot \nabla F$ and use Euler's identity $xF_x+yF_y+zF_z=\deg(F)F$.)
In particular, if the locus of all points where $s(p)=p$ were to include a curve,
that curve consists of lines defined by one or more of the factors of $F$.
\end{exercise}

\SolnEater{\vskip\baselineskip 
\noindent{\it Details}: 
If $s(p)=p$, then $0=s(p)\cdot \nabla F(p) = p\cdot \nabla F(p)=\deg(F)F(p)$,
hence $F(p)=0$. 
\newline\qedsymbol\vskip\baselineskip}

\begin{exercise}\label{pxs}
Let $f=(f_0,f_1,f_2)=(x,y,z)\times s$, so $f_0=ys_2-zs_1$, $f_1=-(xs_2-zs_0)$ and $f_2=xs_1-ys_0$.
Let $\ell=Ax+By+Cz$ be a linear factor of $F$. 
For any point $p=(a,b,c)$ for which $s(p)\neq p$, one can show that $f(p)$ is the point dual to the line
through $p$ and $s(p)$. If in addition $\ell(p)=0$ but $p$ is not a singular point of $F=0$,
then $\ell(s(p))=0$ and one can conclude that $f(p)$ is the point dual to the line
defined by $\ell$. (Note: apply the product rule for $\nabla F$.)
\end{exercise}

\SolnEater{\vskip\baselineskip 
\noindent{\it Details}: 
That $f(p)$ is the point dual to the line
through $p$ and $s(p)$ is clear since the line through $(a,b,c)$ and $(a',b',c')$
is given by $(bc'-b'c)x-(ac'-a'c)y+(ab'-a'b)z=0$; i.e., the coefficient vector
is $(a,b,c)\times(a',b',c')$.

Now define $G$ by $F=\ell G$. We have $\nabla F=\nabla \ell G=G\nabla \ell+\ell\nabla G$.
Thus $0=s(p)\cdot (\nabla F)(p) = s(p)\cdot G(p)(\nabla \ell)(p) +\ell(p)(\nabla G)(p)
=s(p)\cdot G(p)(a,b,c)=G(p)(As_0(p)+Bs_1(p)+Cs_2(p))=G(p)\ell(s(p))$.
Since $p$ is not a crossing point of $F=0$, we see $G(p)\neq0$, so 
$\ell(s(p))=0$. Thus the line through $p$ and $s(p)$ is $\ell=0$, hence
$f(p)$ is the point dual to this line.
\newline\qedsymbol\vskip\baselineskip}

\begin{exercise}\label{pxs2}
If $s$ is not the identity on any line defined by $F=0$, then $f|_L:L\to \P^2$
defines a morphism whose image contains the points $Z$ dual to the lines defined by
the linear factors of $F$ and such that the points of $L\cap s(L)$ map to the point
dual to $L$. (Aside: In fact, $s(L)$ is a curve of degree $\deg(s_i)+1$
that contains $Z$ and has a point of multiplicity $\deg(s_i)$ at the point dual to $L$.)
\end{exercise}

\SolnEater{\vskip\baselineskip 
\noindent{\it Details}: If $s$ is not the identity on any line defined by $F=0$,
then it is the identity on only a finite set of points, but a general line $L$
will avoid those points. Thus $f$ is defined on $L$.
Since $L$ also avoids the singular points of $F=0$, 
we see by \Exercise\ \ref{pxs} that $f$ maps the points of $L$ where $F=0$
to the points dual to the lines defined by the linear factors of $F$,
and that if $q\in L\cap s(L)$, then there is a point $p\in L$ such that $s(p)=q$,
so $L$ is the line through $p$ and $s(p)$, hence $f(p)$ is the point 
dual to the line $L$. 
\newline\qedsymbol\vskip\baselineskip}

\begin{exercise}\label{pxs3}
If $s$ is not the identity on any line defined by $F=0$, then $(x,y,z)\times f = -\deg(F)Fs$,
hence we can recover $s$ from $f$.
\end{exercise}

\SolnEater{\vskip\baselineskip 
\noindent{\it Details}: This is just a calculation.
\newline\qedsymbol\vskip\baselineskip}

\end{document}